\documentclass[11pt,reqno]{amsart}
\usepackage[a4paper,left=2.0cm,right=2.0cm,top=2.5cm,bottom=2.5cm]{geometry}
\usepackage{amsmath}
\usepackage{amssymb}
\usepackage{amsthm,cite}
\usepackage{array}
\usepackage{enumerate}
\usepackage{xcolor}
\usepackage[amsstyle]{cases}
\usepackage{tikz}
\usetikzlibrary{arrows.meta,calc}
\usepackage{microtype}
\usepackage[linkcolor=blue,citecolor=cyan,urlcolor=blue,colorlinks,hypertexnames=false]{hyperref}

\newtheorem{theorem}{Theorem}[section]
\newtheorem{lemma}[theorem]{Lemma}
\newtheorem{corollary}[theorem]{Corollary}
\newtheorem{proposition}[theorem]{Proposition}
\theoremstyle{remark}
\newtheorem{remark}[theorem]{Remark}
\theoremstyle{definition}
\newtheorem{definition}[theorem]{Definition}
\numberwithin{equation}{section}
\allowdisplaybreaks

\begin{document}

\title{A Conditional No-Splash Criterion for Two-Fluid Euler--MHD Interface}

\author{Chengchun Hao}
\address{State Key Laboratory of Mathematical Sciences, Academy of Mathematics and Systems Science, Chinese Academy of Sciences, Beijing 100190, China \newline
	\and
	School of Mathematical Sciences, University of Chinese Academy of Sciences, Beijing 100049, China
}
\email{hcc@amss.ac.cn}

\subjclass[2020]{Primary 35Q35, 35R35; Secondary 76B03, 76W05, 76B47}
\keywords{two-fluid interface, free boundary problem, splash singularity,
magnetohydrodynamics, interface vorticity, chord-arc functional}

\begin{abstract}
We study a two-dimensional incompressible two-fluid system: an inviscid, irrotational Euler
phase and a viscous, conducting MHD phase, separated by a free interface. We prove a conditional
quantitative no-splash criterion for smooth solutions in a specified admissible class. In addition
to a bound on the trace of the Euler velocity, an initial bound on the magnetic trace and bounds on
the bulk vorticity, this class imposes bounds on the interface Biot--Savart potential and its
tangential derivative, fixed exponential vertical localization, and the normal derivative of the
bulk vorticity at the interface together with one tangential derivative of it. It also
requires a Biot--Savart representation of the Euler velocity up to a bounded-gradient harmonic
part and a positive initial chord-arc bound. Under these assumptions the chord-arc functional has
a uniform positive lower bound on $[0,T]$, depending only on the interval length, the decay
width and the a priori bound, after an enlargement that absorbs the surface tension and the
densities. The proof
combines a maximum-principle analysis of the interface-vorticity equation, in which the magnetic
field enters through the Lorentz term and the pressure jump, with a geometric reduction in the style
of Fefferman--Ionescu--Lie.
\end{abstract}

\maketitle

\section{Introduction}

We study the motion of two incompressible fluids that occupy complementary regions of the plane and are separated by a free interface $\Gamma_{t}$. The question we address is whether such an interface can develop a \emph{splash}-type self-intersection in finite time: two distinct material points of the curve meeting at a single point while the curve remains regular everywhere else. Together with the \emph{splat} (a whole arc collapsing onto a segment), the splash is one of the two possible mechanisms of self-intersection (Figure~\ref{fig:splash-splat}); both were introduced by Castro, C\'ordoba, Fefferman, Gancedo and G\'omez-Serrano \cite{CaCoFeGaGoPNAS,CaCoFeGaGo,CaCoFeGaGo2}.

\begin{figure}[!htbp]
\centering
\begin{tikzpicture}[>=Stealth, font=\small, line join=round, scale=1, transform shape]
  \draw[line width=1.6pt]
    (0.4,0.5) .. controls (0.9,1.7) and (1.3,2.7) .. (2.0,3.2)
             .. controls (2.7,2.7) and (3.1,1.7) .. (3.6,0.5);
  \fill (2.0,3.2) circle (2.4pt);
  \node[anchor=west, font=\scriptsize] at (2.14,3.25) {$P$ (splash point)};
  \fill (1.30,2.30) circle (2pt);
  \fill (2.70,2.30) circle (2pt);
  \node[anchor=east, font=\scriptsize] at (1.18,2.30) {$\alpha_{1}$};
  \node[anchor=west, font=\scriptsize] at (2.82,2.30) {$\alpha_{2}$};
  \node[font=\scriptsize, text width=3.7cm, align=center] at (2.0,-0.10)
    {two distinct material points of one smooth curve coincide at $P$};
  \begin{scope}[shift={(5.4,0)}]
    \draw[line width=1.6pt]
      (0.2,0.8) .. controls (1.2,2.7) and (2.6,2.7) .. (3.6,0.8);
    \draw[line width=1.4pt] (0.2,0.8) -- (3.6,0.8);
    \foreach \yy in {2.2,1.6,1.1}{
      \draw[->, black!60, line width=0.7pt] (1.9,\yy) -- (1.9,0.95);
    }
    \fill (0.2,0.8) circle (2pt);
    \fill (3.6,0.8) circle (2pt);
    \node[font=\scriptsize, text width=3.8cm, align=center] at (1.9,-0.10)
      {a whole arc (an interval of parameters) collapses onto the segment};
  \end{scope}
\end{tikzpicture}
\caption{A splash (left) and a splat (right), the two self-intersection mechanisms of
\cite{CaCoFeGaGoPNAS,CaCoFeGaGo,CaCoFeGaGo2}. In a splash two distinct material points
$\alpha_{1}\neq\alpha_{2}$ of an otherwise regular curve meet at a single point $P$; in a splat
an entire arc of the interface collapses onto a line segment.}
\label{fig:splash-splat}
\end{figure}

For a single fluid bounded by vacuum the answer is affirmative: \cite{CaCoFeGaGo,CaCoFeGaGo2} construct genuine finite-time splash and splat singularities for the free-surface Euler equations, with viscous analogs in \cite{CoSh2,Cosh3}. The picture changes completely when the vacuum is replaced by a second fluid. Fefferman, Ionescu and Lie \cite{CfAiVl} proved that a two-fluid Euler interface cannot form a splash, and Bulut \cite{AB} extended the result to the two-fluid Euler--Navier--Stokes system, where the viscous fluid carries non-trivial interior vorticity. This paper asks the same question for the magnetohydrodynamic counterpart in which the viscous fluid is electrically conducting: does the Lorentz force, which is quadratic in the magnetic field and has no hydrodynamic analog, create new singular interface behavior?

We consider a two-dimensional two-fluid system. The upper phase $\Omega_{2}(t)$ is a viscous, electrically conducting fluid governed by the incompressible magnetohydrodynamics (MHD) equations, with velocity field $u_{2}(t,x)$ and magnetic field $B(t,x)$, both of constant density $\rho_{2}>0$. The lower phase $\Omega_{1}(t)$ is an inviscid, irrotational Euler fluid of constant density $\rho_{1}>0$ with velocity $u_{1}(t,x)$. The two phases are separated by the free interface $\Gamma_{t}=\{z(t,\alpha):\alpha\in\mathbb R\}$ and move with it; surface tension of coefficient $\sigma\geqslant 0$ is admitted. We write $v_{j}(t,\alpha)=u_{j}(t,z(t,\alpha))$, $j=1,2$, for the traces of the velocities on the interface. The system is introduced formally in Section \ref{sec:model}.

A distinctive feature of this system is that the magnetic field is tangent to the interface. The standard jump condition for a perfectly conducting fluid against a non-conducting one gives $H\cdot n=0$ on $\Gamma_{t}$, where $H(t,\alpha)=B(t,z(t,\alpha))$ is the magnetic field on the interface and $n$ is the interface normal. In two dimensions, the identity $(B\cdot\nabla)B=\tfrac12\nabla|B|^{2}+(\mathrm{curl}\,B)B^{\perp}$, together with the tangency $H|_{\Gamma_{t}}\parallel\partial_\alpha z$, yields
\[
(B\cdot\nabla)B\cdot\partial_\alpha z=\tfrac12\partial_\alpha|H|^{2}\qquad\text{on }\Gamma_t.
\]
Thus, up to the interface parameterization, the Lorentz term is a tangential derivative of the same
form as the pressure and kinetic terms $\tfrac12\partial_\alpha|v|^{2}$, and it is controlled by the
interface trace of $H$ alone (Lemma \ref{MHD:lorentz}). The magnetic field enters the proof here and
again in the magnetic pressure of the stress balance (Proposition \ref{pressure.jump}).

The splash and splat singularities of \cite{CaCoFeGaGo,CaCoFeGaGo2} are structurally stable \cite{CGSstab}, and neither viscosity, surface tension nor a magnetic field prevents finite-time self-intersection in the one-fluid setting \cite{CaCoFeGaGo2,Cosh3,HaoMHDsplash}; the three-dimensional viscous and non-resistive conducting case is treated in \cite{HLZ}, and a splash-squeeze scenario for ideal MHD against vacuum in \cite{CEH}. On the two-fluid side, besides the complex-analytic proof of \cite{CfAiVl} there is an elementary elliptic argument of Coutand and Shkoller \cite{CoShNS}. C\'ordoba, Enciso and Grubic \cite{CoEnGr} show that the one- and two-fluid theories differ: they build stationary two-fluid Euler solutions whose interface is a smooth curve self-intersecting at a single point, so the no-splash theorems are a feature of time evolution rather than of static geometry.

Closely related two-phase models tell a parallel story: splash singularities are absent for the surface quasi-geostrophic sharp-front and Muskat equations \cite{GaSt} (whose global regularity was further studied in \cite{MuskatGlobal}), while the Rayleigh--Taylor instability, interface turnover and breakdown were analyzed in \cite{RTMuskat,Turning} and the local well-posedness of the Hele-Shaw and Muskat interface evolution in \cite{InterfaceEvo}. On the magnetic side, free-boundary incompressible MHD requires a Rayleigh--Taylor sign condition and is known to lose well-posedness without it \cite{HaoMHDill}; yet, as recently shown by Hao and Yang \cite{HaoMHDsplash}, a single conducting fluid against vacuum can nevertheless form a splash. These singularity questions sit against the backdrop of the foundational well-posedness theory for free-boundary incompressible flows \cite{CoShWP,ShatahZeng,Lindblad,AmbroseMasmoudi}.

The main result, Theorem \ref{thm}, is a conditional quantitative no-splash criterion for the
Euler--MHD interface, stated for the solution class in Definition \ref{adm}. Let a solution on
$[0,T]$ satisfy the a priori bounds collected there, on the trace of the Euler velocity and on the
initial magnetic trace, on the bulk vorticity, its Biot--Savart potential and tangential derivative,
its exponential vertical decay, and on the trace of $(\Delta u_{2})\cdot\tau$ together with one
tangential derivative of it, and let its chord-arc
functional be nondegenerate at time $0$,
which is condition \eqref{e10}. Then the functional stays bounded below on $[0,T]$ by a constant
depending only on $T$, on the decay width $R_{0}$ and on the enlarged a priori bound $A$; the
dependence on $\sigma,\rho_{1},\rho_{2}$ is absorbed into $A$ (Remark \ref{rm:normalize}).

Theorem \ref{thm} is an a priori separation criterion of Beale--Kato--Majda type for the
chord-arc functional: on any fixed interval on which the quantities of Definition \ref{adm} stay
within the stated bounds, the functional has a uniform positive lower bound. Corollary
\ref{cor:criterion} records the contrapositive: a drop below that bound forces at least one of the
admissibility requirements to fail.
Remark \ref{rm:example} gives a steady admissible solution with $\omega=0$.

Conversely, the hypotheses of Definition \ref{adm} are quantitative bounds on a solution that
already exists: a smooth solution on $[0,T]$ satisfying the seven conditions of that definition is
$(A,\rho_{1},\rho_{2})$-admissible as soon as $A$ exceeds the sup-norms appearing there. Two of
these conditions are specific to the present setting, the exponential vertical decay of the bulk
vorticity and the boundedness of the harmonic drift $c+w$, with bounded gradient, on the whole
plane. Producing such solutions on a prescribed time interval, with bounds uniform over a class
of data, is a question of existence theory for the free-boundary Euler--MHD system. The constant
$c$ of the theorem is a function of $(T,A,R_{0})$ only, the same for every admissible solution.
Remark \ref{rm:hypset} compares the hypotheses with those of \cite{CfAiVl} and \cite{AB}.

The mechanism isolated by the proof can be stated in one sentence. The magnetic field reaches the
interface equations only through $\partial_{\alpha}|H|^{2}$, once from the Lorentz term and once
from the magnetic pressure, and both occurrences are total tangential derivatives; the tangency
$H\cdot n=0$ is what makes the first one total. The no-splash mechanism is therefore the one of the
non-conducting case, and what the theorem establishes is that the magnetic field creates no new
obstruction. Without the tangency the Lorentz term would contribute
$(\mathrm{curl}\,B)B^{\perp}\cdot\partial_{\alpha}z$ instead, which is not a tangential derivative,
and the source term of the interface-vorticity equation would then contain a quantity that the
trace of $H$ alone does not control.

The present paper is organized as follows (Figure~\ref{fig:flow} records the logical dependence of
the proof).
In Section \ref{sec:model} we introduce the Euler--MHD two-fluid system, its interface conditions and its Lagrangian reformulation, state the notion of an admissible solution (Definition \ref{adm}) and the main theorem (Theorem \ref{thm}). Section \ref{sec:vorticity} is devoted to the interface vorticity: we derive the evolution equation satisfied by $\omega_{\Gamma}$, the bounds \eqref{e19} and \eqref{om.C1} actually used in the proof, the interior gradient bound \eqref{e9.int} for the Euler velocity, the two places where the magnetic field enters the vorticity equation, and the rigidity of the magnetic trace that replaces its a priori bound by a condition at time $0$. Section \ref{sec:geometry} contains the geometric reduction of a candidate splash point (chord-arc function, conjugate pairs and the local description of the interface). Section \ref{sec:main} proves the main theorem, and Appendix~\ref{sec:singular} collects the singular-integral lemmas used in Step 2 of that proof.

\begin{figure}[htbp]
\centering
\begin{tikzpicture}[>=Stealth, font=\small, line join=round, scale=0.92, transform shape,
    box/.style={draw, rounded corners, align=center, inner sep=5pt, text width=3.4cm}]
  \node[box] (adm) at (4.6,8.4)
    {Definition \ref{adm}\\ (assumptions on traces,\\ bulk vorticity, $R_{0}$)};
  \node[box] (vort) at (1.5,5.4)
    {Sec.~\ref{sec:vorticity} Interface\\ vorticity\\ \eqref{e19} $L^{\infty}$, \eqref{om.C1} $C^{1}$\\ on $\omega_{\Gamma}$; \eqref{e9.int}};
  \node[box] (geom) at (7.7,5.4)
    {Sec.~\ref{sec:geometry} Geometry\\ chord-arc functional,\\ conjugate pairs, local desc.};
  \node[box] (main) at (4.6,2.4)
    {Sec.~\ref{sec:main} Main proof\\ $D$-Gronwall\\ estimate \eqref{D.gronwall}};
  \node[box] (sing) at (8.6,2.4)
    {App.~\ref{sec:singular} Singular\\ integrals\\ Proposition \ref{prop:key},\\ commutator estimates};
  \node[box, fill=black!8] (conc) at (4.6,-0.4)
    {No splash on $[0,T]$};
  \draw[->, thick] (adm) -- (vort);
  \draw[->, thick] (adm) -- (geom);
  \draw[->, thick] (vort) -- (main);
  \draw[->, thick] (geom) -- (main);
  \draw[->, thick] (sing) -- (main);
  \draw[->, thick] (main) -- (conc);
\end{tikzpicture}
\caption{Logical dependence of the proof. The admissibility assumptions
(Definition~\ref{adm}) feed both the interface-vorticity estimates of
Section~\ref{sec:vorticity} and the geometric reduction of Section~\ref{sec:geometry}; their outputs
combine in the $D$-Gronwall estimate \eqref{D.gronwall} of Section~\ref{sec:main}, whose Step~2
uses the singular-integral estimates of Appendix~\ref{sec:singular}.}
\label{fig:flow}
\end{figure}

\section{The two-fluid Euler--MHD problem}\label{sec:model}

\subsection{The equations and the interface conditions}
Throughout, $I$ denotes a time interval. For every $t\in I$, let
$z=z(t,\alpha)=(z_{1}(t,\alpha),z_{2}(t,\alpha))$
parameterize an embedded curve, the interface, which separates $\mathbb{R}^{2}$ into two
open regions $\Omega_{1}(t),\Omega_{2}(t)$.  Writing
$\Gamma_{t}:=\{z(t,\alpha):\alpha\in\mathbb{R}\}$, we have
\[
\mathbb{R}^{2}=\Omega_{1}(t)\cup\Omega_{2}(t)\cup\Gamma_{t}.
\]
We assume that $\Omega_{1}(t)$ and $\Omega_{2}(t)$ are open, connected and simply connected sets, and that $\Gamma_{t}=\partial\Omega_{1}(t)=\partial\Omega_{2}(t)$; the configuration is depicted in Figure \ref{fig:configuration}. Simple connectivity is
automatic in this setting: an embedded curve which is a bounded perturbation of a line and has
the periodicity $z(t,\alpha+2\pi)=z(t,\alpha)+(2\pi,0)$ of Definition \ref{adm} divides the
plane into two components, each homeomorphic to a half-plane.

\begin{figure}[htbp]
	\centering
	\begin{tikzpicture}[>=Stealth, font=\small, line join=round, scale=1, transform shape]
		
		\def\Xmax{9.2}
		\def\Ymax{2.25}
		\def\Amp{0.72}
		\def\prof{\Amp*sin(\x*360/\Xmax)}
		
		\draw[black!35, dashed] plot[domain=-0.95:0, samples=50] (\x,{\prof});
		\draw[black!35, dashed] plot[domain=\Xmax:{\Xmax+0.95}, samples=50] (\x,{\prof});
		
		\fill[black!12] (0,\Ymax) -- plot[domain=0:\Xmax, samples=240] (\x,{\prof}) -- (\Xmax,\Ymax) -- cycle;
		\draw[black!30, dashed] (0,-\Ymax) rectangle (\Xmax,\Ymax);
		
		\draw[line width=1.6pt] plot[domain=0:\Xmax, samples=240] (\x,{\prof});
		
		\draw[<->, black!60] (0,\Ymax+0.26) -- node[above, font=\scriptsize] {$2\pi$ (one period)} (\Xmax,\Ymax+0.26);
		
		\node[anchor=west, align=left] at (0.45,\Ymax-0.55)
		{$\Omega_2(t)$: viscous, electrically conducting (MHD)};
		\node[anchor=west, align=left] at (0.45,-\Ymax+0.50)
		{$\Omega_1(t)$: inviscid, irrotational (Euler)};
		\node[font=\small, inner sep=2pt] at (8.35,-0.62) {$\Gamma_t$};
		
		\foreach \xx in {1.35,3.25,5.30}{
			\pgfmathsetmacro{\yy}{\Amp*sin(\xx*360/\Xmax)}
			\pgfmathsetmacro{\ss}{\Amp*(360/\Xmax)*pi/180*cos(\xx*360/\Xmax)}
			\pgfmathsetmacro{\nn}{sqrt(1+\ss*\ss)}
			\pgfmathsetmacro{\cx}{\xx-0.15*\ss/\nn}
			\pgfmathsetmacro{\cy}{\yy+0.15/\nn}
			\draw[->, black!80, line width=0.6pt] ({\cx-0.38/\nn},{\cy-\ss*0.38/\nn}) -- ({\cx+0.38/\nn},{\cy+\ss*0.38/\nn});
		}
		\node[anchor=south, font=\scriptsize, black!80] at (3.29,0.86) {$H$};
		
		\pgfmathsetmacro{\xp}{6.90}
		\pgfmathsetmacro{\yp}{\Amp*sin(\xp*360/\Xmax)}
		\pgfmathsetmacro{\sl}{\Amp*(360/\Xmax)*pi/180*cos(\xp*360/\Xmax)}
		\pgfmathsetmacro{\tn}{sqrt(1+\sl*\sl)}
		\pgfmathsetmacro{\tx}{1/\tn}
		\pgfmathsetmacro{\ty}{\sl/\tn}
		\pgfmathsetmacro{\nx}{\ty}
		\pgfmathsetmacro{\ny}{-\tx}
		\fill (\xp,\yp) circle (1.8pt);
		\pgfmathsetmacro{\ox}{\xp-0.20*\nx}
		\pgfmathsetmacro{\oy}{\yp-0.20*\ny}
		\draw[black!45, dotted] (\xp,\yp) -- (\ox,\oy);
		\draw[->, thick] (\ox,\oy) -- ++(\tx*0.85,\ty*0.85)
		node[above right, font=\scriptsize, inner sep=2pt] {$\tau$};
		\draw[->, thick] (\xp,\yp) -- ++(\nx*0.85,\ny*0.85)
		node[right, font=\scriptsize, inner sep=2pt] {$\nu$};
		\node[anchor=north west, font=\scriptsize, inner sep=2pt] at (\xp+0.10,\yp-0.16) {$z(t,\alpha)$};
		
	\end{tikzpicture}
	\caption{The two-fluid Euler--MHD configuration, drawn in one
		period $x_{1}\in[0,2\pi]$ of the horizontally periodic window; the dashed lines delimit that
		window and are not physical boundaries, and the dashed arcs are the periodic copies of the
		interface. The arrows marked $H$ are the magnetic field at the
		interface, which is tangent to it. }
	\label{fig:configuration}
\end{figure}

Throughout, the setting is $2\pi$-periodic in the horizontal direction, in the sense made
precise in Definition \ref{adm}; consequently $\Omega_{1}(t)$ and $\Omega_{2}(t)$ are unbounded,
and every integral over $\Omega_{2}(t)$ is understood in the principal-value sense: one integrates over $y_{1}\in\mathbb{R}$ in the principal-value sense,
equivalently one sums the periodized kernel over the periods. The resulting fields are
$2\pi$-periodic in $x_{1}$, as they must be.

The region $\Omega_{1}(t)$ is filled with an Euler fluid of constant density $\rho_{1}>0$
and velocity field $u_{1}(t)\in C^{3}(\overline{\Omega_{1}(t)})$, which is incompressible
in $\Omega_{1}(t)$:
\begin{equation}\label{e1}
\text{div}\ u_{1}=0 \quad \text{in }\Omega_{1}(t),
\end{equation}
and we assume it evolves according to the Euler equation
\begin{equation}\label{e2}
\rho_{1}(\partial_{t}u_{1}+(u_{1}\cdot\nabla)u_{1})=-\nabla p_{1} \quad \text{in }\Omega_{1}(t).
\end{equation}
The flow is moreover irrotational,
\begin{equation}\label{e3}
\text{curl}\ u_{1}=0 \quad \text{in }\Omega_{1}(t).
\end{equation}

The region $\Omega_{2}(t)$ is filled with a magnetohydrodynamic fluid of constant density
$\rho_{2}>0$, velocity field $u_{2}(t)\in C^{3}(\overline{\Omega_{2}(t)})$ and magnetic
field $B\in C^{3}(\overline{\Omega_{2}(t)})$, also incompressible in $\Omega_{2}(t)$:
\begin{equation}\label{e4}
\text{div}\ u_{2}=0 \quad \text{in }\Omega_{2}(t),
\end{equation}
and it evolves according to the MHD equations
\begin{subnumcases}{\label{mhd}}
\rho_{2}(\partial_{t}u_{2}+(u_{2}\cdot\nabla)u_{2})-\Delta u_{2}=-\nabla p_{2}+(B\cdot\nabla)B, \label{mhd.2}\\
\partial_{t}B+(u_{2}\cdot\nabla)B=(B\cdot\nabla)u_{2}, \\
\text{div}\ B=0.
\end{subnumcases}

Two normalizations are in force in \eqref{mhd}: the kinematic viscosity of the conducting fluid and
the magnetic permeability are both equal to one, so that the viscous term reads $-\Delta u_{2}$ and
the Lorentz force reads $(B\cdot\nabla)B$. This is a choice of units and not a restriction. Every
step of the argument uses only that the viscous term is a positive multiple of $\Delta u_{2}$ and
that the Lorentz force is a positive multiple of $(B\cdot\nabla)B$; restoring general coefficients
multiplies the constants of the estimates by powers of those coefficients and replaces the magnetic
pressure $\tfrac12|H|^{2}$ by $\tfrac{\mu}{2}|H|^{2}$ throughout, but changes no estimate in
structure. The constants in Theorem \ref{thm} are therefore to be read for the normalized system
\eqref{mhd}, with the densities $\rho_{1},\rho_{2}$ and the surface tension $\sigma$ entering only
through the enlargement of $A$ described in Remark \ref{rm:normalize}.

We also introduce the Lagrangian coordinates $x=x(t,y)$, obtained by solving
\begin{align}\label{eq.Lag}
	\frac{dx}{dt}=\begin{cases}
		u_{1}(t,x(t,y)), &y \in \Omega_1(0),\\
		u_{2}(t,x(t,y)), &y \in \Omega_2(0),
	\end{cases}\qquad x(0,y)=y,
\end{align}
so that for each $t$ the map $y\mapsto x(t,y)$ is a diffeomorphism of $\Omega_{j}(0)$ onto
$\Omega_{j}(t)$, $j=1,2$.

We write, for the traces on the interface, $v_{1}(t,\alpha)=u_{1}(t,z(t,\alpha))$, $v_{2}(t,\alpha)=u_{2}(t,z(t,\alpha))$, $H(t,\alpha)=B(t,z(t,\alpha))$, $q_{1}(t,\alpha)=p_{1}(t,z(t,\alpha))$, $q_{2}(t,\alpha)=p_{2}(t,z(t,\alpha))$ and 
\[
n(t,\alpha)=-[(\partial_{\alpha}z)(t,\alpha)]^{\bot}=(\partial_{\alpha}z_{2},-\partial_{\alpha}z_{1}). 
\]

Since $\Gamma_{t}$ is a material surface, the following boundary conditions hold:
\begin{subnumcases}{\label{e6}}
(\partial_{t}z-v_{1})\cdot n=0, \\
\partial_{t}z-v_{2}=0, \\
\left(\big(q_{2}-q_{1}+\tfrac{1}{2}|H|^{2}\big)I_{2\times 2}-2D(u_2)-H\otimes H\right)n=\sigma K n, \\
H\cdot n=0, \label{e6.m}
\end{subnumcases}
where all quantities are evaluated at $(t,\alpha)$, except $u_{2}$ and its derivatives, which are
evaluated at $(t,z(t,\alpha))$. Here $\sigma\geqslant 0$ is the coefficient of surface tension and 
\[
K=K_{z(t,\alpha)}(\alpha)=\frac{(\partial_{\alpha}z_{1})(\partial_{\alpha}^{2}z_{2})-(\partial_{\alpha}z_{2})(\partial_{\alpha}^{2}z_{1})}{[(\partial_{\alpha}z_{1})^{2}+(\partial_{\alpha}z_{2})^{2}]^{\frac{3}{2}}}|_{(t,\alpha)}
\] 
denotes the curvature of the interface described by $\alpha\to z(t,\alpha)$ at the point $z(t,\alpha)$.

The third condition of \eqref{e6} is the balance of stresses across $\Gamma_{t}$. The
parameterization is fixed so that $\Omega_{2}(t)$ lies to the left of
$\alpha\mapsto z(t,\alpha)$; then $n=-[\partial_{\alpha}z]^{\bot}$ is the (unnormalized)
normal pointing out of $\Omega_{2}(t)$ into $\Omega_{1}(t)$, and $K$ is the curvature with
respect to this orientation, so that $K>0$ where $\Omega_{2}(t)$ is convex. The Cauchy
stress of the conducting fluid is
\[
T_{2}=-\big(q_{2}+\tfrac12|H|^{2}\big)I+2D(u_{2})+H\otimes H,
\] 
where $D(u):=\tfrac12(\nabla u+\nabla u^{\mathrm T})$
is the deformation-rate tensor and the last two terms
are the viscous and the Maxwell stresses, the latter written for the Lorentz force in the
form $(B\cdot\nabla)B$; the inviscid phase contributes the pure pressure stress
$T_{1}=-q_{1}I$. The jump of the tractions across $\Gamma_{t}$ balances the surface
tension, $(T_{1}-T_{2})n=\sigma Kn$, which is the third condition. By \eqref{e6.m} the
field $H$ is tangent to the interface, so $H\otimes H$ annihilates $n$ and the magnetic
field enters the normal balance only through the magnetic pressure $\tfrac12|H|^{2}$.
Also, $q_{2}$ denotes the {\em fluid} pressure alone, and accordingly in the trace equation \eqref{e15} of Lemma \ref{MHD} the Lorentz term
appears as the total derivative $\tfrac12\partial_{\alpha}|H|^{2}$ rather than being absorbed into $\partial_{\alpha}q_{2}$. The tangential
component of the balance reads $\tau\cdot D(u_{2})n=0$, with
$\tau:=\partial_{\alpha}z/|\partial_{\alpha}z|$ the unit tangent: the standard vanishing of
tangential stress for constant surface tension, a condition on the conducting fluid alone,
since the inviscid phase exerts no tangential traction. In the static case
$u_{2}\equiv0$, $H\equiv0$ this reduces to the Young--Laplace law $q_{2}-q_{1}=\sigma K$.
Only the normal component of the third condition is used in the proof of Theorem \ref{thm}:
Proposition \ref{pressure.jump} uses it to express the pressure jump in closed form, and both
Lemma \ref{nu.omega} and the interface-vorticity equation of Lemma \ref{curl} use that
expression, so that the magnetic pressure and the viscous normal stress enter the estimates on
$\omega_{\Gamma}$ of the interface-vorticity section. The tangential component $\tau\cdot
D(u_{2})n=0$ is not used in the proof. It does not prescribe $\omega$ on $\Gamma_{t}$: the
interface vorticity is an unknown, and the parabolic argument bounds $u=\omega_{\Gamma}/a$ by the
maximum principle applied to the linear equation \eqref{u.eq.lin} on the whole line, with
$\partial_{\nu}\omega$ entering only through the source term. The tangential balance is related to
that normal derivative: by Lemma \ref{nu.omega} the identity
$\partial_{\nu}\omega=(\Delta u_{2})\cdot\tau$ holds on $\Gamma_{t}$, and \eqref{nu.omega.eq}
rewrites this boundary quantity in terms of the traces, of $\partial_{t}(v_{2}\cdot\tau)$, of the
curvature and of the pressure data. That identity is a consistency relation, not an estimate: it
does not by itself bound $\partial_{\nu}\omega$ in terms of the other quantities listed in
Definition \ref{adm}, and the bound that the proof requires is therefore imposed as a hypothesis,
in \eqref{e11.nu}.

The chord-arc function of $z:I\times\mathbb{R}\to\mathbb{R}^{2}$ is
\begin{equation}\label{e7}
F(z)(t,\alpha,\beta):=
\begin{cases}
\frac{|z(t,\alpha)-z(t,\beta)|}{|\alpha-\beta|}\qquad &\text{ if }\alpha\neq\beta,\\
|z_\alpha(t,\alpha)|\qquad &\text{ if }\alpha=\beta.
\end{cases}
\end{equation}

The following symbols keep the same meaning throughout the paper:
$z(t,\alpha)$ is the parameterization of the interface $\Gamma_{t}$;
$a:=|\partial_{\alpha}z|$ is the arclength element, $\tau:=\partial_{\alpha}z/a$ the unit
tangent and $\nu:=n/|n|$ the unit normal; $K$ is the curvature, with the sign convention
$\partial_{\tau}\tau=-K\nu$ and $\partial_{\tau}\nu=K\tau$; $v_{1}$ and $v_{2}$ are the traces
of the two velocities on $\Gamma_{t}$ and $H$ is the trace of the magnetic field;
$\omega:=\mathrm{curl}\,u_{2}$ is the bulk vorticity of the conducting fluid, $\Phi_{2}$ its
Biot--Savart potential, and $\omega_{\Gamma}:=(v_{1}-v_{2})\cdot\partial_{\alpha}z$ is the
interface vorticity; $u:=\omega_{\Gamma}/a$; $c(t)$ and $w(t,\cdot)$ are the constant and the
gradient-harmonic remainder occurring in item (vi) below; and
$\partial_{s}:=a^{-1}\partial_{\alpha}$ denotes differentiation with respect to arclength.
Primes always denote $\partial_{\alpha}$. A few letters carry a second meaning inside individual
proofs, always introduced where it happens and never carried over; they are listed here so that the
global meanings above can be restored at any point. $G$ is the prescribed function of Proposition
\ref{prop:key}, realized in Section \ref{sec:main} by the quantity \eqref{FG.def}. $e$ is the
forcing of \eqref{u.e} in Lemma \ref{curl}, and the unit vector along the chord in Proposition
\ref{chord-arc prop} and in Section \ref{sec:main}. $d$ is a coefficient of \eqref{u.coeff} in Lemma
\ref{curl}, the chord length in Lemma \ref{chord} and Proposition \ref{chord-arc prop}, and the
parameter of the kernel $h_{d}$ in Step 3 of the proof of Theorem \ref{thm}. $c$ is a coefficient of
\eqref{u.coeff} in Lemma \ref{curl}, the drift $c(t)$ of item (vi), and the constant of \eqref{e13}.
$a$ and $b$ are the arclength element and the tangential velocity $v_{1}\cdot\tau$ of
\eqref{curl.not} in Section \ref{sec:vorticity}, and the amplitude $f_{n}(\mu)-f_{m}(\rho)$ and the
parameter difference $\mu-\rho$ in the proof of \eqref{I.C1}. Finally $\varsigma$, $\varrho$, $\Xi$,
$\widetilde\Xi$ are a signed distance, a radius and two covering maps in the proof of Lemma
\ref{chord}.

\subsection{Admissible solutions}
Two logically different kinds of assumption enter the definition below. The first is
\emph{qualitative}: that a smooth solution of the two-fluid
system exists on the whole interval $[0,T]$, with an interface that remains a regular curve and does
not self-intersect. The second is
\emph{quantitative}: that a finite list of norms of that solution is bounded by a single constant
$A$, and that the bulk vorticity decays at a fixed rate $R_{0}$. The theorem is stated for solutions
satisfying both, and its conclusion is a lower bound that depends on $T$, $A$ and $R_{0}$ but not
otherwise on the solution.

\begin{definition}\label{adm}
Fix $T>0$, $A\geqslant 1$ and $\rho_{1},\rho_{2}>0$. A smooth solution
$(z,u_{1},u_{2},B,p_{1},p_{2})$ of the two-fluid Euler--MHD system of Section \ref{sec:model} on
$[0,T]$ is \emph{$(A,\rho_{1},\rho_{2})$-admissible} if the flow is $2\pi$-periodic in the $e_{1}$
direction, that is, all fluid quantities are $2\pi$-periodic in $x_{1}$ and the interface is
parameterized so that $z(t,\alpha+2\pi)=z(t,\alpha)+(2\pi,0)$, and if, with
$v_{1}(t,\alpha)=u_{1}(t,z(t,\alpha))$,
$v_{2}(t,\alpha)=u_{2}(t,z(t,\alpha))$ and $H(t,\alpha)=B(t,z(t,\alpha))$, the following
conditions hold.
\begin{enumerate}
\item[(i)] (regular parameterization of the interface) The map $t\mapsto z(t,\cdot)$ is
continuous from $[0,T]$ into $C^{4}(\mathbb{R};\mathbb{R}^{2})$, the $C^{4}$-norm being taken in
the variable $\alpha$ at fixed $t$, $\partial_{t}z$ exists and is continuous on
$[0,T]\times\mathbb{R}$, and
\begin{equation}\label{e8}
\sup_{t\in[0,T]}\big\|z(t,\cdot)-(\,\cdot\,,0)\big\|_{C^{4}(\mathbb{R})}\leqslant  A
\qquad\text{and}\qquad
\inf_{\alpha\in\mathbb{R},\,t\in[0,T]}|z_{\alpha}(t,\alpha)|\geqslant \frac{1}{A},
\end{equation}

\item[(ii)] (chord-arc condition at time $0$)
\begin{equation}\label{e10}
\inf_{\alpha,\beta\in\mathbb{R}}|F(z)(0,\alpha,\beta)|\geqslant \frac{1}{A}.
\end{equation}

\item[(iii)] (regularity of the interface traces) $v_{1}$ and $H$ are continuous in $t$ with
values in $C^{3}$, respectively $C^{2}$, in $\alpha$, and
\begin{equation}\label{e9}
\sup_{t\in[0,T]}\Big(\|v_{1}(t,\cdot)\|_{C^{3}(\mathbb{R})}
+\|\partial_{t}v_{1}(t,\cdot)\|_{C^{1}(\mathbb{R})}\Big)\leqslant  A,
\qquad
\|H(0,\cdot)\|_{C^{2}(\mathbb{R})}\leqslant  A,
\end{equation}
all norms being taken in $\alpha$ at fixed $t$. 

\item[(iv)] (a priori bounds for the conducting fluid) The bulk vorticity
$\omega:=\mathrm{curl}\,u_{2}$ is bounded,
\begin{equation}\label{e11}
\|\omega\|_{L^{\infty}(\{(t,x):\,t\in[0,T],\,x\in\Omega_{2}(t)\})}\leqslant  A;
\end{equation}
the Biot--Savart potential of the bulk vorticity,
\[
\Phi_{2}(t,x):=\frac{1}{2\pi}\int_{\Omega_{2}(t)}\frac{(x-y)^{\bot}}{|x-y|^{2}}\,\omega(t,y)\,dy,
\]
and its tangential derivative along the interface are bounded,
\begin{equation}\label{e11.bis}
\sup_{t\in[0,T],\,\alpha\in\mathbb{R}}\Big(\big|\Phi_{2}(t,z(t,\alpha))\big|
+\big|\partial_{\alpha}\big[\Phi_{2}(t,z(t,\alpha))\big]\big|\Big)\leqslant  A;
\end{equation}
and $(\Delta u_{2})\cdot\tau$, together with one tangential derivative, is bounded,
\begin{equation}\label{e11.nu}
\sup_{t\in[0,T],\,\alpha\in\mathbb{R}}\Big(
\big|(\Delta u_{2})\cdot\tau\big|
+\big|\partial_{\alpha}\big[(\Delta u_{2})\cdot\tau\big]\big|
\Big)\leqslant A,
\end{equation}
where $\tau=\partial_{\alpha}z/|\partial_{\alpha}z|$ and $u_{2}$ is evaluated at
$(t,z(t,\alpha))$.

\item[(v)] (exponential localization of the bulk vorticity) There is $R_{0}>0$ such that
\begin{equation}\label{e11.loc}
|\omega(t,x)|\leqslant A\,e^{-|x_{2}|/R_{0}}
\qquad\text{for every }t\in[0,T]\text{ and every }x\in\Omega_{2}(t).
\end{equation}

\item[(vi)] (structure of the Euler velocity) There are $c(t)\in\mathbb{R}^{2}$ and a
gradient-harmonic field $w(t,\cdot)=\nabla\psi(t,\cdot)$, with $\Delta\psi=0$ in $\Omega_{1}(t)$,
such that
\begin{equation}\label{e9.bs}
\begin{aligned}
u_{1}(t,x)&=c(t)+w(t,x)+\frac{1}{2\pi}\,\mathrm{p.v.}\!\int_{\mathbb{R}}\frac{(x-z(t,\beta))^{\bot}}{|x-z(t,\beta)|^{2}}\,\omega_{\Gamma}(t,\beta)\,d\beta\\
&\quad+\frac{1}{2\pi}\int_{\Omega_{2}(t)}\frac{(x-y)^{\bot}}{|x-y|^{2}}\,\omega(t,y)\,dy,
\end{aligned}
\end{equation}
for every $t\in[0,T]$ and $x\in\Omega_{1}(t)$, where $\omega_{\Gamma}$ is the interface vorticity
of \eqref{e16}; moreover $w(t,\cdot)$ has a $C^{1}$ extension to $\mathbb{R}^{2}$, still written
$w(t,\cdot)$, such that
\begin{equation}\label{e9.norm}
\sup_{t\in[0,T]}|c(t)|\leqslant A,
\qquad
\sup_{t\in[0,T]}\Big(\|w(t)\|_{L^{\infty}(\mathbb{R}^{2})}
+\|\nabla w(t)\|_{L^{\infty}(\mathbb{R}^{2})}\Big)\leqslant A.
\end{equation}

\item[(vii)] (initial regularity of the interface vorticity) Writing
$a=|\partial_{\alpha}z|$ and $u=\omega_{\Gamma}/a$, the initial values satisfy
\begin{equation}\label{e8.init}
\|u(0,\cdot)\|_{L^{\infty}(\mathbb{R})}
+\|\partial_{\alpha}u(0,\cdot)\|_{L^{\infty}(\mathbb{R})}\leqslant A.
\end{equation}
\end{enumerate}
\end{definition}

\begin{remark}\label{rm:nu.flux}
The bound \eqref{e11.nu} is independent of the other items of Definition \ref{adm} and of the
tangential stress condition, which supplies only the consistency identity of Lemma \ref{nu.omega};
its relation to the joint space--time bound (ST) and to an interior $C^{2}$ bound on $v_{2}$ is
discussed in Remark \ref{rm:hypset}. By Lemma \ref{nu.omega}, the quantity in \eqref{e11.nu} equals
$\rho_{2}\partial_{t}(v_{2}\cdot\tau)$ plus terms involving only the traces $v_{2}$, $q_{1}$, $K$,
$|H|^{2}$ and their tangential derivatives, namely $\partial_{\tau}q_{1}$,
$\sigma\partial_{\tau}K$, $\partial_{\tau}|H|^{2}$, $\partial_{\tau}^{2}(v_{2}\cdot\tau)$,
$\partial_{\tau}[K(v_{2}\cdot\nu)]$ and $v_{2}\cdot\partial_{t}\tau$; for the last,
$\partial_{t}\tau=a^{-1}\big(\partial_{\alpha}v_{2}-\tau\,(\tau\cdot\partial_{\alpha}v_{2})\big)$,
so that $v_{2}\cdot\partial_{t}\tau$ is a tangential derivative of traces of $v_{2}$ as well. Thus
\eqref{e11.nu} controls the time derivative of the tangential velocity of the conducting fluid, for
which no other item provides a bound: item (iii) bounds $\partial_{t}v_{1}$, and the kinematic
conditions \eqref{e6} relate the two traces only through instantaneous quantities.
\end{remark}

\begin{remark}\label{rm:loc}
The absolute value in \eqref{e11.loc} is written because no convention fixes which side of the
interface the conducting fluid occupies (Figure \ref{fig:configuration} depicts one of the two
cases). By \eqref{e8} the curve lies in the strip $|x_{2}|\leqslant A$, and since it is periodic and
embedded it separates the plane into two components, one containing $\{x_{2}>A\}$ and the other
$\{x_{2}<-A\}$; hence $\Omega_{2}(t)$, being one of them, meets at most one of the two half-strips.
Where $|x_{2}|\leqslant A$ the weights $e^{-|x_{2}|/R_{0}}$ and $e^{\mp x_{2}/R_{0}}$ differ by at
most $e^{A/R_{0}}$, and on the half-strip that $\Omega_{2}(t)$ does meet, $e^{-|x_{2}|/R_{0}}$
coincides with one of $e^{\pm x_{2}/R_{0}}$; replacing $|x_{2}|$ by $x_{2}$ or by $-x_{2}$
therefore changes \eqref{e11.loc} only by a constant factor, absorbed into $A$ by Remark
\ref{rm:normalize}.
\end{remark}

\begin{remark}\label{rm:bs}
Item (vi) records the standard representation of a divergence-free velocity by the vorticity it
induces, the interface vortex sheet and the bulk vorticity of the conducting fluid, plus a
harmonic-gradient remainder; it is not an additional restriction on $u_{1}$, only the choice of the
decomposition. The second bound of \eqref{e9.norm} is imposed on an extension, because
$w(t,\cdot)$ is defined only on $\Omega_{1}(t)$; the quantifier is existential, so that admissibility
depends on the existence of an extension and not on the choice of one.
Such an extension exists as soon as $w(t,\cdot)$ is $C^{1}$ up to $\Gamma_{t}$ and bounded with
bounded gradient on $\Omega_{1}(t)$: Whitney's extension theorem applied to the closed set
$\overline{\Omega_{1}(t)}$ produces a $C^{1}$ extension whose norm is bounded by a constant
multiple of the $C^{1}$ norm on $\overline{\Omega_{1}(t)}$, so that $A$ is replaced by $CA$; the
enlargement is absorbed by Remark \ref{rm:normalize}. The estimates below use \eqref{e9.norm} along
segments that may cross $\Gamma_{t}$ (see the discussion following \eqref{V.decomp}); the bounds are
therefore imposed on the whole plane.
\end{remark}

\begin{remark}[The hypothesis set in relation to \cite{CfAiVl} and \cite{AB}]\label{rm:hypset}
	Write {\em (ST)} for the joint space-time bound
	$\|z-(\alpha,0)\|_{C^{4}([0,T]\times\mathbb{R})}\leqslant A$ for the parameterization. In
	\cite{CfAiVl} hypothesis (1.7) is (ST) together with
	$\inf|z_{\alpha}|\geqslant A^{-1}$, and in \cite{AB} hypotheses (7) and (9) are the same two
	conditions, while (8) is the bound
	$\|v\|_{C^{2}(\{(t,x)\in[0,T]\times\mathbb{R}^{2}:\,x\in\Omega_{NS}(t)\})}\leqslant A$ on the
	velocity of the viscous phase, taken \emph{jointly} in $(t,x)$ and not merely on each time slice.
	Since both papers have no magnetic field, the comparison is made
	in the non-conducting limit $B\equiv0$, in which the present setting reduces to the two-fluid
	Euler--Navier--Stokes setting of \cite{AB}.
	
	\begin{center}
		\small
		\begin{tabular}{>{\raggedright\arraybackslash}p{3.3cm}>{\raggedright\arraybackslash}p{2.9cm}>{\raggedright\arraybackslash}p{3.2cm}>{\raggedright\arraybackslash}p{4.3cm}}
			\hline
			requirement & \cite{CfAiVl} & \cite{AB} & Theorem \ref{thm} (here) \\
			\hline
			regularity of $z$ & (ST), joint in $(t,\alpha)$ & (ST), joint in $(t,\alpha)$ & \eqref{e8}: $C^{4}$ in $\alpha$ at fixed $t$; \emph{no} derivative in $t$ \\
			$\inf|z_{\alpha}|\geqslant A^{-1}$ & (1.7) & (9) & \eqref{e8}: same \\
			chord-arc at $t=0$ & (1.10) & Def.~1.1 & \eqref{e10}: same \\
			trace of the Euler velocity & (1.8), joint $C^{3}$ in $(t,\alpha)$ & derived from (ST) & \eqref{e9}: $C^{3}$ in $\alpha$ and $\partial_{t}v_{1}\in C^{1}$ in $\alpha$; no joint bound \\
			magnetic trace at $t=0$ & --- & --- & \eqref{e9}: $\|H(0)\|_{C^{2}}\leqslant A$; on $[0,T]$ by Lemma \ref{B.trace} \\
			bound at time $0$ & (1.11), $L^{\infty}$ of $u_{2}$ & $L^{\infty}$ at $t=0$ & \eqref{e8.init}: $L^{\infty}$ of $u(0)$ \emph{and} of $\partial_{\alpha}u(0)$ \\
			interior bound on the second fluid & none needed ($\mathrm{curl}\,u_{2}=0$) & \textbf{(8)} $\|v\|_{C^{2}}\leqslant A$, \emph{joint} in $(t,x)$ & \textbf{none} \\
			bulk vorticity $\omega$ & absent & implied by (8) & \eqref{e11}: $\|\omega\|_{L^{\infty}}\leqslant A$ \\
			Biot--Savart potential $\Phi_{2}$ & absent & implied by (8) & \eqref{e11.bis} \\
			exponential vertical decay of $\omega$ & --- & \emph{not assumed} & \eqref{e11.loc}: \emph{additional} \\
			$\partial_{\nu}\omega$ at $\Gamma_{t}$ & --- & C$^{0}$ part follows from (ST) and (8) via the identity & \eqref{e11.nu}: \emph{assumed directly} \\
			bounded harmonic part of $u_{1}$ & not needed & not needed & \eqref{e9.norm}: \emph{additional} \\
			\hline
		\end{tabular}
	\end{center}
	
	Dropping (ST) makes
	\eqref{e11.nu} and \eqref{e8.init} necessary: \eqref{e8.trace} yields \eqref{e8.init} by the
	product rule, and Lemma \ref{nu.omega} expresses $\partial_{\nu}\omega=(\Delta u_{2})\cdot\tau$
	through the traces and $\partial_{t}(v_{2}\cdot\tau)$. This expression shows that the first
	summand of \eqref{e11.nu} is implied by (ST) together with (8): the term $\partial_{t}(v_{2}\cdot\tau)$
	is controlled by the momentum equation, whose right-hand side involves $\Delta v$ and $\nabla p$,
	both of which are bounded under (8) once the pressure is recovered from the equations. The second summand, $|\partial_{\alpha}[(\Delta u_{2})\cdot\tau]|$, is of a
	different nature: differentiating the identity in $\alpha$ produces the mixed derivative
	$\partial_{\alpha}\partial_{t}(v_{2}\cdot\tau)$, hence $\nabla\partial_{t}v$. In \cite{AB} the
	bound (8) is joint in $(t,x)$, so this mixed derivative is controlled there and the second summand
	of \eqref{e11.nu} follows from (ST) and (8) as well; were (8) a spatial $C^{2}$ bound only, the
	second summand would require in addition a $C^{2,1}$ bound on $v$ in the fluid region. The present
	theorem assumes \eqref{e11.nu} directly and dispenses with (ST) altogether.
	The interior bound (8) likewise implies \eqref{e11} and \eqref{e11.bis}. No hypothesis of
	Definition \ref{adm} controls an interior derivative of $u_{2}$, or any derivative of $z$ in which
	$t$ occurs. The two rows marked \emph{additional} have no counterpart in \cite{AB}: the exponential
	vertical decay \eqref{e11.loc} and the bounded-drift condition \eqref{e9.norm} arise because the
	velocities are recovered from their vorticity through the Biot--Savart representation \eqref{e9.bs},
	rather than from an assumed interior regularity. Remark \ref{rm:example} gives a solution of the
	class with $\omega\equiv0$.
	
	Concerning \eqref{e11.loc}, the decay is used at three places, all of them in the estimate of the
	bulk term of the Biot--Savart representation: in the proof of the boundary gradient bound, where it
	produces the constant $C(A,R_{0})$ of \eqref{P.int}; in the proof of the at-most-linear growth
	\eqref{growth} of $u_{1}$, which makes the Lagrangian flow \eqref{eq.Lag} well defined; and in the
	estimate of the term $I_{1}$ of the geometric reduction, where $\omega$ is integrated against the
	kernel of \eqref{V.decomp.U}. In the first two places it suffices that the weight be
	integrable against the logarithmically singular kernel of \eqref{P.est}, that is, that
	$\int_{\mathbb{R}}(1+|\log|h||)\,e^{-|h|/R_{0}}\,dh<\infty$; this holds for the exponential weight
	and for any weight decaying like $|y_{2}|^{-2-\delta}$ with $\delta>0$, in which case the constants
	$C(A,R_{0})$ are replaced by constants depending on $\delta$ instead of $R_{0}$. In the third place
	the far field is estimated by $\int e^{-|y_{2}|/R_{0}}|y|^{-2}\,dy$ (Step 3 of the proof of Theorem
	\ref{thm}), so there it suffices that $\omega$ be bounded and that the weight decay in $y_{2}$;
	the exponential weight has this property, as does any bounded weight decaying like
	$|y_{2}|^{-2-\delta}$. These three estimates
	use \eqref{e11.loc} and the boundedness \eqref{e11} of $\omega$, and no other hypothesis of
	Definition \ref{adm}.
	
	The hypotheses of the three results are of different kinds. Bulut's Euler--Navier--Stokes theorem
	assumes a uniform interior $C^{2}$ bound on the viscous velocity, joint in space and time, whereas
	the theorem above assumes
	the bulk-vorticity bounds \eqref{e11}--\eqref{e11.loc}, the boundary flux bound \eqref{e11.nu}, and
	the Euler representation and drift conditions \eqref{e9.bs}--\eqref{e9.norm}. The
	Fefferman--Ionescu--Lie theorem concerns two irrotational Euler phases; the class of Definition
	\ref{adm} allows bulk vorticity in the conducting phase. The three results therefore apply to
	different conditional classes.
	
	The comparison turns on one point: whether \eqref{e11.nu} stays bounded for as long as
	the solution remains smooth. In the Euler--Navier--Stokes setting it is a consequence of (ST)
	together with (8), so there the criterion is a continuation principle; in the class of Definition
	\ref{adm} it is assumed. The
	theorem is conditional on it in the first place, and Remark \ref{rm:nu.flux} identifies
	it as the only item that controls a time derivative of the trace of the conducting phase.
\end{remark}

The admissibility assumptions leave the tangential derivatives of $\omega_{\Gamma}$ free; these are
supplied by the interface-vorticity estimates of Section \ref{sec:vorticity}. By no-slip,
$\partial_{t}z=v_{2}$ and
$\omega_{\Gamma}=(v_{1}-\partial_{t}z)\cdot\partial_{\alpha}z$, so that by the product rule
\begin{equation}\label{e8.trace}
\partial_{\alpha}\omega_{\Gamma}
=(\partial_{\alpha}v_{1}-\partial_{t}\partial_{\alpha}z)\cdot\partial_{\alpha}z
+(v_{1}-\partial_{t}z)\cdot\partial_{\alpha}^{2}z,
\end{equation}
and analogously for $\partial_{t}\omega_{\Gamma}$, which involves
$\partial_{t}^{2}z$. Item (i) imposes no bound on $\partial_{t}\partial_{\alpha}z$ or on $\partial_{t}^{2}z$, so
\eqref{e8.trace} does not bound $\partial_{\alpha}\omega_{\Gamma}$ by the admissibility
assumptions; neither $\|v_{2}\|_{C^{1}(\mathbb{R})}$ nor $\|\omega_{\Gamma}\|_{C^{1}(\mathbb{R})}$,
the norms being taken in the variable $\alpha$, belongs to the a priori information. From the
assumptions alone one obtains
\[
\|\partial_{t}z\|_{L^{\infty}([0,T]\times\mathbb{R})}=\|v_{2}\|_{L^{\infty}}
=\|v_{1}-u\tau\|_{L^{\infty}}\leqslant A+\|u\|_{L^{\infty}},
\]
in which the right-hand side involves the unknown $u$, together with \eqref{e8.init}, which
bounds $u$ at time $0$ only.

Item (i) requires four derivatives of $z$ in $\alpha$ because the interface-vorticity equation is
differentiated once in $\alpha$, and the resulting equation
contains two tangential derivatives of the curvature $K$, hence four derivatives of $z$ in the
parameter. No regularity in $t$ beyond that of item (i) is used.

The seven items enter the proof as follows. Item (i) fixes the range
$1/A\leqslant a\leqslant A+1$, the curvature bound $|K|\leqslant C(A)$ and the $C^{4}$ data for
the local graphs of Proposition \ref{chord-arc prop}; item (ii) is used in Lemma \ref{chord-arc}
and to make $\bar t>0$; item (iii) gives the Lorentz bound of Lemma \ref{MHD:lorentz} and part of
the bounds on $\Lambda$ and $f$ of Lemma \ref{curl.lin}, the trace of $H$ itself being recovered
from its value at time $0$ in Lemma \ref{B.trace}; the three parts of item (iv) enter in
Step 3, in Step 2(d) and in Lemma \ref{curl.lin} respectively; item (v) makes the bulk term
integrable in the far field, in Lemma \ref{grad.euler}, \eqref{growth} and Step 3; item (vi)
supplies the decomposition of $v_{1}$ and the bound on the harmonic drift; item (vii) supplies the
initial data of the two maximum principles. Each item is used at least once, and none only to
shorten a statement.

The following consequence of the definition supplies the interior gradient bound for the
Euler velocity that the geometric reduction needs. The
argument has two parts, which rest on different inputs. The first
part is local: it computes the Hessian of the potential of $u_{1}$ on the interface, and it
uses only the hypotheses of Definition \ref{adm}. The second part is the maximum principle on
$\Omega_{1}(t)$, and it uses the bounds on the interface vorticity derived in the
interface-vorticity section; it is stated at the end of that section, as Lemma \ref{grad.euler}.

\begin{lemma}\label{grad.euler.bdry}
Let $(z,u_{1},u_{2},B,p_{1},p_{2})$ be an $(A,\rho_{1},\rho_{2})$-admissible solution. Then
\begin{equation}\label{e9.bdry}
\sup_{t\in[0,T]}\sup_{x\in\Gamma_{t}}|\nabla u_{1}(t,x)|\leqslant C(A).
\end{equation}
\end{lemma}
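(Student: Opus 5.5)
The plan is to exploit the fact that in the Euler phase $u_{1}$ is both divergence-free and curl-free. This makes $\nabla u_{1}$ an algebraically rigid matrix, entirely determined by its action on a single unit vector. That action, applied to the unit tangent, is a tangential derivative of the trace $v_{1}$, which item (iii) controls. No information about the interior of $\Omega_{1}(t)$, about $\omega_{\Gamma}$ or about the conducting phase should be needed. This matches the description above of the first, purely local part of the argument.

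\textbf{Step 1: structure of $\nabla u_{1}$.} By \eqref{e1} and \eqref{e3}, at every point of $\overline{\Omega_{1}(t)}$ the matrix $M:=\nabla u_{1}$ is symmetric and traceless. Here we use $u_{1}\in C^{3}(\overline{\Omega_{1}(t)})$, so the identities extend by continuity up to $\Gamma_{t}$. Equivalently, since $\Omega_{1}(t)$ is simply connected, $u_{1}=\nabla\phi$ with $\phi$ harmonic, and $M$ is the Hessian of $\phi$. Hence
\[
M=\begin{pmatrix} p & q\\ q & -p\end{pmatrix},
\qquad M^{2}=(p^{2}+q^{2})\,I_{2\times 2}.
\]
It follows that $|M\xi|^{2}=\xi\cdot M^{2}\xi=(p^{2}+q^{2})|\xi|^{2}$ for every $\xi\in\mathbb{R}^{2}$. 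The Frobenius norm satisfies $|M|^{2}=2(p^{2}+q^{2})$, so $|M|=\sqrt2\,|M\xi|$ for any unit vector $\xi$.

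\textbf{Step 2: tangential derivative of the trace.} Since $u_{1}$ is $C^{1}$ up to the boundary and $z(t,\cdot)\in C^{4}$, the chain rule gives
\[
\partial_{\alpha}v_{1}(t,\alpha)=(\nabla u_{1})(t,z(t,\alpha))\,\partial_{\alpha}z(t,\alpha)=a\,M\tau .
\]
Taking $\xi=\tau$ in Step 1 yields
\[
|\nabla u_{1}(t,z(t,\alpha))|=\sqrt2\,a^{-1}|\partial_{\alpha}v_{1}(t,\alpha)|.
\]

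\textbf{Step 3: conclusion.} By \eqref{e9}, $|\partial_{\alpha}v_{1}|\leqslant\|v_{1}(t,\cdot)\|_{C^{3}}\leqslant A$. By \eqref{e8}, $a\geqslant 1/A$. Therefore $|\nabla u_{1}|\leqslant\sqrt2\,A^{2}$ on $\Gamma_{t}$, uniformly in $t\in[0,T]$ and $\alpha\in\mathbb{R}$, which is \eqref{e9.bdry} with $C(A)=\sqrt2A^{2}$.

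I do not expect a real obstacle. The only point needing care is the justification that the algebraic identities $\operatorname{div}u_{1}=\operatorname{curl}u_{1}=0$ hold at boundary points. This uses the assumed $C^{3}(\overline{\Omega_{1}(t)})$ regularity, so that $\nabla u_{1}$ is continuous up to $\Gamma_{t}$. It also uses the fact that the trace $v_{1}$ in item (iii) is exactly $u_{1}\circ z$, so the chain rule applies to the one-sided limit.
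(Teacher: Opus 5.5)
Your proof is correct and is essentially the paper's argument in coordinate-free form. The paper writes $u_{1}=\nabla\psi$ with $\psi$ harmonic, reads off $\psi_{\tau\tau}$ and $\psi_{\tau\nu}$ from tangential derivatives of $v_{1}\cdot\tau$ and $v_{1}\cdot\nu$ (picking up curvature terms from the frame relations), and gets $\psi_{\nu\nu}=-\psi_{\tau\tau}$ from $\Delta\psi=0$. Working directly with $M\tau=a^{-1}\partial_{\alpha}v_{1}$ shows that those curvature terms cancel, so your version needs only $\|\partial_{\alpha}v_{1}\|_{L^{\infty}}\leqslant A$ and $a\geqslant 1/A$, uses no curvature bound and no simple connectivity, and gives the explicit constant $\sqrt2A^{2}$.
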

\begin{proof}
By \eqref{e3} and the simple connectedness of $\Omega_{1}(t)$ there is a function
$\psi=\psi(t,\cdot)$ with $\Delta\psi=0$ in $\Omega_{1}(t)$ and $u_{1}=\nabla\psi$. Hence
$\nabla u_{1}=D^{2}\psi$, and every entry of $D^{2}\psi$ is harmonic in $\Omega_{1}(t)$.

\emph{The boundary values.} Let $\tau=z_{\alpha}/|z_{\alpha}|$ and
$\nu=n/|n|$ be the unit tangent and the unit normal of $\Gamma_{t}$; then $(\tau,\nu)$ is an
orthonormal frame, and with $K$ the curvature of \eqref{e6} one has
$\partial_{\tau}\tau=-K\nu$ and $\partial_{\tau}\nu=K\tau$, with $|K|\leqslant C(A)$ by
\eqref{e8}. On $\Gamma_{t}$ we have $\psi_{\tau}=v_{1}\cdot\tau$ and
$\psi_{\nu}=v_{1}\cdot\nu$, both bounded by $A$. Differentiating along $\Gamma_{t}$ and using
the two frame relations,
\[
\psi_{\tau\tau}=\partial_{\tau}(v_{1}\cdot\tau)+K\,(v_{1}\cdot\nu),\qquad
\psi_{\tau\nu}=\partial_{\tau}(v_{1}\cdot\nu)-K\,(v_{1}\cdot\tau),\qquad
\psi_{\nu\nu}=-\psi_{\tau\tau},
\]
where $\psi_{\tau\tau},\psi_{\tau\nu},\psi_{\nu\nu}$ denote the entries of the Hessian
$D^{2}\psi$ in the frame $(\tau,\nu)$: the first two follow from
$\partial_{\tau}\nabla\psi=D^{2}\psi\,\tau$ together with the frame relations, and the third
from $\Delta\psi=\psi_{\tau\tau}+\psi_{\nu\nu}=0$, the Laplacian being the trace of the
Hessian. Since $\|v_{1}\|_{C^{1}}\leqslant A$ by \eqref{e9} and
$|K|\leqslant C(A)$, all second derivatives of $\psi$ are bounded on $\Gamma_{t}$ by
$C(A)$; as $(\tau,\nu)$ is an orthonormal frame, this gives
$\sup_{\Gamma_{t}}|\nabla u_{1}|\leqslant C(A)$.
\end{proof}

The interior version of \eqref{e9.bdry}, used in the geometric reduction, is
Lemma \ref{grad.euler}, at the end of the interface-vorticity section.

\subsection{The main theorem}
\begin{theorem}\label{thm}
Let $(z,u_{1},u_{2},B,p_{1},p_{2})$ be an $(A,\rho_{1},\rho_{2})$-admissible solution
of the two-fluid Euler--MHD system on the time interval $[0,T]$. Then
\begin{equation}\label{e13}
\inf_{\alpha,\beta\in\mathbb{R},\,t\in[0,T]}|F(z)(t,\alpha,\beta)|\geqslant c
\end{equation}
for some constant $c>0$ that depends only on $T$, $A$, the decay
width $R_{0}$ of \eqref{e11.loc}, the surface tension coefficient $\sigma$ and the
densities $\rho_{1},\rho_{2}$. 
\end{theorem}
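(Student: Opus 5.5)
The plan follows the Fefferman--Ionescu--Lie scheme, organised as a Gronwall argument for the minimal chord-arc distance; the estimates on $\omega_{\Gamma}$ come first. \textbf{Step 0 (interface vorticity).} Use the interface-vorticity equation of Lemma \ref{curl}, written as a linear parabolic-type equation \eqref{u.eq.lin} for $u=\omega_{\Gamma}/a$. In its source term the magnetic field enters only through $\partial_\alpha|H|^{2}$: once from the Lorentz term (Lemma \ref{MHD:lorentz}, via $H\cdot n=0$) and once from the magnetic pressure in Proposition \ref{pressure.jump}. The trace $H$ is controlled on $[0,T]$ from $\|H(0)\|_{C^2}\le A$ by Lemma \ref{B.trace}. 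The viscous flux $\partial_\nu\omega=(\Delta u_2)\cdot\tau$ is controlled by \eqref{e11.nu}. The maximum principle on the whole line then gives $\sup_{[0,T]}\|u\|_{L^\infty}\le C(T,A)$, i.e.\ \eqref{e19}. Differentiating once in $\alpha$ and applying the maximum principle again, with initial data from \eqref{e8.init}, gives the $C^1$ bound \eqref{om.C1}. From these, Lemma \ref{grad.euler} yields the interior gradient bound $\|\nabla u_1\|_{L^\infty(\Omega_1)}\le C(A,R_0,T)$ and the linear growth \eqref{growth}, so the Lagrangian flow is well defined.

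\textbf{Step 1 (geometric reduction).} Set $\delta(t)=\inf_{\alpha,\beta}F(z)(t,\alpha,\beta)$. Let $\bar t$ be the first time at which $\delta$ drops below a small threshold $\epsilon_0(A)$; here $\bar t>0$ by \eqref{e10} and continuity. Near a near-minimizing pair, Lemma \ref{chord-arc} and Proposition \ref{chord-arc prop} apply. Using \eqref{e8}, pairs with $|\alpha-\beta|$ small have $F\ge 1/(2A)$. So a small value of $F$ forces a conjugate pair $(\alpha,\beta)$ at distance $d(t)=|z(t,\alpha)-z(t,\beta)|\ll1$ with nearly antiparallel tangents. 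Near it, the interface consists of two $C^4$ graphs $f_1<f_2$ over a common interval of length $\sim1$, with the thin neck filled by the Euler fluid $\Omega_1$ (or by $\Omega_2$; both cases must be handled). By periodicity, the relevant quantity to track is $D(t)$, the minimal distance between the two sheets.

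\textbf{Step 2 (Gronwall for $D$).} Differentiate $D$ in time along the minimizing pair. The two sheets move with velocity $v_2=v_1-u\tau$, whose normal part is $v_1\cdot\nu$. Hence $\dot D$ is bounded by the difference of the normal components of $v_1$ at the two points, i.e.\ by the normal velocity jump across the neck. Decompose $v_1$ by \eqref{e9.bs}:
\begin{enumerate}
\item[(a)] The drift $c+w$ contributes at most $\|\nabla w\|_\infty D\le AD$.
\item[(b)] The bulk Biot--Savart term $I_1$ is handled by splitting into near field and far field, using \eqref{e11}, \eqref{e11.bis} and the decay \eqref{e11.loc}; it contributes at most $C D(1+|\log D|)$.
\item[(c)] The vortex-sheet term is the dangerous one. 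Write it through the kernel $h_d$ and apply Proposition \ref{prop:key} and the commutator estimates of Appendix \ref{sec:singular}. Here $G$ is the quantity \eqref{FG.def} built from $\omega_\Gamma$ on the two sheets. Because the sheets are nearly antiparallel graphs with $C^1$ density (by \eqref{om.C1}), the singular contributions of the two close sheets cancel to leading order. What remains is a bound $CD(1+|\log D|)$.
\end{enumerate}
Altogether this gives \eqref{D.gronwall}, $|\dot D|\le C(T,A,R_0)\,D(1+|\log D|)$. Integrating yields $D(t)\ge \exp(-e^{Ct}(1+|\log D(\bar t)|))$, a positive bound depending only on $T,A,R_0$. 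This gives \eqref{e13}, once the dependence on $\sigma,\rho_1,\rho_2$ is absorbed into $A$ by Remark \ref{rm:normalize}.

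\textbf{Main obstacle.} The main obstacle is Step 2(c). The near-field cancellation of the two vortex sheets requires the $C^1$ control of $u$ uniformly in time, and that control rests on Step 0. The delicate point in Step 0 is showing that the source term of the differentiated vorticity equation is bounded using only the hypotheses listed: $\partial_\alpha^2|H|^2$ (via Lemma \ref{B.trace}), two derivatives of $K$ (via $C^4$ of $z$), $\partial_\alpha[(\Delta u_2)\cdot\tau]$ and $\partial_\alpha\Phi_2$. I would first check this source term carefully, then verify that the Step 2(c) cancellation needs nothing beyond $\|u\|_{C^1}$.
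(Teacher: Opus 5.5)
Your outline is essentially the paper's proof. It has the same ingredients: maximum-principle bounds on $u$ and $\partial_\alpha u$ from the linear equation \eqref{u.eq.lin}; reduction to a conjugate pair; and an envelope-theorem Gronwall inequality \eqref{D.gronwall} for the shortest chord, with the Euler velocity split into harmonic drift, bulk term and vortex-sheet term, ending in a double-exponential lower bound.

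Three details need correcting.
\begin{enumerate}
\item Lemma \ref{chord-arc} already forces the neck into the conducting phase, which is \eqref{e25}. So only one configuration occurs and there are not two cases to handle.
\item The kernel $h_{d}$ belongs to the bulk estimate of Step~3, not to the vortex-sheet term.
\item Proposition \ref{prop:key} also requires the tangential identities for $F^{\pm}$. Their $C^{1}$ bounds, like that of $G$, come from \eqref{e9}, \eqref{e9.norm} and \eqref{e11.bis}; this, and not the source term $f$ of the differentiated vorticity equation, is where $\partial_{\alpha}\Phi_{2}$ enters. So applying the proposition needs more than $\|u\|_{C^{1}}$.
\end{enumerate}
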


\begin{remark}\label{rm:dependence}
	The dependence on $\sigma$ and on the densities is inessential:
	enlarging $A$ preserves admissibility, so after the enlargement of Remark \ref{rm:normalize} the
	constant may equally be read as depending only on $T$, on the decay width $R_{0}$ and on the
	enlarged a priori bound.
	The initial chord-arc bound \eqref{e10} is part of admissibility, and the proof below gives an
	explicit lower bound, so \eqref{e13} upgrades \eqref{e10} to a bound that is uniform on $[0,T]$
	and over the whole class.
\end{remark}

\begin{corollary}[Quantitative separation criterion]\label{cor:criterion}
Fix $T>0$, $A\geqslant1$, $R_{0}>0$ and $\rho_{1},\rho_{2}>0$, and let $c>0$ be the constant of
Theorem \ref{thm}. Let $(z,u_{1},u_{2},B,p_{1},p_{2})$ be a smooth solution on $[0,T]$ whose
interface is an embedded curve for every $t\in[0,T]$, and suppose that conditions (i)--(vii) of
Definition \ref{adm} hold with the constant $A$ and the decay rate $R_{0}$, the initial conditions
being imposed at $t=0$. Then
\[
\inf_{(t,\alpha,\beta)\in[0,T]\times\mathbb{R}^{2}}F(z)(t,\alpha,\beta)\geqslant c,
\]
so the interface remains uniformly separated on $[0,T]$ and no splash occurs there. Conversely, if
this infimum is less than $c$, then at least one of the quantitative requirements (i)--(vii) fails.
This includes failure of the initial chord-arc bound \eqref{e10}, one of the stated upper or lower
norm bounds, the velocity representation in item (vi), or the exponential localization
\eqref{e11.loc} with rate $R_{0}$.
\end{corollary}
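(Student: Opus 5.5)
The corollary is a reformulation of Theorem \ref{thm}, and I would prove it by checking that its hypotheses reproduce Definition \ref{adm} exactly, and then taking the contrapositive. The first step is the verification. The solution in the corollary is smooth on $[0,T]$, its interface is embedded for every $t$, and items (i)--(vii) hold with the given $A$ and $R_{0}$, with (ii), the magnetic part of (iii), and (vii) imposed at $t=0$. These are the qualitative and quantitative requirements of Definition \ref{adm}. The only other ingredient of that definition is the horizontal $2\pi$-periodicity, which is built into item (i) through $z(t,\alpha+2\pi)=z(t,\alpha)+(2\pi,0)$ and is part of the setting of the corollary. So the solution is $(A,\rho_{1},\rho_{2})$-admissible with decay width $R_{0}$.

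The second step is to apply Theorem \ref{thm}. This gives $\inf F(z)\geqslant c$ on $[0,T]\times\mathbb{R}^{2}$. The constant depends only on $T$, $A$, $R_{0}$, $\sigma$, $\rho_{1}$ and $\rho_{2}$, and by Remark \ref{rm:dependence} the dependence on $\sigma,\rho_{1},\rho_{2}$ can be absorbed into an enlarged $A$. It is therefore the same $c$ that the corollary fixes in advance, independent of the particular solution; this uniformity is the point to stress. Uniform separation then follows from the definition \eqref{e7} of $F$. For $\alpha\neq\beta$ it gives $|z(t,\alpha)-z(t,\beta)|\geqslant c|\alpha-\beta|>0$, so distinct material points never meet and no splash occurs. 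For $\alpha=\beta$ it gives $|z_{\alpha}|\geqslant c$, so the parameterization stays regular.

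The converse is the logical contrapositive of the direct statement, since $c$ is independent of the solution. If $\inf F(z)<c$ for a smooth solution with embedded interfaces, then the solution cannot satisfy all of (i)--(vii) with constants $A$ and $R_{0}$. Listing which failures are possible is just unpacking these items: the initial chord-arc bound \eqref{e10}, an upper or lower norm bound in (i), (iii), (iv), (vi) or (vii), the representation \eqref{e9.bs}, or the localization \eqref{e11.loc} with rate $R_{0}$.

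I expect no real obstacle here. The only care needed is to confirm that no hidden hypothesis of Definition \ref{adm}, namely periodicity or the existence of the $C^{1}$ extension of $w$ in (vi), is missing from the corollary's assumptions. The extension is covered by the existential form of (vi) and by Remark \ref{rm:bs}.
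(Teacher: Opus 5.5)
Your proposal is correct and matches the paper's proof: the hypotheses make the solution $(A,\rho_{1},\rho_{2})$-admissible, \eqref{e13} then applies, and the converse is the contrapositive. One small correction: the $2\pi$-periodicity is not part of item (i). It is stated in the preamble of Definition \ref{adm} and in the standing setting of Section \ref{sec:model}, and it requires all fluid quantities to be periodic in $x_{1}$, not only $z$. You should therefore take it from that standing assumption rather than read it off from (i).
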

\begin{proof}
Under the stated hypotheses the solution is $(A,\rho_{1},\rho_{2})$-admissible, so \eqref{e13}
applies; the converse is its contrapositive.
\end{proof}

\begin{remark}\label{rm:local.in.time}
The constant $c$ depends on $T$, and the dependence is quantitative: the proof gives
$c\approx\varepsilon_{1}C_{3}^{-1/2}\varepsilon_{4}^{e^{C_{0}T}}$ with $C_{0}$ depending only on
$A$ and $R_{0}$, so $c$ decreases double exponentially in the length of the interval. Theorem
\ref{thm} and Corollary \ref{cor:criterion} are accordingly local in time: they exclude a splash on
any interval on which the a priori bounds hold, with a threshold chosen for the length of that
interval. The argument yields no bound that is uniform over all horizons, since the $D$-Gronwall
estimate of Section \ref{sec:main} controls $D(\bar t)$ by a power of $D(t_{0})$ whose exponent is
exponential in $\bar t-t_{0}$.
\end{remark}

The first part of the proof is analytic and is contained in Section \ref{sec:vorticity}: the
analysis of the interface-vorticity equation. The uniform $L^{\infty}$ and $C^{1}$ bounds, in the
variable $\alpha$, on the interface vorticity $\omega_{\Gamma}=(v_{1}-v_{2})\cdot\partial_{\alpha}z$
and on the conducting-fluid trace $v_{2}$ are derived from the equation (Lemmas \ref{om.Linf} and
\ref{om.C1.est}), and the normal derivative $\partial_{\nu}\omega$ of the bulk vorticity at the
interface is expressed in closed form (Lemma \ref{nu.omega}), its bound being the flux hypothesis
\eqref{e11.nu}. In the Euler--MHD derivation the magnetic field enters through the Lorentz term and
the magnetic pressure. By the tangency condition $H\cdot n=0$ of \eqref{e6.m}, $B$ is parallel to the
interface, so that $(B\cdot\nabla)B\cdot\partial_{\alpha}z=\frac{1}{2}\partial_{\alpha}|H|^{2}$ on
$\Gamma_{t}$ (Lemma \ref{MHD:lorentz}) and the Lorentz force enters the tangential momentum balance
as a total tangential derivative; the magnetic pressure in the pressure jump is a total tangential
derivative as well. The magnetic field therefore reaches the interface-vorticity equation only
through $\partial_{\alpha}|H|^{2}$, twice, once from the Lorentz term and once from the magnetic
pressure, each with coefficient $\frac{1}{2\rho_{2}a}$ (see \eqref{u.e}); without the tangency
condition, \eqref{lorentz.vector} contributes instead
$(\mathrm{curl}\,B)\,B^{\bot}\cdot\partial_{\alpha}z$, which is not a tangential derivative. The
second part, Section \ref{sec:geometry} and Appendix~\ref{sec:singular}, is geometric and kinematic;
it adapts Fefferman--Ionescu--Lie and, beyond the bounds that Section \ref{sec:vorticity} imports
from the equations, uses neither the momentum equations nor the stress balance, so it is
independent of the magnetic field. In the non-conducting limit $B\equiv0$ the theorem reduces
to a no-splash statement for the two-fluid Euler--Navier--Stokes system of the same type as that of
Bulut \cite{AB}.

\begin{remark}\label{rm:example}
The example below is a steady solution with a flat interface: $z(\alpha)=(\alpha,0)$, so
$F(z)(t,\alpha,\beta)\equiv1$ and \eqref{e13} holds with $c=1$. It is an admissible solution in
the case $\omega=0$, with a nonzero magnetic field and a nonzero interface vorticity. Since $\omega\equiv0$ and $w\equiv0$ there, the two restrictions
\eqref{e11.loc} and \eqref{e9.norm} hold degenerately, and the remaining conditions are checked
below.

Let $\Omega_{2}=\{x_{2}>0\}$,
$\Omega_{1}=\{x_{2}<0\}$ and $z(\alpha)=(\alpha,0)$, and fix $c_{0}\in\mathbb{R}\setminus\{0\}$ and
$b_{0}\neq0$. Set
\[
u_{1}\equiv(c_{0},0),\qquad u_{2}\equiv0,\qquad B\equiv(b_{0},0),\qquad
p_{1}\equiv p_{0},\qquad p_{2}\equiv p_{0}-\tfrac12b_{0}^{2}.
\]
Then $\mathrm{div}\,u_{j}=0$, $\mathrm{curl}\,u_{1}=0$ and $\mathrm{div}\,B=0$; the induction
equation holds because $B$ and $u_{2}$ are constant, and the momentum equations reduce to
$\nabla p_{j}=0$ and $(B\cdot\nabla)B=0$, both of which are satisfied. The interface conditions
of \eqref{e6} hold: $\partial_{t}z=0=v_{2}$, $v_{1}=(c_{0},0)$ and $H=(b_{0},0)$ are tangent to
$\Gamma$, the viscous stress vanishes because $u_{2}$ is constant, and the normal balance reads
$q_{2}-q_{1}+\tfrac12|H|^{2}=0$, which is exactly $p_{2}-p_{1}=-\tfrac12b_{0}^{2}$. Also
$\omega=0$, so \eqref{e11} and \eqref{e11.loc} hold for every $R_{0}>0$; $\Phi_{2}=0$, so
\eqref{e11.bis} holds; and $\omega_{\Gamma}=(v_{1}-v_{2})\cdot\partial_{\alpha}z=c_{0}$, so the
interface vorticity is a nonzero constant. Conditions \eqref{e11.nu} and \eqref{e8.init} also
hold: $\Delta u_{2}=0$, hence
$(\Delta u_{2})\cdot\tau=0$ and \eqref{e11.nu} holds; and $u=\omega_{\Gamma}/a=c_{0}$ is constant,
so $\|u(0)\|_{L^{\infty}}+\|\partial_{\alpha}u(0)\|_{L^{\infty}}=|c_{0}|$ and \eqref{e8.init}
holds. Finally \eqref{e9.bs} holds with
$c(t)=(\tfrac{c_{0}}{2},0)$ and $w=0$: the Birkhoff--Rott integral of the constant
$\omega_{\Gamma}=c_{0}$ over the flat line equals $(-\tfrac{c_{0}}{2}\,\mathrm{sgn}\,x_{2},0)$, which is
$(\tfrac{c_{0}}{2},0)$ on $\Omega_{1}$. Finally \eqref{e9} holds, since
$\|v_{1}(t,\cdot)\|_{C^{3}}=|c_{0}|$ and $\|H(0,\cdot)\|_{C^{2}}=|b_{0}|$ for every $t$; the
uniform bound on $H$ required by the proof is supplied by Lemma \ref{B.trace}, which here reads
$H(t,\alpha)=(b_{0},0)=\lambda_{0}\partial_{\alpha}z$ with $\lambda_{0}\equiv b_{0}$. Thus all the
conditions of Definition \ref{adm} are
satisfied for $A$ large enough, with a nonzero magnetic field and a nonzero interface
vorticity, and the interface is flat.

No member of the class with $\omega\not\equiv0$ is constructed here. The items (iv)--(vi) are not consequences of one another: the
Biot--Savart potential and its tangential derivative are not controlled by
$\|\omega\|_{L^{\infty}}$ alone, the exponential localization \eqref{e11.loc} is a separate
restriction, and the flux bound \eqref{e11.nu} controls the time derivative of the tangential
velocity of the conducting phase, for which no other item provides a bound (Remark
\ref{rm:nu.flux}). A construction of a non-trivial member of the class would have to
produce all three.
\end{remark}

\section{The interface vorticity}\label{sec:vorticity}
Six groups of results of this section are used in the proof of Theorem \ref{thm}: the
Biot--Savart representation of Lemma \ref{Euler formula}; the rigidity of the magnetic trace of
Lemma \ref{B.trace}, which recovers $\|H\|_{C^{2}}$ from the value of $H$ at time $0$; the bounds on
the interface vorticity of Lemmas \ref{om.Linf} and \ref{om.C1.est} together with the bound on
$v_{2}$ that follows from them; the interface-vorticity equation \eqref{omega.eq} of Lemma
\ref{curl}, from which those bounds are derived; the closed formula for the normal derivative of
the bulk vorticity at the interface of Lemma \ref{nu.omega}, together with the flux hypothesis
\eqref{e11.nu} that bounds it; and the interior gradient bound for the Euler velocity of Lemma
\ref{grad.euler}, which the geometric reduction needs and which is proved at the end of this
section. The trace equations of Lemmas \ref{Euler} and \ref{MHD} and the
pressure-jump identity of Proposition \ref{pressure.jump} are the two ingredients of
\eqref{omega.eq}; the magnetic field enters through the first as the Lorentz term and through the
second as the magnetic pressure. We begin by
writing the equations of the two fluids on the closures of
$\Omega_{1}(t)$ and $\Omega_{2}(t)$, evaluated on the interface, and read off an evolution equation
for the \emph{interface vorticity}, the jump of the tangential velocity across $\Gamma_t$.

\begin{lemma}\label{Euler}
Let $(z,u_{1},p_{1})$ be the Euler phase of an $(A,\rho_{1},\rho_{2})$-admissible solution in the
sense of Definition \ref{adm}, and let $v_{1}$ and $q_{1}$ be the traces of $u_{1}$ and $p_{1}$
on $\Gamma_{t}$ as in \eqref{e6}. Then
\begin{equation}\label{e14}
\partial_{t}(v_{1}\cdot\partial_{\alpha}z)-\partial_{\alpha}(v_{1}\cdot\partial_{t}z)+\frac{1}{2}\partial_{\alpha}[|v_{1}|^{2}]+\frac{1}{\rho_{1}}\partial_{\alpha}q_{1}=0
\end{equation}
for all $t\in[0,T]$ and $\alpha\in\mathbb{R}$.
\end{lemma}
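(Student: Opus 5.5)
The plan is to evaluate the Euler equation \eqref{e2} on the interface, take its tangential component along $\partial_{\alpha}z$, and use irrotationality \eqref{e3} to write the convective term as a gradient of $\tfrac12|u_{1}|^{2}$. Concretely, by \eqref{e3} and the two-dimensional identity $(u\cdot\nabla)u=\tfrac12\nabla|u|^{2}+(\mathrm{curl}\,u)\,u^{\bot}$, \eqref{e2} becomes
\[
\partial_{t}u_{1}+\tfrac12\nabla|u_{1}|^{2}+\tfrac{1}{\rho_{1}}\nabla p_{1}=0\quad\text{in }\Omega_{1}(t).
\]
Since $u_{1},p_{1}\in C^{3}(\overline{\Omega_{1}(t)})$, this identity extends by continuity to $\overline{\Omega_{1}(t)}$. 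So it holds at $x=z(t,\alpha)$, which is a point of $\Gamma_{t}\subset\overline{\Omega_{1}(t)}$.

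Next I dot the restricted identity with $\partial_{\alpha}z$. The chain rule gives $\nabla f(t,z)\cdot\partial_{\alpha}z=\partial_{\alpha}[f(t,z(t,\alpha))]$ for $f=\tfrac12|u_{1}|^{2}$ and $f=p_{1}$, which produces the terms $\tfrac12\partial_{\alpha}|v_{1}|^{2}$ and $\rho_{1}^{-1}\partial_{\alpha}q_{1}$. The time-derivative term needs more care, because $\partial_{t}v_{1}=\partial_{t}u_{1}(t,z)+(\partial_{t}z\cdot\nabla)u_{1}(t,z)$. Here I use irrotationality again: $\nabla u_{1}$ is symmetric, so
\[
\big((\partial_{t}z\cdot\nabla)u_{1}\big)\cdot\partial_{\alpha}z=\big((\partial_{\alpha}z\cdot\nabla)u_{1}\big)\cdot\partial_{t}z=\partial_{\alpha}v_{1}\cdot\partial_{t}z.
\]
Hence
\[
\partial_{t}u_{1}(t,z)\cdot\partial_{\alpha}z=\partial_{t}v_{1}\cdot\partial_{\alpha}z-\partial_{\alpha}v_{1}\cdot\partial_{t}z .
\]
By the product rule the right-hand side equals $\partial_{t}(v_{1}\cdot\partial_{\alpha}z)-\partial_{\alpha}(v_{1}\cdot\partial_{t}z)$, because the two mixed terms $v_{1}\cdot\partial_{t}\partial_{\alpha}z$ cancel. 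Summing the three contributions gives \eqref{e14}.

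The main point requiring justification is regularity. We need $\partial_{t}u_{1}$ and $\nabla u_{1}$ to extend continuously up to $\Gamma_{t}$, which follows from $u_{1}\in C^{3}(\overline{\Omega_{1}(t)})$ and smoothness of the solution. We also need the mixed derivatives $\partial_{t}\partial_{\alpha}z=\partial_{\alpha}\partial_{t}z$ to exist and commute. This holds because $\partial_{t}z=v_{2}$ by no-slip and $v_{2}$ is $C^{1}$ in $\alpha$ for a smooth solution. I only use this equality qualitatively, so no quantitative bound on $\partial_{t}\partial_{\alpha}z$ is needed; that is consistent with the remark after \eqref{e8.trace}. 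Note that the derivation uses only the Euler phase. It does not need the kinematic condition $(\partial_{t}z-v_{1})\cdot n=0$, since the symmetry of $\nabla u_{1}$ handles the full vector $\partial_{t}z$, normal and tangential parts alike.
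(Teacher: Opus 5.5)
Your proof is correct and is essentially the paper's argument. The paper expands the three terms by the chain rule, cancels the mixed derivatives $\partial_t\partial_\alpha z$, and shows that everything except the pressure term is a multiple of $\mathrm{curl}\,u_1$, which then vanishes by \eqref{e3}: its term $S_2$ is your antisymmetric-Jacobian term, and its convective combination is your identity $(u\cdot\nabla)u=\tfrac12\nabla|u|^2+(\mathrm{curl}\,u)\,u^{\bot}$. You simply apply irrotationality at the start, in vector form, rather than at the end.
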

\begin{proof}
Recall that $v_{1}(t,\alpha)=u_{1}(t,z(t,\alpha))$. By the chain rule,
\begin{equation}\label{euler.chain}
\partial_{\alpha}v_{1}^{j}=\sum_{k=1}^{2}(\partial_{k}u_{1}^{j})\,\partial_{\alpha}z_{k},
\qquad
\partial_{t}v_{1}^{j}=(\partial_{t}u_{1}^{j})+\sum_{k=1}^{2}(\partial_{k}u_{1}^{j})\,\partial_{t}z_{k},
\end{equation}
where, here and below, $u_{1}$ and all of its derivatives are evaluated at
$(t,z(t,\alpha))$. Using \eqref{euler.chain},
\[
\begin{aligned}
\partial_{t}(v_{1}\cdot\partial_{\alpha}z)
&=\sum_{j=1}^{2}\Big[(\partial_{t}v_{1}^{j})(\partial_{\alpha}z_{j})+(v_{1}^{j})(\partial_{t\alpha}z_{j})\Big] \\
&=\sum_{j=1}^{2}\Big[(\partial_{t}u_{1}^{j})(\partial_{\alpha}z_{j})+\sum_{k=1}^{2}(\partial_{k}u_{1}^{j})(\partial_{t}z_{k})(\partial_{\alpha}z_{j})+(u_{1}^{j})(\partial_{t\alpha}z_{j})\Big],
\end{aligned}
\]
and, in the same way,
\[
\partial_{\alpha}(v_{1}\cdot\partial_{t}z)
=\sum_{j=1}^{2}\Big[\sum_{k=1}^{2}(\partial_{k}u_{1}^{j})(\partial_{\alpha}z_{k})(\partial_{t}z_{j})+(u_{1}^{j})(\partial_{t\alpha}z_{j})\Big],
\]
whereas
\[
\frac{1}{2}\partial_{\alpha}[|v_{1}|^{2}]
=\sum_{j=1}^{2}(v_{1}^{j})(\partial_{\alpha}v_{1}^{j})
=\sum_{j=1}^{2}\sum_{k=1}^{2}(u_{1}^{j})(\partial_{k}u_{1}^{j})(\partial_{\alpha}z_{k}).
\]
Since $\partial_{t\alpha}z_{j}=\partial_{\alpha t}z_{j}$, the terms involving the second
derivatives of $z$ cancel, and we obtain
\[
\begin{aligned}
&\partial_{t}(v_{1}\cdot\partial_{\alpha}z)-\partial_{\alpha}(v_{1}\cdot\partial_{t}z)+\frac{1}{2}\partial_{\alpha}[|v_{1}|^{2}] \\
=&\sum_{j=1}^{2}(\partial_{t}u_{1}^{j})(\partial_{\alpha}z_{j})
+\underbrace{\sum_{j,k=1}^{2}(\partial_{k}u_{1}^{j})\Big[(\partial_{t}z_{k})(\partial_{\alpha}z_{j})-(\partial_{\alpha}z_{k})(\partial_{t}z_{j})\Big]}_{=:S_{2}}
+\underbrace{\sum_{j,k=1}^{2}(u_{1}^{j})(\partial_{k}u_{1}^{j})(\partial_{\alpha}z_{k})}_{=:S_{3}}.
\end{aligned}
\]
In $S_{2}$ only the off-diagonal terms survive, and regrouping them gives
\begin{equation}\label{euler.S2}
S_{2}=(\mathrm{curl}\,u_{1})\big[(\partial_{t}z_{1})(\partial_{\alpha}z_{2})-(\partial_{\alpha}z_{1})(\partial_{t}z_{2})\big],
\qquad \mathrm{curl}\,u_{1}:=\partial_{1}u_{1}^{2}-\partial_{2}u_{1}^{1}.
\end{equation}
Next we substitute the Euler equation, written componentwise as
\[
\partial_{t}u_{1}^{j}=-\sum_{k=1}^{2}(u_{1}^{k})(\partial_{k}u_{1}^{j})-\rho_{1}^{-1}(\partial_{j}p_{1}),
\]
into the first sum. Together with $S_{3}$ this produces
\[
-\sum_{j,k=1}^{2}(u_{1}^{k})(\partial_{k}u_{1}^{j})(\partial_{\alpha}z_{j})
+S_{3}-\rho_{1}^{-1}\sum_{j=1}^{2}(\partial_{j}p_{1})(\partial_{\alpha}z_{j}).
\]
Renaming the summation indices in $S_{3}$ and in the first term, the two convective
contributions combine into
\begin{equation}\label{euler.S3}
\begin{aligned}
-\sum_{j,k=1}^{2}(u_{1}^{k})(\partial_{k}u_{1}^{j})(\partial_{\alpha}z_{j})+S_{3}
&=\sum_{j,k=1}^{2}(u_{1}^{k})\Big[(\partial_{j}u_{1}^{k})-(\partial_{k}u_{1}^{j})\Big](\partial_{\alpha}z_{j})\\
&=(\mathrm{curl}\,u_{1})\big[(u_{1}^{2})(\partial_{\alpha}z_{1})-(u_{1}^{1})(\partial_{\alpha}z_{2})\big].
\end{aligned}
\end{equation}
Finally, by the chain rule,
$\partial_{\alpha}q_{1}=(\partial_{1}p_{1})(\partial_{\alpha}z_{1})+(\partial_{2}p_{1})(\partial_{\alpha}z_{2})$.
Collecting the identities above, every term of
$\partial_{t}(v_{1}\cdot\partial_{\alpha}z)-\partial_{\alpha}(v_{1}\cdot\partial_{t}z)+\frac{1}{2}\partial_{\alpha}[|v_{1}|^{2}]$
other than the pressure term is a multiple of $\mathrm{curl}\,u_{1}$; see
\eqref{euler.S2} and \eqref{euler.S3}. Since the Euler fluid is irrotational, \eqref{e3}
gives $\mathrm{curl}\,u_{1}=0$, and therefore
\[
\partial_{t}(v_{1}\cdot\partial_{\alpha}z)-\partial_{\alpha}(v_{1}\cdot\partial_{t}z)+\frac{1}{2}\partial_{\alpha}[|v_{1}|^{2}]
=-\rho_{1}^{-1}\partial_{\alpha}q_{1},
\]
which is precisely \eqref{e14}.
\end{proof}
\begin{lemma}\label{MHD}
Let $(z,u_{2},B,p_{2})$ be the conducting phase of an $(A,\rho_{1},\rho_{2})$-admissible solution
in the sense of Definition \ref{adm}, and let $v_{2}$ and $q_{2}$ be the traces of $u_{2}$ and
$p_{2}$ on $\Gamma_{t}$ as in \eqref{e6}. Then
\begin{equation}\label{e15}
\partial_{t}(v_{2}\cdot\partial_{\alpha}z)-\partial_{\alpha}(v_{2}\cdot\partial_{t}z)+\frac{1}{2}\partial_{\alpha}[|v_{2}|^{2}]+\frac{1}{\rho_{2}}\partial_{\alpha}q_{2}-\frac{1}{\rho_{2}}(\Delta u_{2})\cdot(\partial_{\alpha}z)|_{(t,\alpha)}-\frac{1}{\rho_{2}}(B\cdot\nabla)B\cdot\partial_{\alpha}z=0
\end{equation}
for all $t\in[0,T]$ and $\alpha\in\mathbb{R}$.
\end{lemma}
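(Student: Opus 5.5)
The computation follows the proof of Lemma \ref{Euler} line by line, with two changes. First, the momentum equation \eqref{mhd.2} carries the extra forcing $\rho_{2}^{-1}(\Delta u_{2}+(B\cdot\nabla)B)$. Second, $u_{2}$ is not irrotational, so the curl terms no longer vanish; they must cancel by themselves, and this is where the no-slip condition $\partial_{t}z=v_{2}$ of \eqref{e6} enters.

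First, write $v_{2}=u_{2}(t,z(t,\alpha))$ and apply the chain rule exactly as in \eqref{euler.chain}. Expanding $\partial_{t}(v_{2}\cdot\partial_{\alpha}z)-\partial_{\alpha}(v_{2}\cdot\partial_{t}z)+\frac12\partial_{\alpha}|v_{2}|^{2}$, the mixed terms $u_{2}^{j}\partial_{t\alpha}z_{j}$ cancel. What remains is
\[
\sum_{j}(\partial_{t}u_{2}^{j})\partial_{\alpha}z_{j}+S_{2}+S_{3},
\]
with $S_{2}=(\mathrm{curl}\,u_{2})\big[\partial_{t}z_{1}\partial_{\alpha}z_{2}-\partial_{\alpha}z_{1}\partial_{t}z_{2}\big]$ as in \eqref{euler.S2}.

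Next, substitute
\[
\partial_{t}u_{2}^{j}=-u_{2}^{k}\partial_{k}u_{2}^{j}-\rho_{2}^{-1}\partial_{j}p_{2}+\rho_{2}^{-1}\Delta u_{2}^{j}+\rho_{2}^{-1}\big((B\cdot\nabla)B\big)^{j}.
\]
The convective part combines with $S_{3}$ as in \eqref{euler.S3} into
\[
(\mathrm{curl}\,u_{2})\big[u_{2}^{2}\partial_{\alpha}z_{1}-u_{2}^{1}\partial_{\alpha}z_{2}\big].
\]
The pressure part gives $-\rho_{2}^{-1}\partial_{\alpha}q_{2}$ by the chain rule. The viscous and Lorentz parts give exactly $\rho_{2}^{-1}(\Delta u_{2})\cdot\partial_{\alpha}z+\rho_{2}^{-1}(B\cdot\nabla)B\cdot\partial_{\alpha}z$, evaluated at $(t,z(t,\alpha))$. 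These are left untouched, as \eqref{e15} requires.

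The one genuinely new point is the cancellation of the two curl terms. Evaluate on $\Gamma_{t}$ and insert $\partial_{t}z=v_{2}=u_{2}(t,z)$ into $S_{2}$. This gives
\[
S_{2}=(\mathrm{curl}\,u_{2})\big[u_{2}^{1}\partial_{\alpha}z_{2}-u_{2}^{2}\partial_{\alpha}z_{1}\big],
\]
which is exactly the negative of the convective curl term, so their sum vanishes without any assumption on $\omega$. Collecting the remaining terms yields \eqref{e15}.

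I expect no real obstacle beyond bookkeeping. The only care needed is that the chain rule and the momentum equation are applied to traces on the closure of $\Omega_{2}(t)$, which is legitimate by the assumed $C^{3}$ regularity of $u_{2},B$ up to $\Gamma_{t}$ and the continuity of $\partial_{t}z$ from item (i). One should also keep the sign convention $\mathrm{curl}\,u=\partial_{1}u^{2}-\partial_{2}u^{1}$ consistent between the two curl terms, since the cancellation depends on it.
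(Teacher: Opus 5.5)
Your proposal is correct and follows essentially the paper's own argument. The paper likewise expands the three derivative terms, cancels the $\partial_{t\alpha}z$ terms, and substitutes \eqref{mhd.2} together with $\partial_{t}z=v_{2}$. It then observes that the convective curl term $\omega\,(v_{2}^{2}\partial_{\alpha}z_{1}-v_{2}^{1}\partial_{\alpha}z_{2})$ cancels the $S_{2}$-type term, leaving only the pressure, viscous and Lorentz contributions.
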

\begin{proof}
We argue as in the proof of Lemma \ref{Euler}, but now we must keep track of the term produced by $\mathrm{curl}\,u_{2}$, which does \emph{not} vanish in $\Omega_{2}(t)$. Throughout, $u_{2}$ and its derivatives are evaluated at $(t,z(t,\alpha))$. First,
\[
\partial_{t}(v_{2}\cdot\partial_{\alpha}z)=\sum_{j=1}^{2}\Big[(\partial_{t}u_{2}^{j})(\partial_{\alpha}z_{j})+(\partial_{1}u_{2}^{j})(\partial_{t}z_{1})(\partial_{\alpha}z_{j})+(\partial_{2}u_{2}^{j})(\partial_{t}z_{2})(\partial_{\alpha}z_{j})+(u_{2}^{j})(\partial_{t\alpha}z_{j})\Big],
\]
and
\[
\partial_{\alpha}(v_{2}\cdot\partial_{t}z)=\sum_{j=1}^{2}\Big[(\partial_{1}u_{2}^{j})(\partial_{\alpha}z_{1})(\partial_{t}z_{j})+(\partial_{2}u_{2}^{j})(\partial_{\alpha}z_{2})(\partial_{t}z_{j})+(u_{2}^{j})(\partial_{t\alpha}z_{j})\Big],
\]
and
\[
\frac{1}{2}\partial_{\alpha}[|v_{2}|^{2}]=\sum_{j=1}^{2}\Big[(u_{2}^{j})(\partial_{1}u_{2}^{j})(\partial_{\alpha}z_{1})+(u_{2}^{j})(\partial_{2}u_{2}^{j})(\partial_{\alpha}z_{2})\Big].
\]
Summing the three identities the terms containing $\partial_{t\alpha}z_{j}$ cancel, and, recalling that $\mathrm{curl}\,u_{2}=\partial_{1}u_{2}^{2}-\partial_{2}u_{2}^{1}$, we get
\[
\begin{aligned}
&\partial_{t}(v_{2}\cdot\partial_{\alpha}z)-\partial_{\alpha}(v_{2}\cdot\partial_{t}z)+\frac{1}{2}\partial_{\alpha}[|v_{2}|^{2}] \\
=&\sum_{j=1}^{2}\Big[(\partial_{t}u_{2}^{j})(\partial_{\alpha}z_{j})+(u_{2}^{j})(\partial_{1}u_{2}^{j})(\partial_{\alpha}z_{1})+(u_{2}^{j})(\partial_{2}u_{2}^{j})(\partial_{\alpha}z_{2})\Big] \\
&+(\mathrm{curl}\,u_{2})\Big[(\partial_{t}z_{1})(\partial_{\alpha}z_{2})-(\partial_{t}z_{2})(\partial_{\alpha}z_{1})\Big].
\end{aligned}
\]
Now we insert the equations of motion and the interface condition $\partial_{t}z=v_{2}$ from \eqref{e6}. By \eqref{mhd.2},
\[
\partial_{t}u_{2}^{j}=-\sum_{k=1}^{2}(u_{2}^{k})(\partial_{k}u_{2}^{j})+\frac{1}{\rho_{2}}\Delta u_{2}^{j}-\frac{1}{\rho_{2}}(\partial_{j}p_{2})+\frac{1}{\rho_{2}}\sum_{k=1}^{2}(B_{k})(\partial_{k}B_{j}).
\]
Substituting this and using $\partial_{t}z=v_{2}$, the right-hand side becomes
\[
\begin{aligned}
&\sum_{j=1}^{2}\Big[\Big(-\sum_{k=1}^{2}(v_{2}^{k})(\partial_{k}u_{2}^{j})+\frac{1}{\rho_{2}}\Delta u_{2}^{j}-\frac{1}{\rho_{2}}(\partial_{j}p_{2})+\frac{1}{\rho_{2}}\sum_{k=1}^{2}(B_{k})(\partial_{k}B_{j})\Big)(\partial_{\alpha}z_{j}) \\
&\qquad\qquad+(v_{2}^{j})(\partial_{1}u_{2}^{j})(\partial_{\alpha}z_{1})+(v_{2}^{j})(\partial_{2}u_{2}^{j})(\partial_{\alpha}z_{2})\Big] \\
&\qquad+(\mathrm{curl}\,u_{2})\Big[(v_{2}^{1})(\partial_{\alpha}z_{2})-(v_{2}^{2})(\partial_{\alpha}z_{1})\Big].
\end{aligned}
\]
Here the terms of the form $(v_{2}^{i})(\partial_{j}u_{2}^{k})(\partial_{\alpha}z_{\ell})$ (which collect the convective contribution, the contribution of the term $\tfrac12\partial_{\alpha}[|v_{2}|^{2}]$ and the $\mathrm{curl}\,u_{2}$ term) cancel identically. Writing $\omega:=\mathrm{curl}\,u_{2}$ (the bulk vorticity of \eqref{e17}) for this cancellation,
\[
-\sum_{j,k}(v_{2}^{k})(\partial_{k}u_{2}^{j})(\partial_{\alpha}z_{j})+\sum_{j}(v_{2}^{j})\big[(\partial_{1}u_{2}^{j})(\partial_{\alpha}z_{1})+(\partial_{2}u_{2}^{j})(\partial_{\alpha}z_{2})\big]=\omega\,(v_{2}^{2}\partial_{\alpha}z_{1}-v_{2}^{1}\partial_{\alpha}z_{2}),
\]
while the last term equals $\omega\,(v_{2}^{1}\partial_{\alpha}z_{2}-v_{2}^{2}\partial_{\alpha}z_{1})=-\omega\,(v_{2}^{2}\partial_{\alpha}z_{1}-v_{2}^{1}\partial_{\alpha}z_{2})$, so that the two cancel. Therefore only the following terms survive:
\[
\begin{aligned}
&\sum_{j=1}^{2}\frac{1}{\rho_{2}}(\Delta u_{2}^{j})(\partial_{\alpha}z_{j})-\frac{1}{\rho_{2}}\partial_{\alpha}q_{2}
+\frac{1}{\rho_{2}}\sum_{j,k}(B_{k})(\partial_{k}B_{j})(\partial_{\alpha}z_{j})\\
&=\frac{1}{\rho_{2}}(\Delta u_{2})\cdot\partial_{\alpha}z-\frac{1}{\rho_{2}}\partial_{\alpha}q_{2}
+\frac{1}{\rho_{2}}(B\cdot\nabla)B\cdot\partial_{\alpha}z,
\end{aligned}
\]
where we used the chain rule $\partial_{\alpha}q_{2}=(\partial_{1}p_{2})(\partial_{\alpha}z_{1})+(\partial_{2}p_{2})(\partial_{\alpha}z_{2})$. Moving these to the left-hand side yields exactly \eqref{e15}. 
\end{proof}

We define the interface vorticity,
\begin{equation}\label{e16}
\omega_{\Gamma}(t,\alpha)=(v_{1}(t,\alpha)-v_{2}(t,\alpha))\cdot(\partial_{\alpha}z)(t,\alpha).
\end{equation}

For notational convenience, we let $\omega$ denote the vorticity internal to the MHD fluid, defined by 
\begin{equation}\label{e17}
\omega(t,x)=\text{curl}(u_{2})(t,x)\,\chi_{\Omega_{2}(t)}(x).
\end{equation}

First we recall the Biot--Savart representation of the velocity of the Euler fluid in terms of the interface vorticity $\omega_{\Gamma}$ and the bulk vorticity $\omega$ (see \cite[Proposition 2.1]{AB}). By the structural hypothesis \eqref{e9.bs} this representation is an identity for $u_{1}$ up to the bounded harmonic part $c+w$ of condition (vi).
\begin{lemma}\label{Euler formula}
With the notation above, let $c(t)$ and $w(t,\cdot)$ be as in \eqref{e9.norm}. Then, for every
$t\in[0,T]$ and every $\alpha\in\mathbb{R}$,
\begin{equation}\label{e18}
\begin{aligned}
v_{1}(t,\alpha)=&\,c(t)+w(t,z(t,\alpha))+\frac{\omega_{\Gamma}(t,\alpha)\partial_{\alpha}z(t,\alpha)}{2|\partial_{\alpha}z(t,\alpha)|^{2}}\\
&+\frac{1}{2\pi}\,\mathrm{p.v.}\!\int_{\mathbb{R}}\frac{(z(t,\alpha)-z(t,\beta))^{\bot}}{|z(t,\alpha)-z(t,\beta)|^{2}}\omega_{\Gamma}(t,\beta)d\beta \\
&+\frac{1}{2\pi}\int_{\Omega_{2}(t)}\frac{(z(t,\alpha)-y)^{\bot}}{|z(t,\alpha)-y|^{2}}\omega(t,y)dy.
\end{aligned}
\end{equation}
\end{lemma}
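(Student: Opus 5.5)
The plan is to pass to the limit $x\to z(t,\alpha)$ from inside $\Omega_{1}(t)$ in the structural representation \eqref{e9.bs}, treating its four terms one at a time. Since $u_{1}\in C^{3}(\overline{\Omega_{1}(t)})$, the left-hand side tends to $v_{1}(t,\alpha)$. The constant $c(t)$ is unchanged in the limit. The harmonic part $w(t,\cdot)$ is $C^{1}$ up to $\Gamma_{t}$ by its extension in \eqref{e9.norm}, so it converges to $w(t,z(t,\alpha))$.

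The bulk term needs only a continuity argument. The kernel $(x-y)^{\bot}/|x-y|^{2}$ is locally integrable, uniformly in $x$. Also $\omega$ is bounded by \eqref{e11} and decays exponentially in $y_{2}$ by \eqref{e11.loc}. The far field in $y_{1}$ is handled by the periodized (principal-value) convention fixed in Section \ref{sec:model}. Together these make the bulk integral a continuous function of $x\in\mathbb{R}^{2}$, by dominated convergence applied to the periodized kernel on a fundamental strip. Its limit is therefore the value at $x=z(t,\alpha)$, which is the last term of \eqref{e18}.

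The main step is the vortex-sheet term, for which I would use the classical Plemelj jump relation. Fix $\alpha$ and approach along the normal, $x=z(t,\alpha)+\varepsilon\nu$, with $\nu$ pointing into $\Omega_{1}$ by the orientation convention. Split the integral into $|\beta-\alpha|<\delta$ and its complement. On the complement the kernel converges uniformly, because the chord-arc quantity is positive at each fixed time: the curve is embedded, $C^{4}$, and a bounded periodic perturbation of a line, so $\inf F(z)(t,\cdot,\cdot)>0$ at that $t$, though not uniformly in $t$. On the near part:
\begin{itemize}
\item[(a)] Write $\omega_{\Gamma}(\beta)=\omega_{\Gamma}(\alpha)+O(|\beta-\alpha|)$. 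The $O(|\beta-\alpha|)$ remainder contributes a term that is continuous in the limit and converges to the corresponding principal-value piece; this uses that $\omega_{\Gamma}$ is $C^{1}$ for the smooth solution.
\item[(b)] Replace $z(\beta)$ by its tangent line $z(\alpha)+(\beta-\alpha)z_{\alpha}(\alpha)$, at an error of $O(|\beta-\alpha|^{2})$. The resulting correction to the kernel is bounded, so it also passes to the limit.
\item[(c)] For the flat model, compute explicitly: $\frac{1}{2\pi}\int\frac{(\varepsilon\nu-s z_{\alpha})^{\bot}}{|\varepsilon\nu-s z_{\alpha}|^{2}}\,ds$ tends to $\pm\frac{1}{2}\,z_{\alpha}/|z_{\alpha}|^{2}$. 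The odd part in $s$ gives the principal value.
\end{itemize}
Combined with the factor $\omega_{\Gamma}(\alpha)$, step (c) yields the boundary term $\frac{\omega_{\Gamma}\partial_{\alpha}z}{2|\partial_{\alpha}z|^{2}}$.

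The main obstacle is the sign of this half-residue. It is fixed by the convention that $n=-(\partial_{\alpha}z)^{\bot}$ points from $\Omega_{2}$ into $\Omega_{1}$, so the Euler side is the side reached along $+\nu$. I would check the sign against the flat example of Remark \ref{rm:example}. There $\omega_{\Gamma}=c_{0}$ and the sheet contributes $(c_{0}/2,0)$ on $\Omega_{1}$, which must equal $\omega_{\Gamma}z_{\alpha}/(2|z_{\alpha}|^{2})=(c_{0}/2,0)$ plus a vanishing principal value; this confirms the $+$ sign. Finally, non-tangential convergence along general paths follows from uniform boundedness of the near part, so the normal limit suffices. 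This is the standard computation of \cite[Proposition 2.1]{AB}, and the remaining work is bookkeeping with the periodic kernel.
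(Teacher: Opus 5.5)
Your proposal is correct and follows essentially the paper's route: let $x\to z(t,\alpha)$ from inside $\Omega_{1}(t)$ in \eqref{e9.bs}, treat $c$, $w$ and the bulk integral as continuous up to $\Gamma_{t}$, and obtain the half-residue and the principal value from the Birkhoff--Rott jump relation along the normal $z(t,\alpha)+\varepsilon\nu$, with the $+$ sign because $n=-(\partial_{\alpha}z)^{\bot}$ points into $\Omega_{1}(t)$. The only difference is that you sketch the proof of that jump relation (near/far split, tangent-line model, flat-sheet sign check) and of the bulk continuity, where the paper cites \cite[Proposition 2.1]{AB} and simply asserts continuity; your remark on non-tangential limits is unnecessary, since $u_{1}\in C^{3}(\overline{\Omega_{1}(t)})$ and the continuity of the other terms already make the limit along one path sufficient.
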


\begin{proof}
Fix $t\in[0,T]$ and drop it from the notation. By the structural hypothesis \eqref{e9.bs},
\begin{equation}\label{e18.bs}
\begin{aligned}
u_{1}(x)&=c+w(x)+\frac{1}{2\pi}\,\mathrm{p.v.}\!\int_{\mathbb{R}}\frac{(x-z(\beta))^{\bot}}{|x-z(\beta)|^{2}}\omega_{\Gamma}(\beta)\,d\beta\\
&\quad+\frac{1}{2\pi}\int_{\Omega_{2}}\frac{(x-y)^{\bot}}{|x-y|^{2}}\omega(y)\,dy
\qquad\text{for }x\in\Omega_{1}.
\end{aligned}
\end{equation}
Define
\[
U_{1}(x)=\frac{1}{2\pi}\,\mathrm{p.v.}\!\int_{\mathbb{R}}\frac{(x-z(\beta))^{\bot}}{|x-z(\beta)|^{2}}\omega_{\Gamma}(\beta)d\beta.
\]
For $x$ at a positive distance from $\Gamma$ the integrand is smooth; the principal value is
required only in order that the far-field tail converge, and it produces a $C^{4}$ dependence on
$x$, the differentiation being justified by the smoothness of the solution assumed in Definition
\ref{adm}. Hence $U_{1}\in C^{4}(\Omega_{1};\mathbb{R}^{2})$, and by direct differentiation
\[
\text{div} \ U_{1}=0 \quad \text{and} \quad \text{curl} \ U_{1}=0 \quad \text{in} \ \Omega_{1}.
\]
Since $z$ is a regular $C^{4}$ curve and $|\partial_{\alpha}z|\geqslant1/A$, there is $\varepsilon_{0}>0$ such that
\[
z(\alpha)-\varepsilon[z_{\alpha}(\alpha)]^{\bot}\in\Omega_{1} \ \text{for any} \ \alpha\in\mathbb{R} \ \text{and} \ \varepsilon\in(0,\varepsilon_{0}).
\]
By the classical jump relation for the Birkhoff--Rott integral (see \cite[Proposition 2.1]{AB}),
\[
\lim_{\varepsilon\to 0}U_{1}(z(\alpha)-\varepsilon[z_{\alpha}(\alpha)]^{\bot})=\frac{\omega_{\Gamma}(\alpha)}{2}\frac{z_{\alpha}(\alpha)}{|z_{\alpha}(\alpha)|^{2}}+\frac{1}{2\pi}\,\mathrm{p.v.}\!\int_{\mathbb{R}}\frac{(z(\alpha)-z(\beta))^{\bot}}{|z(\alpha)-z(\beta)|^{2}}\omega_{\Gamma}(\beta)d\beta
\]
uniformly for any $\alpha\in\mathbb{R}$. The harmonic part $w$ tends to $w(z(\alpha))$: by
\eqref{e9.norm} it is bounded with gradient bounded by $A$ on the whole plane, hence uniformly
continuous there and, in particular, it extends continuously to $\overline{\Omega_{1}}$. Letting $x\to z(\alpha)$ from
inside $\Omega_{1}$ in \eqref{e18.bs}, and using that the bulk term is continuous across $\Gamma$,
\[
\begin{aligned}
v_{1}(\alpha)=&\lim_{x\to z(\alpha)}u_{1}(x) \\
=&\,c+w(z(\alpha))+\frac{\omega_{\Gamma}(\alpha)}{2}\frac{z_{\alpha}(\alpha)}{|z_{\alpha}(\alpha)|^{2}}+\frac{1}{2\pi}\,\mathrm{p.v.}\!\int_{\mathbb{R}}\frac{(z(\alpha)-z(\beta))^{\bot}}{|z(\alpha)-z(\beta)|^{2}}\omega_{\Gamma}(\beta)d\beta \\
&+\frac{1}{2\pi}\int_{\Omega_{2}}\frac{(z(\alpha)-y)^{\bot}}{|z(\alpha)-y|^{2}}\omega(y)dy,
\end{aligned}
\]
which is \eqref{e18}.
\end{proof}

\begin{remark}
The principal value in the second term of \eqref{e18} cannot be dispensed with: by the periodicity
$z(t,\beta+2\pi)=z(t,\beta)+(2\pi,0)$ the second component of the kernel decays only like
$|\alpha-\beta|^{-1}$ as $|\beta|\to\infty$, so the corresponding integral is merely conditionally
convergent, whereas the first component is integrable because $|z_{2}(t,\beta)|$ is bounded by
\eqref{e8}.
\end{remark}

\begin{remark}\label{rm:constant}
The terms $c(t)$ and $w(t)$ of \eqref{e9.norm} enter the proof of Theorem \ref{thm} only
through \eqref{e18}, and neither plays any role in the estimates, for the following reasons. The
Euler velocity is used below through the differences $V_{1}(0)-V_{2}(0)$ of Step 1 and through
the interface quantities $U_{1},U_{2},W_{1},W_{2}$ of \eqref{V.decomp}, and \eqref{V.decomp}
itself is obtained from the Biot--Savart part of \eqref{e18}: writing
\[
V_{n}(\mu)=Q^{-1}\big(c(t)+w(t,z(t,\alpha_{n}+\beta_{n}(\mu)))\big)+U_{n}(\mu)+W_{n}(\mu),
\]
the constant $Q^{-1}(c(t))$ cancels from the difference $V_{1}(0)-V_{2}(0)$ of Step 1, while the
harmonic part $w$ contributes
\[
\begin{aligned}
\big|\big(Q^{-1}w(t,z(t,\alpha_{1}))-Q^{-1}w(t,z(t,\alpha_{2}))\big)\cdot e_{2}\big|
&\leqslant\big|w(t,z(t,\alpha_{1}))-w(t,z(t,\alpha_{2}))\big|\\
&\leqslant \|\nabla w(t)\|_{L^{\infty}(\mathbb{R}^{2})}\,|z(t,\alpha_{1})-z(t,\alpha_{2})|
\leqslant A\tilde d,
\end{aligned}
\]
where the first inequality uses $|Q^{-1}v|=|v|$ for the orthogonal $Q$ and the second is the mean
value theorem along the segment joining the two points, together with the gradient bound of
\eqref{e9.norm}. That bound is available here even though the segment lies in $\Omega_{2}(t)$,
because the hypothesis is imposed on the whole plane rather than only inside $\Omega_{1}(t)$. The
resulting $O(A\tilde d)$ term is
absorbed in the $D$-Gronwall estimate \eqref{D.gronwall.a.e}. The identities \eqref{Ue1.pm} and \eqref{Ue2.disp},
and the conclusion $|G(0)|\lesssim\epsilon\log\frac1\epsilon$ of Proposition \ref{prop:key} (here
$\epsilon$ is the width parameter of the localized analysis, fixed in
Section \ref{sec:main}), concern
$U_{n}$ and $W_{n}$ only and are therefore unaffected by $c$ and $w$. In item (d) of Step 2 the
$C^{1}$-bound $\|U_{n}\|_{C^{1}((-2\sqrt{\epsilon},2\sqrt{\epsilon}))}\lesssim_{A}1$ is checked by
adding the contribution of $w$, which is bounded by $A$ together with its $\mu$-derivative, since
$\|\nabla w\|_{L^{\infty}(\mathbb{R}^{2})}\leqslant A$ by \eqref{e9.norm} and
$\|\beta_{n}\|_{C^{4}}\lesssim1$. The harmonic part does not enter $F^{\pm}$ or $G$ at all:
these are defined in \eqref{FG.def} from the localized velocities $U_{n}$, and \eqref{V.decomp}
splits $V_{n}=Q^{-1}(c)+Q^{-1}w_{n}+U_{n}+W_{n}$, so that $Q^{-1}(c)+Q^{-1}w_{n}$ is precisely
the summand that is dropped when passing from $V_{n}$ to $U_{n}$.
\end{remark}

We estimate $\omega_{\Gamma}$ and $\omega$.

The next lemma records a rigidity property of the magnetic trace: it is determined by its initial
value and by the geometry of the interface, the induction equation entering only through this
identity.

\begin{lemma}\label{B.trace}
Let $(z,u_{1},u_{2},B,p_{1},p_{2})$ be an $(A,\rho_{1},\rho_{2})$-admissible solution and let
$H(t,\alpha)=B(t,z(t,\alpha))$. Then
\begin{equation}\label{B.trace.id}
H(t,\alpha)=\lambda_{0}(\alpha)\,\partial_{\alpha}z(t,\alpha),
\qquad
\lambda_{0}(\alpha):=\frac{H(0,\alpha)\cdot\partial_{\alpha}z(0,\alpha)}
{|\partial_{\alpha}z(0,\alpha)|^{2}},
\end{equation}
for every $(t,\alpha)\in[0,T]\times\mathbb{R}$. In particular
\begin{equation}\label{B.trace.bound}
\sup_{t\in[0,T]}\|H(t,\cdot)\|_{C^{2}(\mathbb{R})}
\leqslant C(A)\big(1+\|H(0,\cdot)\|_{C^{2}(\mathbb{R})}\big).
\end{equation}
\end{lemma}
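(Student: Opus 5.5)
The plan is to derive, from the induction equation and the no-slip condition, a linear ODE in $t$ for $H$ at each fixed $\alpha$. The key structural fact is that $\partial_{\alpha}z$ satisfies the same equation.

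\emph{Step 1: the evolution of $H$.} Since $\partial_{t}z=v_{2}=u_{2}(t,z)$ by \eqref{e6}, the chain rule and the induction equation give
\[
\partial_{t}H=(\partial_{t}B)(t,z)+(v_{2}\cdot\nabla)B(t,z)=(B\cdot\nabla)u_{2}(t,z)=\nabla u_{2}(t,z)\,H .
\]
Here the smoothness of $B,u_{2}$ up to $\Gamma_{t}$ is used, with $B\in C^{3}(\overline{\Omega_{2}(t)})$. Differentiating $\partial_{t}z=u_{2}(t,z)$ in $\alpha$ gives
\[
\partial_{t}\partial_{\alpha}z=\nabla u_{2}(t,z)\,\partial_{\alpha}z .
\]
So $X:=\partial_{\alpha}z$ and $H$ both solve the linear system $\dot Y=M(t)Y$, where $M(t)=\nabla u_{2}(t,z(t,\alpha))$ depends continuously on $t$ for fixed $\alpha$.

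\emph{Step 2: uniqueness.} By \eqref{e6.m} we have $H(0,\alpha)\parallel\partial_{\alpha}z(0,\alpha)$. Since $|\partial_{\alpha}z|\geqslant 1/A$, this means $H(0,\alpha)=\lambda_{0}(\alpha)\partial_{\alpha}z(0,\alpha)$ with $\lambda_{0}$ as in \eqref{B.trace.id}. The function $Y:=H-\lambda_{0}(\alpha)\partial_{\alpha}z$ solves the same linear ODE, because $\lambda_{0}$ is independent of $t$, and it has $Y(0)=0$. Uniqueness for linear ODEs with continuous coefficients (Gronwall) then gives $Y\equiv0$ on $[0,T]$, which is \eqref{B.trace.id}.

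The main point to check is that $M(t)$ is genuinely available, i.e.\ that $\nabla u_{2}$ is continuous up to the interface along the trajectory $t\mapsto z(t,\alpha)$. This is the qualitative smoothness built into Definition \ref{adm}. No quantitative bound on $\nabla u_{2}$ is needed, which is exactly why the identity is useful when no interior derivative of $u_{2}$ is assumed. An alternative route that avoids this issue is to note that the tangency \eqref{e6.m} is itself preserved and argue with the scalar $H\cdot\partial_{\alpha}z^{\perp}$; I would use the ODE uniqueness argument above.

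\emph{Step 3: the bound \eqref{B.trace.bound}.} Write $\lambda_{0}=H(0)\cdot\partial_{\alpha}z(0)\,|\partial_{\alpha}z(0)|^{-2}$. By \eqref{e8}, $\|\partial_{\alpha}z(0)\|_{C^{3}}\leqslant A+1$ and $|\partial_{\alpha}z(0)|\geqslant 1/A$, so the map $\xi\mapsto|\xi|^{-2}$ is smooth on the relevant range. The Leibniz rule then gives
\[
\|\lambda_{0}\|_{C^{2}}\leqslant C(A)\,\|H(0)\|_{C^{2}} .
\]
Multiplying by $\partial_{\alpha}z(t)$, whose $C^{2}$ norm is at most $C(A)$ uniformly in $t$ by \eqref{e8}, yields
\[
\|H(t)\|_{C^{2}}\leqslant C(A)\|H(0)\|_{C^{2}}\leqslant C(A)\bigl(1+\|H(0)\|_{C^{2}}\bigr).
\]
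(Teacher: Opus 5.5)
Your proof is correct. It rests on the same two identities as the paper's, $\partial_t H=\nabla u_2(t,z)\,H$ and $\partial_t\partial_\alpha z=\nabla u_2(t,z)\,\partial_\alpha z$, but it uses the tangency condition differently.

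\begin{itemize}
\item The paper invokes \eqref{e6.m} at every time $t$. This lets it write $H=\lambda\,\partial_\alpha z$ with $\lambda=H\cdot\partial_\alpha z/|\partial_\alpha z|^{2}$ for all $t$. It then compares $\partial_t H=\lambda\,\partial_\alpha v_2$ with $\partial_t(\lambda\partial_\alpha z)=(\partial_t\lambda)\partial_\alpha z+\lambda\,\partial_\alpha v_2$, which gives $\partial_t\lambda=0$ pointwise. No uniqueness theorem is involved; only the time-differentiability of $\lambda$ is needed, and that follows from $|\partial_\alpha z|\geqslant 1/A$.
\item You use tangency only at $t=0$. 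You then conclude by uniqueness for the linear system $\dot Y=M(t)Y$, which needs the qualitative boundedness of $M(t)=\nabla u_2(t,z(t,\alpha))$ on $[0,T]$ for fixed $\alpha$.
\end{itemize}

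What your route gains is that it shows tangency at positive times is a consequence, not a hypothesis. The induction equation, the no-slip condition and initial tangency alone force $H\parallel\partial_\alpha z$ for all $t$ (the frozen-in property). So \eqref{e6.m} would only need to be imposed at $t=0$ for this lemma. The paper's route is a shorter pointwise computation, but it relies on \eqref{e6.m} holding as a boundary condition throughout $[0,T]$.

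Your derivation of \eqref{B.trace.bound} from \eqref{B.trace.id} is the same as the paper's.
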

\begin{proof}
By \eqref{e6.m} the field is tangent to $\Gamma_{t}$ at every time, so
$H(t,\alpha)=\lambda(t,\alpha)\partial_{\alpha}z(t,\alpha)$ with
$\lambda=H\cdot\partial_{\alpha}z/|\partial_{\alpha}z|^{2}$. Since $\partial_{t}z=v_{2}$ by
\eqref{e6}, the chain rule gives
$\partial_{t}H=\big(\partial_{t}B+(v_{2}\cdot\nabla)B\big)_{|x=z(t,\alpha)}$, and the induction
equation of \eqref{mhd}, evaluated at $x=z(t,\alpha)$ where $u_{2}=v_{2}$, gives
\[
\partial_{t}H=(H\cdot\nabla)u_{2}=\lambda\,(\partial_{\alpha}z\cdot\nabla)u_{2}
=\lambda\,\partial_{\alpha}v_{2}.
\]
On the other hand $\partial_{t}\partial_{\alpha}z=\partial_{\alpha}v_{2}$, so that
$\partial_{t}(\lambda\partial_{\alpha}z)=(\partial_{t}\lambda)\partial_{\alpha}z
+\lambda\partial_{\alpha}v_{2}$; comparing the two expressions for $\partial_{t}H$ yields
$(\partial_{t}\lambda)\partial_{\alpha}z=0$, and $\partial_{\alpha}z\neq0$ by \eqref{e8}, hence
$\partial_{t}\lambda=0$. Therefore $\lambda(t,\alpha)=\lambda(0,\alpha)=\lambda_{0}(\alpha)$,
which is \eqref{B.trace.id}.

For \eqref{B.trace.bound}, the product rule and
$\|\partial_{\alpha}z(t,\cdot)\|_{C^{2}}\leqslant1+A$ by \eqref{e8} give
$\|H(t,\cdot)\|_{C^{2}}\leqslant C\|\lambda_{0}\|_{C^{2}}(1+A)$, while
$\|\lambda_{0}\|_{C^{2}}\leqslant C(A)\|H(0,\cdot)\|_{C^{2}}$ because
$|\partial_{\alpha}z(0,\alpha)|\geqslant1/A$ and $\|z(0,\cdot)\|_{C^{4}}\leqslant A$.
\end{proof}

The following identity shows that the Lorentz term is of the same
(total-derivative) nature as the pressure and $\tfrac12|v|^{2}$ terms.

\begin{lemma}\label{MHD:lorentz}
Let $B$ be the magnetic field of the MHD fluid and $H(t,\alpha)=B(t,z(t,\alpha))$. Assume
the interface condition $H\cdot n=0$, i.e., \eqref{e6.m}. Then, on $\Gamma_{t}$,
\begin{equation}\label{lorentz.identity}
(B\cdot\nabla)B\cdot\partial_{\alpha}z=\frac{1}{2}\partial_{\alpha}|H|^{2}.
\end{equation}
In particular, under \eqref{e9} and Lemma \ref{B.trace}, the trace bound of \eqref{B.trace.bound} being absorbed into $A$ as in Remark \ref{rm:normalize},
\begin{equation}\label{lorentz.bound}
\Big\|(B\cdot\nabla)B\cdot\partial_{\alpha}z\Big\|_{L^{\infty}([0,T]\times\mathbb{R})}
=\frac{1}{2}\Big\|\partial_{\alpha}|H|^{2}\Big\|_{L^{\infty}([0,T]\times\mathbb{R})}\leqslant A^{2}.
\end{equation}
\end{lemma}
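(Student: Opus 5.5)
The plan is to use the two-dimensional vector identity
\[
(B\cdot\nabla)B=\tfrac12\nabla|B|^{2}+(\mathrm{curl}\,B)\,B^{\perp},
\]
valid pointwise in $\overline{\Omega_{2}(t)}$ for $B\in C^{3}$. I will verify it componentwise. With $\mathrm{curl}\,B=\partial_{1}B_{2}-\partial_{2}B_{1}$ and $B^{\perp}=(-B_{2},B_{1})$, the first component of the left side is $B_{1}\partial_{1}B_{1}+B_{2}\partial_{2}B_{1}$. The right side gives $B_{1}\partial_{1}B_{1}+B_{2}\partial_{1}B_{2}-B_{2}(\partial_{1}B_{2}-\partial_{2}B_{1})$, which is the same. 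The second component is checked in the same way.

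Next, I take the dot product with $\partial_{\alpha}z$ and evaluate at $x=z(t,\alpha)$. For the gradient term, the chain rule gives $\nabla|B|^{2}(t,z(t,\alpha))\cdot\partial_{\alpha}z=\partial_{\alpha}\big[|B|^{2}(t,z(t,\alpha))\big]=\partial_{\alpha}|H|^{2}$. This uses only that $|B|^{2}$ is $C^{1}$ up to $\Gamma_{t}$, i.e.\ one-sided derivatives along the boundary, which is legitimate since $B\in C^{3}(\overline{\Omega_{2}(t)})$.

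For the curl term, I use \eqref{e6.m}: $H\cdot n=0$ with $n=-(\partial_{\alpha}z)^{\perp}$. Since $v^{\perp}\cdot w=-v\cdot w^{\perp}$, we get $H^{\perp}\cdot\partial_{\alpha}z=-H\cdot(\partial_{\alpha}z)^{\perp}=H\cdot n=0$. Hence $(\mathrm{curl}\,B)\,H^{\perp}\cdot\partial_{\alpha}z=0$, and \eqref{lorentz.identity} follows.

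For the bound \eqref{lorentz.bound}, the equality of norms is immediate from the identity. By Lemma \ref{B.trace} together with \eqref{e9}, $\sup_{t}\|H(t)\|_{C^{2}}\leqslant C(A)(1+A)$. Then $\tfrac12|\partial_{\alpha}|H|^{2}|=|H\cdot\partial_{\alpha}H|\leqslant\|H\|_{C^{1}}^{2}$, which is bounded by $C(A)^{2}(1+A)^{2}$. After the enlargement of $A$ in Remark \ref{rm:normalize}, so that the enlarged constant dominates $\sup_{t}\|H(t)\|_{C^{1}}$, this is at most $A^{2}$.

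There is no genuine obstacle here. The only point needing care is the bookkeeping of this enlargement, namely that the inequality is read after replacing $A$ by the enlarged constant. A lesser point is checking the sign convention relating $H^{\perp}\cdot\partial_{\alpha}z$ to $H\cdot n$, but the term vanishes whichever sign it carries.
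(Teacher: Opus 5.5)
Your proposal is correct and follows the paper's proof: the same two-dimensional identity $(B\cdot\nabla)B=\tfrac12\nabla|B|^{2}+(\mathrm{curl}\,B)B^{\perp}$, the chain rule for the gradient term, tangency killing the curl term, and the $C^{1}$ trace bound from \eqref{e9} and Lemma \ref{B.trace} once $A$ has been enlarged as in Remark \ref{rm:normalize}. The only cosmetic difference is that you check $H^{\perp}\cdot\partial_{\alpha}z=H\cdot n=0$ directly, whereas the paper first writes $H=\lambda\,\partial_{\alpha}z$ and then uses $(\partial_{\alpha}z)^{\perp}\cdot\partial_{\alpha}z=0$.
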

\begin{proof}
Since $n=-[(\partial_{\alpha}z)]^{\bot}=(\partial_{\alpha}z_{2},-\partial_{\alpha}z_{1})$,
the condition $H\cdot n=0$ amounts to $H_{1}(\partial_{\alpha}z_{2})-H_{2}(\partial_{\alpha}z_{1})=0$,
i.e., $B$ is tangent to the interface: $B|_{\Gamma_{t}}=\lambda\,\partial_{\alpha}z$ for some
scalar $\lambda$.

We use the two-dimensional identity
\begin{equation}\label{lorentz.vector}
(B\cdot\nabla)B=\frac{1}{2}\nabla|B|^{2}+(\mathrm{curl}\,B)\,B^{\bot},\qquad B^{\bot}:=(-B_{2},B_{1}),
\end{equation}
where $\mathrm{curl}\,B=\partial_{1}B_{2}-\partial_{2}B_{1}$. Taking the inner product of
\eqref{lorentz.vector} with $\partial_{\alpha}z$ and evaluating on $\Gamma_{t}$, the first
term gives, by the chain rule,
\[
\frac{1}{2}(\nabla|B|^{2})\cdot\partial_{\alpha}z
=\frac{1}{2}\partial_{\alpha}\big(|B(t,z(t,\alpha))|^{2}\big)=\frac{1}{2}\partial_{\alpha}|H|^{2}.
\]
For the second term, since $B|_{\Gamma_{t}}=\lambda\,\partial_{\alpha}z$ we have
$B^{\bot}|_{\Gamma_{t}}=\lambda\,(\partial_{\alpha}z)^{\bot}$, and
$(\partial_{\alpha}z)^{\bot}\cdot\partial_{\alpha}z=0$; hence
\[
(\mathrm{curl}\,B)\,B^{\bot}\cdot\partial_{\alpha}z
=(\mathrm{curl}\,B)\,\lambda\,\big[(\partial_{\alpha}z)^{\bot}\cdot\partial_{\alpha}z\big]=0.
\]
This proves \eqref{lorentz.identity}. Finally \eqref{lorentz.bound} follows from
$\|H\|_{L^{\infty}}+\|\partial_{\alpha}H\|_{L^{\infty}}\leqslant A$, the $C^{1}$-bound furnished by
\eqref{e9} and Lemma \ref{B.trace}.
\end{proof}

\begin{remark}\label{rm:no.induction}
Only the trace of the magnetic field enters the estimates. The Lorentz force reaches the interface
equations through the identity of Lemma \ref{MHD:lorentz}, which requires only the tangency
\eqref{e6.m}, and the magnetic pressure of Proposition \ref{pressure.jump} involves only
$|H|^{2}$; the constraint $\mathrm{div}\,B=0$ is used nowhere, and the induction equation only in
Lemma \ref{B.trace}, where it is what makes the trace bound \eqref{B.trace.bound} follow from the
value of $H$ at time $0$. Once that bound is available, the estimates of Sections \ref{sec:main}
and \ref{sec:singular} are independent of the magnetic evolution: they apply verbatim to any
tangent field whose trace obeys \eqref{B.trace.bound}, whether or not it solves the induction
equation. What the no-splash mechanism depends on is not the magnetic evolution but the
total-derivative structure that the tangency confers on the Lorentz term.
\end{remark}

The next consequence of the tangency condition is the point at which the magnetic field enters
the estimates of this paper. Because $H$ is tangent to $\Gamma_{t}$, the Lorentz force exerts no
tangential traction, and its normal component is the isotropic magnetic pressure
$\tfrac12|H|^{2}$; the stress balance therefore determines the pressure jump exactly, with the
magnetic pressure as one of its terms.

\begin{proposition}\label{pressure.jump}
Let $(z,u_{1},u_{2},B,p_{1},p_{2})$ be an $(A,\rho_{1},\rho_{2})$-admissible solution, let
$\tau=\partial_{\alpha}z/|\partial_{\alpha}z|$ and $\nu=n/|n|$ be the unit tangent and the unit
normal of $\Gamma_{t}$, and let $K$ be the curvature of \eqref{e6}. Then, on $\Gamma_{t}$,
\begin{equation}\label{pressure.jump.eq}
q_{2}-q_{1}=\sigma K-\frac{1}{2}|H|^{2}-2\,\partial_{\tau}(v_{2}\cdot\tau)-2K\,(v_{2}\cdot\nu).
\end{equation}
\end{proposition}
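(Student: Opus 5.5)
The plan is to take the normal component of the stress balance, the third condition of \eqref{e6}, and then to rewrite the one remaining viscous quantity in terms of tangential derivatives of the trace $v_{2}$, using incompressibility and the Frenet relations $\partial_{\tau}\tau=-K\nu$, $\partial_{\tau}\nu=K\tau$.

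\emph{Step 1 (normal balance).} Divide the third condition of \eqref{e6} by $|n|$, so that $n$ becomes $\nu$, and take the inner product with $\nu$. The isotropic part gives $q_{2}-q_{1}+\tfrac12|H|^{2}$. The Maxwell term gives $(H\cdot\nu)(H\cdot\nu)$, which vanishes by the tangency \eqref{e6.m}. The right-hand side gives $\sigma K$. The result is
\[
q_{2}-q_{1}+\tfrac12|H|^{2}-2\,\nu\cdot D(u_{2})\nu=\sigma K .
\]
So everything reduces to identifying $\nu\cdot D(u_{2})\nu=\nu\cdot(\nabla u_{2})\nu$, where $u_{2}$ is evaluated at $z(t,\alpha)$.

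\emph{Step 2 (trace-free reduction).} In the orthonormal frame $(\tau,\nu)$, the trace of $\nabla u_{2}$ equals $\tau\cdot(\nabla u_{2})\tau+\nu\cdot(\nabla u_{2})\nu$. This trace is $\mathrm{div}\,u_{2}=0$ by \eqref{e4}, which holds up to $\Gamma_{t}$ by the $C^{3}$ regularity of $u_{2}$ on $\overline{\Omega_{2}(t)}$. Hence
\[
\nu\cdot D(u_{2})\nu=-\tau\cdot(\nabla u_{2})\tau .
\]

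\emph{Step 3 (tangential derivative).} By the chain rule, $(\nabla u_{2})\tau=\partial_{\tau}v_{2}$, where $\partial_{\tau}=a^{-1}\partial_{\alpha}$ is the arclength derivative. Therefore
\[
\tau\cdot\partial_{\tau}v_{2}=\partial_{\tau}(v_{2}\cdot\tau)-v_{2}\cdot\partial_{\tau}\tau=\partial_{\tau}(v_{2}\cdot\tau)+K\,(v_{2}\cdot\nu).
\]
Substituting this into Step 1 gives \eqref{pressure.jump.eq}:
\[
q_{2}-q_{1}=\sigma K-\tfrac12|H|^{2}-2\partial_{\tau}(v_{2}\cdot\tau)-2K(v_{2}\cdot\nu).
\]

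The only delicate point is consistency of signs and orientation. One must check that $n=-(\partial_{\alpha}z)^{\perp}$ and the convention $\partial_{\tau}\tau=-K\nu$ agree with the curvature formula for $K$ in \eqref{e6}. One must also confirm that the ordering $(T_{1}-T_{2})n=\sigma Kn$ produces the sign $+2\,\nu\cdot D\nu$ on the right, which becomes $-2(\partial_{\tau}(v_{2}\cdot\tau)+K\,v_{2}\cdot\nu)$. I would verify this on the static case (the Young--Laplace law) and on a rigid rotation before finalizing.
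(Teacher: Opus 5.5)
Your proposal is correct and follows essentially the paper's route: take the normal component of the stress balance (tangency kills $H\otimes H$), then use incompressibility in the $(\tau,\nu)$ frame together with $\partial_{\tau}\tau=-K\nu$. Your trace-invariance plus chain-rule step is the coordinate-free form of the paper's normal-coordinate formula $\mathrm{div}\,X=\partial_{\tau}X^{\tau}+\partial_{\nu}X^{\nu}+KX^{\nu}$ at $s=0$, and it has the mild advantage of not requiring $\nu$ to be extended off $\Gamma_{t}$. The sign checks you flag are already settled: the third condition of \eqref{e6} is explicit, and $\partial_{\tau}\tau\cdot\nu=a^{-3}(\partial_{\alpha}^{2}z_{1}\,\partial_{\alpha}z_{2}-\partial_{\alpha}^{2}z_{2}\,\partial_{\alpha}z_{1})=-K$ follows directly from the formulas for $n$ and $K$.
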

\begin{proof}
Taking the normal component of the stress balance \eqref{e6} and using \eqref{e6.m} gives
\[
q_{2}-q_{1}+\tfrac12|H|^{2}-2\,\nu\cdot D(u_{2})\nu=\sigma K.
\]
In the normal coordinates $(t,s)\mapsto\gamma(t)+s\nu(t)$, where $\gamma$ is the arclength
parameterization of $\Gamma_{t}$, one has $\partial_{\nu}\nu=0$ and hence
$\nu\cdot D(u_{2})\nu=\partial_{\nu}(u_{2}\cdot\nu)$ at $s=0$. Write
$u_{2}=u^{\tau}\tau+u^{\nu}\nu$ on $\Gamma_{t}$, so that $u^{\tau}=v_{2}\cdot\tau$ and
$u^{\nu}=v_{2}\cdot\nu$. The frame relations
are $\partial_{\tau}\tau=-K\nu$ and $\partial_{\tau}\nu=K\tau$. In those coordinates
the metric is $(1+sK)^{2}dt^{2}+ds^{2}$, and the standard formula for the divergence in
orthogonal coordinates gives, at $s=0$,
\[
\mathrm{div}\,X=\partial_{\tau}X^{\tau}+\partial_{\nu}X^{\nu}+KX^{\nu}
\]
for every field $X=X^{\tau}\tau+X^{\nu}\nu$. Applying this to the divergence-free field $u_{2}$
gives
\[
\partial_{\nu}u^{\nu}=-\partial_{\tau}u^{\tau}-Ku^{\nu}
=-\partial_{\tau}(v_{2}\cdot\tau)-K\,(v_{2}\cdot\nu),
\]
and \eqref{pressure.jump.eq} follows. 
\end{proof}

\begin{lemma}\label{nu.omega}
Let $(z,u_{1},u_{2},B,p_{1},p_{2})$ be an $(A,\rho_{1},\rho_{2})$-admissible solution, let
$\tau=\partial_{\alpha}z/|\partial_{\alpha}z|$ and $\nu=n/|n|$ be the unit tangent and the unit
normal of \eqref{e6}, and let $\omega=\mathrm{curl}\,u_{2}$. Then, on $\Gamma_{t}$,
\begin{equation}\label{nu.omega.eq}
\begin{aligned}
\partial_{\nu}\omega
={}&\rho_{2}\,\partial_{t}(v_{2}\cdot\tau)-\rho_{2}\,(v_{2}\cdot\partial_{t}\tau)
+\partial_{\tau}q_{1}+\sigma\,\partial_{\tau}K-\partial_{\tau}|H|^{2}\\
&-2\,\partial_{\tau}^{2}(v_{2}\cdot\tau)-2\,\partial_{\tau}\big[K\,(v_{2}\cdot\nu)\big],
\end{aligned}
\end{equation}
and the identity $(\Delta u_{2})\cdot\tau=\partial_{\nu}\omega$.
\end{lemma}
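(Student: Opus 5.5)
The identity $(\Delta u_{2})\cdot\tau=\partial_{\nu}\omega$ is pure vector calculus, so I would prove it first. Formula \eqref{nu.omega.eq} should then come from taking the tangential component of the momentum equation \eqref{mhd.2} on $\Gamma_{t}$, and eliminating the fluid pressure with Proposition \ref{pressure.jump} and the Lorentz term with Lemma \ref{MHD:lorentz}. All quantities are traces of $C^{3}$ fields up to $\overline{\Omega_{2}(t)}$, so every derivative below makes sense on $\Gamma_{t}$.

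\emph{Step 1: the identity.} Since $\mathrm{div}\,u_{2}=0$ by \eqref{e4}, in two dimensions one has $\Delta u_{2}=\nabla\,\mathrm{div}\,u_{2}+\nabla^{\bot}\omega=\nabla^{\bot}\omega$, with $\nabla^{\bot}=(-\partial_{2},\partial_{1})$. This is checked componentwise: $\Delta u_{2}^{1}=\partial_{1}(\partial_{1}u_{2}^{1}+\partial_{2}u_{2}^{2})-\partial_{2}\omega$, and similarly for the second component. The normal is $\nu=n/|n|=(\partial_{\alpha}z_{2},-\partial_{\alpha}z_{1})/a=(\tau_{2},-\tau_{1})$, so
\[
\nabla^{\bot}\omega\cdot\tau=-\tau_{1}\partial_{2}\omega+\tau_{2}\partial_{1}\omega=\nabla\omega\cdot\nu=\partial_{\nu}\omega .
\]
This gives $(\Delta u_{2})\cdot\tau=\partial_{\nu}\omega$ on $\Gamma_{t}$, the derivatives of $\omega$ being one-sided limits from $\Omega_{2}(t)$.

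\emph{Step 2: tangential momentum balance.} I will take the inner product of \eqref{mhd.2} with $\tau$ and evaluate at $x=z(t,\alpha)$. Because $\partial_{t}z=v_{2}$ by \eqref{e6}, the chain rule gives $(\partial_{t}u_{2}+(u_{2}\cdot\nabla)u_{2})(t,z(t,\alpha))=\partial_{t}v_{2}$. Hence $\rho_{2}\,\partial_{t}v_{2}\cdot\tau=\rho_{2}\,\partial_{t}(v_{2}\cdot\tau)-\rho_{2}\,v_{2}\cdot\partial_{t}\tau$. The pressure term becomes $\nabla p_{2}\cdot\tau=\partial_{\tau}q_{2}$. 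By Lemma \ref{MHD:lorentz}, divided by $a$, the Lorentz term is $(B\cdot\nabla)B\cdot\tau=\tfrac12\partial_{\tau}|H|^{2}$. Using Step 1, this yields
\[
\partial_{\nu}\omega=\rho_{2}\,\partial_{t}(v_{2}\cdot\tau)-\rho_{2}\,v_{2}\cdot\partial_{t}\tau+\partial_{\tau}q_{2}-\tfrac12\partial_{\tau}|H|^{2}.
\]

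\emph{Step 3: elimination of $q_{2}$.} I will differentiate \eqref{pressure.jump.eq} along $\Gamma_{t}$:
\[
\partial_{\tau}q_{2}=\partial_{\tau}q_{1}+\sigma\partial_{\tau}K-\tfrac12\partial_{\tau}|H|^{2}-2\partial_{\tau}^{2}(v_{2}\cdot\tau)-2\partial_{\tau}[K(v_{2}\cdot\nu)].
\]
Inserting this into Step 2 combines the two magnetic contributions into $-\partial_{\tau}|H|^{2}$, which is exactly \eqref{nu.omega.eq}. This differentiation is legitimate because $K$ is $C^{2}$ in $\alpha$ by \eqref{e8}, and $H$ and $v_{2}$ are $C^{2}$ traces.

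The main obstacle I expect is only bookkeeping of orientation and signs. First, the sign of $\nu$ relative to $\nabla^{\bot}$ in Step 1 must agree with the convention $n=-[\partial_{\alpha}z]^{\bot}$. Second, the sign with which the magnetic pressure enters Proposition \ref{pressure.jump} must be tracked, since the two $\tfrac12\partial_{\tau}|H|^{2}$ contributions must add rather than cancel. I would verify both against the flat steady example of Remark \ref{rm:example}, where every term vanishes consistently.
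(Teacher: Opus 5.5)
Your proposal is correct and follows the paper's proof. The paper takes the tangential component of \eqref{mhd.2} along the material interface and substitutes three facts: $\Delta u_{2}=\nabla^{\perp}\omega$, the tangency identity $(B\cdot\nabla)B\cdot\tau=\tfrac12\partial_{\tau}|H|^{2}$, and $\partial_{t}v_{2}\cdot\tau=\partial_{t}(v_{2}\cdot\tau)-v_{2}\cdot\partial_{t}\tau$; it then eliminates $\partial_{\tau}q_{2}$ with the differentiated pressure jump of Proposition \ref{pressure.jump}, so that the two magnetic contributions combine into $-\partial_{\tau}|H|^{2}$, and your sign checks agree with its conventions.
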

\begin{proof}
\emph{Step 1: Prandtl's relation.} By the no-slip condition $\partial_{t}z=v_{2}$ of \eqref{e6}
the curve $t\mapsto z(t,\alpha)$ is a trajectory of $u_{2}$, so that the material derivative of
$u_{2}$ along it is $\partial_{t}v_{2}$:
\[
\big(\partial_{t}u_{2}+(u_{2}\cdot\nabla)u_{2}\big)\big|_{x=z(t,\alpha)}
=\frac{d}{dt}u_{2}(t,z(t,\alpha))=\partial_{t}v_{2}(t,\alpha).
\]
Evaluating the momentum equation \eqref{mhd.2} at $x=z(t,\alpha)$ and taking the tangential
component therefore gives
\begin{equation}\label{prandtl.1}
\rho_{2}\,\partial_{t}v_{2}\cdot\tau
=-\partial_{\tau}q_{2}+(\Delta u_{2})\cdot\tau+(B\cdot\nabla)B\cdot\tau.
\end{equation}
Three identities reduce the right-hand side. First, $\mathrm{div}\,u_{2}=0$ gives
$\Delta u_{2}=\nabla\,\mathrm{div}\,u_{2}+\nabla^{\perp}\mathrm{curl}\,u_{2}
=\nabla^{\perp}\omega$, the sign being the one for which
$\nabla^{\perp}f=(-\partial_{2}f,\partial_{1}f)$; since
$\nu=(z_{\alpha,2},-z_{\alpha,1})/|\partial_{\alpha}z|$ one has
$\nabla^{\perp}\omega\cdot\tau=\partial_{\nu}\omega$, and hence
$(\Delta u_{2})\cdot\tau=\partial_{\nu}\omega$. Second, contracting the identity
\eqref{lorentz.vector} with $\tau$ instead of $\partial_{\alpha}z$ gives
$(B\cdot\nabla)B\cdot\tau=\frac12\partial_{\tau}|B|^{2}+(\mathrm{curl}\,B)(B^{\perp}\cdot\tau)$,
and $B^{\perp}\cdot\tau=0$ on $\Gamma_{t}$ because $B|_{\Gamma_{t}}\parallel\tau$ by
\eqref{e6.m}; hence $(B\cdot\nabla)B\cdot\tau=\frac12\partial_{\tau}|H|^{2}$. Third,
$\partial_{t}v_{2}\cdot\tau=\partial_{t}(v_{2}\cdot\tau)-v_{2}\cdot\partial_{t}\tau$.
Substituting the three into \eqref{prandtl.1} yields
\begin{equation}\label{prandtl.2}
\partial_{\nu}\omega=\rho_{2}\,\partial_{t}(v_{2}\cdot\tau)
-\rho_{2}\,(v_{2}\cdot\partial_{t}\tau)+\partial_{\tau}q_{2}-\frac12\partial_{\tau}|H|^{2}.
\end{equation}

\emph{Step 2: the pressure.} Differentiating the pressure jump of Proposition
\ref{pressure.jump} along $\Gamma_{t}$ gives
\[
\partial_{\tau}q_{2}=\partial_{\tau}q_{1}+\sigma\,\partial_{\tau}K-\frac12\partial_{\tau}|H|^{2}
-2\,\partial_{\tau}^{2}(v_{2}\cdot\tau)-2\,\partial_{\tau}\big[K\,(v_{2}\cdot\nu)\big],
\]
and inserting this into \eqref{prandtl.2} gives \eqref{nu.omega.eq}.
\end{proof}

\begin{lemma}\label{curl}
Let $(z,u_{1},u_{2},B,p_{1},p_{2})$ be an $(A,\rho_{1},\rho_{2})$-admissible solution as in
Theorem \ref{thm} and let $\omega_{\Gamma}$ be defined by \eqref{e16}. Put
\begin{equation}\label{curl.not}
a:=|\partial_{\alpha}z|,\qquad
\tau:=\frac{\partial_{\alpha}z}{a},\qquad
u:=\frac{\omega_{\Gamma}}{a},\qquad
\gamma:=\frac{\rho_{1}-\rho_{2}}{\rho_{2}},\qquad
b:=\frac{v_{1}\cdot\partial_{\alpha}z}{a},
\end{equation}
and
\begin{equation}\label{curl.Theta}
\Theta:=(\Delta u_{2})\cdot\partial_{\alpha}z+(B\cdot\nabla)B\cdot\partial_{\alpha}z.
\end{equation}
Then, on $[0,T]\times\mathbb{R}$,
\begin{equation}\label{omega.eq}
\begin{aligned}
\partial_{t}\omega_{\Gamma}
=&-\gamma\,\partial_{t}(v_{1}\cdot\partial_{\alpha}z)
+\frac{\gamma}{2}\,\partial_{\alpha}|v_{1}|^{2}
-\gamma\,\partial_{\alpha}\Big[\frac{\omega_{\Gamma}\,(v_{1}\cdot\partial_{\alpha}z)}{|\partial_{\alpha}z|^{2}}\Big]
-\frac{1}{2}\,\partial_{\alpha}\Big[\frac{\omega_{\Gamma}^{2}}{|\partial_{\alpha}z|^{2}}\Big]\\
&-\frac{1}{\rho_{2}}\partial_{\alpha}(q_{1}-q_{2})
-\frac{1}{\rho_{2}}\Theta.
\end{aligned}
\end{equation}
Equivalently, $u$ satisfies the
quasilinear parabolic equation
\begin{equation}\label{u.eq}
\partial_{t}u=\frac{2}{\rho_{2}a^{2}}\,u''-c\,u'+d\,u+e,
\end{equation}
where $':=\partial_{\alpha}$ and
\begin{equation}\label{u.coeff}
c:=\frac{u}{a}+\frac{\gamma b}{a}+\frac{2a'}{\rho_{2}a^{3}},
\qquad
d:=-\frac{\gamma b'}{a}-\frac{\partial_{t}a}{a},
\end{equation}
\begin{equation}\label{u.e}
\begin{aligned}
e:=&-\frac{\gamma}{a}\partial_{t}(v_{1}\cdot\partial_{\alpha}z)
+\frac{\gamma}{2a}\partial_{\alpha}|v_{1}|^{2}\\
&+\frac{1}{\rho_{2}a}\Big[\sigma\,\partial_{\alpha}K-\frac{1}{2}\partial_{\alpha}|H|^{2}
-\partial_{\alpha}\Big(\frac{2}{a}\partial_{\alpha}(v_{1}\cdot\tau)\Big)
-2\partial_{\alpha}\big(K\,(v_{1}\cdot\nu)\big)\Big]\\
&-\frac{1}{\rho_{2}}(\Delta u_{2})\cdot\tau
-\frac{1}{2\rho_{2}a}\partial_{\alpha}|H|^{2}.
\end{aligned}
\end{equation}
The two occurrences of $\partial_{\alpha}|H|^{2}$ in \eqref{u.e}, both with coefficient
$\frac{1}{2\rho_{2}a}$, have different origins: the Lorentz force produces one through the
identity of Lemma \ref{MHD:lorentz}, the magnetic pressure in Proposition \ref{pressure.jump} the
other. They are combined at the end of the proof into the single term
$-\frac{1}{\rho_{2}a}\partial_{\alpha}|H|^{2}$ of $f$.
Also, writing $\tilde c:=\gamma b/a+2a'/(\rho_{2}a^{3})$, so that $c=u/a+\tilde c$, the
diffusion coefficient is strictly positive and bounded, and the coefficient $\tilde c$ is bounded
together with its first derivative:
\begin{equation}\label{u.coeff.bound}
\frac{1}{a}\in[(A+1)^{-1},A],\qquad
\frac{2}{\rho_{2}a^{2}}\in\Big[\frac{2}{\rho_{2}(A+1)^{2}},\frac{2A^{2}}{\rho_{2}}\Big],
\qquad |\tilde c|_{1}\leqslant C(A,\sigma,\rho_{1},\rho_{2}),
\end{equation}
where $|f|_{1}:=\|f\|_{L^{\infty}([0,T]\times\mathbb{R})}
+\|\partial_{\alpha}f\|_{L^{\infty}([0,T]\times\mathbb{R})}$. The coefficient
$d=-\gamma b'/a-\partial_{t}a/a$ does not enter \eqref{u.coeff.bound}: by
\eqref{lin.a} it equals $-\gamma b'/a-(\tau\cdot\partial_{\alpha}v_{1}-\partial_{\alpha}u)/a$, so
that $\|d\|_{L^{\infty}}$ requires a bound on $\|\partial_{\alpha}u\|_{L^{\infty}}$, which is not
available at this stage. The forcing $e$ likewise contains the term
$-\gamma a^{-1}\partial_{t}(v_{1}\cdot\partial_{\alpha}z)=-\gamma a^{-1}\partial_{t}(ab)$, whose
boundedness uses \eqref{lin.a} as well. The quasilinear form \eqref{u.eq} is therefore recorded only
for orientation; the estimates below use the \emph{linear} form
\eqref{u.eq.lin}, whose coefficients $\Lambda$ and $f$ are bounded by \eqref{e8}, \eqref{e9},
\eqref{e11.nu} and \eqref{B.trace.bound} alone (Lemma \ref{curl.lin}).
\end{lemma}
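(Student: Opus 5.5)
The plan is to obtain \eqref{omega.eq} by subtracting the two trace equations of Lemmas \ref{Euler} and \ref{MHD}. We then pass to $u=\omega_{\Gamma}/a$ using the pressure jump of Proposition \ref{pressure.jump} and the Lorentz identity of Lemma \ref{MHD:lorentz}, and finally read off the bounds \eqref{u.coeff.bound} from \eqref{e8}--\eqref{e9}.

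\emph{Step 1: the equation for $\omega_{\Gamma}$.} In both \eqref{e14} and \eqref{e15} we insert the no-slip condition $\partial_{t}z=v_{2}$. We multiply \eqref{e14} by $\rho_{1}/\rho_{2}=1+\gamma$ and subtract \eqref{e15}. Since $\partial_{t}\omega_{\Gamma}=\partial_{t}(v_{1}\cdot\partial_{\alpha}z)-\partial_{t}(v_{2}\cdot\partial_{\alpha}z)$, this gives
\[
\partial_{t}\omega_{\Gamma}+\gamma\partial_{t}(v_{1}\cdot\partial_{\alpha}z)-(1+\gamma)\partial_{\alpha}(v_{1}\cdot v_{2})+\tfrac{1+\gamma}{2}\partial_{\alpha}|v_{1}|^{2}+\tfrac12\partial_{\alpha}|v_{2}|^{2}+\tfrac{1}{\rho_{2}}\partial_{\alpha}(q_{1}-q_{2})+\tfrac{1}{\rho_{2}}\Theta=0.
\]
The $\gamma$-free kinetic terms combine to $\tfrac12\partial_{\alpha}|v_{1}-v_{2}|^{2}$. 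By the first kinematic condition of \eqref{e6} the normal components of $v_{1}$ and $v_{2}$ coincide, so $v_{1}-v_{2}=\omega_{\Gamma}\partial_{\alpha}z/a^{2}$ and $|v_{1}-v_{2}|^{2}=\omega_{\Gamma}^{2}/a^{2}$. For the $\gamma$-terms we write $v_{1}\cdot v_{2}=|v_{1}|^{2}-\omega_{\Gamma}(v_{1}\cdot\partial_{\alpha}z)/a^{2}$. Then $-\gamma\partial_{\alpha}(v_{1}\cdot v_{2})+\tfrac{\gamma}{2}\partial_{\alpha}|v_{1}|^{2}=-\tfrac{\gamma}{2}\partial_{\alpha}|v_{1}|^{2}+\gamma\partial_{\alpha}[\omega_{\Gamma}(v_{1}\cdot\partial_{\alpha}z)/a^{2}]$. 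Moving everything except $\partial_{t}\omega_{\Gamma}$ to the right yields \eqref{omega.eq}.

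\emph{Step 2: the equation for $u$.} We write $\partial_{t}u=a^{-1}\partial_{t}\omega_{\Gamma}-u\,\partial_{t}a/a$ and substitute \eqref{omega.eq}, with the following replacements.
\begin{itemize}
\item[$\circ$] The pressure jump $q_{1}-q_{2}$ is replaced by the right-hand side of \eqref{pressure.jump.eq}, with $v_{2}\cdot\tau=v_{1}\cdot\tau-u$ and $v_{2}\cdot\nu=v_{1}\cdot\nu$, since $v_{2}=v_{1}-u\tau$.
\item[$\circ$] In $\Theta$, the viscous part is $(\Delta u_{2})\cdot\partial_{\alpha}z=a(\Delta u_{2})\cdot\tau$, and the Lorentz part is $\tfrac12\partial_{\alpha}|H|^{2}$ by Lemma \ref{MHD:lorentz}.
\end{itemize}
The viscous normal-stress term $-2\partial_{\tau}^{2}(v_{2}\cdot\tau)$ contributes $+\tfrac{2}{\rho_{2}a}\partial_{\alpha}(a^{-1}u')$. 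This is the diffusion term $\tfrac{2}{\rho_{2}a^{2}}u''$ together with the drift $-\tfrac{2a'}{\rho_{2}a^{3}}u'$. The remaining $v_{1}$-parts of that term stay in $e$, as do the curvature terms and the magnetic pressure.

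Expanding $\tfrac{1}{2a}\partial_{\alpha}(u^{2})=\tfrac{u}{a}u'$ and $\tfrac{\gamma}{a}\partial_{\alpha}(ub)=\tfrac{\gamma}{a}(bu'+b'u)$ gives the coefficients $c$ and $d$ of \eqref{u.coeff}. The leftover terms are exactly \eqref{u.e}. The two $\partial_{\alpha}|H|^{2}$ terms in \eqref{u.e} are the magnetic pressure, via Proposition \ref{pressure.jump}, and the Lorentz term, via $\Theta$; each carries the coefficient $\tfrac{1}{2\rho_{2}a}$.

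\emph{Step 3: bounds.} The range $1/A\leqslant a\leqslant A+1$ follows from \eqref{e8} and gives the first two inclusions of \eqref{u.coeff.bound}. Next, $b=v_{1}\cdot\tau$ is bounded in $C^{1}$ by \eqref{e9} and \eqref{e8}, and $a'$, $a''$ are bounded by the $C^{4}$ bound in \eqref{e8}. Hence $|\tilde c|_{1}\leqslant C(A,\sigma,\rho_{1},\rho_{2})$.

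The main obstacle is not conceptual but the sign and factor bookkeeping in Step 2. One has to track the orientation $n=-(\partial_{\alpha}z)^{\perp}$ and the frame relations $\partial_{\tau}\tau=-K\nu$, $\partial_{\tau}\nu=K\tau$ when differentiating $K(v_{2}\cdot\nu)$ and $\partial_{\tau}(v_{2}\cdot\tau)$. One also has to confirm that the drift $-2a'/(\rho_{2}a^{3})$ from the viscous term enters $c$ with the stated sign. I would verify the final identity on the flat-interface case, $a\equiv1$, $K=0$, as a check.
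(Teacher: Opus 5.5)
Your proposal is correct and follows essentially the same route as the paper: you combine \eqref{e14} and \eqref{e15} using $\partial_{t}z=v_{2}$ and $v_{1}-v_{2}=\omega_{\Gamma}\partial_{\alpha}z/a^{2}$ to obtain \eqref{omega.eq}, then divide by $a$ and insert the pressure jump of Proposition~\ref{pressure.jump} and the Lorentz identity of Lemma~\ref{MHD:lorentz} to read off $c$, $d$, $e$ and the bounds \eqref{u.coeff.bound}. The only cosmetic difference is in the weighting: you scale \eqref{e14} by $\rho_{1}/\rho_{2}$ and substitute the no-slip condition first, whereas the paper multiplies \eqref{e14} by $\rho_{1}$ and \eqref{e15} by $\rho_{2}$, subtracts, and divides by $\rho_{2}$ at the end.
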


\begin{proof}
Multiplying \eqref{e14} by $\rho_{1}$ and \eqref{e15} by $\rho_{2}$ and subtracting, the three
derivative terms combine into
\[
\partial_{t}\big[(\rho_{1}v_{1}-\rho_{2}v_{2})\cdot\partial_{\alpha}z\big]
-\partial_{\alpha}\big[(\rho_{1}v_{1}-\rho_{2}v_{2})\cdot\partial_{t}z\big]
+\frac{1}{2}\partial_{\alpha}\big[\rho_{1}|v_{1}|^{2}-\rho_{2}|v_{2}|^{2}\big],
\]
and the remaining terms give $-\partial_{\alpha}(q_{1}-q_{2})-\Theta$, the two pressure
contributions canceling with the coefficients $\rho_{1}\rho_{1}^{-1}=\rho_{2}\rho_{2}^{-1}=1$
and the terms of \eqref{curl.Theta} surviving with the sign shown. We rewrite the
left-hand side in terms of $v_{1}$ and $\omega_{\Gamma}$ alone. By \eqref{e16} and the first
condition of \eqref{e6},
\begin{equation}\label{curl.jump}
v_{1}-v_{2}=\frac{\omega_{\Gamma}}{|\partial_{\alpha}z|^{2}}\,\partial_{\alpha}z,
\end{equation}
so that $v_{2}\cdot\partial_{\alpha}z=v_{1}\cdot\partial_{\alpha}z-\omega_{\Gamma}$ and
$|v_{1}-v_{2}|^{2}=\omega_{\Gamma}^{2}/|\partial_{\alpha}z|^{2}$. Hence
\[
(\rho_{1}v_{1}-\rho_{2}v_{2})\cdot\partial_{\alpha}z
=(\rho_{1}-\rho_{2})(v_{1}\cdot\partial_{\alpha}z)+\rho_{2}\omega_{\Gamma},
\]
which gives the first two terms of the right-hand side of \eqref{omega.eq}. For the remaining
two we use the no-slip condition $\partial_{t}z=v_{2}$ of \eqref{e6} and \eqref{curl.jump},
which gives $v_{2}=v_{1}-\omega_{\Gamma}\partial_{\alpha}z/|\partial_{\alpha}z|^{2}$ and
therefore
$v_{1}\cdot v_{2}=|v_{1}|^{2}-\omega_{\Gamma}(v_{1}\cdot\partial_{\alpha}z)/|\partial_{\alpha}z|^{2}$.
A direct computation then yields
\[
\begin{aligned}
&-\partial_{\alpha}\big[(\rho_{1}v_{1}-\rho_{2}v_{2})\cdot\partial_{t}z\big]
+\frac{1}{2}\partial_{\alpha}\big[\rho_{1}|v_{1}|^{2}-\rho_{2}|v_{2}|^{2}\big]\\
&=-\rho_{1}\partial_{\alpha}(v_{1}\cdot v_{2})
+\rho_{2}\partial_{\alpha}|v_{2}|^{2}
+\frac{\rho_{1}}{2}\partial_{\alpha}|v_{1}|^{2}
-\frac{\rho_{2}}{2}\partial_{\alpha}|v_{2}|^{2}\\
&=-\rho_{2}\partial_{\alpha}(v_{1}\cdot v_{2})
+\frac{\rho_{2}}{2}\partial_{\alpha}\big[|v_{1}|^{2}+|v_{2}|^{2}\big]
+(\rho_{1}-\rho_{2})\partial_{\alpha}\Big[\frac{1}{2}|v_{1}|^{2}-v_{1}\cdot v_{2}\Big]\\
&=\frac{\rho_{2}}{2}\partial_{\alpha}|v_{1}-v_{2}|^{2}
+(\rho_{1}-\rho_{2})\partial_{\alpha}\Big[-\frac{1}{2}|v_{1}|^{2}
+\frac{\omega_{\Gamma}\,(v_{1}\cdot\partial_{\alpha}z)}{|\partial_{\alpha}z|^{2}}\Big],
\end{aligned}
\]
the last identity using $|v_{1}-v_{2}|^{2}=\omega_{\Gamma}^{2}/|\partial_{\alpha}z|^{2}$ and
the formula for $v_{1}\cdot v_{2}$ above. Dividing by $\rho_{2}$ and collecting gives
\eqref{omega.eq}.

To obtain \eqref{u.eq}, we divide \eqref{omega.eq} by $a$ and use
$\omega_{\Gamma}=au$, $\omega_{\Gamma}/|\partial_{\alpha}z|^{2}=u/a$ and
$\frac{1}{a}\partial_{t}\omega_{\Gamma}=\partial_{t}u+\frac{\partial_{t}a}{a}u$. The Burgers
term becomes $-\frac{1}{2a}\partial_{\alpha}[u^{2}]=-\frac{u}{a}u'$. For the pressure term we
use Proposition \ref{pressure.jump}: by \eqref{curl.jump} one has
$v_{2}=v_{1}-u\,\tau$, so that $v_{2}\cdot\tau=v_{1}\cdot\tau-u$ and
$v_{2}\cdot\nu=v_{1}\cdot\nu$, and therefore
\begin{equation}\label{curl.qjump}
q_{2}-q_{1}=\sigma K-\frac{1}{2}|H|^{2}-\frac{2}{a}\partial_{\alpha}(v_{1}\cdot\tau)+\frac{2u'}{a}-2K\,(v_{1}\cdot\nu).
\end{equation}
Since $-\rho_{2}^{-1}\partial_{\alpha}(q_{1}-q_{2})=\rho_{2}^{-1}\partial_{\alpha}(q_{2}-q_{1})$,
the contribution of the pressure is
$\rho_{2}^{-1}\partial_{\alpha}(2u'/a)=\frac{2}{\rho_{2}a^{2}}u''-\frac{2a'}{\rho_{2}a^{3}}u'$
plus the bounded terms of \eqref{u.e}. Finally, by Lemma \ref{MHD:lorentz} and
$(\Delta u_{2})\cdot\partial_{\alpha}z=a\,(\Delta u_{2})\cdot\tau$,
\[
-\frac{1}{\rho_{2}a}\Theta
=-\frac{1}{\rho_{2}}(\Delta u_{2})\cdot\tau-\frac{1}{2\rho_{2}a}\partial_{\alpha}|H|^{2},
\]
which is the last line of \eqref{u.e}. Collecting the terms gives \eqref{u.coeff} and
\eqref{u.e}. The bound for $\tilde c$ in \eqref{u.coeff.bound} follows from \eqref{e8}
and \eqref{e9}, and so does the range of $1/a$ and of $2/(\rho_{2}a^{2})$, which is
$1/A\leqslant a\leqslant A+1$, that is, \eqref{e8} together with
$|z_{\alpha}-(1,0)|\leqslant A$. As noted in the statement, neither $d$ nor $|e|_{1}$ is bounded
here: by \eqref{lin.a} both involve $\partial_{\alpha}u$, which \eqref{e8} and \eqref{e9} do not
control.
\end{proof}

\begin{lemma}\label{curl.lin}
Let $(z,u_{1},u_{2},B,p_{1},p_{2})$ be an $(A,\rho_{1},\rho_{2})$-admissible solution, let
$a,u,\tau,\nu,\gamma,b$ be as in \eqref{curl.not}, let $K$ be the curvature of \eqref{e6}, and
let
\[
\partial_{s}:=\frac{1}{a}\,\partial_{\alpha}
\]
denote differentiation with respect to arclength on $\Gamma_{t}$. Then \eqref{omega.eq} is
equivalent to the {\em linear} parabolic equation
\begin{equation}\label{u.eq.lin}
\partial_{t}u=\frac{2}{\rho_{2}}\,\partial_{s}^{2}u+\Lambda\,u+f,
\end{equation}
where
\begin{equation}\label{u.lin.coeff}
\begin{aligned}
\Lambda&:=-\frac{\tau\cdot\partial_{\alpha}v_{1}}{a}
-\gamma\,\frac{\partial_{\alpha}b}{a}
-\gamma\,K\,(v_{1}\cdot\nu),\\
f&:=-\gamma\,(\partial_{t}v_{1})\cdot\tau
+\frac{1}{\rho_{2}a}\Big[\sigma\,\partial_{\alpha}K
-\partial_{\alpha}\Big(\frac{2}{a}\,\partial_{\alpha}b\Big)
-2\,\partial_{\alpha}\big(K\,(v_{1}\cdot\nu)\big)\Big]\\
&\qquad-\frac{1}{\rho_{2}}(\Delta u_{2})\cdot\tau
-\frac{1}{\rho_{2}a}\,\partial_{\alpha}|H|^{2}.
\end{aligned}
\end{equation}
Also, in view of \eqref{e8}, \eqref{e9}, \eqref{e11.nu} and the trace bound
\eqref{B.trace.bound}, writing
$|g|_{1}:=\|g\|_{L^{\infty}([0,T]\times\mathbb{R})}
+\|\partial_{\alpha}g\|_{L^{\infty}([0,T]\times\mathbb{R})}$,
\begin{equation}\label{u.lin.bound}
|\Lambda|_{1}+|f|_{1}\leqslant C(A,\sigma,\rho_{1},\rho_{2}).
\end{equation}
\end{lemma}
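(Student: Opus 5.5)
The plan is to start from \eqref{omega.eq}, divide by $a$, and eliminate every time derivative of the geometry in favour of spatial derivatives of $v_{1}$ and $u$. Two kinematic identities do this. The first is the evolution of the arclength element,
\begin{equation}\label{lin.a}
\partial_{t}a=\tau\cdot\partial_{\alpha}v_{2}=\tau\cdot\partial_{\alpha}v_{1}-\partial_{\alpha}u .
\end{equation}
It follows from $\partial_{t}\partial_{\alpha}z=\partial_{\alpha}v_{2}$ (no-slip in \eqref{e6}), from $v_{2}=v_{1}-u\tau$ (a consequence of \eqref{curl.jump}), and from $\tau\cdot\partial_{\alpha}\tau=0$. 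The second is
\[
\partial_{t}(v_{1}\cdot\partial_{\alpha}z)=a\,(\partial_{t}v_{1})\cdot\tau+v_{1}\cdot\partial_{\alpha}v_{2}.
\]
Combined with $\partial_{\alpha}\tau=-aK\nu$, it gives
\[
v_{1}\cdot\partial_{\alpha}v_{2}=\tfrac12\partial_{\alpha}|v_{1}|^{2}-b\,u'+aK\,u\,(v_{1}\cdot\nu).
\]

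\emph{Step 1: the quasilinear terms cancel.} Write $\partial_{t}u=a^{-1}\partial_{t}\omega_{\Gamma}-(\partial_{t}a/a)u$ and insert \eqref{lin.a}. The contribution $+uu'/a$ coming from $-(\partial_{t}a/a)u$ cancels exactly the Burgers term $-\frac{1}{2a}\partial_{\alpha}[\omega_{\Gamma}^{2}/a^{2}]=-uu'/a$. What remains is $-(\tau\cdot\partial_{\alpha}v_{1})u/a$, the first term of $\Lambda$.

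\emph{Step 2: the $\gamma$-terms become linear.} Using the second identity, the three $\gamma$-terms of \eqref{omega.eq} (divided by $a$, with $\omega_{\Gamma}(v_{1}\cdot\partial_{\alpha}z)/a^{2}=ub$) are
\[
-\gamma(\partial_{t}v_{1})\cdot\tau-\tfrac{\gamma}{a}\big(\tfrac12\partial_{\alpha}|v_{1}|^{2}-bu'+aKu\,(v_{1}\cdot\nu)\big)+\tfrac{\gamma}{2a}\partial_{\alpha}|v_{1}|^{2}-\tfrac{\gamma}{a}(b'u+bu').
\]
In this sum the $|v_{1}|^{2}$ terms cancel and the $bu'$ terms cancel. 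The result is
\[
-\gamma(\partial_{t}v_{1})\cdot\tau-\gamma K(v_{1}\cdot\nu)u-\gamma b'u/a,
\]
which gives the remaining part of $\Lambda$ and the first term of $f$.

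\emph{Step 3: the pressure and Lorentz terms.} For the pressure I use \eqref{curl.qjump}. The piece $2u'/a$ produces $\frac{2}{\rho_{2}a}\partial_{\alpha}(a^{-1}\partial_{\alpha}u)=\frac{2}{\rho_{2}}\partial_{s}^{2}u$. The other pieces give the bracket in $f$, since $v_{1}\cdot\tau=b$. The magnetic-pressure piece $-\frac{1}{2\rho_{2}a}\partial_{\alpha}|H|^{2}$ adds to the Lorentz piece from $\Theta$, handled as in the proof of Lemma \ref{curl}, to give $-\frac{1}{\rho_{2}a}\partial_{\alpha}|H|^{2}$. Every step is reversible, so \eqref{u.eq.lin} is equivalent to \eqref{omega.eq}.

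\emph{Step 4: the bound \eqref{u.lin.bound}.} This is derivative counting, using $1/A\le a\le A+1$ from \eqref{e8}.
\begin{itemize}
\item $|\Lambda|_{1}$ needs $v_{1}\in C^{3}$ (because $b'$ is differentiated once more) and $K\in C^{1}$, hence $z\in C^{3}$.
\item In $|f|_{1}$, the term $\partial_{\alpha}(\sigma\partial_{\alpha}K)$ needs $K\in C^{2}$, that is, $z\in C^{4}$; this is exactly why item (i) asks for four derivatives.
\item The term $\partial_{\alpha}^{2}(a^{-1}b')$ needs $v_{1}\in C^{3}$ and $z\in C^{3}$, and $\partial_{\alpha}^{2}(K\,v_{1}\cdot\nu)$ needs $v_{1}\in C^{2}$ and $z\in C^{4}$.
\item The term $(\partial_{t}v_{1})\cdot\tau$ is controlled in $C^{1}$ by \eqref{e9}, and $(\Delta u_{2})\cdot\tau$ together with its $\alpha$-derivative is exactly \eqref{e11.nu}.
\item The term $\partial_{\alpha}|H|^{2}$ in $C^{1}$ needs $H\in C^{2}$, which \eqref{B.trace.bound} supplies.
\end{itemize}
No quantity involving $u$, $\partial_{t}z$ or $v_{2}$ appears, which is the point of the linear form.

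The main obstacle is the algebra of Steps 1--2. One must check that every occurrence of $u'$ other than the diffusion term cancels: the Burgers term against \eqref{lin.a}, and the $bu'$ terms among themselves. Without these cancellations $\Lambda$ would depend on the unknown $\partial_{\alpha}u$. I would verify this by collecting the coefficient of $u'$ at the end; it should be exactly zero.
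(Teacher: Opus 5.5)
Your proof is correct and is essentially the paper's argument. You divide \eqref{omega.eq} by $a$, eliminate the time derivatives of the geometry through $\partial_t\partial_\alpha z=\partial_\alpha v_2=\partial_\alpha v_1-\partial_\alpha(u\tau)$ so that every $u'$-term except the diffusion cancels, and then insert \eqref{curl.qjump} and Lemma \ref{MHD:lorentz}; your one-step expansion $\partial_t(v_1\cdot\partial_\alpha z)=a(\partial_t v_1)\cdot\tau+v_1\cdot\partial_\alpha v_2$ merely shortcuts the paper's separate formulas for $\partial_t\tau$, $\partial_t b$ and \eqref{lin.decomp}.

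There is one harmless slip in the derivative counting of Step 4. The term $\partial_\alpha^2(a^{-1}b')$ contains $v_1\cdot\partial_\alpha^3\tau$ and therefore needs $z\in C^4$, not $C^3$; item (i) supplies this. Conversely, $|\Lambda|_1$ needs only $v_1\in C^2$, not $C^3$.
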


\begin{proof}
We record three consequences of the kinematic conditions \eqref{e6}, all obtained by
writing tangential fields in the orthonormal frame $(\tau,\nu)$, whose derivatives are the frame
relations $\partial_{\alpha}\tau=-aK\nu$ and $\partial_{\alpha}\nu=aK\tau$ of Proposition
\ref{pressure.jump}. Write $':=\partial_{\alpha}$. By the no-slip condition
$\partial_{t}z=v_{2}$ and by \eqref{curl.jump}, which says $v_{2}=v_{1}-u\tau$, we have
$\partial_{t}z=v_{1}-u\tau$ throughout.

\emph{(i) Stretching of arclength.} Since $a^{2}=\partial_{\alpha}z\cdot\partial_{\alpha}z$,
\[
\partial_{t}a=\tau\cdot\partial_{\alpha}\partial_{t}z
=\tau\cdot\big(\partial_{\alpha}v_{1}-u'\tau-u\,\partial_{\alpha}\tau\big)
=\tau\cdot\partial_{\alpha}v_{1}-u',
\]
because $\tau\cdot\tau=1$ and $\tau\cdot\partial_{\alpha}\tau=0$; that is,
\begin{equation}\label{lin.a}
\partial_{t}a=\tau\cdot\partial_{\alpha}v_{1}-\partial_{\alpha}u.
\end{equation}
Thus the rate of stretching of arclength is the tangential velocity gradient of the inviscid
phase \emph{minus} $\partial_{\alpha}u$.

\emph{(ii) Rotation of the tangent.} From $\tau=\partial_{\alpha}z/a$, together with
\eqref{lin.a} and $\partial_{\alpha}\tau=-aK\nu$,
\[
\partial_{t}\tau=\frac{\partial_{\alpha}\partial_{t}z}{a}-\frac{\partial_{t}a}{a}\tau
=\frac{\partial_{\alpha}v_{1}}{a}+K\,u\,\nu
-\frac{\tau\cdot\partial_{\alpha}v_{1}}{a}\,\tau.
\]

\emph{(iii) Time derivative of $b$.} Since $b=v_{1}\cdot\tau$ (indeed
$b=(v_{1}\cdot\partial_{\alpha}z)/a$), the preceding relation gives
\begin{equation}\label{lin.b}
\partial_{t}b=(\partial_{t}v_{1})\cdot\tau+\frac{v_{1}\cdot\partial_{\alpha}v_{1}}{a}
+K\,u\,(v_{1}\cdot\nu)-\frac{\tau\cdot\partial_{\alpha}v_{1}}{a}\,b.
\end{equation}
Finally, decomposing in the orthonormal frame as
$\partial_{\alpha}v_{1}=(\tau\cdot\partial_{\alpha}v_{1})\,\tau+\mu\,\nu$ with
$\mu:=\partial_{\alpha}v_{1}\cdot\nu$, one has
\begin{equation}\label{lin.decomp}
v_{1}\cdot\partial_{\alpha}v_{1}=b\,(\tau\cdot\partial_{\alpha}v_{1})+(v_{1}\cdot\nu)\,\mu,
\qquad
\tfrac12\,\partial_{\alpha}|v_{1}|^{2}=b\,(\tau\cdot\partial_{\alpha}v_{1})+(v_{1}\cdot\nu)\,\mu.
\end{equation}

Now divide \eqref{omega.eq} by $a$, expanding its left-hand side as
$\partial_{t}\omega_{\Gamma}/a=\partial_{t}u+(\partial_{t}a/a)u$, and rewrite its right-hand side
using $\partial_{t}(v_{1}\cdot\partial_{\alpha}z)=\partial_{t}(ab)$,
$\omega_{\Gamma}(v_{1}\cdot\partial_{\alpha}z)/|\partial_{\alpha}z|^{2}=ub$ and
$\omega_{\Gamma}^{2}/|\partial_{\alpha}z|^{2}=u^{2}$. This gives
\[
\partial_{t}u=-\frac{\partial_{t}a}{a}u-\gamma\frac{\partial_{t}a}{a}b-\gamma\partial_{t}b
+\frac{\gamma}{2a}\partial_{\alpha}|v_{1}|^{2}
-\gamma\frac{b}{a}u'-\gamma\frac{b'}{a}u-\frac{u}{a}u'
+\frac{1}{\rho_{2}a}\partial_{\alpha}(q_{2}-q_{1})-\frac{1}{\rho_{2}a}\Theta.
\]
Substituting \eqref{lin.a} into the first two terms produces $+\frac{u}{a}u'$ and
$+\gamma\frac{b}{a}u'$, which cancel the two terms $-\frac{u}{a}u'$ (coming from the Burgers
term $-\frac12\partial_{\alpha}[u^{2}]$) and $-\gamma\frac{b}{a}u'$ (coming from the density
term $-\gamma\partial_{\alpha}[bu]$) exactly:
\[
\partial_{t}u=-\frac{\tau\cdot\partial_{\alpha}v_{1}}{a}u
-\gamma\frac{\tau\cdot\partial_{\alpha}v_{1}}{a}b-\gamma\partial_{t}b
+\frac{\gamma}{2a}\partial_{\alpha}|v_{1}|^{2}-\gamma\frac{b'}{a}u
+\frac{1}{\rho_{2}a}\partial_{\alpha}(q_{2}-q_{1})-\frac{1}{\rho_{2}a}\Theta.
\]
Substituting \eqref{lin.b} and \eqref{lin.decomp} in turn, the terms carrying
$b\,(\tau\cdot\partial_{\alpha}v_{1})$ and those carrying $(v_{1}\cdot\nu)\mu$ cancel in pairs as
well, leaving
\[
\partial_{t}u=-\frac{\tau\cdot\partial_{\alpha}v_{1}}{a}u-\gamma\frac{b'}{a}u
-\gamma K\,(v_{1}\cdot\nu)\,u-\gamma(\partial_{t}v_{1})\cdot\tau
+\frac{1}{\rho_{2}a}\partial_{\alpha}(q_{2}-q_{1})-\frac{1}{\rho_{2}a}\Theta.
\]
It remains to evaluate the last two groups. By Proposition \ref{pressure.jump} with
$v_{2}\cdot\tau=v_{1}\cdot\tau-u=b-u$ and $v_{2}\cdot\nu=v_{1}\cdot\nu$ (both from
\eqref{curl.jump}),
\[
q_{2}-q_{1}=\sigma K-\tfrac12|H|^{2}-\frac{2}{a}(b'-u')-2K\,(v_{1}\cdot\nu),
\]
and since
$\frac{1}{\rho_{2}a}\partial_{\alpha}\big(\frac{2}{a}u'\big)
=\frac{2}{\rho_{2}a^{2}}u''-\frac{2a'}{\rho_{2}a^{3}}u'
=\frac{2}{\rho_{2}}\partial_{s}^{2}u$, we get
\[
\frac{1}{\rho_{2}a}\partial_{\alpha}(q_{2}-q_{1})
=\frac{2}{\rho_{2}}\partial_{s}^{2}u
+\frac{1}{\rho_{2}a}\Big[\sigma\,\partial_{\alpha}K
-\partial_{\alpha}\Big(\frac{2}{a}\partial_{\alpha}b\Big)
-2\,\partial_{\alpha}\big(K\,(v_{1}\cdot\nu)\big)\Big]
-\frac{1}{2\rho_{2}a}\partial_{\alpha}|H|^{2},
\]
while Lemma \ref{MHD:lorentz} and the definition \eqref{curl.Theta} give
\[
-\frac{1}{\rho_{2}a}\Theta
=-\frac{1}{\rho_{2}}(\Delta u_{2})\cdot\tau-\frac{1}{2\rho_{2}a}\partial_{\alpha}|H|^{2}.
\]
Adding the last three relations gives \eqref{u.eq.lin} and \eqref{u.lin.coeff}. The magnetic
field contributes the two identical terms $-\frac{1}{2\rho_{2}a}\partial_{\alpha}|H|^{2}$, one
produced by the Lorentz force through Lemma \ref{MHD:lorentz} and one by the magnetic pressure
through Proposition \ref{pressure.jump}; they combine into the single term
$-\frac{1}{\rho_{2}a}\partial_{\alpha}|H|^{2}$ appearing in $f$.

Finally, \eqref{u.lin.bound} holds because neither $\Lambda$ nor $f$ contains $u$: each is a
polynomial expression in $\partial_{\alpha}^{k}z$ with $k\leqslant4$, in
$\partial_{\alpha}^{k}v_{1}$ with $k\leqslant3$ together with $\partial_{t}v_{1}$ and
$\partial_{\alpha}\partial_{t}v_{1}$, in $\partial_{\alpha}^{k}H$ with
$k\leqslant2$, in the factor $1/a$, and (through the last line of \eqref{u.lin.coeff}) in
$(\Delta u_{2})\cdot\tau$ together with its $\alpha$-derivative. All of these are bounded by
\eqref{e8}, \eqref{e9}, \eqref{e11.nu} and the trace bound \eqref{B.trace.bound}. 
\end{proof}

\begin{lemma}\label{om.Linf}
Let $(z,u_{1},u_{2},B,p_{1},p_{2})$ be an $(A,\rho_{1},\rho_{2})$-admissible solution. There is
a constant $C_{1}=C_{1}(A,T,\sigma,\rho_{1},\rho_{2})$ such that
\begin{equation}\label{e19}
\|\omega_{\Gamma}\|_{L^{\infty}([0,T]\times\mathbb{R})}\leqslant C_{1}.
\end{equation}
\end{lemma}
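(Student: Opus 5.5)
The plan is to apply the maximum principle to the linear parabolic equation \eqref{u.eq.lin} of Lemma \ref{curl.lin}, satisfied by $u=\omega_{\Gamma}/a$, and then return to $\omega_{\Gamma}=au$. The input is the bound \eqref{u.lin.bound}: $\|\Lambda\|_{L^{\infty}}+\|f\|_{L^{\infty}}\leqslant C(A,\sigma,\rho_{1},\rho_{2})=:M$, which uses only \eqref{e8}, \eqref{e9}, \eqref{e11.nu} and \eqref{B.trace.bound}. The initial datum is controlled by \eqref{e8.init}, $\|u(0)\|_{L^{\infty}}\leqslant A$. The operator $\frac{2}{\rho_{2}}\partial_{s}^{2}=\frac{2}{\rho_{2}a^{2}}\partial_{\alpha}^{2}-\frac{2a'}{\rho_{2}a^{3}}\partial_{\alpha}$ is uniformly elliptic, with bounded coefficients by \eqref{u.coeff.bound}. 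The solution is smooth and, by the periodicity in Definition \ref{adm}, $u(t,\alpha+2\pi)=u(t,\alpha)$. Indeed $a$ and $\omega_{\Gamma}$ are $2\pi$-periodic, since $z(\alpha+2\pi)=z(\alpha)+(2\pi,0)$ and the velocities are periodic in $x_{1}$. So the ``whole line'' problem is really a problem on the circle $\mathbb{R}/2\pi\mathbb{Z}$. This is compact, so suprema are attained and no growth condition at infinity is needed.

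The key step is a comparison with $\phi(t):=e^{(M+1)t}\big(A+Mt\big)$ (or simply $e^{(M+1)t}(A+M/(M+1))$). Set $w_{\pm}:=e^{-(M+1)t}(\pm u)$. Then
\[
\partial_{t}w_{\pm}=\tfrac{2}{\rho_{2}}\partial_{s}^{2}w_{\pm}+(\Lambda-M-1)w_{\pm}\pm e^{-(M+1)t}f .
\]
At an interior-in-time first point where $\max_{\alpha}w_{\pm}$ would exceed $\max(A,M)$, we have $\partial_{s}^{2}w_{\pm}\leqslant 0$ and $\partial_{t}w_{\pm}\geqslant0$. But the zeroth-order term is $\leqslant -w_{\pm}<-M$ there, which dominates $|f|e^{-(M+1)t}\leqslant M$, a contradiction. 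To make the argument strict, I will run it with $\max(A,M)+\delta$ and let $\delta\to0$. The time derivative at a spatial maximum is handled as usual, by the continuity of $t\mapsto\max_{\alpha}w_{\pm}(t,\cdot)$ on the compact circle. Hence $\|u(t)\|_{L^{\infty}}\leqslant e^{(M+1)T}\max(A,M)$ for $t\in[0,T]$. Multiplying by $a\leqslant A+1$ gives \eqref{e19} with $C_{1}=(A+1)e^{(M+1)T}\max(A,M)$, which depends only on $A,T,\sigma,\rho_{1},\rho_{2}$.

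The main point to verify is the regularity needed for the classical maximum principle. We need $u\in C^{1}$ in $t$ and $C^{2}$ in $\alpha$, with \eqref{u.eq.lin} holding pointwise. This follows from the smoothness assumed in Definition \ref{adm}. Concretely, $\omega_{\Gamma}=(v_{1}-\partial_{t}z)\cdot\partial_{\alpha}z$, where $v_{1}$ is $C^{3}$ in $\alpha$ and the solution is smooth. The equivalence of \eqref{omega.eq} and \eqref{u.eq.lin} was proved pointwise in Lemma \ref{curl.lin}. I also expect care to be needed to confirm that no hidden $u$-dependence remains in $\Lambda$ or $f$. This is exactly the content of Lemma \ref{curl.lin}, where the Burgers term and the $\partial_{t}a$ term cancel via \eqref{lin.a}, so the equation is genuinely linear. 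Should periodicity of $u$ fail to be available, the fallback is a standard Phragmén–Lindelöf variant on $\mathbb{R}$: add $\varepsilon(1+\alpha^{2})e^{Nt}$ to the barrier and use the boundedness of $u$ on $[0,T]\times\mathbb{R}$ for the smooth solution.
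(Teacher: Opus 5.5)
Your proposal is correct and is essentially the paper's argument. Both apply the maximum principle on one period to the linear equation \eqref{u.eq.lin}, using the coefficient bound \eqref{u.lin.bound} and the initial bound \eqref{e8.init}, and then multiply by $a\leqslant A+1$. The only difference is that the paper closes with Danskin's theorem, the inequality $N'\leqslant\lambda N+\varphi$ and Gronwall, whereas your exponential rescaling with a first-crossing barrier gives an equivalent constant of the same dependence.
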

\begin{proof}
Since $\omega_{\Gamma}=au$ and $a\leqslant A+1$ by \eqref{e8}, it suffices to bound
$\|u\|_{L^{\infty}([0,T]\times\mathbb{R})}$. The function $u$ is smooth enough for the
classical maximum principle. Indeed $u=\omega_{\Gamma}/a$ with
$\omega_{\Gamma}=(v_{1}-v_{2})\cdot\partial_{\alpha}z$ and
$v_{2}=u_{2}(t,z(t,\alpha))$; since $u_{2}\in C^{3}(\overline{\Omega_{2}(t)})$ and
$z(t,\cdot)\in C^{4}$ in $\alpha$, the composition $v_{2}$ is of class $C^{3}$ in $\alpha$,
whence $\omega_{\Gamma}\in C^{3}$ and $u\in C^{3}$ in $\alpha$. The linear equation
\eqref{u.eq.lin} therefore holds in the classical sense, and $\partial_{t}u$ is continuous;
the envelope argument below is accordingly legitimate. The initial datum of \eqref{e8.init} is used for the bound only, not for the regularity.

Put $\lambda:=\|\Lambda\|_{L^{\infty}([0,T]\times\mathbb{R})}$ and
$\varphi:=\|f\|_{L^{\infty}([0,T]\times\mathbb{R})}$; both are finite by \eqref{u.lin.bound}. Being
$2\pi$-periodic and smooth in $\alpha$, the function $u(t,\cdot)$ attains its supremum and its
infimum on $[0,2\pi]$ for each $t$. Write
\[
\begin{aligned}
M(t)&:=\max_{\alpha}u(t,\alpha),\qquad m(t):=\min_{\alpha}u(t,\alpha),\\
N(t)&:=\|u(t)\|_{L^{\infty}(\mathbb{R})}=\max\{M(t),-m(t)\}.
\end{aligned}
\]
By Danskin's theorem on the envelope of an extremum over a compact set that depends on a parameter, $M$ and $m$ are Lipschitz, and for almost every $t$
\[
M'(t)=\partial_{t}u(t,\alpha_{t})\quad\text{for some }\alpha_{t}\text{ with }
u(t,\alpha_{t})=M(t),
\]
the analogous statement holding for $m$ and $-m$ at some minimizing point. At a maximizing point
$\alpha_{t}$ we have $\partial_{\alpha}u(t,\alpha_{t})=0$, and since
$\partial_{s}^{2}u=\frac{1}{a}\partial_{\alpha}\big(\frac{1}{a}\partial_{\alpha}u\big)
=\frac{u''}{a^{2}}-\frac{a'u'}{a^{3}}$, this gives
\[
\partial_{s}^{2}u(t,\alpha_{t})
=\frac{\partial_{\alpha}^{2}u(t,\alpha_{t})}{a(t,\alpha_{t})^{2}}\leqslant0.
\]
Hence, evaluating \eqref{u.eq.lin} at $(t,\alpha_{t})$ and discarding the non-positive leading
term,
\[
M'(t)\leqslant\Lambda(t,\alpha_{t})\,M(t)+f(t,\alpha_{t})
\leqslant\lambda\,N(t)+\varphi,
\]
where the last step uses $M(t)\leqslant N(t)$ together with $\lambda\geqslant0$; if $M(t)<0$
the inequality is immediate. Applying the same argument to $-u$, which solves
\eqref{u.eq.lin} with $f$ replaced by $-f$ because the equation is linear, bounds $-m'(t)$ by
$\lambda N(t)+\varphi$ as well. Since $N=\max\{M,-m\}$, we conclude that
\[
N'(t)\leqslant\lambda\,N(t)+\varphi\qquad\text{for a.e. }t\in[0,T],
\]
and Gronwall's inequality gives
\begin{equation}\label{e19.gronwall}
\|u(t)\|_{L^{\infty}(\mathbb{R})}\leqslant e^{\lambda T}\big(\|u(0)\|_{L^{\infty}(\mathbb{R})}+\varphi T\big)
\qquad\text{for every }t\in[0,T].
\end{equation}
For the initial value we use hypothesis \eqref{e8.init}, which gives
$\|u(0)\|_{L^{\infty}(\mathbb{R})}\leqslant A$; as $\omega_{\Gamma}=au$ and
$a\leqslant A+1$ by \eqref{e8}, this also gives $|\omega_{\Gamma}(0)|\leqslant A(A+1)$. 
Combining this with \eqref{e19.gronwall} and
$\|\omega_{\Gamma}\|_{L^{\infty}}\leqslant(A+1)\|u\|_{L^{\infty}}$ proves \eqref{e19}, with
$C_{1}=(A+1)e^{\lambda T}(A+\varphi T)$.
\end{proof}

\begin{lemma}\label{om.C1.est}
Let $(z,u_{1},u_{2},B,p_{1},p_{2})$ be an $(A,\rho_{1},\rho_{2})$-admissible solution. There is a
constant $C_{2}=C_{2}(A,T,\sigma,\rho_{1},\rho_{2})$ such that
\begin{equation}\label{om.C1}
\sup_{t\in[0,T]}\|\omega_{\Gamma}(t,\cdot)\|_{C^{1}(\mathbb{R})}\leqslant C_{2}
\end{equation}
and
\begin{equation}\label{e11.trace.der}
\sup_{t\in[0,T]}\|v_{2}(t,\cdot)\|_{C^{1}(\mathbb{R})}\leqslant C_{2}.
\end{equation}
\end{lemma}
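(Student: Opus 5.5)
The plan is to run the same maximum-principle argument as in Lemma \ref{om.Linf}, now applied to the derivative $w:=\partial_{s}u=a^{-1}\partial_{\alpha}u$ instead of to $u$. Differentiating the linear equation \eqref{u.eq.lin} in $\alpha$ and dividing by $a$, I expect to obtain an equation of the form
\[
\partial_{t}w=\frac{2}{\rho_{2}}\partial_{s}^{2}w+\Lambda w+(\partial_{s}\Lambda)\,u+\partial_{s}f+[\partial_{t},\partial_{s}]u .
\]
The term $\partial_{s}^{3}u$ arises here because $\partial_{s}$ commutes with $\partial_{s}^{2}$. The commutator is $[\partial_{t},\partial_{s}]u=-\frac{\partial_{t}a}{a}\,w$. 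By \eqref{lin.a}, $\partial_{t}a=\tau\cdot\partial_{\alpha}v_{1}-\partial_{\alpha}u=\tau\cdot\partial_{\alpha}v_{1}-a\,w$, so the commutator equals $-\frac{\tau\cdot\partial_{\alpha}v_{1}}{a}w+w^{2}$.

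The coefficients $\Lambda$, $\partial_{s}\Lambda$ and $\partial_{s}f$ are bounded by \eqref{u.lin.bound}. This is exactly why Lemma \ref{curl.lin} asks for $|f|_{1}$ and not only $\|f\|_{L^\infty}$, and why \eqref{e11.nu} includes one tangential derivative of $(\Delta u_{2})\cdot\tau$. The factor $u$ in $(\partial_{s}\Lambda)u$ is bounded by Lemma \ref{om.Linf}. The equation for $w$ is therefore
\[
\partial_{t}w=\frac{2}{\rho_{2}}\partial_{s}^{2}w+\Lambda' w+w^{2}+g,
\]
with $|\Lambda'|+|g|\leqslant C(A,T,\sigma,\rho_{1},\rho_{2})$.

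The main obstacle is the quadratic term $w^{2}$. It has a good sign at a minimum of $w$ but a bad sign at a maximum, so a plain Gronwall argument only gives a Riccati inequality for $\max w$, which may blow up before time $T$. My first attempt is to check the signs carefully. The $u\,u'$ Burgers contributions cancelled exactly in the passage to \eqref{u.eq.lin}, and the true commutator may combine with a term hidden in $\Lambda$; it is possible that the $\partial_{\alpha}u$ inside $\partial_{t}a$ cancels once $\partial_{s}$ is taken of $\partial_{t}u$ properly, namely as $a^{-1}\partial_{\alpha}\partial_{t}u$, with $\partial_{t}$ applied at fixed $\alpha$. In that case the differentiated equation is linear in $w$. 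If the $w^{2}$ term really survives, I would instead work with $\partial_{\alpha}u$ directly: differentiating \eqref{u.eq.lin} in $\alpha$ at fixed $\alpha$ involves no commutator with $\partial_{t}$. The only nonlinearity then comes from $\partial_{\alpha}$ hitting the coefficient $2/(\rho_{2}a^{2})$ and the $a'$-terms in $\partial_{s}^{2}$. These are controlled by \eqref{e8}, since $a$ is $C^{3}$ in $\alpha$ uniformly in $t$, and they give terms $C(A)(|\partial_{\alpha}^{2}u|+|\partial_{\alpha}u|)$. At an extremum of $\partial_{\alpha}u$ the second-derivative part vanishes, apart from the diffusion term, which has the right sign.

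With a linear inequality $\frac{d}{dt}\|\partial_{\alpha}u\|_{L^\infty}\leqslant \lambda_{1}\|\partial_{\alpha}u\|_{L^\infty}+\varphi_{1}$ in hand, derived via Danskin's theorem as in Lemma \ref{om.Linf}, Gronwall's inequality and the initial bound \eqref{e8.init} on $\partial_{\alpha}u(0)$ give $\sup_{t}\|\partial_{\alpha}u(t)\|_{L^\infty}\leqslant C$. Then $\partial_{\alpha}\omega_{\Gamma}=a'u+a\,\partial_{\alpha}u$ is bounded by \eqref{e8} and \eqref{e19}, which proves \eqref{om.C1}. Finally $v_{2}=v_{1}-u\tau$ by \eqref{curl.jump}, so $\|v_{2}\|_{C^{1}}\leqslant\|v_{1}\|_{C^{1}}+\|u\|_{C^{1}}\|\tau\|_{C^{1}}$. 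This is bounded by \eqref{e9}, \eqref{e8} and the bounds just obtained, which gives \eqref{e11.trace.der}. The regularity needed for the maximum principle follows from $u\in C^{3}$ in $\alpha$, as noted in the proof of Lemma \ref{om.Linf}.
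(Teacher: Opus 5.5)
Your final route is the same as the paper's proof. You differentiate \eqref{u.eq.lin} in $\alpha$ at fixed $t$ and obtain a linear equation $\partial_t\zeta=\frac{2}{\rho_2}\partial_s^2\zeta+c_1\zeta'+c_0\zeta+(\partial_\alpha\Lambda)u+\partial_\alpha f$ for $\zeta=\partial_\alpha u$, with coefficients controlled by \eqref{e8}, \eqref{u.lin.bound} and Lemma \ref{om.Linf}. You then apply the Danskin/maximum-principle/Gronwall argument and conclude via $\partial_\alpha\omega_\Gamma=a'u+a\zeta$ and $v_2=v_1-u\tau$.

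You can drop your first branch. For $w=\partial_s u=\zeta/a$ the $w^2$ term genuinely survives, because it comes from $\partial_t(1/a)$ and $\partial_t a=\tau\cdot\partial_\alpha v_1-\partial_\alpha u$. Also, the $a'$ and $a''$ terms you call nonlinearities are really linear terms with variable coefficients.
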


\begin{proof}
Write $\zeta:=\partial_{\alpha}u$; the letter $\zeta$ (rather than $w$) is used because $w$ denotes the gradient-harmonic remainder of Definition \ref{adm}(vi). The solution is smooth, so \eqref{u.eq.lin} may be differentiated with respect to $\alpha$; the estimate of the resulting equation uses the bound \eqref{u.lin.bound} on the coefficients and on their first derivatives. Differentiating and using
\[
\partial_{\alpha}\partial_{s}^{2}u
=\partial_{s}^{2}\zeta-\frac{2a'}{a^{3}}\,\zeta'
-\Big(\frac{a''}{a^{3}}-\frac{3(a')^{2}}{a^{4}}\Big)\zeta
\]
gives
\begin{equation}\label{zeta.eq.lin}
\partial_{t}\zeta=\frac{2}{\rho_{2}}\,\partial_{s}^{2}\zeta+c_{1}\,\zeta'+c_{0}\,\zeta+g,
\end{equation}
where
\[
c_{1}:=-\frac{4a'}{\rho_{2}a^{3}},\qquad
c_{0}:=\Lambda-\frac{2}{\rho_{2}}\Big(\frac{a''}{a^{3}}-\frac{3(a')^{2}}{a^{4}}\Big),\qquad
g:=(\partial_{\alpha}\Lambda)\,u+\partial_{\alpha}f.
\]
By \eqref{e8} and \eqref{u.lin.bound},
\[
\|c_{1}\|_{L^{\infty}([0,T]\times\mathbb{R})}+\|c_{0}\|_{L^{\infty}([0,T]\times\mathbb{R})}
\leqslant C(A,\sigma,\rho_{1},\rho_{2})=:\kappa_{0},
\]
and by Lemma \ref{om.Linf} together with \eqref{u.lin.bound},
\[
\|g\|_{L^{\infty}([0,T]\times\mathbb{R})}
\leqslant\|\partial_{\alpha}\Lambda\|_{L^{\infty}}\|u\|_{L^{\infty}([0,T]\times\mathbb{R})}
+\|\partial_{\alpha}f\|_{L^{\infty}}
\leqslant C(A,\sigma,\rho_{1},\rho_{2})\,(C_{1}+1)=:G_{0},
\]
with $C_{1}$ the constant of Lemma \ref{om.Linf}. Equation \eqref{zeta.eq.lin} is again linear,
with coefficients independent of $\zeta$, so the argument of Lemma \ref{om.Linf} applies verbatim:
at a point where $\zeta(t,\cdot)$ attains its supremum one has $\zeta'=\partial_{\alpha}\zeta=0$ and hence
$\partial_{s}^{2}\zeta=\partial_{\alpha}^{2}\zeta/a^{2}\leqslant0$, so that Danskin's theorem and
\eqref{zeta.eq.lin} yield, for $N(t):=\|\zeta(t)\|_{L^{\infty}(\mathbb{R})}$,
\[
N'(t)\leqslant\kappa_{0}\,N(t)+G_{0}\qquad\text{for a.e. }t\in[0,T].
\]
The inequality at a {\em negative} minimum follows by applying the same reasoning to
$-\zeta$, which solves \eqref{zeta.eq.lin} with $g$ replaced by $-g$; linearity is used here, and
the bound obtained is two-sided. Gronwall's inequality gives
\begin{equation}\label{zeta.gronwall}
\|\zeta(t)\|_{L^{\infty}(\mathbb{R})}\leqslant e^{\kappa_{0} T}\big(\|\zeta(0)\|_{L^{\infty}(\mathbb{R})}+G_{0}T\big)
\qquad\text{for every }t\in[0,T].
\end{equation}
The initial value is bounded by hypothesis \eqref{e8.init}, which asserts exactly
$\|u(0)\|_{L^{\infty}}+\|\partial_{\alpha}u(0)\|_{L^{\infty}}\leqslant A$, hence
$\|\zeta(0)\|_{L^{\infty}}\leqslant A$. Enlarging the constant if necessary, this gives
\eqref{zeta.gronwall} with $C_{2}=C_{2}(A,T,\sigma,\rho_{1},\rho_{2})$.

Since $\omega_{\Gamma}=au$ and $a$ is of class $C^{1}$ in $\alpha$ by \eqref{e8}, we have
$\partial_{\alpha}\omega_{\Gamma}=a'u+a\zeta$, whence
$\|\partial_{\alpha}\omega_{\Gamma}\|_{L^{\infty}}\leqslant C_{2}$; together with \eqref{e19}
this proves \eqref{om.C1}. Finally \eqref{curl.jump} gives $v_{2}=v_{1}-u\tau$, so that, $\tau$
being of class $C^{3}$ in $\alpha$ by \eqref{e8},
\[
\sup_{t\in[0,T]}\|v_{2}(t,\cdot)\|_{C^{1}(\mathbb{R})}
\leqslant\sup_{t\in[0,T]}\|v_{1}(t,\cdot)\|_{C^{1}(\mathbb{R})}
+C(A)\sup_{t\in[0,T]}\|u(t,\cdot)\|_{C^{1}(\mathbb{R})}
\leqslant C_{2},
\]
which is \eqref{e11.trace.der}, the last bound being \eqref{e19} together with
\eqref{zeta.gronwall} applied to $\zeta=\partial_{\alpha}u$.
\end{proof}

\begin{lemma}\label{grad.euler}
Let $(z,u_{1},u_{2},B,p_{1},p_{2})$ be an $(A,\rho_{1},\rho_{2})$-admissible solution whose
chord-arc functional is positive on $[0,T]$, that is,
$\inf_{t\in[0,T],\,\alpha\ne\beta}F(z)(t,\alpha,\beta)>0$. Then
there is $A_{*}=A_{*}(A,T,\sigma,\rho_{1},\rho_{2})\geqslant A$ such that
\begin{equation}\label{e9.int}
\sup_{t\in[0,T]}\|\nabla u_{1}(t)\|_{L^{\infty}(\Omega_{1}(t))}\leqslant A_{*}.
\end{equation}
\end{lemma}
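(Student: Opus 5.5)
The plan is to promote the boundary bound of Lemma \ref{grad.euler.bdry} to the whole of $\Omega_{1}(t)$ by a maximum principle for harmonic functions, and to use the Biot--Savart representation \eqref{e9.bs} to supply the behaviour at infinity that the maximum principle on an unbounded periodic domain requires. As in the proof of Lemma \ref{grad.euler.bdry}, write $u_{1}=\nabla\psi$ with $\Delta\psi=0$. Then every entry of $\nabla u_{1}=D^{2}\psi$ is harmonic in $\Omega_{1}(t)$, continuous up to $\Gamma_{t}$ since $u_{1}\in C^{3}(\overline{\Omega_{1}(t)})$, and $2\pi$-periodic in $x_{1}$. By \eqref{e9.bdry} its boundary values are bounded by $C(A)$.

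Passing to the quotient cylinder $\Omega_{1}(t)/2\pi\mathbb{Z}$, the domain is bounded in $x_{1}$ and extends to $x_{2}\to-\infty$ (or $+\infty$, depending on the side, cf.\ Remark \ref{rm:loc}). The maximum principle then holds provided each entry of $D^{2}\psi$ stays bounded, or at least grows sublinearly, as $|x_{2}|\to\infty$. A bounded harmonic function on a half-cylinder with bounded boundary values attains its supremum on the boundary or has a limit at infinity; one can use Phragmén--Lindelöf on the cylinder, or the Fourier expansion in $x_{1}$, where nonzero modes decay like $e^{-|k||x_{2}|}$ and the zero mode is affine in $x_{2}$. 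So the task reduces to a \emph{uniform far-field bound} for $\nabla u_{1}$ on $\{x\in\Omega_{1}(t):\operatorname{dist}(x,\Gamma_{t})\geqslant1\}$, say. This bound need not be quantitative, only finite for each $t$, since it serves only to justify the maximum principle. I would nonetheless derive a quantitative one from \eqref{e9.bs}.

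The far-field bound comes from differentiating \eqref{e9.bs}, term by term.
\begin{itemize}
\item The term $c(t)$ contributes nothing.
\item The term $\nabla w$ is bounded by $A$ by \eqref{e9.norm}.
\item For the vortex-sheet term, the derivative of the periodized kernel $\partial_{x}\big[(x-z(\beta))^{\perp}|x-z(\beta)|^{-2}\big]$ is summable over periods at distance $\geqslant1$ from $\Gamma_{t}$. Hence this term is bounded by $C(A)\|\omega_{\Gamma}\|_{L^{\infty}}\leqslant C(A)C_{1}$ by Lemma \ref{om.Linf}. If preferred, one integrates by parts using the $C^{1}$ bound \eqref{om.C1} to handle the periodic sum.
\item For the bulk term, $\Omega_{2}(t)$ lies on the opposite side of $\Gamma_{t}\subset\{|x_{2}|\leqslant A\}$. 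For $x$ at distance $\geqslant1$ from $\Gamma_{t}$, the gradient of the kernel is bounded by $|x-y|^{-2}$ with $|x-y|\geqslant1$. The decay \eqref{e11.loc} makes $\int_{\Omega_{2}}e^{-|y_{2}|/R_{0}}|x-y|^{-2}\,dy$ finite after periodization, using the kernel $\sim e^{-|x_{2}-y_{2}|}$ for the nonzero modes and the summability in $y_{2}$ for the zero mode.
\end{itemize}
Moreover, the zero Fourier mode of $\nabla u_{1}$ tends to a constant as $x_{2}\to\mp\infty$. Indeed, the average in $x_{1}$ of $u_{1}$ is determined by the total circulation and the bulk vorticity below the level $x_{2}$, and this average converges.

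Combining the two steps, each entry of $D^{2}\psi(t,\cdot)$ is harmonic, periodic, bounded on the quotient, and bounded by $C(A)$ on $\Gamma_{t}$. The maximum principle on the half-cylinder therefore gives $\sup_{\Omega_{1}(t)}|\nabla u_{1}|\leqslant\max\{C(A),\limsup_{x_{2}\to\mp\infty}|\nabla u_{1}|\}$. The limit at infinity is itself bounded by the far-field estimates above, giving $A_{*}=A_{*}(A,T,\sigma,\rho_{1},\rho_{2})$, with $C_{1}$ carrying the $T,\sigma,\rho$ dependence. The positivity of the chord-arc functional enters only qualitatively: it guarantees that $\Gamma_{t}$ is embedded, so that $\Omega_{1}(t)$ is a genuine half-cylinder and the jump relation of Lemma \ref{Euler formula} and the maximum principle apply.

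The main obstacle is the far-field bound for the vortex-sheet term. It requires handling the conditionally convergent periodic sum, and in particular checking that the $x_{2}$-average of the gradient stays bounded rather than growing linearly. I would first try the closed-form periodized kernel $\tfrac12\cot\big(\tfrac{x-z}{2}\big)$ in complex notation, whose derivative is $-\tfrac14\csc^{2}$ and is uniformly bounded at distance $\geqslant1$ from $\Gamma_{t}$. This gives the bound directly in terms of $\int_{0}^{2\pi}|\omega_{\Gamma}|\,d\beta$.
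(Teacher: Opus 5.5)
Your argument is correct, but it departs from the paper's at both of its main steps.

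\emph{Qualitative boundedness.} The paper bounds $u_{1}$ itself on all of $\Omega_{1}(t)$. Near $\Gamma_{t}$ this needs a local analysis of the vortex-sheet integral, using the chord--arc constant $\kappa_{*}$, a tubular neighbourhood and the Lipschitz bound \eqref{om.C1}. The paper then controls $\nabla u_{1}$ by interior gradient estimates away from $\Gamma_{t}$ and by $C^{3}$-regularity plus periodicity near it. You instead differentiate \eqref{e9.bs} directly, and only at distance $\geqslant1$ from $\Gamma_{t}$, where the differentiated periodized kernel is bounded. So \eqref{e19} alone suffices there, and the chord--arc hypothesis becomes essentially superfluous. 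This is simpler. You should, however, state how the collar $\{x\in\Omega_{1}(t):\operatorname{dist}(x,\Gamma_{t})<1\}$ is handled: $\nabla u_{1}$ is bounded there because $u_{1}\in C^{3}(\overline{\Omega_{1}(t)})$ and the collar has compact closure modulo the period, exactly as in the paper.

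\emph{Maximum principle.} The paper applies it to $h-\eta\log|x-x_{0}|$ with $x_{0}\in\Omega_{2}(t)$. This needs only a solution-dependent bound on $h$ and yields $\sup_{\Omega_{1}(t)}|h|\leqslant\sup_{\Gamma_{t}}|h|\leqslant C(A)$, so infinity never contributes. You extract a limit $a_{0}$ at infinity from the Fourier expansion and bound it by your quantitative far-field estimate. That makes $A_{*}$ depend on $C_{1}$, which is harmless. It also makes $A_{*}$ depend on $R_{0}$, through your use of \eqref{e11.loc} for the bulk term, whereas the statement has $A_{*}$ independent of $R_{0}$.

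Both extra dependences are avoidable. Under $\zeta=e^{\mp i(x_{1}+ix_{2})}$ the quotient $\Omega_{1}(t)/2\pi\mathbb{Z}$ becomes a punctured Jordan domain. A bounded harmonic function extends across the puncture, so $a_{0}$ already lies between $\inf_{\Gamma_{t}}h$ and $\sup_{\Gamma_{t}}h$; this is precisely what the logarithmic barrier encodes. And if you do want a quantitative far-field bound, note that the $x$-gradient of the periodized kernel \eqref{P.kernel} decays like $e^{-|x_{2}-y_{2}|}$. Hence $\|\omega\|_{L^{\infty}}\leqslant A$ bounds the bulk contribution without the decay weight.
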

\begin{proof}
The positivity of the chord-arc functional is used only qualitatively, to make the quantities
$M_{t}$ and $\kappa_{*}$ below finite; the bound \eqref{e9.int} itself does not depend on it. In
the proof of Theorem \ref{thm} the lemma is applied with $T$ replaced by $\bar{t}$, and there the
required positivity is \eqref{g.range}.

We continue the argument of Lemma \ref{grad.euler.bdry}: as there, $u_{1}=\nabla\psi$ with
$\Delta\psi=0$ in $\Omega_{1}(t)$, so that every entry $h$ of $\nabla u_{1}=D^{2}\psi$ is
harmonic in $\Omega_{1}(t)$, and \eqref{e9.bdry} gives
$\sup_{\Gamma_{t}}|\nabla u_{1}|\leqslant C(A)$.

\emph{The maximum principle.} Fix $t$ and an entry $h$ of $\nabla u_{1}$; $h$ is
harmonic in $\Omega_{1}(t)$ and continuous on $\overline{\Omega_{1}(t)}$, because
$u_{1}\in C^{3}(\overline{\Omega_{1}(t)})$. We establish the finiteness needed to justify a
maximum-principle argument on this horizontally unbounded domain. By the structural representation \eqref{e9.bs} one has
$u_{1}=c+w+U_{1}+\Phi_{2}$ on $\Omega_{1}(t)$, where $U_{1}$ and $\Phi_{2}$ denote the
interface and the bulk Biot--Savart terms respectively.
The first two summands are bounded by $A$ in view of \eqref{e9.norm}, and
$\|\omega_{\Gamma}\|_{L^{\infty}}+\|\omega_{\Gamma}\|_{C^{0,1}}\leqslant C(A)$ by
\eqref{e19} and \eqref{om.C1}; it remains to bound the two Biot--Savart terms, which we do by an explicit
periodization.

\emph{The periodized kernel.} Since $z(t,\alpha)-(\alpha,0)$ is $2\pi$-periodic in $\alpha$, the
curve $\Gamma_{t}$ and the region $\Omega_{2}(t)$ are invariant under $x_{1}\mapsto x_{1}+2\pi$,
and $\omega_{\Gamma}$, $\omega$ are $2\pi$-periodic in their first variable. The principal value
prescribed after \eqref{e9.bs} is therefore exactly this periodization: an integral over
$\mathbb{R}$, or over $\Omega_{2}(t)$, against a $2\pi$-periodic density equals the integral over
one period against
\begin{equation}\label{P.kernel}
\begin{aligned}
P(h,t)&:=\mathop{\mathrm{sym}}\sum_{k\in\mathbb Z}\frac{(-h,\ t-2\pi k)}{(t-2\pi k)^{2}+h^{2}}\\
&=\frac12\Big(-\frac{\sinh h}{\cosh h-\cos t},\ \frac{\sin t}{\cosh h-\cos t}\Big),
\qquad h\ne0,\ t\in\mathbb R,
\end{aligned}
\end{equation}
where $\mathrm{sym}\sum$ denotes the limit of the sums over $|k|\leqslant N$. The first component is
absolutely summable; the second has $O(k^{-1})$ terms, but its symmetric pairs are $O(k^{-2})$.
The closed form obtained above is the standard partial-fraction identity for the cotangent and fixes
precisely the principal-value convention prescribed after \eqref{e9.bs}. The components satisfy
\begin{equation}\label{P.est}
\begin{aligned}
\int_{0}^{2\pi}|P_{1}(h,t)|\,dt&=\pi,\\
\int_{0}^{2\pi}|P_{2}(h,t)|\,dt
&=\log\frac{\cosh h+1}{\cosh h-1}=2\log\coth\frac{|h|}{2}\\
&\leqslant
\begin{cases}
C\big(1+\log\tfrac1{|h|}\big),&0<|h|\leqslant1,\\[1mm]
C,&|h|>1.
\end{cases}
\end{aligned}
\end{equation}
the first identity follows from $\int_{0}^{2\pi}\frac{dt}{\cosh h-\cos t}
=\frac{2\pi}{|\sinh h|}$ together with the factor $\tfrac12$ in \eqref{P.kernel}; for the second,
split the integral at $t=\pi$ and substitute $u=\cos t$ on each half to obtain the logarithmic
expression in \eqref{P.est}, whose small- and large-$|h|$ bounds are immediate. Hence
\begin{equation}\label{P.int}
\int_{\mathbb{R}}\int_{0}^{2\pi}|P(x_{2}-y_{2},x_{1}-y_{1})|\,e^{-|y_{2}|/R_{0}}\,dy_{1}\,dy_{2}\leqslant C(R_{0}),
\end{equation}
uniformly in $x$, because the inner integral in $y_{1}$ is bounded by
$C(1+|\log|x_{2}-y_{2}||)$ for $|x_{2}-y_{2}|\leqslant1$ and by $C$ otherwise, by
\eqref{P.est}, while the remaining integral in $y_{2}$ against the exponential weight converges:
the logarithmic singularity is integrable and the exponential weight makes the contribution of
$|y_{2}|\geqslant1$ finite.

\emph{The bulk term.} By \eqref{e11.loc} the density $\omega$ decays exponentially,
$|\omega(t,y)|\leqslant A e^{-|y_{2}|/R_{0}}$, and by \eqref{e11} it is bounded by $A$; moreover
$x\in\Omega_{1}(t)$ and $y\in\Omega_{2}(t)$ lie on opposite sides of
$\Gamma_{t}$, so $h=x_{2}-y_{2}$ vanishes only on a set of measure zero in the integration and
\eqref{P.est} applies there. Hence
\[
|\Phi_{2}(t,x)|\leqslant\frac{A}{2\pi}\int_{\mathbb{R}}\int_{0}^{2\pi}
|P(x_{2}-y_{2},x_{1}-y_{1})|\,e^{-|y_{2}|/R_{0}}\,dy_{1}\,dy_{2}\leqslant C(A,R_{0})
\]
by \eqref{P.int}.

\emph{The interface term.} We show that $U_{1}(t,x)$ is bounded for each fixed admissible
solution, with a bound that may depend on that solution through its chord--arc constant. Such a
bound is needed only to justify the maximum principle; the boundary estimate
$\sup_{\Gamma_t}|\nabla u_1|\leqslant C(A)$ does not use it.
Set
\[
\kappa_{*}:=\inf_{t\in[0,T],\,\alpha\ne\beta}F(z)(t,\alpha,\beta)>0,
\]
the positivity being the hypothesis of the lemma. This constant is positive because: near
$\alpha=\beta$ use
$|z_{\alpha}|\geqslant A^{-1}$ and the uniform $C^{4}$ bound; for $|\alpha-\beta|\geqslant 4A$ use
$F\geqslant 1-2A/|\alpha-\beta|\geqslant \tfrac12$; on the remaining compact set modulo the simultaneous
period shift, positivity follows from embeddedness and compactness in $(t,\alpha,\beta)$.
For fixed $t$, the one-period form of the interface integral is
\[
U_{1}(t,x)=\frac1{2\pi}\int_{0}^{2\pi}
P\big(x_{2}-z_{2}(t,\beta),\,x_{1}-z_{1}(t,\beta)\big)\,
\omega_{\Gamma}(t,\beta)\,d\beta,
\]
with the symmetric principal value at the singular point; this follows from
$z(t,\beta+2\pi k)=z(t,\beta)+(2\pi k,0)$ and the definition of $P$ in \eqref{P.kernel}. The
periodized kernel $P(h,r)$ is smooth, bounded, and $2\pi$-periodic in $r$ on every set
$h^{2}+\mathrm{dist}(r,2\pi\mathbb{Z})^{2}\geqslant r_{0}^{2}>0$: this follows directly from its closed
form in \eqref{P.kernel}, since the denominator $\cosh h-\cos r$ vanishes only at
$(h,r)=(0,2\pi k)$ and $P$ decays as $|h|\to\infty$.
Let $d:=\mathrm{dist}(x,\Gamma_{t})$. If $d\geqslant r_{0}$, the whole periodized integrand is bounded,
so $|U_{1}(t,x)|\leqslant C(A,r_{0})$. If $d<r_{0}$, choose $r_{0}$ smaller than a tubular-neighborhood radius of $\Gamma_t$; such a
radius is positive uniformly for this fixed solution on the compact time interval, by the uniform
$C^{2}$ bound and the positive chord--arc constant $\kappa_{*}$. Then the nearest point is unique
modulo the period, say $z(t,\alpha_{0})$, and
$x=z(t,\alpha_{0})+d\nu(t,\alpha_{0})$. Choose a fixed small arclength radius $\delta>0$,
depending only on the curvature bound, and split the periodized integral into the local arc
$|s|<\delta$ about this point and its complement. The chord--arc lower bound $\kappa_{*}$ ensures
that every point of the complementary arc, including all its periodic translates, stays a positive
distance (depending on $\kappa_{*},\delta,r_{0}$) from $x$; hence that contribution is bounded by
$C(A,\kappa_{*},\delta,r_{0})$ using the boundedness of $P$ away from its singular set.
For the local arc, the $k=0$ term is the singular kernel and the terms $k\ne0$ in
\eqref{P.kernel} are smooth and bounded there by a constant depending only on $A$; the latter
contribute at most $C(A)\|\omega_{\Gamma}\|_{L^{\infty}}$. Use arclength $s$ for the singular
term and write $\widetilde\omega(s)\,ds=\omega_{\Gamma}(t,\beta)\,d\beta$; by \eqref{om.C1}
and the $C^{4}$ bound for $z$, which gives $1/A\leqslant a\leqslant A+1$ and
$|\partial_{\alpha}(1/a)|\leqslant C(A)$, hence
$\|\widetilde\omega\|_{C^{0,1}}\leqslant C(A)$. In the orthonormal frame
$(\tau,\nu)$ at $s=0$,
where $\nu=-\tau^{\perp}$, Taylor's theorem gives
\[
x-z(t,s)=-s\tau+d\nu+O(A s^{2}),\qquad
(x-z(t,s))^{\perp}=d\tau+s\nu+O(A s^{2}),
\]
and consequently
\[
\frac{(x-z(t,s))^{\perp}}{|x-z(t,s)|^{2}}
=\frac{(d,s)}{s^{2}+d^{2}}+O(A),\qquad |s|<\delta,
\]
where the two components are in the $(\tau,\nu)$ frame. The constant part
$\widetilde\omega(0)$ of the density contributes at most
$C(A)\big(\int_{-\delta}^{\delta}\frac{d}{s^{2}+d^{2}}ds+
|\mathrm{p.v.}\int_{-\delta}^{\delta}\frac{s}{s^{2}+d^{2}}ds|\big)\leqslant C(A)$.
For the difference part, $|\widetilde\omega(s)-\widetilde\omega(0)|\leqslant C(A)|s|$ and
$|(d,s)|/(s^{2}+d^{2})=(s^{2}+d^{2})^{-1/2}$, so its integral is bounded by
$C(A)\int_{-\delta}^{\delta}|s|/(s^{2}+d^{2})^{1/2}ds\leqslant C(A)\delta$; the $O(A)$ kernel
remainder is bounded as well. Thus $|U_{1}(t,x)|\leqslant C(A,\kappa_{*})$ uniformly for
$x\in\Omega_{1}(t)$ and $t\in[0,T]$.

Combining the four terms,
\begin{equation}\label{u1.bdd}
|u_{1}(t,x)|\leqslant C(A,R_{0},\kappa_{*}),\qquad x\in\Omega_{1}(t),\ t\in[0,T],
\end{equation}
so $u_{1}$ is bounded for this fixed solution. We also need $h=\nabla u_{1}$ to be bounded in order
to use the barrier at infinity; boundedness of $u_{1}$ alone would not imply this. Choose a tubular
neighborhood of $\Gamma_{t}$ of radius smaller than the reach used above. On its closure modulo the
horizontal period, $h$ is bounded because $u_{1}\in C^{3}(\overline{\Omega_{1}(t)})$ and is periodic.
On the complement, every ball of radius $r_{0}/4$ is contained in $\Omega_{1}(t)$, and the interior
gradient estimate for the harmonic components of $u_{1}$ gives
$|h(x)|\leqslant C r_{0}^{-1}\|u_{1}\|_{L^{\infty}(\Omega_{1}(t))}$. Thus $h$ is bounded on
$\Omega_{1}(t)$ by a finite constant $M_{t}$, which may depend on the particular solution.
Choose $x_{0}\in\Omega_{2}(t)$, set $d_{0}:=\mathrm{dist}(x_{0},\Gamma_{t})>0$, and fix
$\eta>0$. For any fixed $x\in\Omega_{1}(t)$ take $R>|x-x_{0}|$ so large that
$M_{t}-\eta\log R\leqslant\sup_{\Gamma_{t}}h-\eta\log d_{0}$. Applying the maximum principle to
$h-\eta\log|x-x_{0}|$ on $D_{R}:=\Omega_{1}(t)\cap B_{R}(x_{0})$ then yields
\[
h(x)-\eta\log|x-x_{0}|\leqslant\sup_{\Gamma_{t}}h-\eta\log d_{0}.
\]
Letting $\eta\downarrow0$ gives $h(x)\leqslant\sup_{\Gamma_{t}}h$. Repeating the argument for $-h$
shows $\sup_{\Omega_{1}(t)}|h|\leqslant\sup_{\Gamma_{t}}|h|\leqslant C(A)$, which is
\eqref{e9.int}.
\end{proof}

\begin{remark}\label{rm:normalize}
Both constants of this section depend on $T$ as well as on $A$, $\sigma$ and the densities: $C_{1}$ and $C_{2}$ contain the Gronwall factors $e^{\lambda T}$ and $e^{\kappa_{0}T}$ of \eqref{e19.gronwall} and \eqref{zeta.gronwall}, and $C_{2}$ additionally contains the constant $C_{1}$, since the forcing $\partial_{\alpha}\Lambda\,u+\partial_{\alpha}f$ of the differentiated equation \eqref{zeta.eq.lin} is bounded using the estimate for $u$ itself. Since every statement of this paper is monotone in $A$, we may
replace $A$ by any larger constant and keep the letter $A$; taking
$A':=\max\{A,C_{1},C_{2},\sigma,A_{*},C(A)(1+\|H(0,\cdot)\|_{C^{2}(\mathbb{R})})\}$, where $A_{*}$ is
the constant of Lemma \ref{grad.euler} and the last entry is the bound \eqref{B.trace.bound}, the
solution is $(A',\rho_{1},\rho_{2})$-admissible and the five bounds
\[
\begin{aligned}
&\|\omega_{\Gamma}\|_{L^{\infty}}\leqslant A',\qquad
\sup_{t\in[0,T]}\|\omega_{\Gamma}(t,\cdot)\|_{C^{1}(\mathbb{R})}\leqslant A',\qquad
\sup_{t\in[0,T]}\|v_{2}(t,\cdot)\|_{C^{1}(\mathbb{R})}\leqslant A',\\
&\sup_{t\in[0,T]}\|\nabla u_{1}(t)\|_{L^{\infty}(\Omega_{1}(t))}\leqslant A',\qquad
\sup_{t\in[0,T]}\|H(t,\cdot)\|_{C^{2}(\mathbb{R})}\leqslant A'
\end{aligned}
\]
hold, the first three being taken in the variable $\alpha$ alone, as in Lemmas \ref{om.Linf} and
\ref{om.C1.est}. Renaming $A'\mapsto A$, we may use all five with the constant $A$ of Definition
\ref{adm}. From this point on these five bounds are used freely with the constant $A$,
and no further dependence on the densities or on $\sigma$ occurs anywhere in the estimates of
the geometric-reduction and singular-integral sections; the dependence asserted in Theorem \ref{thm}
is exactly the one inherited through $A'$. 
\end{remark}

\section{Geometry of a candidate splash point}\label{sec:geometry}
We collect the elementary properties of the chord-arc function $F(z)$ that we will
need. They show that, as soon as $F(z)$ becomes small, the interface develops a pair of
almost-touching points; this is the configuration analyzed in the proof of the main theorem.

\begin{lemma}\label{chord}\mbox{}
	\begin{enumerate}[\rm (1)]
		\item  There is $\varepsilon_{1}=\varepsilon_{1}(A)>0$ sufficiently small such that
		\begin{equation}\label{e20}
			F(z)(t,\alpha,\beta)\geqslant\varepsilon_{1}, \quad \text{if } |\alpha-\beta|\leqslant\varepsilon_{1}\ \text{and}\ (t,\alpha,\beta)\in[0,T]\times\mathbb{R}\times\mathbb{R}.
		\end{equation} 
		
		\item There is a constant $\varepsilon_{2}=\varepsilon_{2}(A)$ sufficiently small with the following properties: if
		\begin{equation}\label{e21}
			\inf_{\alpha,\beta\in\mathbb{R}}F(z)(t,\alpha,\beta)=\vartheta\leqslant\varepsilon_{2},
		\end{equation}
		for some $t\in[0,T]$, then there are points $\alpha_{1},\alpha_{2}\in\mathbb{R}$ such that
		\begin{equation}\label{e22}
			\begin{aligned}
				|\alpha_{1}-\alpha_{2}|&\in[2\varepsilon_{1},\frac{1}{2\varepsilon_{1}}], \\
				|z(t,\alpha_{1})-z(t,\alpha_{2})|&=\min_{\alpha,\beta\in\mathbb{R},|\alpha-\beta|\in[\varepsilon_{1},\frac{1}{\varepsilon_{1}}]}|z(t,\alpha)-z(t,\beta)|\approx\vartheta.
			\end{aligned}
		\end{equation}
		Also,
\begin{equation}\label{e23}
			\begin{aligned}
				(z(t,\alpha_{1}):z(t,\alpha_{2}))\cap\Gamma_{t}&=\varnothing, \\
				[z(t,\alpha_{1})-z(t,\alpha_{2})]\cdot z_{\alpha}(t,\alpha_{1})&=[z(t,\alpha_{1})-z(t,\alpha_{2})]\cdot z_{\alpha}(t,\alpha_{2})=0,
			\end{aligned}
		\end{equation}
		and
		\begin{equation}\label{e24}
			z_{\alpha}(t,\alpha_{1})\cdot z_{\alpha}(t,\alpha_{2})<0.
		\end{equation}
		where $(p:q)$ denotes the open line segment in the plane between the points $p$ and $q$.
	\end{enumerate}
\end{lemma}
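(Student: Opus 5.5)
Part (1) is a Taylor estimate. For $|\alpha-\beta|\leqslant\varepsilon_{1}$ we write
\[
z(t,\alpha)-z(t,\beta)=(\alpha-\beta)z_{\alpha}(t,\beta)+R,\qquad |R|\leqslant\tfrac12\|z_{\alpha\alpha}\|_{L^\infty}|\alpha-\beta|^{2}\leqslant A|\alpha-\beta|^{2},
\]
using \eqref{e8}. Together with $|z_{\alpha}|\geqslant 1/A$ this gives $F(z)\geqslant 1/A-A\varepsilon_{1}$. Choosing $\varepsilon_{1}\leqslant 1/(2A^{2})$ and $\varepsilon_{1}\leqslant 1/(2A)$ yields \eqref{e20}. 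The same estimate holds at $\alpha=\beta$ by definition \eqref{e7}.

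For part (2), we first localize the infimum in $|\alpha-\beta|$.
\begin{itemize}
\item \emph{Short range.} For $|\alpha-\beta|\leqslant 2\varepsilon_{1}$ we rerun the Taylor bound with $2\varepsilon_{1}$ in place of $\varepsilon_{1}$. Shrinking $\varepsilon_{1}$ gives $F\geqslant 1/(2A)$ there.
\item \emph{Long range.} For $|\alpha-\beta|\geqslant 1/(2\varepsilon_{1})$, \eqref{e8} gives $|z(\alpha)-(\alpha,0)|\leqslant A$. Hence $|z(\alpha)-z(\beta)|\geqslant|\alpha-\beta|-2A$, so $F\geqslant 1-4A\varepsilon_{1}\geqslant\tfrac12$.
\end{itemize}
So if $\vartheta\leqslant\varepsilon_{2}<\min\{1/(2A),1/2\}$, the infimum is approached only on $|\alpha-\beta|\in[2\varepsilon_{1},1/(2\varepsilon_{1})]$.

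Next we minimize the distance itself. On that middle set, $F\approx\vartheta$ means $|z(\alpha)-z(\beta)|\approx\vartheta$ up to factors $\varepsilon_{1}^{\pm1}$. We then consider the minimum of $|z(\alpha)-z(\beta)|$ over the closed set $|\alpha-\beta|\in[\varepsilon_{1},1/\varepsilon_{1}]$. By periodicity $z(\alpha+2\pi)=z(\alpha)+(2\pi,0)$, we may reduce to $\alpha\in[0,2\pi]$. The set is then compact, so a minimizing pair $(\alpha_{1},\alpha_{2})$ exists, with minimum value $d\lesssim\vartheta$.

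We must check that the minimizer is interior, i.e. that $|\alpha_{1}-\alpha_{2}|\in[2\varepsilon_{1},1/(2\varepsilon_{1})]$. The short- and long-range bounds above give $|z(\alpha)-z(\beta)|\geqslant\varepsilon_{1}/(2A)\cdot$const near the endpoints. This exceeds $C\vartheta$ once $\varepsilon_{2}$ is small relative to $\varepsilon_{1}$. The upper bound on $d$ comes from a near-infimum pair, and the lower bound $d\gtrsim\vartheta\,\varepsilon_{1}$ from $F\geqslant\vartheta$; together these give $d\approx\vartheta$, which is \eqref{e22}.

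Interiority makes the first-order conditions hold. Differentiating $|z(\alpha)-z(\beta)|^{2}$ in $\alpha$ and in $\beta$ at the interior minimum gives the orthogonality in \eqref{e23}. For the segment condition, suppose some point $z(\gamma)$ lay on $(z(\alpha_{1}):z(\alpha_{2}))$. Then $|z(\gamma)-z(\alpha_{j})|<d$ for both $j$. Since $d$ is small, the short-range bound forces $|\gamma-\alpha_{j}|\geqslant\varepsilon_{1}$ for each $j$. The long-range bound forces $|\gamma-\alpha_{j}|\leqslant1/\varepsilon_{1}$, using $d$ small and the triangle inequality, at least for one of the two indices, and we choose $\gamma$ or a translate accordingly. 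The pair $(\alpha_{j},\gamma)$ then beats the minimum, a contradiction.

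The main obstacle is the sign condition \eqref{e24}. By \eqref{e23}, both tangents $z_{\alpha}(\alpha_{1})$ and $z_{\alpha}(\alpha_{2})$ are perpendicular to the chord $e=(z(\alpha_{1})-z(\alpha_{2}))/d$. So they are parallel or antiparallel, and we must exclude parallel.
\begin{itemize}
\item \emph{Setup.} Take coordinates with $e$ vertical. Near each $\alpha_{j}$, write the curve as a graph over the horizontal axis with $C^{2}$ norm $\leqslant C(A)$.
\item \emph{Orientation.} Use the convention that $\Omega_{2}$ lies to the left of the parametrization. If the tangents were parallel, the two arcs would have the same orientation. The open segment between them misses $\Gamma_{t}$ and so lies in a single component, say $\Omega_{1}$. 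Then the lower arc would have $\Omega_{1}$ above it while the upper arc, by the same orientation, would have $\Omega_{1}$ below it. This is inconsistent with which side of each arc the segment exits: the segment leaves the lower arc upward and the upper arc downward, so the two normals pointing into the segment's component are opposite, which forces the tangents to be opposite.
\end{itemize}
Concretely, we compute $\nu(\alpha_{j})\cdot e$ and show the two signs agree exactly when $\tau(\alpha_{1})\cdot\tau(\alpha_{2})<0$. Making "the segment lies on the $\Omega_{1}$ side of both arcs" rigorous is where I expect most care to be needed. I would use a signed-distance (tubular-neighborhood) argument near each endpoint, valid because $d\ll$ the $C^{2}$ reach $C(A)^{-1}$, together with connectedness of the segment.
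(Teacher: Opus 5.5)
Part (1), the localization to the window, the existence of a minimizing pair, the bound $d\approx\vartheta$ and the orthogonality in the second line of \eqref{e23} all match the paper. The step that fails as written is your proof of the first line of \eqref{e23}. The short-range bound does \emph{not} force $|\gamma-\alpha_j|\geqslant\varepsilon_1$. For $|\gamma-\alpha_j|\leqslant\varepsilon_1$ it only gives $|\gamma-\alpha_j|\leqslant 2A|z(\gamma)-z(\alpha_j)|<2Ad$. That is compatible with a point $z(\gamma)$ on the chord very close to $z(\alpha_1)$, with $\gamma$ very close to $\alpha_1$. This is exactly the case the paper treats separately (its Case 1): it uses $[z(\alpha_3)-z(\alpha_1)]\cdot z_\alpha(\alpha_1)=0$ together with Taylor's formula to force $\alpha_3=\alpha_1$.

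The repair is short. Since $|\alpha_1-\alpha_2|\geqslant2\varepsilon_1$, at least one index $j$ has $|\gamma-\alpha_j|\geqslant\varepsilon_1$. The long-range estimate gives $|\gamma-\alpha_j|<d+2A\leqslant\varepsilon_1^{-1}$ for \emph{both} indices. So $(\gamma,\alpha_j)$ lies in the window, and $|z(\gamma)-z(\alpha_j)|<d$ contradicts minimality. Your qualifier ``for at least one of the two indices'' therefore belongs to the lower bound, not the upper one. Drop the remark about choosing a translate of $\gamma$: $z(\gamma+2\pi k)=z(\gamma)+(2\pi k,0)$ is no longer on the chord, and by injectivity the label $\gamma$ is unique.

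For \eqref{e24} your route differs from the paper's and is valid, and shorter. The paper never uses which side of $\Gamma_t$ is which. It closes the arc $z([\alpha_1,\alpha_2])$ with the chord into a Jordan curve and places the two unbounded arcs in the unbounded component $O_1$. If the tangents were parallel, the chord and the translated families $R^{\pm}_\delta\subset O_1$ would cover a disc about the midpoint, contradicting that the midpoint lies on $\partial O_2$.

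You instead use the standing hypotheses $\Gamma_t=\partial\Omega_1=\partial\Omega_2$ and the convention that $\Omega_2$ lies to the left. The open chord is connected and misses $\Gamma_t$, so it lies in a single component. It leaves $z(\alpha_1)$ along $e$ and $z(\alpha_2)$ along $-e$, both normal to the tangents. Both must be the normal pointing into that component, which forces $\tau(\alpha_2)=-\tau(\alpha_1)$.

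The rigor you worry about is purely qualitative:
\begin{itemize}
\item Take a small disc about $z(\alpha_j)$ meeting $\Gamma_t$ in a single graph arc. Its two half-discs lie in different components, because $z(\alpha_j)\in\partial\Omega_1\cap\partial\Omega_2$.
\item The left half-disc is $\Omega_2$, and this is constant along the curve by connectedness of $\mathbb{R}$.
\item The point $z(\alpha_j)\pm se$ lies in the appropriate half-disc for all small $s>0$.
\end{itemize}
Do not phrase this through a tubular neighbourhood of width comparable to $C(A)^{-1}$. The other sheet is at distance $d$, so the reach of $\Gamma_t$ is at most $d/2$. Only arbitrarily small $s$ is needed.

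The paper's argument is more intrinsic, using only embeddedness and the Jordan curve theorem. Yours trades that for the side structure assumed in Section~\ref{sec:model}, which the paper does provide.
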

\begin{proof}
For \eqref{e20}, if $\alpha=\beta$ then $F(z)(t,\alpha,\beta)=|z_{\alpha}(t,\alpha)|\geqslant \frac{1}{A}$ by \eqref{e8}.
If $\alpha\ne\beta$, Taylor's formula gives
\[
z(t,\alpha)-z(t,\beta)=z_{\alpha}(t,\beta)(\alpha-\beta)+R,\qquad |R|\leqslant \tfrac12\|z_{\alpha\alpha}\|_{L^{\infty}}|\alpha-\beta|^{2},
\]
so that, dividing by $|\alpha-\beta|$ and using $|z_{\alpha}|\geqslant \frac{1}{A}$ together with $\|z_{\alpha\alpha}\|_{L^{\infty}}\leqslant A$,
\[
F(z)(t,\alpha,\beta)\geqslant |z_{\alpha}(t,\beta)|-\tfrac12\|z_{\alpha\alpha}\|_{L^{\infty}}|\alpha-\beta|\geqslant \frac{1}{A}-A|\alpha-\beta|.
\]
For $|\alpha-\beta|\leqslant \varepsilon_{1}$ with $\varepsilon_{1}\leqslant \frac{1}{2A^{2}}$ this gives
$F(z)(t,\alpha,\beta)\geqslant \frac1A-A|\alpha-\beta|\geqslant \frac1A-\frac{1}{2A}=\frac{1}{2A}\geqslant \varepsilon_{1}$,
which is \eqref{e20}. 

For $(2)$, we assume that $t$ is fixed and drop it from the notation. We begin with the
following observation, which will allow us to take $\alpha_{1},\alpha_{2}$ to be minimizers over
a compact window.

First we show that the infimum in \eqref{e21} is attained at a pair whose parameter
separation lies in $[\varepsilon_{1},\varepsilon_{1}^{-1}]$. On the one hand, by \eqref{e20} the
value of $F(z)$ is not small when $|\alpha-\beta|\leqslant\varepsilon_{1}$. On the other hand,
the periodicity condition of Definition \ref{adm} says that $\alpha\mapsto z(t,\alpha)-(\alpha,0)$
is $2\pi$-periodic, so that $\|z(t,\alpha)-(\alpha,0)\|_{L^{\infty}}\leqslant  A$ by \eqref{e8} and
hence
\[
|z(t,\alpha)-z(t,\beta)|\geqslant|\alpha-\beta|-2A,
\qquad\text{i.e.,}\qquad
F(z)(t,\alpha,\beta)\geqslant1-\frac{2A}{|\alpha-\beta|}\geqslant\frac12
\]
whenever $|\alpha-\beta|\geqslant4A$. Shrinking $\varepsilon_{1}=\varepsilon_{1}(A)$ once more if
necessary, we may assume
\begin{equation}\label{eps1.list}
4A\leqslant\varepsilon_{1}^{-1},\qquad
\varepsilon_{1}\leqslant\frac18,\qquad
\varepsilon_{1}\leqslant\frac{1}{2A^{4}}.
\end{equation}
The last condition is used only in Case 1 of the proof of \eqref{e25} below; the first two are used
throughout.
Since $2\vartheta\leqslant2\varepsilon_{2}\leqslant\frac12$ after shrinking
$\varepsilon_{2}=\varepsilon_{2}(A)$, every pair with $F(z)(t,\alpha,\beta)\leqslant2\vartheta$
must satisfy $\varepsilon_{1}\leqslant|\alpha-\beta|\leqslant\varepsilon_{1}^{-1}$. Applying this
to a nearly minimizing pair of the infimum in \eqref{e21} we obtain $\alpha_{0},\beta_{0}$ with
\begin{equation}\label{eq.ast}
\varepsilon_{1}\leqslant|\alpha_{0}-\beta_{0}|\leqslant\varepsilon_{1}^{-1},
\qquad
|z(\alpha_{0})-z(\beta_{0})|\leqslant2\vartheta\,|\alpha_{0}-\beta_{0}|
\leqslant2\vartheta\,\varepsilon_{1}^{-1}.
\end{equation}
Next we check that the infimum over the window
$|\alpha-\beta|\in[\varepsilon_{1},\varepsilon_{1}^{-1}]$ is actually attained at a pair with
$|\alpha-\beta|\in[2\varepsilon_{1},\frac{1}{2\varepsilon_{1}}]$, which requires excluding the two
boundary annuli. If $\varepsilon_{1}\leqslant|\alpha-\beta|\leqslant2\varepsilon_{1}$ then
$|\alpha-\beta|\leqslant\frac{1}{2A^{2}}$, so the Taylor estimate used in the proof of
\eqref{e20} gives $F(z)(t,\alpha,\beta)\geqslant\frac{1}{2A}$ and therefore
$|z(\alpha)-z(\beta)|\geqslant\frac{|\alpha-\beta|}{2A}\geqslant\frac{\varepsilon_{1}}{2A}$. If
$\frac{1}{2\varepsilon_{1}}\leqslant|\alpha-\beta|\leqslant\varepsilon_{1}^{-1}$, then
$|z(\alpha)-z(\beta)|\geqslant|\alpha-\beta|-2A\geqslant\frac{1}{2\varepsilon_{1}}-2A
\geqslant\frac{1}{4\varepsilon_{1}}$, using $4A\varepsilon_{1}\leqslant\frac12$. Both lower bounds
exceed the upper bound $2\vartheta\,\varepsilon_{1}^{-1}$ obtained in \eqref{eq.ast} as soon as
$2\varepsilon_{2}\varepsilon_{1}^{-1}<\min\{\frac{\varepsilon_{1}}{2A},\frac{1}{4\varepsilon_{1}}\}$,
which holds after a further shrinking of $\varepsilon_{2}=\varepsilon_{2}(A)$. Hence the
infimum of $|z(\alpha)-z(\beta)|$ over $|\alpha-\beta|\in[\varepsilon_{1},\varepsilon_{1}^{-1}]$
coincides with its infimum over the compact window
$|\alpha-\beta|\in[2\varepsilon_{1},\frac{1}{2\varepsilon_{1}}]$, and the latter is attained; let
$\alpha_{1},\alpha_{2}$ realize it. Then \eqref{e22} holds: the constraint on
$|\alpha_{1}-\alpha_{2}|$ is part of the definition of the window, and, writing
$d_{0}:=|z(\alpha_{1})-z(\alpha_{2})|$, we have on the one hand
$d_{0}\leqslant|z(\alpha_{0})-z(\beta_{0})|\leqslant2\vartheta\varepsilon_{1}^{-1}$ and on the
other hand $d_{0}=F(z)(t,\alpha_{1},\alpha_{2})|\alpha_{1}-\alpha_{2}|\geqslant
2\vartheta\varepsilon_{1}$, because $(\alpha_{1},\alpha_{2})$ belongs to the larger window so
that $F(z)(t,\alpha_{1},\alpha_{2})\geqslant\inf_{\alpha,\beta}F(z)(t,\alpha,\beta)=\vartheta$.
Thus $d_{0}\approx_{\varepsilon_{1}}\vartheta$, which is the asserted relation in
\eqref{e22} (the implied constants depend on $\varepsilon_{1}$, hence only on $A$).

For \eqref{e23}, the function $\alpha\mapsto|z(\alpha)-z(\alpha_{2})|^{2}$ is locally minimized at $\alpha=\alpha_{1}$, while $\alpha\mapsto|z(\alpha_{1})-z(\alpha)|^{2}$ is locally minimized at $\alpha=\alpha_{2}$; differentiating these two functions at the corresponding points gives
\begin{align}\label{ast.2}
[z(\alpha_{1})-z(\alpha_{2})]\cdot z_{\alpha}(\alpha_{1})=0,\qquad
[z(\alpha_{1})-z(\alpha_{2})]\cdot z_{\alpha}(\alpha_{2})=0,
\end{align}
which is the second line of \eqref{e23}. We prove the first line of \eqref{e23},
asserting that the open segment $(z(\alpha_{1}):z(\alpha_{2}))$ is disjoint from
$\Gamma_{t}$, an assertion that is used both here (to verify that the curve $\gamma$
below is simple) and in the proof of \eqref{e25}.

Suppose, for a contradiction, that $z(\alpha_{3})\in(z(\alpha_{1}):z(\alpha_{2}))$ for
some $\alpha_{3}\in\mathbb{R}$. Then
$z(\alpha_{3})=(1-\lambda)z(\alpha_{1})+\lambda z(\alpha_{2})$ for a unique
$\lambda\in(0,1)$, so that
\begin{align}\label{ast.3}
|z(\alpha_{1})-z(\alpha_{3})|=\lambda\,d_{0}<d_{0},
\qquad
|z(\alpha_{3})-z(\alpha_{2})|=(1-\lambda)d_{0}<d_{0},
\end{align}
where $d_{0}:=|z(\alpha_{1})-z(\alpha_{2})|$. The vector
$z(\alpha_{3})-z(\alpha_{1})=\lambda\,(z(\alpha_{2})-z(\alpha_{1}))$ is parallel to
$z(\alpha_{2})-z(\alpha_{1})$, hence \eqref{ast.2} also yields
\begin{align}\label{ast.4}
[z(\alpha_{3})-z(\alpha_{1})]\cdot z_{\alpha}(\alpha_{1})=0,
\qquad
[z(\alpha_{3})-z(\alpha_{2})]\cdot z_{\alpha}(\alpha_{2})=0.
\end{align}
Because $\Gamma_{t}$ is an embedded curve, the parameterization $\alpha\mapsto z(\alpha)$ is
injective modulo the periodicity $z(\alpha+2\pi)=z(\alpha)+(2\pi,0)$, and hence injective on
$\mathbb{R}$: the point $z(\alpha_{3})$ therefore has a unique label $\alpha_{3}$, and there is no
freedom in choosing it. The three cases below cover all the possible values of
$|\alpha_{3}-\alpha_{1}|$, so they are exhaustive.

\textit{Case 1: $|\alpha_{3}-\alpha_{1}|\leqslant \varepsilon_{1}$.} Taylor's formula at
$\alpha_{1}$ reads
\[
z(\alpha_{3})-z(\alpha_{1})=z_{\alpha}(\alpha_{1})(\alpha_{3}-\alpha_{1})+R,
\qquad
|R|\leqslant \tfrac12\|z_{\alpha\alpha}\|_{L^{\infty}}|\alpha_{3}-\alpha_{1}|^{2}.
\]
Taking the scalar product with $z_{\alpha}(\alpha_{1})$ and using the first identity in
\eqref{ast.4} together with $|z_{\alpha}(\alpha_{1})|\geqslant \frac1A$ and
$\|z_{\alpha}\|_{L^{\infty}}\leqslant A+1\leqslant2A$ (both by \eqref{e8}) gives
\[
\frac{|\alpha_{3}-\alpha_{1}|}{A^{2}}
\leqslant |z_{\alpha}(\alpha_{1})|^{2}|\alpha_{3}-\alpha_{1}|
=|R\cdot z_{\alpha}(\alpha_{1})|
\leqslant A^{2}|\alpha_{3}-\alpha_{1}|^{2}.
\]
If $\alpha_{3}\ne\alpha_{1}$ we may divide by $|\alpha_{3}-\alpha_{1}|$ and obtain
$|\alpha_{3}-\alpha_{1}|\geqslant \frac{1}{A^{4}}$, which contradicts
$|\alpha_{3}-\alpha_{1}|\leqslant \varepsilon_{1}\leqslant \frac{1}{2A^{4}}$, the last inequality
being part of \eqref{eps1.list}. Hence
$\alpha_{3}=\alpha_{1}$, i.e.,\ $z(\alpha_{3})=z(\alpha_{1})$, which does not belong to
the open segment $(z(\alpha_{1}):z(\alpha_{2}))$.

\textit{Case 2: $\varepsilon_{1}<|\alpha_{3}-\alpha_{1}|\leqslant \varepsilon_{1}^{-1}$.}
By \eqref{ast.3} and the fact, established above, that $d_{0}$ is the infimum of
$|z(\alpha)-z(\beta)|$ over the window $|\alpha-\beta|\in[\varepsilon_{1},\varepsilon_{1}^{-1}]$,
so that every such pair satisfies $|z(\alpha)-z(\beta)|\geqslant d_{0}$, we have
$d_{0}\leqslant |z(\alpha_{1})-z(\alpha_{3})|<d_{0}$, a contradiction.

\textit{Case 3: $|\alpha_{3}-\alpha_{1}|>\varepsilon_{1}^{-1}$.} Using
$|z(\alpha)-z(\beta)|\geqslant |\alpha-\beta|-2A$ (periodicity of
$\alpha\mapsto z(\alpha)-(\alpha,0)$ and \eqref{e8}) together with
$4A\varepsilon_{1}\leqslant \frac12$ and with
$d_{0}\leqslant |z(\alpha_{0})-z(\beta_{0})|\leqslant 2\vartheta\,\varepsilon_{1}^{-1}
\leqslant 2\varepsilon_{2}\varepsilon_{1}^{-1}$ from the estimate \eqref{eq.ast} above and
$\vartheta\leqslant \varepsilon_{2}$, we obtain
\[
|z(\alpha_{1})-z(\alpha_{3})|
\geqslant |\alpha_{3}-\alpha_{1}|-2A
>\varepsilon_{1}^{-1}-2A
\geqslant \tfrac12\varepsilon_{1}^{-1}
>2\varepsilon_{2}\varepsilon_{1}^{-1}\geqslant d_{0},
\]
again contradicting \eqref{ast.3}.

The three cases are exhaustive and each leads to a contradiction, which proves the
first line of \eqref{e23}.

As for \eqref{e24}, we may assume that $\alpha_{1}<\alpha_{2}$ (interchanging the two
points if needed) and we consider the curve $\gamma:[0,2]\to\mathbb{R}^{2}$:
\[
\gamma(\mu):=
\begin{cases}
z((1-\mu)\alpha_{1}+\mu\alpha_{2}), &\text{if} \ \mu\in[0,1], \\
(2-\mu)z(\alpha_{2})+(\mu-1)z(\alpha_{1}), &\text{if} \ \mu\in[1,2].
\end{cases}
\]
We verify that $\gamma$ is a simple closed curve. Since $\Gamma_{t}$ is an embedded curve,
the parameterization $\alpha\mapsto z(\alpha)$ is injective on $\mathbb{R}$, so that the closed arc $z([\alpha_{1},\alpha_{2}])$ meets the two complementary arcs
$z((-\infty,\alpha_{1}))$ and $z((\alpha_{2},\infty))$ only at its endpoints
$z(\alpha_{1}),z(\alpha_{2})$. On the other hand, the open segment
$(z(\alpha_{1}):z(\alpha_{2}))$ is disjoint from $\Gamma_{t}$ by the first line of \eqref{e23},
which we have just proved. Hence the two pieces of $\gamma$ meet only at the points
$z(\alpha_{1})$ and $z(\alpha_{2})$, and $\gamma$ is a simple closed curve in the plane; by the
Jordan curve theorem, $\mathbb{R}^{2}\setminus\gamma=O_{1}\cup O_{2}$, where $O_{1},O_{2}$ are
disjoint connected open sets, $O_{2}$ is bounded, $O_{1}$ is unbounded, and
$\gamma=\partial O_{1}=\partial O_{2}$. Each of the two arcs
$A^{-}:=z((-\infty,\alpha_{1}))$ and $A^{+}:=z((\alpha_{2},\infty))$ is connected and unbounded,
because $|z(\alpha)-(\alpha,0)|\leqslant A$ by \eqref{e8}, and it is disjoint from $\gamma$, by
the injectivity of the parameterization together with the first line of \eqref{e23}; since $O_{2}$
is bounded while $O_{1}$ is the other component of $\mathbb{R}^{2}\setminus\gamma$, such an arc
must lie in $O_{1}$. Therefore
\[
z(\alpha)\in O_{1} \quad \text{for any} \ \alpha\in\mathbb{R}\setminus[\alpha_{1},\alpha_{2}].
\]
Let $v:=z(\alpha_{2})-z(\alpha_{1})$, for $\delta>0$, we consider the sets
\[
\begin{aligned}
&R_{\delta}^{-}:=\{z(\alpha)+\mu v:\alpha\in(\alpha_{1}-\delta,\alpha_{1}),\mu\in[0,\frac{2}{3}]\}, \\
&R_{\delta}^{+}:=\{z(\alpha)-\mu v:\alpha\in(\alpha_{2},\alpha_{2}+\delta),\mu\in[0,\frac{2}{3}]\}.
\end{aligned}
\]
The sets $R_{\delta}^{\pm}$ are connected, being continuous images of the rectangles $(\alpha_{1}-\delta,\alpha_{1})\times[0,\frac23]$ and $(\alpha_{2},\alpha_{2}+\delta)\times[0,\frac23]$ under $(\alpha,\mu)\mapsto z(\alpha)\pm\mu v$. If $\delta$ is sufficiently small, then we claim:
\[
R_{\delta}^{-}\cap\gamma=R_{\delta}^{+}\cap\gamma=\varnothing.
\]
Since $v=z(\alpha_{2})-z(\alpha_{1})$ and $v\cdot z_{\alpha}(\alpha_{j})=0$ for $j=1,2$ by
\eqref{e23}, a point of $R_{\delta}^{-}$ that also lies on $\Gamma_{t}$ gives labels
$\alpha\in(\alpha_{1}-\delta,\alpha_{1})$ and $\beta$ with $z(\beta)-z(\alpha)=\mu v$ for some
$\mu\in(0,\frac23]$, and then
\[
[z(\beta)-z(\alpha)]\cdot z_{\alpha}(\alpha_{1})=\mu\,v\cdot z_{\alpha}(\alpha_{1})=0.
\]
Similarly, a point of $R_{\delta}^{+}$ that also lies on $\Gamma_{t}$ gives
$\alpha\in(\alpha_{2},\alpha_{2}+\delta)$ and $\beta$ with $z(\alpha)-z(\beta)=\mu v$ for some
$\mu\in(0,\frac23]$, and then
$[z(\alpha)-z(\beta)]\cdot z_{\alpha}(\alpha_{2})=\mu\,v\cdot z_{\alpha}(\alpha_{2})=0$. In
both cases $|z(\alpha)-z(\beta)|=\mu|v|\leqslant\frac23d$.

We claim that in both cases $|\alpha-\beta|<\varepsilon_{1}$. If
$|\alpha-\beta|\in[\varepsilon_{1},\varepsilon_{1}^{-1}]$, then
$|z(\alpha)-z(\beta)|\geqslant d$ by the defining property of $d=d_{0}$ established above,
which contradicts $|z(\alpha)-z(\beta)|\leqslant\frac23d$. If $|\alpha-\beta|>\varepsilon_{1}^{-1}$,
then the periodicity estimate $|z(\alpha)-z(\beta)|\geqslant|\alpha-\beta|-2A$ together with
$4A\varepsilon_{1}\leqslant\frac12$ gives
$|z(\alpha)-z(\beta)|>\frac12\varepsilon_{1}^{-1}\geqslant2\varepsilon_{2}\varepsilon_{1}^{-1}\geqslant d$,
again a contradiction. Hence $|\alpha-\beta|<\varepsilon_{1}$, so that \eqref{e20} applies and
yields
\[
|z(\alpha)-z(\beta)|\geqslant\varepsilon_{1}|\alpha-\beta|,
\qquad\text{i.e.,}\qquad
\eta:=|\alpha-\beta|\leqslant\frac{|z(\alpha)-z(\beta)|}{\varepsilon_{1}}
\leqslant\frac{2d}{3\varepsilon_{1}}.
\]
Hence, in the first case both $\alpha$ and $\beta$ lie in the interval
$(\alpha_{1}-\delta-\eta,\alpha_{1}+\eta)$, and in the second case both lie in
$(\alpha_{2}-\eta,\alpha_{2}+\delta+\eta)$.

Now Taylor's formula at $\alpha_{1}$, respectively at $\alpha_{2}$, gives
\[
\frac{z(\beta)-z(\alpha)}{\beta-\alpha}=z_{\alpha}(\alpha_{1})+O(\delta+\eta),
\qquad\text{respectively}\qquad
\frac{z(\alpha)-z(\beta)}{\alpha-\beta}=z_{\alpha}(\alpha_{2})+O(\delta+\eta),
\]
where the implied constant is at most $\|z_{\alpha\alpha}\|_{L^{\infty}}\leqslant A$. Taking
the scalar product with $z_{\alpha}(\alpha_{1})$, respectively with $z_{\alpha}(\alpha_{2})$,
and using the orthogonality relations just derived yields, for $j=1,2$ (with $j=1$ in the
first case above and $j=2$ in the second),
\[
0=|z_{\alpha}(\alpha_{j})|^{2}+O\big(A(\delta+\eta)\,|z_{\alpha}(\alpha_{j})|\big),
\]
while $|z_{\alpha}(\alpha_{j})|^{2}\geqslant A^{-2}$ by \eqref{e8} and
$|z_{\alpha}(\alpha_{j})|\leqslant A+1\leqslant2A$. Since
$\eta\leqslant\frac{2d}{3\varepsilon_{1}}$ and $d\approx\vartheta\leqslant\varepsilon_{2}$,
choosing first $\delta$ and then $\varepsilon_{2}$ small enough (both depending only on $A$)
makes the error terms in the last relation at most $\frac12A^{-2}$, which contradicts
$|z_{\alpha}(\alpha_{j})|^{2}\geqslant A^{-2}$. This proves
the claim.
Since $z(\alpha)\in O_{1}$ for every $\alpha\in(\alpha_{1}-\delta,\alpha_{1})\cup(\alpha_{2},\alpha_{2}+\delta)$, we conclude that
$R_{\delta}^{-}\subset O_{1}$ and $R_{\delta}^{+}\subset O_{1}$.
By the second identity in \eqref{e23} both $z_{\alpha}(\alpha_{1})$ and
$z_{\alpha}(\alpha_{2})$ are orthogonal to the nonzero vector $v$, with $|v|=d$; since the
orthogonal complement of a nonzero vector in $\mathbb{R}^{2}$ is one-dimensional, the two vectors
are parallel or antiparallel. Assume for contradiction that they are parallel with the same
orientation, and let $\hat\nu$ be the unit normal to $v$ chosen so that
$z_{\alpha}(\alpha_{j})\cdot\hat\nu=|z_{\alpha}(\alpha_{j})|$ for $j=1,2$; denote by
$\varsigma(x):=(x-z(\alpha_{1}))\cdot\hat\nu$ the signed distance from the chord line. Since
$v\cdot\hat\nu=0$, the translation $\pm\mu v$ has no component in the direction $\hat\nu$, so for
a point of the form $x=z(\alpha)\pm\mu v$ the signed distance is
\[
\varsigma(x)=\big(z(\alpha)-z(\alpha_{j})\big)\cdot\hat\nu
=(\alpha-\alpha_{j})\,z_{\alpha}(\alpha_{j})\cdot\hat\nu+O\big(A(\alpha-\alpha_{j})^{2}\big),
\]
with $j=1$ and the sign $+$ for $R_{\delta}^{-}$, and $j=2$ and the sign $-$ for
$R_{\delta}^{+}$, the constant in $O(\cdot)$ being at most
$\|z_{\alpha\alpha}\|_{L^{\infty}}\leqslant A$. Taking $\delta\leqslant(2A^{2})^{-1}$, so that
the error term is at most $\frac12A^{-1}|\alpha-\alpha_{j}|$, and using
$z_{\alpha}(\alpha_{j})\cdot\hat\nu=|z_{\alpha}(\alpha_{j})|\geqslant A^{-1}$ from \eqref{e8}, we
obtain
\[
\varsigma(x)\leqslant-\tfrac12A^{-1}(\alpha_{1}-\alpha)<0
\quad\text{for } x=z(\alpha)+\mu v,\ \alpha\in(\alpha_{1}-\delta,\alpha_{1}),
\]
\[
\varsigma(x)\geqslant\tfrac12A^{-1}(\alpha-\alpha_{2})>0
\quad\text{for } x=z(\alpha)-\mu v,\ \alpha\in(\alpha_{2},\alpha_{2}+\delta),
\]
uniformly in $\mu\in[0,\frac23]$. Thus $R_{\delta}^{-}$ and $R_{\delta}^{+}$ lie on opposite
sides of the chord line, separated by it.

To pass from this to a full neighborhood of the midpoint $q=\frac{z(\alpha_{1})+z(\alpha_{2})}{2}$
we use that the two families of translates cover the two sides. The maps
$\Xi(\alpha,\mu):=z(\alpha)+\mu v$ and $\widetilde\Xi(\alpha,\mu):=z(\alpha)-\mu v$ have
Jacobian determinants $\det(z_{\alpha}(\alpha),v)$ and $-\det(z_{\alpha}(\alpha),v)$, which do not
vanish at $(\alpha_{1},\frac12)$, respectively at $(\alpha_{2},\frac12)$, because
$z_{\alpha}(\alpha_{j})$ is orthogonal and not parallel to $v$; both maps send those points to
$q$. By the inverse function theorem there is $\varrho>0$, depending only on $A$ and $d$, such that
$B(q,\varrho)\subset\Xi(U_{1})\cap\widetilde\Xi(U_{2})$, where $U_{1}$ and $U_{2}$ are
neighborhoods of $(\alpha_{1},\frac12)$, respectively of $(\alpha_{2},\frac12)$, on which
$\Xi$, respectively $\widetilde\Xi$, is one-to-one and on which
$|\alpha-\alpha_{j}|\leqslant\delta$ and $|\mu-\frac12|\leqslant\frac16$. Let
$x\in B(q,\varrho)$ and write $(\alpha,\mu)=\Xi^{-1}(x)$. Then $x=z(\alpha)+\mu v$, so by the
computation above $\varsigma(x)$ and $\alpha-\alpha_{1}$ have the same sign; if $\varsigma(x)<0$ then
$\alpha\in(\alpha_{1}-\delta,\alpha_{1})$ and $\mu\in[\frac13,\frac23]$, that is,
$x\in R_{\delta}^{-}$. If instead $\varsigma(x)>0$, the same argument applied to
$(\alpha,\mu)=\widetilde\Xi^{-1}(x)$ gives $x\in R_{\delta}^{+}$. Finally, if $\varsigma(x)=0$, then
$x$ lies on the chord line at distance at most $\varrho$ from $q$, hence on the open segment
$(z(\alpha_{1}):z(\alpha_{2}))$ provided $\varrho<\frac d2$. Diminishing $\varrho$ if necessary so
that this holds, we obtain
\[
B(q,\varrho)\subset(z(\alpha_{1}):z(\alpha_{2}))\cup R_{\delta}^{-}\cup R_{\delta}^{+}
\subset\gamma\cup O_{1}.
\]
This contradicts $q\in\partial(O_{2})$, which follows from $\gamma=\partial O_{2}$: as $O_{2}$ is a
nonempty open set, every neighborhood of a point of its boundary contains points of $O_{2}$,
whereas $\gamma$ has empty interior and $O_{1}\cap O_{2}=\varnothing$. So it follows that
$z_{\alpha}(\alpha_{1})$ and $z_{\alpha}(\alpha_{2})$ are antiparallel, that is
$z_{\alpha}(\alpha_{1})\cdot z_{\alpha}(\alpha_{2})<0$, which is exactly \eqref{e24}.
\end{proof}
\begin{remark}
A pair of points $z(t,\alpha_{1})$, $z(t,\alpha_{2})$ satisfying \eqref{e22} will be called a \emph{conjugate pair}.
\end{remark}
\begin{lemma}\label{chord-arc}
For any $\delta>0$ there is $\varepsilon_{3}=\varepsilon_{3}(A,T,\delta)$ sufficiently small with the following property: if $t\in[0,T]$, $\alpha,\beta\in\mathbb{R}$ satisfy $|\alpha-\beta|\geqslant \delta$, and
\[
[z(t,\alpha):z(t,\beta)]\subset\overline{\Omega_{1}(t)},
\]
then we have
\[
|z(t,\alpha)-z(t,\beta)|\geqslant \varepsilon_{3}.
\]
\end{lemma}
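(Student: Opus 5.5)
The plan is the Fefferman--Ionescu--Lie argument: transport the region cut off by the chord backward in time with the Euler flow, and compare diameters at times $0$ and $t$. Its only quantitative input is the interior Lipschitz bound for $u_{1}$ of Lemma \ref{grad.euler}, together with \eqref{e20} and \eqref{e10}.

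\emph{Setup.} Fix $t$, $\alpha<\beta$ with $\beta-\alpha\geqslant\delta$, and put $d:=|z(t,\alpha)-z(t,\beta)|$; we may assume $d\leqslant\varepsilon_{2}$. Assume first that the open segment $(z(t,\alpha):z(t,\beta))$ does not meet $\Gamma_{t}$; the general case reduces to this one by passing to a sub-pair of consecutive intersection points, which is where I would apply \eqref{e20}. Then, exactly as in the proof of \eqref{e24}, the arc $z(t,[\alpha,\beta])$ and the segment form a Jordan curve. Its bounded complementary component $O(t)$ lies in $\Omega_{1}(t)$, since near the segment it sits on the $\Omega_{1}$ side.

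\emph{Backward transport.} Let $X(s,\cdot)$ be the Lagrangian flow of $u_{1}$ from \eqref{eq.Lag}, restricted to $\overline{\Omega_{1}}$. Lemma \ref{grad.euler} gives $|\nabla u_{1}|\leqslant A$ (after Remark \ref{rm:normalize}), so $X$ and its inverse are $e^{AT}$-Lipschitz on $\overline{\Omega_{1}}$. This is applied on the interval where the chord-arc functional is positive, as in the proof of Theorem \ref{thm}. The interface is material for the Euler phase because of the first condition in \eqref{e6}. Hence $X$ maps $\Gamma$ to $\Gamma$, with labels sliding tangentially, and maps bounded components of $\Omega_{1}$ to bounded components. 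Let $O(0):=X(t,\cdot)^{-1}(O(t))$. Its boundary is the backward image $\gamma_{0}$ of the segment, which has length at most $e^{AT}d$, together with an interface arc $z(0,[\alpha',\beta'])$. Since $|z(0,\alpha')-z(0,\beta')|\leqslant e^{AT}d$, the initial chord-arc bound \eqref{e10} gives $|\alpha'-\beta'|\leqslant Ae^{AT}d$. Therefore $\operatorname{diam}O(0)\leqslant C(A,T)\,d$. Pushing forward again, $\operatorname{diam}O(t)\leqslant e^{AT}\operatorname{diam}O(0)\leqslant C(A,T)\,d$.

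\emph{Contradiction.} On the other hand $\overline{O(t)}$ contains the arc $z(t,[\alpha,\beta])$ with $\beta-\alpha\geqslant\delta$. Choose $\alpha<\alpha''\leqslant\beta$ with $\alpha''-\alpha=\min(\delta,\varepsilon_{1})$. Then \eqref{e20} gives
\[
\operatorname{diam}O(t)\geqslant|z(t,\alpha)-z(t,\alpha'')|\geqslant\varepsilon_{1}\min(\delta,\varepsilon_{1}).
\]
Combining this with the upper bound yields $d\geqslant\varepsilon_{3}:=\varepsilon_{1}\min(\delta,\varepsilon_{1})/C(A,T)$.

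\emph{Main obstacle.} The delicate step is the topological one: showing that the backward flow maps $O(t)$ onto a region bounded by a single short curve and a single interface arc. This needs the Euler flow to be a homeomorphism of $\overline{\Omega_{1}}$ preserving $\Gamma$, which uses \eqref{growth}. It also needs care in the reduction when the segment touches $\Gamma_{t}$ at interior points; I would handle this by taking consecutive touching points and applying \eqref{e20} when they are parametrically close.
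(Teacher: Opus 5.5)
Your route differs from the paper's in its second half, and one step rests on a false claim.

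\textbf{How the paper proceeds.} The paper never looks at the region enclosed by the chord. It writes $\partial_t z=v_1+A_1\partial_\alpha z$ with $A_1=-u/a$, which is bounded and Lipschitz in $\alpha$ by \eqref{e19} and \eqref{om.C1}. The Euler flow therefore acts on labels through $\partial_t\varphi=-A_1(t,\varphi)$, and Gronwall gives $|\varphi(0,\alpha)-\varphi(0,\beta)|\gtrsim\delta$. Combined with the backward transport \eqref{chord.ar.lip} of the chord, this contradicts \eqref{e10} directly.

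\textbf{Where your argument fails.} You instead use \eqref{e10} to turn the short chord at time $0$ into a short label interval, $|\alpha'-\beta'|\le Ae^{AT}d$, and then transport forward again. The problem is your assertion that $X$ and its inverse are $e^{AT}$-Lipschitz on $\overline{\Omega_1}$. A bound on $\nabla u_1$ controls only the growth of lengths of curves that stay in $\overline{\Omega_1}$, not Euclidean distances, because $\Omega_1$ is not convex. In the splash geometry, two points of $\Omega_1$ on either side of a thin layer of $\Omega_2$ are Euclidean-close but intrinsically far apart. This is why the paper states \eqref{chord.ar.lip} only for the endpoints of a segment contained in $\overline{\Omega_1(t_0)}$. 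Your first use of the flow, the length bound on $\gamma_0$, is of this restricted type and is fine. Your second use, $\operatorname{diam}O(t)\le e^{AT}\operatorname{diam}O(0)$, is not: $O(0)$ need not be convex, and the segment joining two of its points may cross $\Omega_2(0)$.

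\textbf{The repair.} Transport lengths of curves rather than diameters; once you do, most of your construction becomes unnecessary.
\begin{itemize}
\item The arc $z(0,[\alpha',\beta'])$ lies on $\Gamma_0$ and has length at most $(A+1)|\alpha'-\beta'|\le A(A+1)e^{AT}d$.
\item The flow carries it onto $z(t,[\alpha,\beta])$, since the label map is an increasing homeomorphism as in \eqref{phi.identity}.
\item The curve stays on $\Gamma_s\subset\overline{\Omega_1(s)}$, so its length grows at most by the factor $e^{AT}$.
\item By \eqref{e8}, the final length is at least $\delta/A$.
\end{itemize}
Hence $d\ge\delta\, e^{-2AT}/(A^2(A+1))$. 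You would no longer need the Jordan domain $O(t)$, the reduction to segments disjoint from $\Gamma_t$, or \eqref{e20}. That reduction, as sketched, would itself need work when the contact set is infinite or contains subsegments. The hypothesis $[z(t,\alpha):z(t,\beta)]\subset\overline{\Omega_1(t)}$ then enters only through the backward transport of the chord.

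\textbf{Comparison with the paper.} The corrected route uses the label flow only qualitatively (existence and monotonicity of $\varphi$), not the quantitative Lipschitz bound on $A_1$ that comes from \eqref{om.C1}. In exchange it uses the gradient bound for $u_1$ a second time, on $\Gamma$, where \eqref{e9.bdry} already suffices. Both routes need the topological input that the flow maps $\Gamma_0$ onto $\Gamma_t$ and $\overline{\Omega_1(0)}$ onto $\overline{\Omega_1(t)}$; parts (i)--(ii) of the paper's proof supply it.
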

\begin{proof}
We assume that $\delta$ is sufficiently small, and assume for contradiction that the conclusion of the lemma fails. Therefore, there are points $t_{0}\in[0,T]$ and $\alpha_{0},\beta_{0}\in\mathbb{R}$ such that 
\[
|\alpha_{0}-\beta_{0}|\geqslant \delta, \quad |z(t_{0},\alpha_{0})-z(t_{0},\beta_{0})|\leqslant \varepsilon_{3}, \quad [z(t_{0},\alpha_{0}):z(t_{0},\beta_{0})]\subset\overline{\Omega_{1}(t_{0})}.
\]
Throughout the proof we use the interior bound \eqref{e9.int}, which is supplied by Lemma
\ref{grad.euler}.
By the kinematic condition in \eqref{e6} the vector $\partial_{t}z-v_{1}$ is tangential to
$\Gamma_{t}$, so there is a coefficient $A_{1}$ such that
\begin{equation}\label{chord.ar.kin}
\partial_{t}z(t,\alpha)=v_{1}(t,\alpha)+A_{1}(t,\alpha)\partial_{\alpha}z(t,\alpha).
\end{equation}
Since $\partial_{t}z=v_{2}$ and $v_{1}-v_{2}=u\tau$ by \eqref{curl.jump}, comparison with
\eqref{chord.ar.kin} identifies the coefficient explicitly:
\begin{equation}\label{A1.id}
A_{1}=-\frac{u}{a},\qquad a=|\partial_{\alpha}z|,\qquad u=\frac{\omega_{\Gamma}}{a}.
\end{equation}
By the two bounds \eqref{e19} and \eqref{om.C1} of the interface-vorticity section,
\[
\|u\|_{L^{\infty}([0,T]\times\mathbb{R})}+\|\partial_{\alpha}u\|_{L^{\infty}([0,T]\times\mathbb{R})}
\leqslant C(A,T,\sigma,\rho_{1},\rho_{2}),
\]
while $\|a\|_{C^{1}}\leqslant C(A)$ and $|\partial_{\alpha}z|\geqslant1/A$ by \eqref{e8}. Hence
$\|A_{1}\|_{L^{\infty}}+\|\partial_{\alpha}A_{1}\|_{L^{\infty}}\lesssim1$, so that $A_{1}$ is
bounded and Lipschitz in $\alpha$; this uses the equations of motion, since
\eqref{e19} and \eqref{om.C1} do. 

Because the interface moves with the Euler flow only up to the tangential
reparameterization \eqref{chord.ar.kin}, one cannot estimate
$z(t,\alpha_{0})-z(t,\beta_{0})$ by differentiating along the flow directly; instead we
argue through the Lagrangian label of the Euler fluid. Let $\Phi_{t}$ denote the flow of
$u_{1}$, i.e.,
\[
\partial_{t}\Phi_{t}(q)=u_{1}(t,\Phi_{t}(q)),\qquad \Phi_{t_{0}}=\mathrm{id}.
\]
The flow is well defined on $[0,T]$, and $\Phi_{t}$ is a diffeomorphism of
$\Omega_{1}(t_{0})$ onto its image: by \eqref{e9.int} the field $u_{1}(t,\cdot)$ is Lipschitz
with constant $A$ on $\Omega_{1}(t)$, and by \eqref{e9.bs} it has at most linear growth there,
\begin{equation}\label{growth}
|u_{1}(t,x)|\leqslant C_{\mathrm{sol}}(1+|x|),\qquad x\in\Omega_{1}(t),\ t\in[0,T],
\end{equation}
which together with the Lipschitz bound is what makes the trajectories of \eqref{eq.Lag} exist
on $[0,T]$. Indeed, the structural representation \eqref{e9.bs} and \eqref{u1.bdd} give
$|u_{1}(t,x)|\leqslant C_{\mathrm{sol}}<\infty$ for $x\in\Omega_{1}(t)$ by
\eqref{u1.bdd} of Lemma \ref{grad.euler}; the two harmonic
summands are bounded by \eqref{e9.norm}, the interface term by the periodization of \eqref{P.kernel}
together with the Lipschitz bound $\|\omega_{\Gamma}\|_{C^{0,1}}\leqslant C(A)$ of \eqref{om.C1},
and the bulk term by \eqref{P.int} together with the exponential decay \eqref{e11.loc}, whence
\eqref{growth} follows a fortiori. 

We verify that $\Phi_{t}$ maps $\Gamma_{t_{0}}$ onto $\Gamma_{t}$ and $\Omega_{1}(t_{0})$ onto
$\Omega_{1}(t)$. Both statements are consequences of \eqref{eq.Lag} and the kinematic condition,
as follows.

(i) \emph{The interface.} The coefficient $A_{1}$ of \eqref{chord.ar.kin} is bounded and
Lipschitz, so for each $\psi$ the ordinary differential equation
\begin{equation}\label{phi.ode}
\partial_{t}\varphi(t,\psi)=-A_{1}(t,\varphi(t,\psi)),\qquad\varphi(t_{0},\psi)=\psi,
\end{equation}
has a unique solution on $[0,T]$. Set $Z(t,\psi):=z(t,\varphi(t,\psi))$. Then $Z(t_{0},\psi)=z(t_{0},\psi)$,
and by \eqref{chord.ar.kin} and \eqref{phi.ode},
\[
\begin{aligned}
\partial_{t}Z(t,\psi)&=\partial_{t}z(t,\varphi)+z_{\alpha}(t,\varphi)\,\partial_{t}\varphi
=\big[v_{1}+A_{1}z_{\alpha}\big](t,\varphi)-A_{1}(t,\varphi)z_{\alpha}(t,\varphi)\\
&=v_{1}(t,\varphi)=u_{1}(t,Z(t,\psi)).
\end{aligned}
\]
Hence $t\mapsto Z(t,\psi)$ solves the same initial value problem as $t\mapsto\Phi_{t}(z(t_{0},\psi))$,
and by uniqueness of solutions of \eqref{eq.Lag} they coincide:
\begin{equation}\label{phi.identity}
\Phi_{t}\big(z(t_{0},\psi)\big)=z\big(t,\varphi(t,\psi)\big)\in\Gamma_{t}.
\end{equation}
Conversely, $\psi\mapsto\varphi(t,\psi)$ is a homeomorphism of $\mathbb{R}$ (it is the flow of the
bounded Lipschitz field $-A_{1}(t,\cdot)$), so every point of $\Gamma_{t}$ arises in this way.
Therefore $\Phi_{t}(\Gamma_{t_{0}})=\Gamma_{t}$.

(ii) \emph{The two regions.} First, $\Phi_{t}(\Omega_{1}(t_{0}))\subseteq\Omega_{1}(t)$. Let
$q\in\Omega_{1}(t_{0})$. The trajectory $s\mapsto\Phi_{s}(q)$ is continuous on $[0,T]$, and by (i)
it never meets $\Gamma_{s}$: if $\Phi_{s}(q)\in\Gamma_{s}$, then
$q=\Phi_{t_{0}}(\Phi_{s}(q))\in\Gamma_{t_{0}}$, contradicting $q\in\Omega_{1}(t_{0})$. The set
$A:=\{s\in[0,T]:\Phi_{s}(q)\in\Omega_{1}(s)\}$ is therefore both open and closed. It is open: if
$s_{0}\in A$, then $\delta:=\operatorname{dist}(\Phi_{s_{0}}(q),\Gamma_{s_{0}})>0$, and for $s$
close to $s_{0}$ the point $\Phi_{s}(q)$ stays within $\delta/3$ of $\Phi_{s_{0}}(q)$ (continuous
dependence on $s$), while $\Gamma_{s}$ stays within $\delta/3$ of $\Gamma_{s_{0}}$ in Hausdorff
distance, because $z$ is continuous in $t$ with values in $C^{4}$, hence in particular with
values in $C^{0}$, uniformly, by item (i) of Definition \ref{adm}; hence
$\Phi_{s}(q)\notin\Gamma_{s}$ and $\Phi_{s}(q)$ lies in the same component of
$\mathbb{R}^{2}\setminus\Gamma_{s}$ as before, namely $\Omega_{1}(s)$. The same argument applied to
$\Omega_{2}(s)$ shows that the complement of $A$ is open. Since the set contains $s=t_{0}$, where $\Phi_{t_{0}}=\mathrm{id}$, it is all of
$[0,T]$.

Conversely, let $p\in\Omega_{1}(t)$ and let $X(s)$ be the solution of the backward problem
$\partial_{s}X=-u_{1}(s,X)$, $X(t)=p$, continued as far as it stays in $\Omega_{1}(s)$. If it
failed to exist up to $s=t_{0}$, there would be a first time $s_{*}\in[t_{0},t)$ with
$X(s_{*})\in\Gamma_{s_{*}}$; but then $p=\Phi_{t}(X(s_{*}))\in\Phi_{t}(\Gamma_{s_{*}})=\Gamma_{t}$
by (i), contradicting $p\in\Omega_{1}(t)$. Hence $X$ is defined on $[t_{0},t]$, $q:=X(t_{0})$ lies in
$\Omega_{1}(t_{0})$ by the first part applied to the trajectory $s\mapsto X(s)$, and
$\Phi_{t}(q)=p$. Therefore $\Phi_{t}(\Omega_{1}(t_{0}))=\Omega_{1}(t)$.

In view of \eqref{e9.int} the flow does not expand the length of
curves lying in $\overline{\Omega_{1}(t_{0})}$, in the following {\em restricted} sense: if
$q,q'\in\overline{\Omega_{1}(t_{0})}$ are such that the closed segment $[q,q']$ is contained in
$\overline{\Omega_{1}(t_{0})}$, then
\begin{equation}\label{chord.ar.lip}
|\Phi_{t}(q)-\Phi_{t}(q')|\leqslant e^{A|t-t_{0}|}|q-q'|
\qquad\text{for all }t\in[0,T].
\end{equation}
Let $\gamma(s):=q+s(q'-q)$, $s\in[0,1]$, parameterize the segment $[q,q']$, and let
$\Gamma_{t}(s):=\Phi_{t}(\gamma(s))$. Since $\Phi_{t}$ transports $\overline{\Omega_{1}(t_{0})}$
onto $\overline{\Omega_{1}(t)}$, the curve $\Gamma_{t}$ is contained in
$\overline{\Omega_{1}(t)}$ for every $t$; differentiating $\partial_{t}\Phi_{t}=u_{1}(t,\Phi_{t})$
with respect to $s$ gives
$\partial_{t}\big(\partial_{s}\Gamma_{t}(s)\big)=\nabla u_{1}(t,\Gamma_{t}(s))\partial_{s}\Gamma_{t}(s)$,
so that $L(t):=\int_{0}^{1}\big|\partial_{s}\Gamma_{t}(s)\big|\,ds$ satisfies
\[
L'(t)\leqslant\|\nabla u_{1}(t)\|_{L^{\infty}(\Omega_{1}(t))}\,L(t)\leqslant A\,L(t)
\]
whenever $L(t)>0$, whence $L(t)\leqslant e^{A|t-t_{0}|}L(t_{0})=e^{A|t-t_{0}|}|q-q'|$ by Gronwall's
inequality, applied forward for $t>t_{0}$ and backward for $t<t_{0}$. Since the distance between
the endpoints of $\Gamma_{t}$ is at most its length, \eqref{chord.ar.lip} follows. 

On the other hand, the reparameterization $\varphi$ of \eqref{phi.ode} satisfies
\[
\partial_{t}\varphi(t,\psi)=-A_{1}(t,\varphi(t,\psi))
\quad\text{and}\quad
\varphi(t_{0},\psi)=\psi,
\]
by
construction, and \eqref{phi.identity} shows that it realizes the conjugacy between the two
parameterizations of $\Gamma_{t}$. Since
$A_{1}$ is Lipschitz with constant $\lesssim A$, the difference of two solutions satisfies
\[
|\partial_{t}(\varphi(t,\psi)-\varphi(t,\psi'))|\lesssim A|\varphi(t,\psi)-\varphi(t,\psi')|,
\]
and Gronwall's inequality backwards from $t_{0}$ yields
\begin{equation}\label{chord.ar.philip}
|\varphi(0,\alpha_{0})-\varphi(0,\beta_{0})|\gtrsim|\alpha_{0}-\beta_{0}|\geqslant \delta.
\end{equation}
Finally, applying \eqref{chord.ar.lip} and \eqref{phi.identity} with
$q=z(t_{0},\alpha_{0})$, $q'=z(t_{0},\beta_{0})$, which are joined by the closed segment
$[z(t_{0},\alpha_{0}):z(t_{0},\beta_{0})]$ contained in $\overline{\Omega_{1}(t_{0})}$ by
hypothesis, so that \eqref{chord.ar.lip} applies, and using the hypothesis
$|z(t_{0},\alpha_{0})-z(t_{0},\beta_{0})|\leqslant \varepsilon_{3}$, we obtain
\[
|z(0,\varphi(0,\alpha_{0}))-z(0,\varphi(0,\beta_{0}))|
=|\Phi_{0}(z(t_{0},\alpha_{0}))-\Phi_{0}(z(t_{0},\beta_{0}))|
\lesssim\varepsilon_{3},
\]
Together with \eqref{chord.ar.philip} this shows that
$F(z)(0,\varphi(0,\alpha_{0}),\varphi(0,\beta_{0}))\lesssim\varepsilon_{3}/\delta$, which is
strictly smaller than $1/A$ provided $\varepsilon_{3}=\varepsilon_{3}(A,T,\delta)$ is
chosen sufficiently small. This contradicts the chord-arc assumption \eqref{e10} at
time $0$ and completes the proof of the lemma.
\end{proof}
We give a complete description of a neighborhood of a conjugate pair:
\begin{proposition}\label{chord-arc prop}
(1) There are small constants $\varepsilon_{4},\varepsilon_{5}>0$ such that
\begin{equation}\label{eps45.list}
0<\varepsilon_{4}\leqslant \varepsilon_{5}^{100},\qquad
\varepsilon_{5}<\min\{\varepsilon_{1},\varepsilon_{3}\},\qquad
\varepsilon_{5}\leqslant\varepsilon_{2},
\end{equation}
with the following properties: assume that $\inf_{\alpha,\beta}F(z)(t,\alpha,\beta)=\vartheta\leqslant \varepsilon_{4}$ for some $t\in[0,T]$, let $\alpha_{1},\alpha_{2}$ be as in \eqref{e22}, and set
\[
d:=|z(t,\alpha_{2})-z(t,\alpha_{1})|, \quad e:=\frac{z(t,\alpha_{2})-z(t,\alpha_{1})}{|z(t,\alpha_{2})-z(t,\alpha_{1})|}.
\]
Then 
\begin{equation}\label{e25}
(z(t,\alpha_{1}):z(t,\alpha_{2}))\subset\Omega_{2}(t),
\end{equation}
and 
\begin{equation}\label{e26}
z_{\alpha}(t,\alpha_{1})\cdot e=z_{\alpha}(t,\alpha_{2})\cdot e=0, \quad z_{\alpha}(t,\alpha_{1})\cdot z_{\alpha}(t,\alpha_{2})<0.
\end{equation}
In addition we have
\begin{equation}\label{e27}
\begin{aligned}
&\{z(t,\beta)-\rho e:|\beta-\alpha_{1}|\leqslant \varepsilon_{5}, \rho\in(0,\varepsilon_{5})\}\subset\Omega_{1}(t), \\
&\{z(t,\beta)+\rho e:|\beta-\alpha_{2}|\leqslant \varepsilon_{5}, \rho\in(0,\varepsilon_{5})\}\subset\Omega_{1}(t), \\
&\{z(t,\beta)+\rho e:|\beta-\alpha_{1}|\leqslant \varepsilon_{5}, \rho\in(0,d)\}\subset\Omega_{2}(t), \\
&\{z(t,\beta)-\rho e:|\beta-\alpha_{2}|\leqslant \varepsilon_{5}, \rho\in(0,d)\}\subset\Omega_{2}(t).
\end{aligned}
\end{equation}
(2) Let $R:\mathbb{R}^{2}\to\mathbb{R}^{2}$ denote the orientation-preserving rigid motion transformation that satisfies 
\begin{equation}\label{e28}
\begin{aligned}
R(0,0)&=\frac{z(t,\alpha_{1})+z(t,\alpha_{2})}{2}, \\
R(0,\frac{d}{2})&=z(t,\alpha_{2}), \\
R(0,-\frac{d}{2})&=z(t,\alpha_{1}).
\end{aligned}
\end{equation}
Then there exist functions $f_{1},f_{2},\beta_{1},\beta_{2}:(-\varepsilon_{5}^{4},\varepsilon_{5}^{4})\to(-\varepsilon_{5}^{2},\varepsilon_{5}^{2})$ such that
\begin{equation}\label{e29}
R(\rho,f_{1}(\rho))=z(t,\alpha_{1}+\beta_{1}(\rho)), \quad R(\rho,f_{2}(\rho))=z(t,\alpha_{2}+\beta_{2}(\rho)),
\end{equation} 
for any $\rho\in(-\varepsilon_{5}^{4},\varepsilon_{5}^{4})$. In addition,
\begin{equation}\label{e30}
\begin{aligned}
&f_{1}(0)=-\frac{d}{2}, \quad f_{2}(0)=\frac{d}{2}, \\
&\beta_{1}(0)=\beta_{2}(0)=0, \quad f'_{1}(0)=f'_{2}(0)=0, \\
&-\beta'_{1}(\rho)\beta'_{2}(\rho)\approx 1 \quad \text{and} \quad f_{2}(\rho)-f_{1}(\rho)\geqslant d \quad \text{for all}\ \rho\in(-\varepsilon_{5}^{4},\varepsilon_{5}^{4}), \\
&\|f_{1}\|_{C^{4}(-\varepsilon_{5}^{4},\varepsilon_{5}^{4})}+\|f_{2}\|_{C^{4}(-\varepsilon_{5}^{4},\varepsilon_{5}^{4})}
+\|\beta_{1}\|_{C^{4}(-\varepsilon_{5}^{4},\varepsilon_{5}^{4})}+\|\beta_{2}\|_{C^{4}(-\varepsilon_{5}^{4},\varepsilon_{5}^{4})}\lesssim1,
\end{aligned}
\end{equation}
all the implied constants depending only on $A$.
Finally,
\begin{equation}\label{e31}
\begin{aligned}
&R(\{(\rho,y):\rho\in(-\varepsilon_{5}^{4},\varepsilon_{5}^{4}),y\in(f_{2}(\rho),\varepsilon_{5}^{2})\})\subset\Omega_{1}(t), \\
&R(\{(\rho,y):\rho\in(-\varepsilon_{5}^{4},\varepsilon_{5}^{4}),y\in(-\varepsilon_{5}^{2},f_{1}(\rho))\})\subset\Omega_{1}(t), \\
&R(\{(\rho,y):\rho\in(-\varepsilon_{5}^{4},\varepsilon_{5}^{4}),y\in(f_{1}(\rho),f_{2}(\rho))\})\subset\Omega_{2}(t).
\end{aligned}
\end{equation}
\end{proposition}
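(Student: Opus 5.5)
The plan is to take $\varepsilon_{5}$ small depending only on $A$ (and on $T$ through $\varepsilon_{3}$), with $\varepsilon_{5}<\min\{\varepsilon_{1},\varepsilon_{3}\}$ and $\varepsilon_{5}\le\varepsilon_{2}$. Then we set $\varepsilon_{4}:=\min\{\varepsilon_{5}^{100},c\,\varepsilon_{5}^{N}\}$, with $c$ and $N$ chosen so that $d\approx\vartheta\le\varepsilon_{4}$, from \eqref{e22}, makes $d$ negligible against every power of $\varepsilon_{5}$ used below. Since $\vartheta\le\varepsilon_{4}\le\varepsilon_{2}$, Lemma \ref{chord}(2) applies and produces the conjugate pair. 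It already gives \eqref{e26} (the orthogonality relations and antiparallelism \eqref{e23}, \eqref{e24}) and the fact that the open chord misses $\Gamma_{t}$.

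\emph{Step 1: \eqref{e25}.} The open segment $(z(t,\alpha_{1}):z(t,\alpha_{2}))$ is connected and disjoint from $\Gamma_{t}$, so it lies entirely in $\Omega_{1}(t)$ or entirely in $\Omega_{2}(t)$. Suppose it lay in $\Omega_{1}(t)$. Then the closed segment lies in $\overline{\Omega_{1}(t)}$, and $|\alpha_{1}-\alpha_{2}|\ge2\varepsilon_{1}$. Lemma \ref{chord-arc} with $\delta=2\varepsilon_{1}$ then gives $d\ge\varepsilon_{3}(A,T,2\varepsilon_{1})$. This contradicts $d\lesssim\vartheta\le\varepsilon_{4}$ once $\varepsilon_{4}$ is small. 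I expect this step to be the conceptual heart of the proof: it is where the dynamics enter, through Lemma \ref{grad.euler} inside Lemma \ref{chord-arc}. Given the earlier lemmas, however, it is short.

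\emph{Step 2: local graphs, \eqref{e28}--\eqref{e30}.} Apply the rigid motion $R$, so that $e$ becomes the vertical direction and $z(t,\alpha_{j})=R(0,\mp d/2)$. By \eqref{e26} the tangent at each endpoint is horizontal, and the two tangents point in opposite directions. Write $R^{-1}z(t,\alpha_{j}+\beta)=(X_{j}(\beta),Y_{j}(\beta))$. Then $|X_{j}'(0)|=|z_{\alpha}(\alpha_{j})|\ge1/A$, the sign of $X_{1}'(0)X_{2}'(0)$ is negative, and $\|X_{j}\|_{C^{4}}\lesssim1$ by \eqref{e8}. By the quantitative inverse function theorem, $X_{j}$ is invertible on $|\beta|\le C\varepsilon_{5}^{2}$, with image containing $(-\varepsilon_{5}^{4},\varepsilon_{5}^{4})$ once $\varepsilon_{5}$ is small. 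We set $\beta_{j}:=X_{j}^{-1}$ and $f_{j}:=Y_{j}\circ\beta_{j}$. The $C^{4}$ bounds follow from the chain rule. The identities $f_{j}'(0)=0$ and $f_{j}(0)=\mp d/2$ are immediate. The relation $-\beta_{1}'\beta_{2}'\approx1$ follows from the opposite signs of $X_{j}'$. For $f_{2}-f_{1}\ge d$, note that $(\rho,f_{1}(\rho))$ and $(\rho,f_{2}(\rho))$ are images of parameters at distance in $[\varepsilon_{1},\varepsilon_{1}^{-1}]$, since $|\beta_{j}|\le\varepsilon_{5}^{2}\ll\varepsilon_{1}$. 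Hence their distance, namely $|f_{2}(\rho)-f_{1}(\rho)|$, is at least $d$ by the minimality in \eqref{e22}. Continuity from $\rho=0$ fixes the sign.

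\emph{Step 3: sides, \eqref{e27} and \eqref{e31}.} In the box $Q=(-\varepsilon_{5}^{4},\varepsilon_{5}^{4})\times(-\varepsilon_{5}^{2},\varepsilon_{5}^{2})$ we first show that $R^{-1}\Gamma_{t}\cap Q$ consists exactly of the two graphs. Any other point $z(t,\beta)$ in $R(Q)$ lies within $\lesssim\varepsilon_{5}^{2}$ of $z(t,\alpha_{1})$. If $|\beta-\alpha_{1}|\le\varepsilon_{1}$, then \eqref{e20} forces $|\beta-\alpha_{1}|\lesssim\varepsilon_{5}^{2}$, so the point lies on the first graph. If $\beta$ is farther than $\varepsilon_{1}$ from both $\alpha_{1}$ and $\alpha_{2}$, then $\beta$ is separated by at least $\varepsilon_{1}$ from $\alpha_{1}$. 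It is also not beyond $\varepsilon_{1}^{-1}$ from $\alpha_{1}$, by the periodicity estimate. Minimality then gives distance $\ge d$ to both $z(\alpha_{1})$ and $z(\alpha_{2})$, and a sharper argument (I would mimic the $R_{\delta}^{\pm}$ argument of Lemma \ref{chord}) shows this cannot occur near the chord at scale $\varepsilon_{5}^{2}$. I expect this exclusion of a third sheet to be the main technical obstacle. The first attempt would be a Taylor argument: a third sheet passing within $\varepsilon_{5}^{2}$ must cross some vertical line between the graphs, producing a pair in the window at distance $<d$.

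Once this is done, $R(Q)\setminus\Gamma_{t}$ has three components: above $f_{2}$, between the graphs, and below $f_{1}$. The middle one contains the chord, so it lies in $\Omega_{2}(t)$ by Step 1. The outer ones lie across the graphs and therefore in $\Omega_{1}(t)$, since a regular interface locally separates $\Omega_{1}$ from $\Omega_{2}$. This gives \eqref{e31}. Finally, \eqref{e27} follows by the same transversality argument along the vertical segments issuing from $z(t,\beta)$ with $|\beta-\alpha_{j}|\le\varepsilon_{5}$. These segments are transverse because the tangent stays nearly horizontal. Moving inward by $\rho<d$, they stay between the graphs (using $f_{2}-f_{1}\ge d$ and the tube structure). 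Moving outward by $\rho<\varepsilon_{5}$, they meet no other sheet, by the minimality argument at parameter separation $\ge\varepsilon_{1}$.
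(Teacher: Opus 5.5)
Steps 1 and 2 match the paper. For \eqref{e25} the paper uses Lemma \ref{chord-arc} with $\delta=\varepsilon_{1}$ and you use $2\varepsilon_{1}$; either works. The local graphs and $f_{2}-f_{1}\geqslant d$ are obtained the same way.

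\textbf{The gap: the outer regions in Step 3.} You assert that the outward segments $z(t,\beta)\mp\rho e$, $\rho<\varepsilon_{5}$, meet no other sheet ``by the minimality argument''. You also assert that the parts of $R(Q)$ above the graph of $f_{2}$ and below that of $f_{1}$ contain no third sheet, to be shown by a Taylor argument or by mimicking the $R_{\delta}^{\pm}$ argument of Lemma \ref{chord}. Minimality only says that a point $z(t,\gamma)$ with $|\gamma-\beta|\in[\varepsilon_{1},\varepsilon_{1}^{-1}]$ is at distance $\geqslant d$ from $z(t,\beta)$. But $d\lesssim\varepsilon_{4}\leqslant\varepsilon_{5}^{100}$, while \eqref{e27} and \eqref{e31} need emptiness up to distance $\varepsilon_{5}$, respectively $\varepsilon_{5}^{2}$. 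Consider three almost parallel strands: a third strand lies at distance $\varepsilon_{5}^{3}$ beyond the upper graph, with a thin layer of $\Omega_{1}(t)$ between them. This is compatible with embeddedness, the minimal chord $d$, \eqref{e20} and the periodicity bounds. Such a strand never enters the region between the two graphs, so your ``crossing of a vertical line between the graphs'' never happens. The $R_{\delta}^{\pm}$ argument is purely local near $\alpha_{1},\alpha_{2}$ and cannot see it either. No argument using only the geometry at the fixed time $t$ excludes it.

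\textbf{The missing ingredient: a second use of Lemma \ref{chord-arc}.} This is why $\varepsilon_{5}<\varepsilon_{3}$ appears in \eqref{eps45.list}; you impose that condition but never use it. The paper argues as follows.
\begin{itemize}
\item Fix $|\beta-\alpha_{1}|\leqslant\varepsilon_{5}$ and let $t_{1}$ be the first parameter at which $z(t,\beta)-t_{1}e$ meets $\Gamma_{t}$, say at $z(t,\gamma)$.
\item The arc $z((\alpha_{1}-2\varepsilon_{1},\alpha_{1}+2\varepsilon_{1}))$ is a graph over $e^{\perp}$, so $|\gamma-\beta|\geqslant\varepsilon_{1}$.
\item By transversality the segment starts into $\Omega_{1}(t)$, so $[z(t,\beta):z(t,\beta)-t_{1}e]\subset\overline{\Omega_{1}(t)}$.
\item Lemma \ref{chord-arc} with $\delta=\varepsilon_{1}$ then gives $t_{1}\geqslant\varepsilon_{3}>\varepsilon_{5}$.
\end{itemize}
This gives the first two lines of \eqref{e27}, and by transfer to the $(\rho,y)$ coordinates the outer two lines of \eqref{e31}. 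The middle line of \eqref{e31} then follows from a connectedness argument. A point of $\Gamma_{t}$ between the graphs with minimal $|\rho|$ must have tangent parallel to $e$. A Taylor step of size $\varepsilon_{5}^{4}/2$ along the curve then lands in the region already known to lie in $\Omega_{1}(t)$, which is a contradiction.

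So the dynamics enter twice, not only in Step 1. Your exclusion of a third sheet cannot be completed as sketched.
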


\begin{remark}
	Such an $R$ in (2) exists and is unique: writing $m:=\frac{1}{2}(z(t,\alpha_{1})+z(t,\alpha_{2}))$ and
	$e=(e_{1},e_{2})$ for the unit vector of \eqref{e26}, the affine map
	\[
	R(\rho,y):=m+\rho\,(e_{2},-e_{1})+y\,e
	\]
	satisfies \eqref{e28}, because $R(0,\pm\frac d2)=m\pm\frac d2 e=z(t,\alpha_{2})$ resp.\ $z(t,\alpha_{1})$.
	Its linear part $Q:=DR$ has the two orthonormal columns $(e_{2},-e_{1})$ and $e$, so
	$Q\in O(2)$ and $\det Q=e_{2}^{2}+e_{1}^{2}=1$; consequently
	\begin{equation}\label{Q.perp}
		(Qv)^{\perp}=Q(v^{\perp})\quad\text{for every }v\in\mathbb{R}^{2},\qquad v^{\perp}:=(-v_{2},v_{1}),
	\end{equation}
	and the change of variables $y\mapsto R(y)$ has Jacobian $\det Q=1$. 
\end{remark}

\begin{proof}
As before, we assume that $t$ is fixed and we drop it from the notation. 

For the first part, Lemma \ref{chord} ensures that there exist $\alpha_{1},\alpha_{2}$ satisfying \eqref{e22} such that $|z(\alpha_{2})-z(\alpha_{1})|\approx\vartheta$, provided $\varepsilon_{4}\leqslant \varepsilon_{2}$. For \eqref{e25}, note that $|\alpha_{1}-\alpha_{2}|\geqslant2\varepsilon_{1}$ by \eqref{e22} and that
$d\approx\vartheta\leqslant\varepsilon_{4}$. Apply the contrapositive of Lemma \ref{chord-arc} with
$\delta:=\varepsilon_{1}$: if $d<\varepsilon_{3}=\varepsilon_{3}(A,T,\varepsilon_{1})$ then the closed
segment $[z(\alpha_{1}):z(\alpha_{2})]$ cannot be contained in $\overline{\Omega_{1}(t)}$, hence it meets
$\Omega_{2}(t)$. On the other hand, the open segment $(z(\alpha_{1}):z(\alpha_{2}))$ does not meet
$\Gamma_{t}$, by the first line of \eqref{e23}; since
$\mathbb{R}^{2}=\Omega_{1}(t)\cup\Omega_{2}(t)\cup\Gamma_{t}$ with $\Omega_{1}(t),\Omega_{2}(t)$ disjoint and
open, the open segment, being connected, is contained in $\Omega_{1}(t)\cup\Omega_{2}(t)$ and must lie
entirely in one of the two; as it meets $\Omega_{2}(t)$, it lies in $\Omega_{2}(t)$, which is \eqref{e25}.
The condition $d<\varepsilon_{3}$ holds as soon as $\varepsilon_{4}$ is chosen so small that
$d\approx\vartheta\leqslant\varepsilon_{4}$ forces $d<\varepsilon_{3}$.

By \eqref{e24}, we get \eqref{e26}. 

Then for \eqref{e27}, we first state the geometric fact on which all the inclusions below rest. By \eqref{e26} and $\|z-(\alpha,0)\|_{C^{4}}\leqslant A$ in \eqref{e8},
\[
|z_{\alpha}(\beta)\cdot e|=|(z_{\alpha}(\beta)-z_{\alpha}(\alpha_{1}))\cdot e|\leqslant A|\beta-\alpha_{1}|;
\]
hence, since $A^{2}\varepsilon_{1}\leqslant \frac14$ (a condition of the form $\varepsilon_{1}\leqslant c(A)$, which we include among those determining $\varepsilon_{1}$ in Lemma \ref{chord}), the arc $z((\alpha_{1}-2\varepsilon_{1},\alpha_{1}+2\varepsilon_{1}))$ is a graph over the line $e^{\perp}$, that is, every line parallel to $e$ meets it in at most one point; the same holds on the corresponding arc around $\alpha_{2}$. If $\beta<\beta'$ belonged to that interval with $z(\beta')-z(\beta)$ parallel to $e$, then $0=\int_{\beta}^{\beta'}z_{\alpha}(s)\cdot e^{\perp}ds$ with $z_{\alpha}(s)\cdot e^{\perp}\geqslant |z_{\alpha}(\alpha_{1})|-A|s-\alpha_{1}|\geqslant \frac{1}{2A}$ on the whole interval (the sign of $e^{\perp}$ being chosen so that $z_{\alpha}(\alpha_{1})\cdot e^{\perp}=|z_{\alpha}(\alpha_{1})|\geqslant \frac1A$), which is impossible.
First we show that the open segments $(z(\beta):z(\beta)+de)$ and $(z(\beta):z(\beta)-\varepsilon_{5}e)$, $|\beta-\alpha_{1}|\leqslant \varepsilon_{5}$, do not meet $\Gamma_{t}$. Suppose first that $z(\beta)+te=z(\gamma)$ with $0<t<d$, or that $z(\beta)-te=z(\gamma)$ with $0<t\leqslant \varepsilon_{5}$, for the respective segment, and consider $|\gamma-\beta|$. If $|\gamma-\beta|<\varepsilon_{1}$, then also $|\gamma-\alpha_{1}|<2\varepsilon_{1}$ (recall $|\beta-\alpha_{1}|\leqslant \varepsilon_{5}<\varepsilon_{1}$), so both parameters lie in $(\alpha_{1}-2\varepsilon_{1},\alpha_{1}+2\varepsilon_{1})$ and the vector $z(\gamma)-z(\beta)=\pm te$ is parallel to $e$, which contradicts the graph property. Hence $|\gamma-\beta|\geqslant\varepsilon_{1}$. If $|\gamma-\beta|\in[\varepsilon_{1},\varepsilon_{1}^{-1}]$, then $(\gamma,\beta)$ is a pair in the window of \eqref{e22}, so that $|z(\gamma)-z(\beta)|\geqslant d$. If $|\gamma-\beta|>\varepsilon_{1}^{-1}$, then $|z(\gamma)-z(\beta)|\geqslant |\gamma-\beta|-2A>\frac12\varepsilon_{1}^{-1}\geqslant d$. Thus in either case $t=|z(\gamma)-z(\beta)|\geqslant d$, contradicting $t<d$ for the first segment and showing that any intersection on the second ray has $t\geqslant d$. Consequently, the positive open segment is disjoint from $\Gamma_{t}$, and the negative ray has no intersection with $\Gamma_t$ for $0<t<d$. To rule out an intersection on the negative segment of length $\varepsilon_{5}$, suppose that $z(\beta)-te\in\Gamma_{t}$ for some $t\in[d,\varepsilon_{5}]$. The set of such intersection parameters is closed in the compact interval $[d,\varepsilon_{5}]$; let $t_{1}$ be its minimum. Then $t_{1}\leqslant\varepsilon_{5}$ and the half-open segment $[z(\beta):z(\beta)-t_{1}e)$ is disjoint from $\Gamma_{t}$ and contains points of $\Omega_{1}$, so the closed segment $[z(\beta):z(\beta)-t_{1}e]$ is contained in $\overline{\Omega_{1}}$. Lemma \ref{chord-arc}, applied with $\delta=\varepsilon_{1}$ (recall $|\beta-\gamma|\geqslant\varepsilon_{1}$), therefore gives $t_{1}\geqslant\varepsilon_{3}$, contradicting $t_{1}\leqslant\varepsilon_{5}<\varepsilon_{3}$. We also impose $\varepsilon_{5}<\varepsilon_{1}$; these conditions are compatible with $\varepsilon_{4}\leqslant\varepsilon_{5}^{100}$, since $\varepsilon_{3}=\varepsilon_{3}(A,T,\varepsilon_{1})$ is fixed once $\varepsilon_{1}$ is.

With this at hand we obtain the inclusions near $z(\alpha_{1})$. The points $z(\alpha_{1})+\rho'e$ with $0<\rho'<d$ lie on the open segment $(z(\alpha_{1}):z(\alpha_{2}))$, hence in $\Omega_{2}$ by \eqref{e25}. Since $e$ is orthogonal to $z_{\alpha}(\alpha_{1})$ by \eqref{e26}, the ray $\rho'\mapsto z(\alpha_{1})+\rho'e$ crosses $\Gamma_{t}$ transversally at $\rho'=0$ and enters $\Omega_{2}$; by \eqref{e26} and $\|z_{\alpha}(\beta)-z_{\alpha}(\alpha_{1})\|\leqslant A|\beta-\alpha_{1}|$ the same transversality holds for every $|\beta-\alpha_{1}|\leqslant \varepsilon_{5}$, and by continuity in $\beta$ the ray $\rho'\mapsto z(\beta)+\rho'e$ enters the same side, namely $\Omega_{2}$. Since by the first part it does not meet $\Gamma_{t}$ again for $\rho'\in(0,d)$, we conclude that
\[
\{z(\beta)+\rho'e:|\beta-\alpha_{1}|\leqslant \varepsilon_{5},\ \rho'\in(0,d)\}\subset\Omega_{2}.
\]
Since $\Gamma_{t}\subset\partial\Omega_{1}$, the same argument applied on the other side of $\Gamma_{t}$ gives
$\{z(\beta)-\rho'e:|\beta-\alpha_{1}|\leqslant \varepsilon_{5},\rho'\in(0,\varepsilon_{5})\}\subset\Omega_{1}$.
These are the first two inclusions of \eqref{e27}; the remaining two follow by the same argument
in a neighborhood of $\alpha_{2}$, with $\alpha_{1}$ and $\alpha_{2}$ interchanged and $e$
replaced by $-e$.

We turn to $(2)$. We define the rigid motion transformation $R$ such that \eqref{e28} holds. We then define the functions $g_{1},h_{1}:(-\varepsilon_{5}^{2},\varepsilon_{5}^{2})\to(-\varepsilon_{5},\varepsilon_{5})$ such that
\[
z(\alpha_{1}+\beta)=R(g_{1}(\beta),h_{1}(\beta)). 
\]
We check that if we choose $\beta=0$,
\[
z(\alpha_{1})=R(g_{1}(0),h_{1}(0)).
\]
Since R is bijective, $g_{1}(0)=0$ and $h_{1}(0)=-\frac{d}{2}$. 

Moreover, by the regularity of $z$ assumed above, $\|g_{1}\|_{C^{4}}+\|h_{1}\|_{C^{4}}\lesssim 1$. 

Next we differentiate $z(\alpha_{1}+\beta)=R(g_{1}(\beta),h_{1}(\beta))$ with respect to $\beta$ and evaluate at $\beta=0$. The rigid motion $R$ has the form $R(x)=Qx+b$ with $Q\in O(2)$ and $b\in\mathbb{R}^{2}$, so its derivative is the constant orthogonal matrix $DR=(\partial_{1}R\ \partial_{2}R)=Q$, where $\partial_{1}R=Qe_{1}$ and $\partial_{2}R=Qe_{2}$ are its two orthonormal columns. By \eqref{e28} we have $R(0,\frac{d}{2})-R(0,0)=\frac{d}{2}e$ and $R(0,-\frac{d}{2})-R(0,0)=-\frac{d}{2}e$; therefore $\partial_{2}R=R(0,\frac{d}{2})-R(0,0)$ up to the factor $\frac{d}{2}$, i.e.,\ $\partial_{2}R=e$ as a unit vector, and consequently $\partial_{1}R\cdot e=0$ because the two columns of an orthogonal matrix are orthogonal. Hence
\[
z_{\alpha}(\alpha_{1})=\partial_{1}R\,g'_{1}(0)+\partial_{2}R\,h'_{1}(0)=\partial_{1}R\,g'_{1}(0)+e\,h'_{1}(0).
\]
Taking the scalar product with $e$ and using $z_{\alpha}(\alpha_{1})\cdot e=0$ from \eqref{e26} gives
\[
0=z_{\alpha}(\alpha_{1})\cdot e=(\partial_{1}R\cdot e)\,g'_{1}(0)+h'_{1}(0)=h'_{1}(0),
\]
so $h'_{1}(0)=0$. Substituting this back yields $z_{\alpha}(\alpha_{1})=\partial_{1}R\,g'_{1}(0)$; since $Q$ is orthogonal, $|\partial_{1}R|=1$, and therefore $|z_{\alpha}(\alpha_{1})|=|g'_{1}(0)|$. By the admissible-parameterization assumption \eqref{e8} we have $|z_{\alpha}(\alpha_{1})|\geqslant A^{-1}$, while the $C^{4}$ bound $\|g_{1}\|_{C^{4}}\lesssim 1$ (coming from the regularity of $z$) gives $|g'_{1}(0)|\lesssim 1$. Thus $|g'_{1}(0)|\approx 1$ (with implied constants depending only on $A$). Therefore, $g_{1}$ is locally invertible in a sufficiently small neighborhood of $0$. So we can define $\beta_{1}:=g_{1}^{-1}$ and $f_{1}:=h_{1}\circ g_{1}^{-1}$, then we can check that
\[
\begin{aligned}
&f_{1}(0)=(h_{1}\circ g_{1}^{-1})(0)=h_{1}(g_{1}^{-1}(0))=h_{1}(0)=-\frac{d}{2}, \\
&\beta_{1}(0)=g_{1}^{-1}(0)=0, \\
&f'_{1}(0)=(h_{1}\circ g_{1}^{-1})'(0)=h'_{1}(g_{1}^{-1}(0))(g_{1}^{-1})'(0)=0, \\
&|\beta'_{1}(0)|=|(g_{1}^{-1})'(0)|\approx 1, \\
&\|f_{1}\|_{C^{4}}+\|\beta_{1}\|_{C^{4}}\lesssim 1.
\end{aligned}
\]
Similarly we can define $g_{2},h_{2}$ such that $z(\alpha_{2}+\beta)=R(g_{2}(\beta),h_{2}(\beta))$, and we can also get $g_{2}$ is locally invertible in a sufficiently small neighborhood of $0$, then we define $\beta_{2}:=g_{2}^{-1}$ and $f_{2}:=h_{2}\circ g_{2}^{-1}$. As before we have
\[
\begin{aligned}
&f_{2}(0)=\frac{d}{2}, \quad
\beta_{2}(0)=0, \quad
f'_{2}(0)=0, \\
&|\beta'_{2}(0)|\approx 1, \quad
\|f_{2}\|_{C^{4}}+\|\beta_{2}\|_{C^{4}}\lesssim 1.
\end{aligned}
\]
We finally prove the last assertion of \eqref{e30}, $f_{2}(\rho)-f_{1}(\rho)\geqslant d$ for
$\rho\in(-\varepsilon_{5}^{4},\varepsilon_{5}^{4})$. By \eqref{e29} the two points
$R(\rho,f_{1}(\rho))$ and $R(\rho,f_{2}(\rho))$ are the image under $R$ of the interface points
with parameters $\alpha_{1}+\beta_{1}(\rho)$ and $\alpha_{2}+\beta_{2}(\rho)$; since $R$ is a rigid
motion it preserves distances, so
\[
|z(\alpha_{2}+\beta_{2}(\rho))-z(\alpha_{1}+\beta_{1}(\rho))|
=\big|R(\rho,f_{2}(\rho))-R(\rho,f_{1}(\rho))\big|=|f_{2}(\rho)-f_{1}(\rho)|.
\]
By \eqref{e22} we have $|\alpha_{2}-\alpha_{1}|\in[2\varepsilon_{1},\frac{1}{2\varepsilon_{1}}]$,
while $\beta_{j}(0)=0$ and $\|\beta'_{j}\|_{L^{\infty}}\lesssim1$ give
$|\beta_{j}(\rho)|\lesssim|\rho|\leqslant\varepsilon_{5}^{4}$; after shrinking $\varepsilon_{5}$
we may therefore assume
\[
\big|(\alpha_{2}+\beta_{2}(\rho))-(\alpha_{1}+\beta_{1}(\rho))\big|
\in[\varepsilon_{1},\varepsilon_{1}^{-1}]\qquad\text{for }|\rho|\leqslant\varepsilon_{5}^{4}.
\]
Since $d$ is by definition the infimum of $|z(\alpha)-z(\beta)|$ over exactly this range of
parameter differences, we conclude $|f_{2}(\rho)-f_{1}(\rho)|\geqslant d$; and since
$f_{2}(0)-f_{1}(0)=d>0$ and $\rho\mapsto f_{2}(\rho)-f_{1}(\rho)$ is continuous on the connected
interval $(-\varepsilon_{5}^{4},\varepsilon_{5}^{4})$ while never vanishing (its modulus is at
least $d$), the difference keeps the positive sign, i.e.,\ $f_{2}(\rho)-f_{1}(\rho)\geqslant d$.

It remains to prove the remaining assertion of the last line of \eqref{e30}, namely
$-\beta'_{1}(\rho)\beta'_{2}(\rho)\approx1$. Since $h'_{1}(0)=h'_{2}(0)=0$, the computation above gives
$z_{\alpha}(\alpha_{j})=\partial_{1}R\,g'_{j}(0)$ for $j=1,2$; the antiparallel
condition $z_{\alpha}(\alpha_{1})\cdot z_{\alpha}(\alpha_{2})<0$ of \eqref{e26}
therefore yields $g'_{1}(0)g'_{2}(0)<0$. Because
$\beta'_{j}(0)=1/g'_{j}(0)$, the two numbers $\beta'_{1}(0)$ and $\beta'_{2}(0)$ have
opposite signs and satisfy $|\beta'_{j}(0)|\approx 1$, so that
$-\beta'_{1}(0)\beta'_{2}(0)\approx 1$. By the bounds
$\|f_{j}\|_{C^{4}}+\|\beta_{j}\|_{C^{4}}\lesssim 1$ and continuity, the same conclusion
holds for every $\rho\in(-\varepsilon_{5}^{4},\varepsilon_{5}^{4})$ after shrinking
$\varepsilon_{5}$ if necessary. This completes the proof of \eqref{e30}. 

For \eqref{e31}, the first two inclusions come from the first two inclusions in \eqref{e27} after transferring to the $(\rho,y)$ coordinates: indeed, writing $z(\beta)-\rho'e=R(g_{1}(\beta),h_{1}(\beta)-\rho')$, the first inclusion of \eqref{e27} gives
$\{R(\rho,f_{1}(\rho)-\rho'):|\rho|<\varepsilon_{5}^{4},\ \rho'\in(0,\varepsilon_{5})\}\subset\Omega_{1}$
(here $\beta=\beta_{1}(\rho)$, so that $|\beta-\alpha_{1}|=|\beta_{1}(\rho)|\lesssim|\rho|<\varepsilon_{5}^{4}\leqslant\varepsilon_{5}$), and this set contains $R(\{(\rho,y):|\rho|<\varepsilon_{5}^{4},\,y\in(-\varepsilon_{5}^{2},f_{1}(\rho))\})$ because $f_{1}(\rho)-\varepsilon_{5}<-\varepsilon_{5}^{2}$ (as $|f_{1}(\rho)|\lesssim d+\rho^{2}\lesssim\varepsilon_{5}^{8}$); the second inclusion follows in the same way from the second inclusion of \eqref{e27}, using $f_{2}(\rho)+\varepsilon_{5}>\varepsilon_{5}^{2}$. So it suffices to show the last inclusion. Set
\[
L:=\bigl\{(\rho,y):|\rho|<\varepsilon_{5}^{4},\ f_{1}(\rho)<y<f_{2}(\rho)\bigr\}.
\]
The set $L$ is connected (it is the image of the connected set $(-\varepsilon_{5}^{4},\varepsilon_{5}^{4})\times(0,1)$ under the continuous map $(\rho,s)\mapsto(\rho,(1-s)f_{1}(\rho)+s f_{2}(\rho))$), and hence so is $R(L)$; moreover $R(L)$ meets $\Omega_{2}$, because
\[
R\bigl(\{(0,y):y\in(f_{1}(0),f_{2}(0))\}\bigr)
=R\bigl(\{(0,y):y\in(-\tfrac d2,\tfrac d2)\}\bigr)
=(z(\alpha_{1}),z(\alpha_{2}))\subset\Omega_{2}
\]
by \eqref{e25} and \eqref{e30}. Assume that $R(L)\not\subset\Omega_{2}$. If $R(L)\cap\Gamma_{t}=\varnothing$, then $R(L)$, being connected and contained in $\Omega_{1}\cup\Omega_{2}$, the union of two disjoint open sets, would have to lie in a single one of them, hence in $\Omega_{2}$, contrary to the assumption; so there is a point $(\rho,y)\in L$ with $R(\rho,y)\in\Gamma_{t}$, and we choose such a point $(\rho_{0},y_{0})$ with $|\rho_{0}|$ minimal. The minimum is attained: letting $\sigma$ denote the infimum of $|\rho|$ over all $(\rho,y)\in L$ with $R(\rho,y)\in\Gamma_{t}$, take a sequence $(\rho_{n},y_{n})$ with $|\rho_{n}|\to\sigma$; passing to a subsequence we may assume $\rho_{n}\to\rho_{\ast}$ and $y_{n}\to y_{\ast}$, where $|\rho_{\ast}|=\sigma<\varepsilon_{5}^{4}$ (all $(\rho_{n},y_{n})$ lie in $L$), $R(\rho_{\ast},y_{\ast})\in\Gamma_{t}$ and $y_{\ast}\in[f_{1}(\rho_{\ast}),f_{2}(\rho_{\ast})]$. If $y_{\ast}=f_{1}(\rho_{\ast})$, then $0<y_{n}-f_{1}(\rho_{n})\to0$, so that for all large $n$ we have $\rho'_{n}:=y_{n}-f_{1}(\rho_{n})\in(0,d)$; but the third inclusion of \eqref{e27}, transferred as above, which reads
$\{R(\rho,f_{1}(\rho)+\rho'):|\rho|<\varepsilon_{5}^{4},\ \rho'\in(0,d)\}\subset\Omega_{2}$,
then places $R(\rho_{n},y_{n})=R(\rho_{n},f_{1}(\rho_{n})+\rho'_{n})$ in $\Omega_{2}$, contradicting $R(\rho_{n},y_{n})\in\Gamma_{t}$. The case $y_{\ast}=f_{2}(\rho_{\ast})$ is identical using the fourth inclusion of \eqref{e27}, i.e.,
$\{R(\rho,f_{2}(\rho)-\rho'):|\rho|<\varepsilon_{5}^{4},\ \rho'\in(0,d)\}\subset\Omega_{2}$.
Thus $(\rho_{\ast},y_{\ast})\in L$ attains the minimum, and we take $(\rho_{0},y_{0}):=(\rho_{\ast},y_{\ast})$. Finally $\rho_{0}\ne0$: otherwise $R(\rho_{0},y_{0})\in R(\{(0,y):y\in(f_{1}(0),f_{2}(0))\})\subset\Omega_{2}$, contradicting $R(\rho_{0},y_{0})\in\Gamma_{t}$. We claim that $z_{\alpha}(\mu_{0})$ is parallel or antiparallel to the vector $e$, where $\mu_{0}\in\mathbb{R}$ is any parameter with $z(\mu_{0})=R(\rho_{0},y_{0})$; such a $\mu_{0}$ exists because $R(\rho_{0},y_{0})\in\Gamma_{t}$. Write $R^{-1}z_{\alpha}(\mu_{0})=(a,b)$; we show that $a=0$. Suppose $a\ne0$. If $b\ne0$, then $\Gamma_{t}$ is locally the graph $\rho=\varphi(y)$ with $\varphi(y_{0})=\rho_{0}$ and $\varphi'(y_{0})=a/b\ne0$; choosing $y$ close to $y_{0}$ so that $|\varphi(y)|<|\rho_{0}|$ (possible because $\rho_{0}\ne0$) and $y\in(f_{1}(\varphi(y)),f_{2}(\varphi(y)))$ (possible because $y_{0}\in(f_{1}(\rho_{0}),f_{2}(\rho_{0}))$ is an open condition and $f_{1},f_{2}$ are continuous), we obtain a point $R(\varphi(y),y)\in\Gamma_{t}\cap R(L)$ with $|\rho|<|\rho_{0}|$. If $b=0$, then $\Gamma_{t}$ is locally the graph $y=\psi(\rho)$, and the same openness reasoning gives $R(\rho,\psi(\rho))\in L$ for all $\rho$ close enough to $\rho_{0}$, in particular for some $\rho$ with $|\rho|<|\rho_{0}|$. In either case the minimality of $|\rho_{0}|$ is contradicted. Hence $a=0$, i.e.,\ $R^{-1}z_{\alpha}(\mu_{0})=(0,k)$ and $z_{\alpha}(\mu_{0})=k\,e$; by \eqref{e8} we have $|k|=|z_{\alpha}(\mu_{0})|\geqslant A^{-1}$, while the $C^{4}$ bound in \eqref{e8} gives the upper estimate $|k|\leqslant A+1$.

Then we use the Taylor expansion around the point $\mu_{0}$,
\[
|z(\mu+\beta)-z(\mu_{0})-\beta z_{\alpha}(\mu_{0})|\lesssim\beta^{2}.
\]
Since $e=\partial_{2}R$ is the second column of $Q$, one has $R^{-1}e=e_{2}$ and therefore
$R^{-1}z_{\alpha}(\mu_{0})=R^{-1}(ke)=(0,k)$; writing $R^{-1}(z(\mu_{0}+\beta))=(\rho,y)$, we get
\[
|(\rho,y)-(\rho_{0},y_{0}+k\beta)|\lesssim\beta^{2}.
\]
Since $y_{0}\in(f_{1}(\rho_{0}),f_{2}(\rho_{0}))$, and since $f_{j}(\rho)=f_{j}(0)+O(\rho^{2})$ (with a constant depending only on $A$) by $f'_{j}(0)=0$ and $\|f_{j}\|_{C^{4}}\lesssim 1$ with $f_{1}(0)=-d/2$, $f_{2}(0)=d/2$, we have $|y_{0}|\lesssim d+\rho_{0}^{2}\lesssim\varepsilon_{5}^{8}$ (recall $d\approx\vartheta\leqslant\varepsilon_{4}\leqslant\varepsilon_{5}^{100}$). Choose $\beta$ with $|\beta|=\frac{\varepsilon_{5}^{4}}{2}$ and $\operatorname{sgn}(\beta)=-\operatorname{sgn}(k)$. Then
\[
|\rho|\leqslant|\rho_{0}|+C\beta^{2}\lesssim\varepsilon_{5}^{4}\leqslant\varepsilon_{5}^{2},
\]
and, because $|k|\geqslant A^{-1}$ and $|k|\leqslant A+1$, we obtain, after shrinking $\varepsilon_{5}$ depending on $A$ if necessary so that $\varepsilon_{5}^{4}\lesssim A^{-1}$ and $A\varepsilon_{5}\lesssim1$,
\[
y=y_{0}+k\beta+O(\beta^{2})\leqslant-\frac{\varepsilon_{5}^{4}}{2A}+O(\varepsilon_{5}^{8})
\leqslant-\frac{\varepsilon_{5}^{4}}{4A},\qquad
y\geqslant-O(\varepsilon_{5}^{8})-(A+1)\frac{\varepsilon_{5}^{4}}{2}\geqslant-\varepsilon_{5}^{3}.
\]
Also, $f_{1}(\rho)=-\frac d2+O(\rho^{2})$ satisfies $|f_{1}(\rho)|\lesssim d+\rho^{2}\lesssim\varepsilon_{5}^{8}$, so that $y<f_{1}(\rho)$ (because $y\leqslant-\frac{\varepsilon_{5}^{4}}{4A}<0$ while $f_{1}(\rho)\geqslant-C\varepsilon_{5}^{8}$, using $\varepsilon_{5}^{4}\lesssim A^{-1}$), and also $y>f_{1}(\rho)-\frac{\varepsilon_{5}^{2}}{2}$ (because $y\geqslant-\varepsilon_{5}^{3}>-\frac{\varepsilon_{5}^{2}}{4}$ while $f_{1}(\rho)-\frac{\varepsilon_{5}^{2}}{2}\leqslant C\varepsilon_{5}^{8}-\frac{\varepsilon_{5}^{2}}{2}\leqslant-\frac{\varepsilon_{5}^{2}}{4}$).
On the other hand the first inclusion of \eqref{e27}, transferred to the coordinates $(\rho,y)$, gives $R(\rho,y)\in\Omega_{1}$ whenever $|\rho|\leqslant2\varepsilon_{5}^{4}$ and $f_{1}(\rho)-\frac{\varepsilon_{5}^{2}}{2}<y<f_{1}(\rho)$: indeed $z(\alpha_{1}+\beta_{1}(\rho))-\rho'e=R(g_{1}(\beta_{1}(\rho)),h_{1}(\beta_{1}(\rho))-\rho')=R(\rho,f_{1}(\rho)-\rho')$ with $\rho'=f_{1}(\rho)-y\in(0,\frac{\varepsilon_{5}^{2}}{2})\subset(0,\varepsilon_{5})$, while $|\beta_{1}(\rho)|\lesssim|\rho|\leqslant2\varepsilon_{5}^{4}\leqslant\varepsilon_{5}$ and, after shrinking $\varepsilon_{5}$ depending on $A$, the functions $\beta_{1},f_{1}$ are defined on $(-2\varepsilon_{5}^{4},2\varepsilon_{5}^{4})$ with $|\beta'_{1}|\lesssim1$ and $\|f_{1}\|_{C^{4}}\lesssim1$ there, because $|g'_{1}(0)|\approx1$ and $\|g_{1}\|_{C^{4}}\lesssim1$ make $g_{1}$ a diffeomorphism with $|g'_{1}|\approx1$ on a neighborhood of $0$ whose size depends only on $A$. The point $(\rho,y)$ constructed above satisfies all these hypotheses, since $|\rho|\leqslant|\rho_{0}|+C\beta^{2}<\varepsilon_{5}^{4}+C\varepsilon_{5}^{8}\leqslant2\varepsilon_{5}^{4}$, $y<f_{1}(\rho)$ and $y>f_{1}(\rho)-\frac{\varepsilon_{5}^{2}}{2}$. The point $z(\mu_{0}+\beta)=R(\rho,y)$ therefore lies at the same time in $\Omega_{1}$ and on $\Gamma_{t}$, which is impossible. This completes the proof of the proposition.
\end{proof}

\section{Proof of the main theorem}\label{sec:main}
Throughout this section and the next, $\epsilon$ denotes the small parameter of the localized
analysis: at a given time it is $\epsilon=\tilde d/2$, where $\tilde d$ is the length of the
shortest admissible chord at that time, so that $\epsilon=d/2$ in the notation of Proposition
\ref{chord-arc prop} (see Step 2 of the proof of Theorem \ref{thm}). Likewise
$(\alpha_{1},\alpha_{2})$ always denotes the conjugate pair furnished by that proposition at the
time under consideration, and $\varepsilon_{1},\dots,\varepsilon_{5}$ are the constants of Lemmas
\ref{chord} and \ref{chord-arc} and of Proposition \ref{chord-arc prop}. A constant written $\lesssim_{A}$ below may
also depend on the fixed time horizon $T$, through $\varepsilon_{3}=\varepsilon_{3}(A,T,\varepsilon_{1})$
and hence through $\varepsilon_{5}$, which accounts for the dependence on $T$ in Theorem
\ref{thm}. The notation is collected for convenience.

\begin{center}
\begin{tabular}{lp{4.35in}}
\hline
symbol & meaning \\
\hline
$\vartheta$ & $\inf_{\alpha,\beta}F(z)(t,\alpha,\beta)$ at the time under consideration (\eqref{e21}) \\
$\varepsilon_{1},\varepsilon_{2}$ & constants of Lemma \ref{chord}, depending only on $A$ \\
$\varepsilon_{3}$ & constant of Lemma \ref{chord-arc}, depending on $A,T,\delta$ (with $\delta=\varepsilon_{1}$ in the applications) \\
$\varepsilon_{4},\varepsilon_{5}$ & constants of Proposition \ref{chord-arc prop}, subject to \eqref{eps45.list} \\
$d$, $\tilde d$ & length of the shortest admissible chord; $d=\tilde d$ in Step 2 \\
$\epsilon$ & $d/2=\tilde d/2$, the width parameter of Appendix~\ref{sec:singular} \\
$D(t)$, $m(t)$ & $D=m^{-2}$, $m$ the shortest admissible chord at time $t$ (\eqref{D.def}) \\
$\tilde D(t)$ & $|z(t,\alpha_{1})-z(t,\alpha_{2})|^{-2}$ for the pair fixed at $t_{2}$ \\
$f,g,\beta_{1},\beta_{2}$ & local graph data of \eqref{e29}--\eqref{e30}; $g=-f_{1}-\epsilon$, $f=f_{2}-\epsilon$ \\
$\omega^{\pm}$ & $\omega^{-}=-\omega_{1}$, $\omega^{+}=\omega_{2}$ (\eqref{omV.def}) \\
$T_{1},T_{2},T_{3},T_{4},H,M$ & singular-integral operators of Appendix~\ref{sec:singular} \\
$\mathcal{C}$, $\|\cdot\|_{Z}$ & test-function class and norm of Appendix~\ref{sec:singular} \\
\hline
\end{tabular}
\end{center}

Three typographically close but logically unrelated symbols occur below:
$\vartheta$, the \emph{value} of the chord-arc functional at the time under
consideration; $\epsilon=d/2$, the \emph{width} of the narrow channel of Appendix~\ref{sec:singular};
and $\varepsilon_{1},\dots,\varepsilon_{5}$, the \emph{thresholds} of Lemma \ref{chord} and
Proposition \ref{chord-arc prop}. They are of very different size: $\vartheta$ is of the order of
$\varepsilon_{2}$, while $\epsilon=d/2$ is at most $\varepsilon_{4}$ and
$\varepsilon_{4}\leqslant\varepsilon_{5}^{100}$.
Throughout, the relation $\approx$ carries implied constants depending only on $A$, unless a
different dependence is stated explicitly.

Let $\psi:\mathbb{R}\to[0,1]$ be a smooth, even function supported in $[-1,1]$ and equal to $1$ in the interval $[-\frac{1}{2},\frac{1}{2}]$, let
\begin{equation}\label{cutoff}
\psi_{r}(\xi):=\psi(\xi/r),\qquad r>0,
\end{equation}
and define the function $S_{t}^{\infty}:\mathbb{R}^{2}\to\mathbb{R}^{2}$
\begin{equation}\label{S.inf}
S_{t}^{\infty}(z):=\frac{1}{2\pi}\,\mathrm{p.v.}\!\int_{\mathbb{R}}\frac{(z-z(t,\beta))^{\bot}}{|z-z(t,\beta)|^{2}}\omega_{\Gamma}(t,\beta)[1-\psi_{\varepsilon_{5}^{10}}(\beta-\alpha_{1})-\psi_{\varepsilon_{5}^{10}}(\beta-\alpha_{2})]d\beta.
\end{equation}
The principal value is needed for the same reason as in \eqref{e18}: by the periodicity
$z(t,\beta+2\pi)=z(t,\beta)+(2\pi,0)$ the second component of the kernel decays only like
$|\beta|^{-1}$ as $|\beta|\to\infty$, so that the tail is merely conditionally convergent.
We verify that the compositions $S_{t}^{\infty}(z(t,\alpha_{m}+\beta_{m}(\cdot)))$ are $C^{3}$
in the parameter, with a bound depending only on $A$. Throughout we use the periodicity condition of Definition \ref{adm}
together with \eqref{e8}, that is $|z(t,\beta)-(\beta,0)|\leqslant A$ for all $\beta$, and the bound
$\|\omega_{\Gamma}\|_{L^{\infty}}\leqslant A$ of \eqref{e19} (Remark \ref{rm:normalize}). By the periodicity
$z(t,\alpha+2\pi k)=z(t,\alpha)+(2\pi k,0)$ we may shift $\alpha_{1},\alpha_{2}$ by a common
multiple of $2\pi$, which translates the whole configuration and leaves the norm in \eqref{S.C3}
unchanged; we therefore choose $k\in\mathbb{Z}$ such that
$\bigl|\frac12(\alpha_{1}+\alpha_{2})+2\pi k\bigr|\leqslant\pi$. Then
$|\alpha_{m}|\leqslant\pi+\frac12|\alpha_{1}-\alpha_{2}|\leqslant\pi+(4\varepsilon_{1})^{-1}$
by \eqref{e22}, and hence, by \eqref{e8},
\begin{equation}\label{S.loc}
|z|\leqslant \pi+(4\varepsilon_{1})^{-1}+A+\varepsilon_{5}^{12}\leqslant C(A)
\qquad\text{for every }z\in B(z(t,\alpha_{m}),\varepsilon_{5}^{12}).
\end{equation}

(a) \emph{The cutoffs remove the two singular parameters.} Since $\psi=1$ on
$[-\frac12,\frac12]$ and $\operatorname{supp}\psi\subseteq[-1,1]$, the factor
$1-\psi_{\varepsilon_{5}^{10}}(\beta-\alpha_{1})-\psi_{\varepsilon_{5}^{10}}(\beta-\alpha_{2})$
vanishes identically on the union of the two intervals
$|\beta-\alpha_{m}|\leqslant \frac12\varepsilon_{5}^{10}$: indeed, if
$|\beta-\alpha_{m}|\leqslant \frac12\varepsilon_{5}^{10}$ then
$|\beta-\alpha_{m'}|\geqslant |\alpha_{1}-\alpha_{2}|-\frac12\varepsilon_{5}^{10}
\geqslant 2\varepsilon_{1}-\frac12\varepsilon_{5}^{10}>\varepsilon_{5}^{10}$ for the other index $m'$
(by \eqref{e22} and $\varepsilon_{5}^{10}\ll\varepsilon_{1}$), so the factor equals $1-1-0=0$.
Hence, on the support of the integrand,
\begin{equation}\label{S.supp}
|\beta-\alpha_{m}|\geqslant \tfrac12\varepsilon_{5}^{10}\quad (m=1,2).
\end{equation}

(b) \emph{Inside the strip the interface consists of the two sheets only.} By \eqref{e29} the two
branches of $\Gamma_{t}$ near $z(t,\alpha_{m})$ are $R(\rho,f_{m}(\rho))$ with
$|\rho|\leqslant \varepsilon_{5}^{4}$, and by \eqref{e31} the sets
$\{(\rho,y):|\rho|<\varepsilon_{5}^{4},\ y>f_{2}(\rho)\}$ and
$\{(\rho,y):|\rho|<\varepsilon_{5}^{4},\ y<f_{1}(\rho)\}$ are mapped by $R$ into $\Omega_{1}(t)$,
while $\{(\rho,y):|\rho|<\varepsilon_{5}^{4},\ f_{1}(\rho)<y<f_{2}(\rho)\}$ is mapped into
$\Omega_{2}(t)$. Since $|f_{j}(\rho)|\leqslant \frac d2+C\rho^{2}\leqslant \varepsilon_{5}^{7}$ by \eqref{e30}, these
three sets cover the strip
\[
\Sigma:=R\big(\{(\rho,y):|\rho|<\varepsilon_{5}^{4},\ |y|<\varepsilon_{5}^{2}\}\big)
\]
minus the two graphs, and therefore
\begin{equation}\label{S.strip}
\Gamma_{t}\cap\Sigma
=R\Big(\{(\rho,f_{1}(\rho)):|\rho|\leqslant \varepsilon_{5}^{4}\}\cup
\{(\rho,f_{2}(\rho)):|\rho|\leqslant \varepsilon_{5}^{4}\}\Big):
\end{equation}
every point of $\Gamma_{t}$ in the strip is $z(t,\alpha_{1}+\beta_{1}(\rho))$ or
$z(t,\alpha_{2}+\beta_{2}(\rho))$ for some $|\rho|\leqslant \varepsilon_{5}^{4}$, and conversely.

(c) \emph{A lower bound for the denominator, uniform in $d$.} Let $z\in B(z(t,\alpha_{m}),r)$ with
$r:=\varepsilon_{5}^{12}$, and let $\beta$ lie in the support of the integrand, so that
$|\beta-\alpha_{m'}|\geqslant \frac12\varepsilon_{5}^{10}$ for $m'=1,2$ by \eqref{S.supp}. Write
$z=R(\rho_{z},y_{z})$ and $z(t,\beta)=R(\rho_{\beta},y_{\beta})$; since $R$ is an isometry,
\begin{equation}\label{S.den}
|z-z(t,\beta)|\geqslant |\rho_{z}-\rho_{\beta}|.
\end{equation}
Here $|\rho_{z}|\leqslant r$, because $z\in B(z(t,\alpha_{m}),r)=B(R(0,f_{m}(0)),r)$ and
$R(0,f_{m}(0))=z(t,\alpha_{m})$ by \eqref{e29}, while $|y_{z}|\leqslant|f_{m}(0)|+r\leqslant \varepsilon_{5}^{7}$.
We distinguish three cases.

If $|\rho_{\beta}|\geqslant \varepsilon_{5}^{4}$, then $|\rho_{z}-\rho_{\beta}|\geqslant \frac12\varepsilon_{5}^{4}$.

If $|\rho_{\beta}|<\varepsilon_{5}^{4}$ and $|y_{\beta}|<\varepsilon_{5}^{2}$, then
$z(t,\beta)\in\Sigma$ and \eqref{S.strip} shows that $z(t,\beta)$ lies on one of the two sheets,
say $z(t,\beta)=z(t,\alpha_{m'}+\beta_{m'}(\rho_{\beta}))$. By \eqref{e30} the map $\beta_{m'}$ is a
diffeomorphism of $(-\varepsilon_{5}^{4},\varepsilon_{5}^{4})$ onto its image; differentiating
\eqref{e29} and taking absolute values gives
\[
|\beta'_{m'}(\rho)|=\frac{\sqrt{1+|f'_{m'}(\rho)|^{2}}}{|z_{\alpha}(t,\alpha_{m'}+\beta_{m'}(\rho))|},
\]
and since $|f'_{m'}|\lesssim1$ while $A^{-1}\leqslant|z_{\alpha}|\leqslant A+1$ by \eqref{e8}, there is a
constant $C_{A}>1$, depending only on $A$ and kept fixed in the rest of this proof, such that
\[
\frac{1}{C_{A}}|\rho|\leqslant|\beta_{m'}(\rho)|\leqslant C_{A}|\rho|,
\qquad |\rho|\leqslant\varepsilon_{5}^{4}.
\]
The hypothesis $|\beta-\alpha_{m'}|\geqslant \frac12\varepsilon_{5}^{10}$, i.e.\
$|\beta_{m'}(\rho_{\beta})|\geqslant \frac12\varepsilon_{5}^{10}$, and the second of these two
inequalities therefore give $|\rho_{\beta}|\geqslant \frac{1}{2C_{A}}\varepsilon_{5}^{10}$; since
$|\rho_{z}|\leqslant r=\varepsilon_{5}^{12}\leqslant\frac{1}{4C_{A}}\varepsilon_{5}^{10}$ after
shrinking $\varepsilon_{5}$ once more, we obtain
$|\rho_{z}-\rho_{\beta}|\geqslant \frac{1}{4C_{A}}\varepsilon_{5}^{10}$.

If $|\rho_{\beta}|<\varepsilon_{5}^{4}$ and $|y_{\beta}|\geqslant \varepsilon_{5}^{2}$, then
$z(t,\beta)\notin\Sigma$ and we use the second coordinate instead:
$|z-z(t,\beta)|\geqslant |y_{z}-y_{\beta}|\geqslant \varepsilon_{5}^{2}-\varepsilon_{5}^{7}\geqslant \frac12\varepsilon_{5}^{2}$.

In all three cases therefore
\[
|z-z(t,\beta)|\geqslant \min\Big\{\tfrac12\varepsilon_{5}^{4},\ \tfrac{1}{4C_{A}}\varepsilon_{5}^{10},\
\tfrac12\varepsilon_{5}^{2}\Big\}\geqslant \tfrac{1}{4C_{A}}\varepsilon_{5}^{10},
\]
Hence the kernel
$K(w):=w^{\bot}/|w|^{2}$ is smooth on $B(z(t,\alpha_{m}),r)\times\operatorname{supp}$ and, for
$k=0,1,2,3$,
\begin{equation}\label{S.deriv}
\big|\partial_{z}^{k}K(z-z(t,\beta))\big|\leqslant C_{k}\,(\varepsilon_{5}^{10})^{-1-k},
\end{equation}
since $\partial_{w}^{k}K$ is homogeneous of degree $-1-k$; the constants $C_{k}$ absorb powers of
$C_{A}$ and therefore depend only on $A$.

(d) \emph{Differentiation under the integral sign.} By \eqref{S.deriv} the integrand of
$\partial_{z}^{3}S_{t}^{\infty}$ is bounded by $C(A)(\varepsilon_{5}^{10})^{-4}|\omega_{\Gamma}(t,\beta)|$
on the part of the support carried by the two sheets, which is a set of measure at most
$4\varepsilon_{5}^{4}$ in $\beta$ by (b). On the remaining part it is bounded by
$C\min\{\varepsilon_{5}^{-16},\varepsilon_{5}^{-8},|\beta|^{-4}\}\,|\omega_{\Gamma}(t,\beta)|$.
For $\beta$ outside the two sheets one has $|\rho_{\beta}|\geqslant\varepsilon_{5}^{4}$ or
$|y_{\beta}|\geqslant\varepsilon_{5}^{2}$: the alternatives
$|\rho_{\beta}|<\varepsilon_{5}^{4}$ and $|y_{\beta}|<\varepsilon_{5}^{2}$ would place
$z(t,\beta)$ in the strip $\Sigma$, hence on one of the two sheets by \eqref{S.strip}. In the first
case $|z-z(t,\beta)|\geqslant|\rho_{z}-\rho_{\beta}|\geqslant\frac12\varepsilon_{5}^{4}$ by (c);
in the second $|z-z(t,\beta)|\geqslant|y_{z}-y_{\beta}|\geqslant\frac12\varepsilon_{5}^{2}$, again
by (c); and in the far field
$|z-z(t,\beta)|\geqslant|\beta|-A-|z|\geqslant|\beta|-C(A)\geqslant\frac12|\beta|$
whenever $|\beta|\geqslant B:=2C(A)$, by \eqref{S.loc}. Hence,
splitting $\beta$ into the two sheets, the set $\{|\beta|\leqslant B\}$, and its complement,
\[
\begin{aligned}
&\int_{\mathbb{R}}\sup_{z\in B(z(t,\alpha_{m}),r)}|\partial_{z}^{3}K(z-z(t,\beta))|\,|\omega_{\Gamma}(t,\beta)|\,d\beta\\
&\quad\leqslant C(A)\Big((\varepsilon_{5}^{10})^{-4}\varepsilon_{5}^{4}
+(\varepsilon_{5}^{-16}+\varepsilon_{5}^{-8})B+B^{-3}\Big)
\leqslant C(A)\big(\varepsilon_{5}^{-36}+1\big),
\end{aligned}
\]
the last inequality because $B$ depends only on $A$ and $\varepsilon_{5}^{-8}B\leqslant
\varepsilon_{5}^{-36}$ after shrinking $\varepsilon_{5}$ once more. Since for fixed $\beta$ in the support the map
$z\mapsto K(z-z(t,\beta))$ is smooth and the bounds are uniform in $z\in B(z(t,\alpha_{m}),r)$, the
dominated convergence theorem justifies differentiating three times under the integral sign. Hence
$S_{t}^{\infty}\in C^{3}(B(z(t,\alpha_{1}),r)\cup B(z(t,\alpha_{2}),r))$ and
\begin{equation}\label{S.C3}
\|S_{t}^{\infty}\|_{C^{3}(B(z(t,\alpha_{1}),\varepsilon_{5}^{12})\cup
B(z(t,\alpha_{2}),\varepsilon_{5}^{12}))}\lesssim_{A}1.
\end{equation}
The constant $\varepsilon_{5}^{-36}$ depends only on $A$ and $T$, and only the bound
\eqref{S.C3} itself is used below.
In particular, the compositions $\mu\mapsto S_{t}^{\infty}(z(t,\alpha_{m}+\beta_{m}(\mu)))$ are
$C^{3}$ on $|\mu|\leqslant2\sqrt{\epsilon}$, since $|z(t,\alpha_{m}+\beta_{m}(\mu))-z(t,\alpha_{m})|
\lesssim|\mu|\lesssim\sqrt{\epsilon}\lesssim\varepsilon_{5}^{50}\ll\varepsilon_{5}^{12}$ by
\eqref{e29}, so the points in question lie in the ball. 

Therefore, letting $Q:\mathbb{R}^{2}\to\mathbb{R}^{2}, Q(x):=R(x)-R(0)$, where $R$ is defined in \eqref{e28}, we have 
\begin{equation}\label{zabeta.id}
\begin{aligned}
\frac{z_{\alpha}(\alpha_{1}+\beta_{1}(\mu))}{|z_{\alpha}(\alpha_{1}+\beta_{1}(\mu))|^{2}}&=\frac{\beta'_{1}(\mu)Q(1,f'_{1}(\mu))}{1+|f'_{1}(\mu)|^{2}}, \\
\frac{z_{\alpha}(\alpha_{2}+\beta_{2}(\mu))}{|z_{\alpha}(\alpha_{2}+\beta_{2}(\mu))|^{2}}&=\frac{\beta'_{2}(\mu)Q(1,f'_{2}(\mu))}{1+|f'_{2}(\mu)|^{2}}.
\end{aligned}
\end{equation} 
All the quantities below are evaluated at one and the same time $t$ (in the proof of
Theorem \ref{thm} we will take $t=t_{2}$). Let
$\omega_{1},\omega_{2}:(-\varepsilon_{5}^{6},\varepsilon_{5}^{6})\to\mathbb{R}$ and
$V_{1},V_{2}:(-\varepsilon_{5}^{6},\varepsilon_{5}^{6})\to\mathbb{R}^{2}$ be given by
\begin{equation}\label{omV.def}
\begin{aligned}
\omega_{1}(\rho)&:=\omega_{\Gamma}(t,\alpha_{1}+\beta_{1}(\rho))\beta'_{1}(\rho), \qquad \omega_{2}(\rho):=\omega_{\Gamma}(t,\alpha_{2}+\beta_{2}(\rho))\beta'_{2}(\rho), \\
V_{1}(\rho)&:=Q^{-1}(v_{1}(t,\alpha_{1}+\beta_{1}(\rho))), \qquad V_{2}(\rho):=Q^{-1}(v_{1}(t,\alpha_{2}+\beta_{2}(\rho))).
\end{aligned}
\end{equation}
\emph{Convention on the integrals in $\rho$.} The functions $\omega_{m}$ of \eqref{omV.def} are defined
on $(-\varepsilon_{5}^{6},\varepsilon_{5}^{6})$, whereas $\beta_{m}$ and $f_{m}$ are defined on
$(-\varepsilon_{5}^{4},\varepsilon_{5}^{4})$; see \eqref{e29} and \eqref{e30}. Since
$\beta_{m}(0)=0$, $|\beta'_{m}(0)|\approx1$ and $\|\beta_{m}\|_{C^{4}}\lesssim1$, the map $\beta_{m}$
is bilipschitz near $0$: with the constant $C_{A}$ of item (c),
$|\beta_{m}(\rho)|\geqslant C_{A}^{-1}|\rho|$ for $|\rho|\leqslant\varepsilon_{5}^{6}$. Hence the
factor $\psi_{\varepsilon_{5}^{10}}(\beta_{m}(\rho))$ vanishes identically for
$C_{A}\varepsilon_{5}^{10}\leqslant|\rho|<\varepsilon_{5}^{6}$. Every integral in $\rho$ occurring in
this section is therefore understood as an integral over the compact interval
$[-\varepsilon_{5}^{9},\varepsilon_{5}^{9}]$, which is contained in
$(-\varepsilon_{5}^{6},\varepsilon_{5}^{6})$ and on which all the functions appearing in the
integrand are defined, the integrand being extended by $0$ outside it; this extension changes
nothing, since the integrand already vanishes for $C_{A}\varepsilon_{5}^{10}\leqslant|\rho|\leqslant
\varepsilon_{5}^{9}$, and $C_{A}\varepsilon_{5}^{10}\leqslant\varepsilon_{5}^{9}$ after shrinking
$\varepsilon_{5}$ if necessary. The factors
$\psi_{\varepsilon_{5}^{10}}(\beta_{m}(\rho))$ equal $1$ only where
$|\beta_{m}(\rho)|\leqslant\frac12\varepsilon_{5}^{10}$ and vanish where
$|\beta_{m}(\rho)|\geqslant\varepsilon_{5}^{10}$; in \eqref{S.supp} the same cutoff is applied to the
parameter $\beta-\alpha_{m}$ rather than to $\beta_{m}(\rho)$. In Appendix~\ref{sec:singular}, $\rho$
is again a summation variable, over compact intervals fixed by the cutoffs
$\psi_{\sqrt{\epsilon}/2^{6}}$.

Using the Biot--Savart formula of Lemma \ref{Euler formula}, together with the identity
\eqref{zabeta.id} for the local term and the definition \eqref{S.inf} of $S_{t}^{\infty}$ for the
far-field term, we get
\begin{equation}\label{V.decomp}
\begin{aligned}
V_{n}(\mu)&=Q^{-1}(c(t))+Q^{-1}\big(w(t,z(t,\alpha_{n}+\beta_{n}(\mu)))\big)
+\frac{\omega_{n}(\mu)(1,f'_{n}(\mu))}{2(1+|f'_{n}(\mu)|^{2})} \\
&\quad+Q^{-1}(S_{t}^{\infty}(z(\alpha_{n}+\beta_{n}(\mu)))) \\
&\quad+\sum_{m=1}^{2}\frac{1}{2\pi}p.v.\int_{\mathbb{R}}\frac{(-f_{n}(\mu)+f_{m}(\rho),\mu-\rho)}{|\mu-\rho|^{2}+|f_{n}(\mu)-f_{m}(\rho)|^{2}}\omega_{m}(\rho)\,\psi_{\varepsilon_{5}^{10}}(\beta_{m}(\rho))\,d\rho \\
&\quad+Q^{-1}\Big(\frac{1}{2\pi}\int_{\Omega_{2}(t)}\frac{(z(t,\alpha_{n}+\beta_{n}(\mu))-y)^{\bot}}{|z(t,\alpha_{n}+\beta_{n}(\mu))-y|^{2}}\omega(t,y)dy\Big) \\
&=:Q^{-1}(c(t))+Q^{-1}w_{n}(\mu)+U_{n}(\mu)+W_{n}(\mu),
\end{aligned}
\end{equation}
In the principal-value integrals we have used \eqref{Q.perp}: since
$z(\alpha_{n}+\beta_{n}(\mu))-z(\alpha_{m}+\beta_{m}(\rho))
=Q\big((\mu,f_{n}(\mu))-(\rho,f_{m}(\rho))\big)$ by \eqref{e29}, applying $Q^{-1}$ to the
orthogonal of this vector produces exactly $\big(f_{m}(\rho)-f_{n}(\mu),\,\mu-\rho\big)$.

Here $V_{n}=Q^{-1}(c(t))+Q^{-1}w_{n}+U_{n}+W_{n}$ with
$w_{n}(\mu):=w(t,z(t,\alpha_{n}+\beta_{n}(\mu)))$ is the following splitting: $U_{n}$ collects the
three contributions of the \emph{interface} vorticity listed above, namely the local
Birkhoff--Rott jump term, the far-field term $S_{t}^{\infty}$ and the two principal-value
integrals, while $W_{n}$ is the contribution of the \emph{bulk} vorticity of the conducting
fluid,
\begin{equation}\label{V.decomp.W}
W_{n}(\mu):=Q^{-1}\Big(\frac{1}{2\pi}\int_{\Omega_{2}(t)}\frac{(z(t,\alpha_{n}+\beta_{n}(\mu))-y)^{\bot}}{|z(t,\alpha_{n}+\beta_{n}(\mu))-y|^{2}}\,\omega(t,y)\,dy\Big),
\end{equation}
and $Q^{-1}(c(t))+Q^{-1}w_{n}$ is the harmonic part of \eqref{e9.norm} (the constant $c(t)$
together with the field $w$, which is bounded with bounded gradient on the whole plane). The constant $c(t)$ cancels from the differences
$V_{1}-V_{2}$ that occur in Step 1, while $w_{1}-w_{2}$ contributes at most $A\tilde d$ to their
$e_{2}$-component, by the gradient bound of \eqref{e9.norm} and the mean value theorem; see Remark
\ref{rm:constant}, where the point that this bound must hold on the whole plane is discussed. Steps 2--3 concern $U_{n}$ and
$W_{n}$ separately, and neither of them contains the harmonic part; the latter is therefore dropped
from the notation in the sequel, in accordance with Remark \ref{rm:constant}.
The factor $\frac{1}{2\pi}$ is the one occurring in the Biot--Savart formula \eqref{e18}. The
first three lines of \eqref{V.decomp} give the three-term formula for $U_{n}$:
\begin{equation}\label{V.decomp.U}
\begin{aligned}
U_{n}(\mu)=&\frac{\omega_{n}(\mu)(1,f'_{n}(\mu))}{2(1+|f'_{n}(\mu)|^{2})}+Q^{-1}(S_{t}^{\infty}(z(\alpha_{n}+\beta_{n}(\mu)))) \\
&+\sum_{m=1}^{2}\frac{1}{2\pi}p.v.\int_{\mathbb{R}}\frac{(-f_{n}(\mu)+f_{m}(\rho),\mu-\rho)}{|\mu-\rho|^{2}+|f_{n}(\mu)-f_{m}(\rho)|^{2}}\omega_{m}(\rho)\,\psi_{\varepsilon_{5}^{10}}(\beta_{m}(\rho))\,d\rho.
\end{aligned}
\end{equation}

\begin{proof}[Proof of Theorem \ref{thm}]
Let $(z,u_{1},u_{2},B,p_{1},p_{2})$ be an $(A,\rho_{1},\rho_{2})$-admissible solution.
We argue by contradiction. Let $K>1$ be a large constant depending only on
$A,T,R_{0}$, to be fixed at the end of the proof, and suppose that
\begin{equation}\label{hyp}
\inf_{(t,\alpha,\beta)\in[0,T]\times\mathbb{R}^{2}}F(z)(t,\alpha,\beta)<\frac{\varepsilon_{4}}{K},
\end{equation}
with $\varepsilon_{4}$ as in Proposition \ref{chord-arc prop}. Write
\[
g(t):=\inf_{\alpha,\beta\in\mathbb{R}}F(z)(t,\alpha,\beta),\qquad t\in[0,T].
\]

\emph{Continuity of $g$.} The function $g$ is continuous at every $t$ with
$g(t)<\min\{\varepsilon_{1},\frac12\}$: indeed, at such a $t$ the infimum is unchanged if one
restricts to
$\varepsilon_{1}\leqslant|\alpha-\beta|\leqslant\varepsilon_{1}^{-1}$, because the
pairs with $|\alpha-\beta|\leqslant\varepsilon_{1}$ are excluded by \eqref{e20} and those with
$|\alpha-\beta|\geqslant\varepsilon_{1}^{-1}$ by the periodicity of
$\alpha\mapsto z(t,\alpha)-(\alpha,0)$ together with \eqref{e8}, which gives
$F(z)(t,\alpha,\beta)\geqslant1-\frac{2A}{|\alpha-\beta|}\geqslant\frac12$ whenever
$|\alpha-\beta|\geqslant4A$, hence a fortiori whenever
$|\alpha-\beta|\geqslant\varepsilon_{1}^{-1}$; shifting $\alpha$ and $\beta$ by one and the same
multiple of $2\pi$ changes neither $|z(t,\alpha)-z(t,\beta)|$ nor $|\alpha-\beta|$, so $\alpha$
may be restricted to one period, after which the set of pairs is compact and depends continuously
on $t$, and the infimum over it is a minimum of a continuous function of $t$. The same
compactness argument is needed again in Step 4 below.

\emph{The two reference times.} By \eqref{hyp} the set
$\{t\in[0,T]:g(t)\leqslant\varepsilon_{4}/K\}$ is non-empty, and it is closed: if $t_{n}\to t^{*}$
with $g(t_{n})\leqslant\varepsilon_{4}/K$, then the compactness argument just given, applied to
pairs that nearly minimize at $t_{n}$, produces a pair at $t^{*}$ with
$F(z)(t^{*},\alpha,\beta)\leqslant\varepsilon_{4}/K$, so that
$g(t^{*})\leqslant\varepsilon_{4}/K$. Let
\begin{equation}\label{tbar.def}
\bar{t}:=\min\big\{t\in[0,T]:g(t)\leqslant\varepsilon_{4}/K\big\}.
\end{equation}
We have $\bar{t}>0$: by \eqref{e10}, $g(0)\geqslant1/A$, while $1/A>\varepsilon_{4}/K$ once
$\varepsilon_{4}\leqslant2/A$ and $K>2A$. Since $g(\bar{t})\leqslant\varepsilon_{4}/K$ and
$g(t)>\varepsilon_{4}/K$ for $t<\bar{t}$, continuity gives
\begin{equation}\label{tbar.range}
g(\bar{t})=\frac{\varepsilon_{4}}{K},
\qquad\text{and}\qquad
g(t)\geqslant\frac{\varepsilon_{4}}{K}\quad\text{for all }t\in[0,\bar{t}].
\end{equation}
Likewise $g(0)\geqslant1/A\geqslant\varepsilon_{4}/2>g(\bar{t})$ once
$\varepsilon_{4}\leqslant\min\{\varepsilon_{2},2/A,e^{-1}\}$, so the set
$\{t\in[0,\bar{t}]:g(t)\geqslant\varepsilon_{4}/2\}$ is non-empty and closed and we may set
\begin{equation}\label{t0.def}
t_{0}:=\max\big\{t\in[0,\bar{t}]:g(t)\geqslant\varepsilon_{4}/2\big\}.
\end{equation}
Then $t_{0}\in[0,\bar{t})$ and, by continuity and the maximality of $t_{0}$,
\begin{equation}\label{t0.value}
\inf_{\alpha,\beta\in\mathbb{R}}F(z)(t_{0},\alpha,\beta)=\frac{\varepsilon_{4}}{2},
\qquad
\inf_{\alpha,\beta\in\mathbb{R}}F(z)(t,\alpha,\beta)\leqslant \frac{\varepsilon_{4}}{2}
\quad\text{for all }t\in[t_{0},\bar{t}].
\end{equation}
The last such time is used because the argument below needs
$g\leqslant\varepsilon_{4}/2$ on the {\em whole} of $[t_{0},\bar{t}]$, which the first crossing of
the level $\varepsilon_{4}/2$ does not give. Together with
\eqref{tbar.range} this gives
\begin{equation}\label{g.range}
\frac{\varepsilon_{4}}{K}\leqslant g(t)\leqslant\frac{\varepsilon_{4}}{2},
\qquad t\in[t_{0},\bar{t}],
\end{equation}
so the shortest admissible chord is positive on $[t_{0},\bar{t}]$, and
Proposition \ref{chord-arc prop} and Lemma \ref{chord}(2) apply at every
$t_{2}\in[t_{0},\bar{t}]$, with $\epsilon=\tilde d/2\in[\varepsilon_{4}/(2K),\varepsilon_{4}/4]$.
More generally, every statement of Sections \ref{sec:geometry} and \ref{sec:main} that is
formulated with the horizon $[0,T]$ is applied below with $T$ replaced by $\bar t$: the a priori
bounds of Definition \ref{adm} hold on $[0,T]\supseteq[0,\bar t]$, and Lemma \ref{grad.euler},
whose hypothesis is the positivity of the chord-arc functional on the horizon, may be used
because of \eqref{g.range}.

For $t\in[t_{0},\bar{t}]$ define
\begin{equation}\label{D.def}
D(t):=\sup\Big\{\frac{1}{|z(t,\alpha)-z(t,\beta)|^{2}}:\alpha,\beta\in\mathbb{R},\ \varepsilon_{1}<|\alpha-\beta|<\frac{1}{\varepsilon_{1}}\Big\}.
\end{equation}
The proof rests on the following a priori estimate: there is a constant $C>0$,
depending only on $A,T,R_{0}$, such that
\begin{equation}\label{D.gronwall}
D(s)\leqslant D(r)+C\int_{r}^{s}D(t')\big(\log D(t')+A\big)\,dt'
\qquad\text{for all }t_{0}\leqslant r\leqslant s\leqslant \bar{t}.
\end{equation}
We establish \eqref{D.gronwall} in several steps; the local form
\begin{equation}\label{D.local}
D(t_{2})\leqslant D(s)+C\int_{s}^{t_{2}}D(t')\big(\log D(t')+A\big)\,dt'
\qquad\text{for all }s\in I(t_{2}):=(t_{2}-\delta(t_{2}),t_{2})\cap[t_{0},\bar{t}]
\end{equation}
is then an immediate consequence, for any $\delta(t_{2})>0$.

\textit{Step 1: reduction to the localized velocity.}

Fix $t_{2}\in[t_{0},\bar{t}]$; by \eqref{t0.value} we have
$\inf_{\alpha,\beta\in\mathbb{R}}F(z)(t_{2},\alpha,\beta)\leqslant\frac{\varepsilon_{4}}{2}\leqslant\varepsilon_{2}$,
so that Lemma \ref{chord}(2) applies at the time $t_{2}$. Let $\alpha_{1},\alpha_{2}$ be the pair of
points furnished by that lemma with $t=t_{2}$, so that $z(t_{2},\alpha_{1})$ and
$z(t_{2},\alpha_{2})$ form a conjugate pair. Set
\[
\tilde{d}:=|z(t_{2},\alpha_{1})-z(t_{2},\alpha_{2})|,
\qquad
\tilde{D}(t):=\frac{1}{|z(t,\alpha_{1})-z(t,\alpha_{2})|^{2}}
\quad\text{for }t\in[0,t_{2}].
\]
Differentiating and then replacing $\partial_{t}z$ by $v_{1}$ yields
\begin{equation}\label{Dtilde.deriv}
\tilde{D}'(t_{2})=-2\tilde{D}(t_{2})^{2}\big[(z(t_{2},\alpha_{1})-z(t_{2},\alpha_{2}))\cdot(v_{1}(t_{2},\alpha_{1})-v_{1}(t_{2},\alpha_{2}))\big].
\end{equation}
The replacement is legitimate: by the kinematic condition for the Euler fluid in
\eqref{e6}, the vector $\partial_{t}z-v_{1}$ is orthogonal to the normal $n$ of
$\Gamma_{t}$ and hence parallel to $\partial_{\alpha}z$, while \eqref{e23} gives
$(z(t_{2},\alpha_{1})-z(t_{2},\alpha_{2}))\cdot\partial_{\alpha}z(t_{2},\alpha_{i})=0$
for $i=1,2$; hence
$(z(t_{2},\alpha_{1})-z(t_{2},\alpha_{2}))\cdot(\partial_{t}z-v_{1})(t_{2},\alpha_{i})=0$
for $i=1,2$.

Let $R$ and $Q:=R-R(0)$ be as in \eqref{e28}. Since
$z(t_{2},\alpha_{1})=R(0,-\frac{\tilde{d}}{2})$ and
$z(t_{2},\alpha_{2})=R(0,\frac{\tilde{d}}{2})$, and since $Q$ is linear and orthogonal,
\[
\begin{aligned}
&(z(t_{2},\alpha_{1})-z(t_{2},\alpha_{2}))\cdot(v_{1}(t_{2},\alpha_{1})-v_{1}(t_{2},\alpha_{2})) \\
&=\Big(Q(0,-\tfrac{\tilde{d}}{2})-Q(0,\tfrac{\tilde{d}}{2})\Big)\cdot(v_{1}(t_{2},\alpha_{1})-v_{1}(t_{2},\alpha_{2})) \\
&=-\tilde{d}\,\big[(Q^{-1}(v_{1}(t_{2},\alpha_{1}))-Q^{-1}(v_{1}(t_{2},\alpha_{2})))\cdot e_{2}\big]
=-\tilde{d}\,\big[(V_{1}(0)-V_{2}(0))\cdot e_{2}\big],
\end{aligned}
\]
where $V_{m}$ is defined in \eqref{omV.def}. The constant $Q^{-1}(c(t_{2}))$ of
\eqref{V.decomp} has canceled in the last difference, so that
$V_{1}(0)-V_{2}(0)=(U_{1}(0)+W_{1}(0))-(U_{2}(0)+W_{2}(0))
+Q^{-1}\big(w(t_{2},z(t_{2},\alpha_{1}))-w(t_{2},z(t_{2},\alpha_{2}))\big)$; the harmonic term
contributes at most $A\tilde d$ to the $e_{2}$-component, by $|Q^{-1}v|=|v|$, the mean value theorem
along the segment joining $z(t_{2},\alpha_{1})$ and $z(t_{2},\alpha_{2})$, whose length is $\tilde d$,
and the gradient bound $\|\nabla w(t_{2})\|_{L^{\infty}(\mathbb{R}^{2})}\leqslant A$ of
\eqref{e9.norm}. The bound must be global here: by \eqref{e25} the chord joining the two points lies in
$\Omega_{2}(t_{2})$, so that no mean value theorem inside $\Omega_{1}(t_{2})$ is available, and the
arc of $\Gamma_{t_{2}}$ joining them has length at least $2\varepsilon_{1}/A$ by \eqref{e8} and
\eqref{e22}, uniformly in $\tilde d$, and is therefore much longer than $\tilde d$.
Combining this with
\eqref{V.decomp} and \eqref{Dtilde.deriv}, and estimating the three contributions separately, we get
\[
|\tilde{D}'(t_{2})|\lesssim\tilde{d}\,\tilde{D}(t_{2})^{2}\Big(\big|(U_{1}(0)-U_{2}(0))\cdot e_{2}\big|+\big|(W_{1}(0)-W_{2}(0))\cdot e_{2}\big|+A\tilde d\Big).
\]
The last summand, of size $O(A\tilde{d})$, is dominated for small $\tilde{d}$ by the bound
$C\tilde{d}|\log\tilde{d}|$ of Steps 2--3, and in any case it is absorbed in the final
Gronwall step: passing to $D=m^{-2}$ it contributes $O(A)$ to the right-hand side of
\eqref{D.gronwall.a.e}, where it is absorbed by $\log D(t)\geqslant\log(\varepsilon_{1}^{2}
\varepsilon_{4}^{-2})\geqslant A$ after shrinking $\varepsilon_{4}$, exactly as the lower bound
$D(t)\geqslant1$ established in Step 4 is used there.

\textit{Step 2: the localized estimate.} We claim that there exists a constant $C$ depending only
on $A$ and $T$ such that
\begin{equation}\label{U.step2.claim}
|(U_{1}(0)-U_{2}(0))\cdot e_{2}|\leqslant  C\tilde{d}|\log(\tilde{d})|.
\end{equation}

For the claim we apply Proposition \ref{chord-arc prop} at the time $t=t_{2}$ and with the pair
$(\alpha_{1},\alpha_{2})$ fixed in Step 1; its parameter $d$ then coincides with $\tilde{d}$, and we
set, in the notation of Figure~\ref{fig:branches},
\[
\begin{aligned}
&\epsilon:=\frac{\tilde{d}}{2}=\frac{d}{2},\qquad
g(\rho):=-f_{1}(\rho)-\epsilon,\qquad f(\rho):=f_{2}(\rho)-\epsilon,\\
&\omega^{-}(\rho):=-\omega_{1}(\rho),\qquad \omega^{+}(\rho):=\omega_{2}(\rho).
\end{aligned}
\]
\begin{figure}[htbp]
\centering
\begin{tikzpicture}[>=Stealth, font=\small, line join=round, scale=1, transform shape]
  \draw[->, black!55] (-2.4,0) -- (2.4,0) node[right, font=\scriptsize] {$x_{1}$};
  \draw[->, black!55] (0,-1.5) -- (0,1.6) node[above, font=\scriptsize] {$x_{2}$};
  \draw[line width=1.6pt]
     plot[domain=-1.75:1.75, samples=120] ({\x}, {0.65 + 0.26*\x*\x});
  \draw[line width=1.6pt]
     plot[domain=-1.75:1.75, samples=120] ({\x}, {-0.65 + 0.26*\x*\x});
  \fill (0,0) circle (2.4pt);
  \node[anchor=south east, font=\scriptsize] at (-0.10,0.02) {splash point};
  \draw[<->, black!70] (1.35,{-0.65+0.26*1.8225}) -- (1.35,{0.65+0.26*1.8225})
     node[midway, right, font=\scriptsize] {$d$};
  \node[font=\scriptsize, anchor=south west] at (1.05,1.35) {$z^{+}(\rho)=(\rho,\,f(\rho)+d/2)$};
  \node[font=\scriptsize, anchor=north west] at (1.05,-1.20) {$z^{-}(\rho)=(\rho,\,-g(\rho)-d/2)$};
  \node[font=\scriptsize, anchor=south east] at (-1.05,1.45) {$\omega^{+}$ on upper branch};
  \node[font=\scriptsize, anchor=north east] at (-1.05,-1.22) {$\omega^{-}$ on lower branch};
  \draw[->, thick] (-1.55,{0.65+0.26*2.4}) -- (-0.85,{0.65+0.26*0.7225});
  \node[font=\scriptsize, anchor=north east] at (-0.92,0.80) {$\rho$};
\end{tikzpicture}
\caption{Local description of the interface near a candidate splash point (the origin). Near the
pair of almost-touching points the two sides of the interface are graphs
$z^{+}(\rho)=(\rho,f(\rho)+d/2)$ and $z^{-}(\rho)=(\rho,-g(\rho)-d/2)$ over the horizontal
parameter $\rho$, with $f(0)=g(0)=0$; the vertical gap at $\rho=0$ is $d$, and it is the gap that
closes as $d\to0$, producing the splash. The interface vorticities $\omega^{+},\omega^{-}$ live on
the two branches. This is the setup of the singular-integral estimate of
Proposition~\ref{prop:key}.}
\label{fig:branches}
\end{figure}
With these choices $f(0)=f_{2}(0)-\epsilon=\frac d2-\epsilon=0$, $g(0)=-f_{1}(0)-\epsilon=\frac d2-\epsilon=0$
and $f'(0)=f'_{2}(0)=0$, $g'(0)=-f'_{1}(0)=0$, which is the normalization required in Appendix~\ref{sec:singular}; moreover $f_{2}(\rho)-f_{1}(\rho)\geqslant d=2\epsilon$ on the
common domain by \eqref{e30}. Since $f'_{j}(0)=0$ and $\|f_{j}\|_{C^{4}}\lesssim1$, we have
$|f'_{j}(\rho)|\leqslant C_{A}|\rho|$ and $|f_{j}(\rho)-f_{j}(0)|\leqslant C_{A}\rho^{2}$; hence,
for $\alpha,\beta\in(-4\sqrt{\epsilon},4\sqrt{\epsilon})$ and $C_{A}$ denoting a constant
depending only on $A$,
\[
\begin{aligned}
f(\alpha)+g(\beta)&=f_{2}(\alpha)-f_{1}(\beta)-2\epsilon\\
&=\big[f_{2}(\alpha)-f_{2}(\beta)\big]+\big[f_{2}(\beta)-f_{1}(\beta)\big]-2\epsilon\\
&\geqslant-C_{A}\big(|\alpha|+|\beta|\big)|\alpha-\beta|-2\epsilon+d
\geqslant-64C_{A}\epsilon,
\end{aligned}
\]
while
\[
f(\alpha)+g(\beta)\leqslant\big[f_{2}(\alpha)-f_{2}(0)\big]+\big[f_{1}(0)-f_{1}(\beta)\big]+d-2\epsilon
\leqslant C_{A}\big(\alpha^{2}+\beta^{2}\big)\leqslant32C_{A}\epsilon.
\]
Thus
\begin{equation}\label{f.g.two}
\big|f(\alpha)+g(\beta)\big|\leqslant 64C_{A}\epsilon
\qquad\text{whenever }|\alpha|,|\beta|\leqslant4\sqrt{\epsilon},
\end{equation}
while on the diagonal $f(\alpha)+g(\alpha)=f_{2}(\alpha)-f_{1}(\alpha)-2\epsilon\geqslant0$. Together
with $f(0)=f'(0)=g(0)=g'(0)=0$ and $\|f\|_{C^{3}}+\|g\|_{C^{3}}\lesssim_{A}1$, these are exactly the
assumptions imposed in Appendix~\ref{sec:singular}, with the constant $A$ of that section denoting
$64C_{A}$: both the diagonal lower bound $f+g\geqslant-\epsilon$ and the two-sided estimate
\eqref{f.g.two} hold. The three properties \eqref{e:N.range.0}--\eqref{e:a.low} hold with the
constant $64C_{A}$, and the estimates of the appendix are uniform in it.
The smallness conditions there are conditions on $\epsilon$ alone, and they hold here as
well: $\epsilon=\frac d2\lesssim\varepsilon_{4}$ by \eqref{e22} and the choice of $t_{2}$, and
$\varepsilon_{4}$ is subject to no lower bound, so it may be chosen so small that in particular
$\epsilon\leqslant(64\cdot 64C_{A})^{-2}$, the condition used in \eqref{e:a.low}, and
$\frac s2=\frac{\sqrt{\epsilon}}{2^{7}}\geqslant\epsilon$.
The $C^{4}$-bounds of \eqref{e30} on $(-\varepsilon_{5}^{4},\varepsilon_{5}^{4})$ are used below
only for arguments with $|x|\lesssim\sqrt{\epsilon}$.
In Appendix~\ref{sec:singular} we therefore take $\delta:=\varepsilon_{5}^{4}$ and use the original
normalized profiles $f=f_{2}-\epsilon$ and $g=-f_{1}-\epsilon$ on $(-\delta,\delta)$; a cutoff on an
interval of width $\varepsilon_{5}^{4}$ would destroy the uniform $C^{3}$ and $C^{4}$ bounds of
\eqref{e30}. Since $\varepsilon_{4}\leqslant\varepsilon_{5}^{100}$ and
$\epsilon\lesssim\varepsilon_{4}$, we may choose $\varepsilon_{4}$ so small that
$4\sqrt{\epsilon}<\delta$. All cutoff-supported integrals used below then sample these profiles
only at arguments in $(-4\sqrt{\epsilon},4\sqrt{\epsilon})$. The local bounds established above
verify the hypotheses of Appendix~\ref{sec:singular}, with its parameter $A$ denoting a constant
depending only on the present $A$.
Then using the definitions of Appendix~\ref{sec:singular}:
\begin{equation}\label{T.defs1}
\begin{aligned}
T_{1,f}\omega^{+}(\alpha)&=\frac{1}{2\pi}p.v.\int_{\mathbb{R}}\frac{f_{2}(\alpha)-f_{2}(\beta)}{(\alpha-\beta)^{2}+(f_{2}(\alpha)-f_{2}(\beta))^{2}}\omega_{2}(\beta)\psi_{\sqrt{\epsilon}}(\alpha-\beta)d\beta, \\
-T_{3,(f,g)}\omega^{-}(\alpha)&=\frac{1}{2\pi}p.v.\int_{\mathbb{R}}\frac{f_{2}(\alpha)-f_{1}(\beta)}{(\alpha-\beta)^{2}+(f_{2}(\alpha)-f_{1}(\beta))^{2}}\omega_{1}(\beta)\psi_{\sqrt{\epsilon}}(\alpha-\beta)d\beta, \\
T_{1,g}\omega^{-}(\alpha)&=\frac{1}{2\pi}p.v.\int_{\mathbb{R}}\frac{f_{1}(\alpha)-f_{1}(\beta)}{(\alpha-\beta)^{2}+(f_{1}(\alpha)-f_{1}(\beta))^{2}}\omega_{1}(\beta)\psi_{\sqrt{\epsilon}}(\alpha-\beta)d\beta, \\
-T_{3,(g,f)}\omega^{+}(\alpha)&=\frac{1}{2\pi}p.v.\int_{\mathbb{R}}\frac{f_{1}(\alpha)-f_{2}(\beta)}{(\alpha-\beta)^{2}+(f_{1}(\alpha)-f_{2}(\beta))^{2}}\omega_{2}(\beta)\psi_{\sqrt{\epsilon}}(\alpha-\beta)d\beta,
\end{aligned}
\end{equation}
and
\begin{equation}\label{T.defs2}
\begin{aligned}
T_{4,(g,f)}\omega^{+}(\alpha)&=\frac{1}{2\pi}p.v.\int_{\mathbb{R}}\frac{\alpha-\beta}{(\alpha-\beta)^{2}+(f_{1}(\alpha)-f_{2}(\beta))^{2}}\omega_{2}(\beta)\psi_{\sqrt{\epsilon}}(\alpha-\beta)d\beta, \\
T_{2,f}\omega^{+}(\alpha)&=\frac{1}{2\pi}p.v.\int_{\mathbb{R}}\frac{\alpha-\beta}{(\alpha-\beta)^{2}+(f_{2}(\alpha)-f_{2}(\beta))^{2}}\omega_{2}(\beta)\psi_{\sqrt{\epsilon}}(\alpha-\beta)d\beta, \\
T_{4,(f,g)}\omega^{-}(\alpha)&=-\frac{1}{2\pi}p.v.\int_{\mathbb{R}}\frac{\alpha-\beta}{(\alpha-\beta)^{2}+(f_{2}(\alpha)-f_{1}(\beta))^{2}}\omega_{1}(\beta)\psi_{\sqrt{\epsilon}}(\alpha-\beta)d\beta, \\
T_{2,g}\omega^{-}(\alpha)&=-\frac{1}{2\pi}p.v.\int_{\mathbb{R}}\frac{\alpha-\beta}{(\alpha-\beta)^{2}+(f_{1}(\alpha)-f_{1}(\beta))^{2}}\omega_{1}(\beta)\psi_{\sqrt{\epsilon}}(\alpha-\beta)d\beta.
\end{aligned}
\end{equation}
The signs on the left-hand sides of \eqref{T.defs1} and \eqref{T.defs2} record how the operator of
Appendix~\ref{sec:singular}, evaluated on $\omega^{\mp}$, compares with the integral written on the
right; three sign changes are involved. First, the
density convention $\omega_{1}=-\omega^{-}$, $\omega_{2}=\omega^{+}$ fixed above. Second, the
numerator of a $T_{1}$-kernel is the difference of a single profile at two points, and for the lower
profile $g(\alpha)-g(\beta)=-\big(f_{1}(\alpha)-f_{1}(\beta)\big)$. Third, the operators with
interchanged profiles use $2\epsilon+g(\alpha)+f(\beta)=f_{2}(\beta)-f_{1}(\alpha)$, the negative of
the corresponding $f_{1}(\alpha)-f_{2}(\beta)$. The entries containing $\omega_{2}$ and no profile
interchange, namely $T_{1,f}\omega^{+}$ and $T_{2,f}\omega^{+}$, carry no leading sign. The entries
$T_{2,g}\omega^{-}$, $T_{3,(f,g)}\omega^{-}$ and $T_{4,(f,g)}\omega^{-}$ carry exactly one minus,
coming from the first change alone (their numerators are $\alpha-\beta$ or
$2\epsilon+f(\alpha)+g(\beta)$, neither of which changes sign under $g=-f_{1}-\epsilon$). The entry
$T_{3,(g,f)}\omega^{+}$ carries one minus, coming from the third change alone. The entry
$T_{1,g}\omega^{-}$ is the only one in which two changes interact: the first would supply a minus,
but the second supplies another one and the two cancel, so no sign appears. The resulting
asymmetry (a minus sign in front of $T_{2,g}\omega^{-}$ but not in front of
$T_{1,g}\omega^{-}$, and none in front of $T_{1,f}\omega^{+}$ and $T_{2,f}\omega^{+}$) reappears
in \eqref{Ue1.pm}, in \eqref{Ue2.disp} and in the identities of
Proposition \ref{prop:key}.
Then we have the formula for $U_{n}(\mu)$:
\[
\begin{aligned}
U_{n}(\mu)=&\frac{\omega_{n}(\mu)(1,f'_{n}(\mu))}{2(1+|f'_{n}(\mu)|^{2})}+Q^{-1}(S_{t}^{\infty}(z(\alpha_{n}+\beta_{n}(\mu)))) \\
&+\sum_{m=1}^{2}\frac{1}{2\pi}p.v.\int_{\mathbb{R}}\frac{(-f_{n}(\mu)+f_{m}(\rho),\mu-\rho)}{|\mu-\rho|^{2}+|f_{n}(\mu)-f_{m}(\rho)|^{2}}\omega_{m}(\rho)\,\psi_{\varepsilon_{5}^{10}}(\beta_{m}(\rho))\,d\rho.
\end{aligned}
\]
Each of the three terms of \eqref{V.decomp.U} contributes explicitly to the
$e_{1}$-component: reading off the first coordinate in
\eqref{V.decomp.U} and recalling $\omega_{1}=-\omega^{-}$, $\omega_{2}=\omega^{+}$, the local
term gives $\frac{\omega_{n}(\mu)}{2(1+|f'_{n}(\mu)|^{2})}$, the far-field term gives
$[Q^{-1}(S_{t}^{\infty}(z(\alpha_{n}+\beta_{n}(\mu))))]\cdot e_{1}$, and the principal-value sum
gives
\[
-\frac{1}{2\pi}\sum_{m=1}^{2}\int_{\mathbb{R}}\frac{f_{n}(\mu)-f_{m}(\rho)}{|\mu-\rho|^{2}+|f_{n}(\mu)-f_{m}(\rho)|^{2}}\,\omega_{m}(\rho)\,\psi_{\varepsilon_{5}^{10}}(\beta_{m}(\rho))\,d\rho.
\]
In the last expression we replace the cutoff $\psi_{\varepsilon_{5}^{10}}(\beta_{m}(\rho))$ by
$\psi_{\sqrt{\epsilon}}(\mu-\rho)$, the error being exactly the term $E_{3}^{\mp}$ defined in
\eqref{E.def1} and \eqref{E.def2} below, and then identify the resulting integrals by means of
\eqref{T.defs1} and \eqref{T.defs2}. Absorbing at the same time the difference between the local
term and the Hilbert term $\pm\frac{\omega^{\mp}(\mu)}{2}$ into $E_{1}^{\mp}$ (that is,
$E_{1}^{-}(\mu)=\frac{\omega^{-}(\mu)}{2(1+|f'_{1}(\mu)|^{2})}-\frac{\omega^{-}(\mu)}{2}$ and
$E_{1}^{+}(\mu)=\frac{\omega^{+}(\mu)}{2}-\frac{\omega^{+}(\mu)}{2(1+|f'_{2}(\mu)|^{2})}$)
and the far-field term into $E_{2}^{\mp}$, we arrive at%
\begin{equation}\label{Ue1.pm}
\begin{aligned}
-(U_{1}(\mu)\cdot e_{1})&= \frac{\omega^{-}(\mu)}{2}+T_{1,g}\omega^{-}(\mu)-T_{3,(g,f)}\omega^{+}(\mu)+E_{1}^{-}(\mu)+E_{2}^{-}(\mu)+E_{3}^{-}(\mu), \\
-(U_{2}(\mu)\cdot e_{1})&=-\frac{\omega^{+}(\mu)}{2}-T_{3,(f,g)}\omega^{-}(\mu)+T_{1,f}\omega^{+}(\mu)+E_{1}^{+}(\mu)+E_{2}^{+}(\mu)+E_{3}^{+}(\mu),
\end{aligned}
\end{equation}
where 
\begin{equation}\label{E.def1}
\begin{aligned}
E_{1}^{-}(\mu)&:=\frac{\omega_{1}(\mu)|f'_{1}(\mu)|^{2}}{2(1+|f'_{1}(\mu)|^{2})}, \\
E_{2}^{-}(\mu)&:=-[Q^{-1}(S_{t}^{\infty}(z(\alpha_{1}+\beta_{1}(\mu))))]\cdot e_{1}, \\
E_{3}^{-}(\mu)&=\sum_{m=1}^{2}\frac{1}{2\pi}p.v.\int_{\mathbb{R}}\frac{f_{1}(\mu)-f_{m}(\rho)}{|\mu-\rho|^{2}+|f_{1}(\mu)-f_{m}(\rho)|^{2}}\omega_{m}(\rho)\,[\psi_{\varepsilon_{5}^{10}}(\beta_{m}(\rho))-\psi_{\sqrt{\epsilon}}(\mu-\rho)]\,d\rho,
\end{aligned}
\end{equation}
and
\begin{equation}\label{E.def2}
\begin{aligned}
	E_{1}^{+}(\mu)&:=\frac{\omega_{2}(\mu)|f'_{2}(\mu)|^{2}}{2(1+|f'_{2}(\mu)|^{2})}, \\
	E_{2}^{+}(\mu)&:=-[Q^{-1}(S_{t}^{\infty}(z(\alpha_{2}+\beta_{2}(\mu))))]\cdot e_{1}, \\
	E_{3}^{+}(\mu)&=\sum_{m=1}^{2}\frac{1}{2\pi}p.v.\int_{\mathbb{R}}\frac{f_{2}(\mu)-f_{m}(\rho)}{|\mu-\rho|^{2}+|f_{2}(\mu)-f_{m}(\rho)|^{2}}\omega_{m}(\rho)\,[\psi_{\varepsilon_{5}^{10}}(\beta_{m}(\rho))-\psi_{\sqrt{\epsilon}}(\mu-\rho)]\,d\rho.
\end{aligned}
\end{equation}
We verify the assumptions of Proposition \ref{prop:key}. First,
\begin{equation}\label{Z.Esum}
\sum_{m=1}^{3}[\|E_{m}^{+}\|_{Z}+\|E_{m}^{-}\|_{Z}]\lesssim\tilde{d}\log(\frac{1}{\tilde{d}}).
\end{equation}
For $\|E_{1}^{\pm}\|_{Z}$, we estimate, using $\|\omega_{i}\|_{L^{\infty}}\lesssim1$ (which
follows from $\|\omega_{\Gamma}\|_{L^{\infty}}\lesssim1$ (see the beginning of the proof of
\eqref{S.C3}) together with $\omega_{i}(\rho)=\omega_{\Gamma}(t,\alpha_{i}+\beta_{i}(\rho))\beta'_{i}(\rho)$
of \eqref{e29} and $\|\beta_{i}\|_{C^{4}}\lesssim1$) and $f'_{i}(0)=0$ together with
$\|f_{i}\|_{C^{4}}\lesssim1$ (whence $|f'_{i}(\mu)|\lesssim|\mu|$),
\[
|E_{1}^{-}(\mu)|+|E_{1}^{+}(\mu)|\leqslant|\omega_{1}(\mu)||f'_{1}(\mu)|^{2}+|\omega_{2}(\mu)||f'_{2}(\mu)|^{2}\lesssim|\mu|^{2},
\]
then we have
\[
\|E_{1}^{-}\|_{Z}+\|E_{1}^{+}\|_{Z}\lesssim\int_{(-\frac{\sqrt{\epsilon}}{8},\frac{\sqrt{\epsilon}}{8})}\frac{\epsilon^{2}|\beta|^{2}}{\epsilon^{3}+|\beta|^{3}}\,d\beta=\frac{2}{3}\epsilon^{2}\log\Big(1+\frac{1}{8^{3}\epsilon^{\frac{3}{2}}}\Big)\lesssim\epsilon\log(\frac{1}{\epsilon})\lesssim\tilde{d}\log(\frac{1}{\tilde{d}}),
\]

For $\|E_{2}^{\pm}\|_{Z}$ we use the first estimate of Lemma \ref{lem:elem}. By \eqref{S.C3} and $\|\beta_{m}\|_{C^{4}}\lesssim1$ (from \eqref{e30}) the map $\mu\mapsto S_{t}^{\infty}(z(t,\alpha_{m}+\beta_{m}(\mu)))$ is bounded and Lipschitz with constant $\lesssim1$, so $|E_{2}^{\pm}(\mu)|\lesssim1$ and $|\partial_{\mu}E_{2}^{\pm}(\mu)|\lesssim1$; hence
\[
\|E_{2}^{\pm}\|_{Z}\lesssim\epsilon\|E_{2}^{\pm}\|_{C^{1}((-\frac{\sqrt{\epsilon}}{8},\frac{\sqrt{\epsilon}}{8}))}\lesssim\epsilon\lesssim\tilde{d}\log(\frac{1}{\tilde{d}}).
\]

To bound $\|E_{3}^{\pm}\|_{Z}$, write the integral (dropping the factor $1/(2\pi)$) as
\[
\begin{aligned}
\mathcal{I}_{n,m}(\mu)&:=\int_{\mathbb{R}}K_{n,m}(\mu,\rho)\,\omega_{m}(\rho)\,[\psi_{\varepsilon_{5}^{10}}(\beta_{m}(\rho))-\psi_{\sqrt{\epsilon}}(\mu-\rho)]\,d\rho,\\
K_{n,m}(\mu,\rho)&:=\frac{f_{n}(\mu)-f_{m}(\rho)}{|\mu-\rho|^{2}+|f_{n}(\mu)-f_{m}(\rho)|^{2}}.
\end{aligned}
\]
and recall from \eqref{omV.def} that $\omega_{2}=\omega^{+}$ and $\omega_{1}=-\omega^{-}$; by \eqref{om.C1}
together with $|\beta_{m}'(0)|\approx 1$ and $\|\beta_{m}\|_{C^{4}}\lesssim1$ from
\eqref{e30}, we have, for $m\in\{1,2\}$,
\begin{equation}\label{om.m}
\|\omega_{m}\|_{C^{1}((-2\sqrt{\epsilon},2\sqrt{\epsilon}))}\lesssim 1.
\end{equation} We claim that
\begin{equation}\label{I.C1}
\|\mathcal{I}_{n,m}\|_{C_{\mu}^{1}((-\frac{\sqrt{\epsilon}}{8},\frac{\sqrt{\epsilon}}{8}))}\lesssim\log(\frac{1}{\epsilon})\qquad\text{for }m,n\in\{1,2\}.
\end{equation}
Throughout the proof of \eqref{I.C1} we write
\[
b:=\mu-\rho,\qquad a:=f_{n}(\mu)-f_{m}(\rho),\qquad
\Psi(\mu,\rho):=\psi_{\varepsilon_{5}^{10}}(\beta_{m}(\rho))-\psi_{\sqrt{\epsilon}}(\mu-\rho),
\]
so that $K_{n,m}=\frac{a}{b^{2}+a^{2}}$ and $\mathcal{I}_{n,m}(\mu)=\int_{\mathbb{R}}K_{n,m}\,\omega_{m}\,\Psi\,d\rho$.

\textit{Step (i): support of $\Psi$.} First, $\Psi(\mu,\rho)=0$ whenever $|b|\leqslant\frac{\sqrt{\epsilon}}{2}$.
Since $|\mu|\leqslant\frac{\sqrt{\epsilon}}{8}$ we get $|\rho|\leqslant|\mu|+|b|\leqslant\frac58\sqrt{\epsilon}$;
and because $d\lesssim\varepsilon_{4}\leqslant\varepsilon_{5}^{100}$ by \eqref{e22} and the choice of
$t_{2}$, while $\epsilon=\frac d2$, we have $\sqrt{\epsilon}\lesssim\varepsilon_{5}^{50}\ll\varepsilon_{5}^{10}$;
moreover
$|\beta_{m}(\rho)|\lesssim|\rho|$ by $\beta_{m}(0)=0$ and $\|\beta_{m}\|_{C^{4}}\lesssim1$ (see \eqref{e30});
hence $|\beta_{m}(\rho)|\leqslant\frac{\varepsilon_{5}^{10}}{2}$ and both cutoffs equal $1$. Second, $\Psi\ne0$
forces either $|\beta_{m}(\rho)|\leqslant\varepsilon_{5}^{10}$ or $|b|\leqslant\sqrt{\epsilon}$; in the first case
$|\rho|\lesssim\varepsilon_{5}^{10}$, because $\beta_{m}(0)=0$, $|\beta_{m}'(0)|\approx1$ and
$\|\beta_{m}\|_{C^{4}}\lesssim1$ make $\beta_{m}$ bilipschitz near $0$, and in the second case
$|\rho|\leqslant|\mu|+\sqrt{\epsilon}\lesssim\sqrt{\epsilon}\ll\varepsilon_{5}^{10}$. Thus, on the support of $\Psi$,
\begin{equation}\label{e:E3supp}
\frac{\sqrt{\epsilon}}{2}\leqslant|b|\lesssim\varepsilon_{5}^{10},\qquad|\rho|\lesssim\varepsilon_{5}^{10},
\end{equation}
and since $|b|\leqslant|\rho|+\frac{\sqrt{\epsilon}}{8}\leqslant|\rho|+\frac{|b|}{4}$ we have
$|b|\leqslant\frac43|\rho|$, while $|\rho|\leqslant|\mu|+|b|\leqslant\frac54|b|$; that is, $|\rho|\approx|b|$.
In particular the support of $\Psi$ is contained in the interval
$[-\varepsilon_{5}^{9},\varepsilon_{5}^{9}]$ of the convention stated after \eqref{omV.def}, and it
is contained in $(-2\sqrt{\epsilon},2\sqrt{\epsilon})$, where $\omega_{m}$ is $C^{1}$ by
\eqref{om.m}; consequently $\mathcal{I}_{n,m}$ is an integral of a compactly supported integrand
over a subset of the chart.

\textit{Step (ii): size of the kernel there.} By \eqref{e30} we have $f_{1}(0)=-\frac d2$, $f_{2}(0)=\frac d2$,
$f'_{j}(0)=0$ and $\|f_{j}\|_{C^{4}}\lesssim1$; Taylor's formula at the origin therefore gives
$|f_{j}(\sigma)\mp\frac d2|\lesssim\sigma^{2}$. Since $d=2\epsilon\lesssim\rho^{2}$ on the support
\eqref{e:E3supp}, this yields
\[
|a|\lesssim d+\mu^{2}+\rho^{2}\lesssim\rho^{2}\lesssim b^{2},\qquad
|\partial_{\mu}a|=|f'_{n}(\mu)|\lesssim|\mu|\leqslant\frac{\sqrt{\epsilon}}{8}.
\]
Hence, on the support of $\Psi$,
\[
|K_{n,m}|=\frac{|a|}{b^{2}+a^{2}}\leqslant\frac{|a|}{b^{2}}\lesssim1,
\]
while, differentiating at fixed $\rho$ (so that $\partial_{\mu}b=1$, $\partial_{\mu}a=f'_{n}(\mu)$),
\[
\partial_{\mu}K_{n,m}=\frac{\partial_{\mu}a}{b^{2}+a^{2}}-\frac{2ab}{(b^{2}+a^{2})^{2}}-\frac{2a^{2}\,\partial_{\mu}a}{(b^{2}+a^{2})^{2}},
\]
so that, by $(b^{2}+a^{2})^{-1}\leqslant b^{-2}$ and $|a|\lesssim b^{2}$,
\[
|\partial_{\mu}K_{n,m}|\lesssim\frac{\sqrt{\epsilon}}{b^{2}}+\frac{|a|}{|b|^{3}}+\frac{a^{2}\sqrt{\epsilon}}{b^{4}}
\lesssim\frac{\sqrt{\epsilon}}{b^{2}}+\frac{1}{|b|}+\sqrt{\epsilon}.
\]

\textit{Step (iii): integration.} By \eqref{om.m}, $\|\omega_{m}\|_{L^{\infty}}\lesssim1$; moreover
$|\Psi|\leqslant1$ and
$|\partial_{\mu}\Psi|=\frac{1}{\sqrt{\epsilon}}\big|\psi'\big(\frac{b}{\sqrt{\epsilon}}\big)\big|$ is supported in
$\{\frac{\sqrt{\epsilon}}{2}\leqslant|b|\leqslant\sqrt{\epsilon}\}$, a set of measure $\lesssim\sqrt{\epsilon}$,
so that $\int_{\mathbb{R}}|\partial_{\mu}\Psi|\,d\rho\lesssim1$. Since for each fixed $\mu$ the integrand is
smooth and compactly supported in $\rho$, we may differentiate under the integral sign, and \eqref{e:E3supp} gives
\[
|\mathcal{I}_{n,m}(\mu)|\lesssim\int_{\{|b|\lesssim\varepsilon_{5}^{10}\}}d\rho\lesssim\varepsilon_{5}^{10},
\]
\[
|\partial_{\mu}\mathcal{I}_{n,m}(\mu)|
\lesssim\int_{\mathbb{R}}\big(|\partial_{\mu}K_{n,m}|\,|\Psi|+|K_{n,m}|\,|\partial_{\mu}\Psi|\big)d\rho
\lesssim\sqrt{\epsilon}\int_{\{|b|\geqslant\frac{\sqrt{\epsilon}}{2}\}}\frac{d\rho}{b^{2}}
+\int_{\{\frac{\sqrt{\epsilon}}{2}\leqslant|b|\lesssim\varepsilon_{5}^{10}\}}\frac{d\rho}{|b|}+1.
\]
The first integral equals $\frac{4}{\sqrt{\epsilon}}$, so the first term is $\lesssim1$; the second is
\[
\lesssim\log\frac{C\varepsilon_{5}^{10}}{\sqrt{\epsilon}}\lesssim\log\frac1\epsilon,
\]
because $\log(C\varepsilon_{5}^{10})$ is a fixed constant while $\frac12\log\frac1\epsilon\to\infty$; and the
last term was estimated above. This proves \eqref{I.C1}. Finally
$E_{3}^{-}=\frac{1}{2\pi}\sum_{m=1}^{2}\mathcal{I}_{1,m}$ and
$E_{3}^{+}=\frac{1}{2\pi}\sum_{m=1}^{2}\mathcal{I}_{2,m}$ by \eqref{E.def1}--\eqref{E.def2}, so \eqref{I.C1}
together with the first estimate of Lemma \ref{lem:elem}, namely
$\|h\|_{Z}\lesssim\epsilon\|h\|_{C^{1}((-\sqrt{\epsilon}/8,\sqrt{\epsilon}/8))}$, yields
\[
\|E_{3}^{\pm}\|_{Z}\lesssim\epsilon\log(\frac{1}{\epsilon})\lesssim\tilde{d}\log(\frac{1}{\tilde{d}}).
\]

Then by calculating we have
\begin{equation}\label{Ue2.disp}
(U_{1}(\mu)\cdot e_{2})-(U_{2}(\mu)\cdot e_{2})=[T_{4,(g,f)}\omega^{+}(\mu)-T_{2,f}\omega^{+}(\mu)]+[T_{4,(f,g)}\omega^{-}(\mu)-T_{2,g}\omega^{-}(\mu)]+E(\mu),
\end{equation}
where 
\[
E=E_{1}+E_{2}+E_{3},
\]
and 
\[
\begin{aligned}
E_{1}(\mu)&:=\frac{\omega_{1}(\mu)f'_{1}(\mu)}{2(1+|f'_{1}(\mu)|^{2})}-\frac{\omega_{2}(\mu)f'_{2}(\mu)}{2(1+|f'_{2}(\mu)|^{2})}, \\
E_{2}(\mu)&:=[Q^{-1}(S_{t}^{\infty}(z(\alpha_{1}+\beta_{1}(\mu))))]\cdot e_{2}-[Q^{-1}(S_{t}^{\infty}(z(\alpha_{2}+\beta_{2}(\mu))))]\cdot e_{2}, \\
E_{3}(\mu)&:=\sum_{m=1}^{2}\frac{1}{2\pi}\int_{\mathbb{R}}[\frac{\mu-\rho}{|\mu-\rho|^{2}+|f_{1}(\mu)-f_{m}(\rho)|^{2}}-\frac{\mu-\rho}{|\mu-\rho|^{2}+|f_{2}(\mu)-f_{m}(\rho)|^{2}}] \\
&\quad\times\omega_{m}(\rho)\,[\psi_{\varepsilon_{5}^{10}}(\beta_{m}(\rho))-\psi_{\sqrt{\epsilon}}(\mu-\rho)]\,d\rho.
\end{aligned}
\]
For $E_{1}$, by its definition in the preceding formula,
\[
|E_{1}(\mu)|\leqslant\frac{|\omega_{1}(\mu)|\,|f'_{1}(\mu)|}{2(1+|f'_{1}(\mu)|^{2})}
+\frac{|\omega_{2}(\mu)|\,|f'_{2}(\mu)|}{2(1+|f'_{2}(\mu)|^{2})}
\lesssim|\mu|
\qquad\text{for }|\mu|\leqslant2\sqrt{\epsilon},
\]
by \eqref{om.m}, $f'_{i}(0)=0$ and $\|f_{i}\|_{C^{4}}\lesssim1$ (whence $|f'_{i}(\mu)|\lesssim|\mu|$);
in particular $|E_{1}(\mu)|\lesssim\epsilon+|\mu|$ there. For $E_{2}$, by \eqref{S.C3} and
$\|\beta_{m}\|_{C^{4}}\lesssim1$ the maps $\mu\mapsto S_{t}^{\infty}(z(t,\alpha_{m}+\beta_{m}(\mu)))$
are bounded and Lipschitz with constant $\lesssim1$: for $|\mu|\leqslant2\sqrt{\epsilon}$ we have
$|z(t,\alpha_{m}+\beta_{m}(\mu))-z(t,\alpha_{m})|
=|Q(\mu,f_{m}(\mu)-f_{m}(0))|\lesssim|\mu|\lesssim\sqrt{\epsilon}\lesssim\varepsilon_{5}^{50}
\ll\varepsilon_{5}^{12}$ by \eqref{e29} and $2\epsilon=\tilde d\lesssim\varepsilon_{4}
\leqslant \varepsilon_{5}^{100}$, so the points at which $S_{t}^{\infty}$ is evaluated lie in the balls of
\eqref{S.C3}, where its $C^{1}$-norm is controlled. Therefore
\[
\begin{aligned}
|E_{2}(\mu)|&\lesssim|z(\alpha_{1}+\beta_{1}(\mu))-z(\alpha_{2}+\beta_{2}(\mu))|\\
&=|f_{2}(\mu)-f_{1}(\mu)|\\
&\leqslant|f_{2}(\mu)-f_{2}(0)|+|f_{2}(0)-f_{1}(0)|+|f_{1}(\mu)-f_{1}(0)|\\
&\lesssim\mu^{2}+d+\mu^{2}\lesssim\epsilon+|\mu|,
\end{aligned}
\]
where the middle equality uses \eqref{e29} again and the last step uses $d=2\epsilon$ and
$|\mu|\leqslant1$. For $E_{3}$ we claim that
\begin{equation}\label{E3.pt}
|E_{3}(\mu)|\lesssim\epsilon\log(\frac{1}{\epsilon})\qquad\text{whenever }|\mu|\leqslant2\sqrt{\epsilon}.
\end{equation}
To locate the region of integration, note that the cutoff difference
$\Psi:=\psi_{\varepsilon_{5}^{10}}(\beta_{m}(\rho))-\psi_{\sqrt{\epsilon}}(\mu-\rho)$ vanishes
unless $|\mu-\rho|\geqslant\frac{\sqrt{\epsilon}}{2}$: indeed, if $|\mu-\rho|<\frac{\sqrt\epsilon}{2}$
then $|\rho|\leqslant|\mu|+\frac{\sqrt\epsilon}{2}\leqslant\frac{5\sqrt\epsilon}{2}$, so
$|\beta_{m}(\rho)|\lesssim|\rho|\lesssim\sqrt\epsilon\leqslant\varepsilon_{5}^{50}\ll\varepsilon_{5}^{10}$
by $\beta_{m}(0)=0$ and $\|\beta_{m}\|_{C^{4}}\lesssim1$, and both cutoffs are equal to $1$.
It is also zero unless $|\beta_{m}(\rho)|\lesssim\varepsilon_{5}^{10}$ or
$|\mu-\rho|\lesssim\sqrt{\epsilon}$, and in either case $|\rho|\lesssim\varepsilon_{5}^{10}$:
this is the bilipschitz character of $\beta_{m}$ near $0$ in the first case, while in the second
$|\rho|\leqslant|\mu|+\sqrt{\epsilon}\lesssim\sqrt{\epsilon}\ll\varepsilon_{5}^{10}$. Hence the
integral runs over the region where
\[
|\mu-\rho|\gtrsim\sqrt{\epsilon}\quad\text{and}\quad|\rho|\lesssim\varepsilon_{5}^{10}.
\]
On this region we write the difference of the two fractions as
\[
\begin{aligned}
&\frac{\mu-\rho}{|\mu-\rho|^{2}+A_{1}^{2}}-\frac{\mu-\rho}{|\mu-\rho|^{2}+A_{2}^{2}}
=\frac{(\mu-\rho)\big(A_{2}^{2}-A_{1}^{2}\big)}{\big((\mu-\rho)^{2}+A_{1}^{2}\big)\big((\mu-\rho)^{2}+A_{2}^{2}\big)},\\
& A_{j}:=f_{j}(\mu)-f_{m}(\rho).
\end{aligned}
\]
By \eqref{e30} we have $f_{j}(0)=\mp\frac d2$, $f'_{j}(0)=0$ and $\|f_{j}\|_{C^{4}}\lesssim1$, so
Taylor's formula gives $|f_{j}(\sigma)\mp\frac d2|\lesssim\sigma^{2}$; since $|\mu|\leqslant2\sqrt\epsilon$,
this yields
\[
|A_{j}|=|f_{j}(\mu)-f_{m}(\rho)|\lesssim d+\mu^{2}+\rho^{2}\lesssim\epsilon+\rho^{2},
\qquad j,m\in\{1,2\}.
\]
Likewise $|f_{2}(\mu)-f_{1}(\mu)|\leqslant d+C\mu^{2}\lesssim\epsilon$, because
$\mu^{2}\leqslant4\epsilon\lesssim d$. Therefore, using $|X^{2}-Y^{2}|\leqslant|X-Y|(|X|+|Y|)$,
\[
\big||A_{2}|^{2}-|A_{1}|^{2}\big|
\leqslant|f_{2}(\mu)-f_{1}(\mu)|\big(|A_{1}|+|A_{2}|\big)
\lesssim\epsilon\,(\epsilon+\rho^{2}).
\]
It remains to integrate this bound over the region above, and for this we split it into two parts.
If $|\rho|\geqslant8\sqrt{\epsilon}$, then $\epsilon\lesssim\rho^{2}$ and, since
$|\mu|\leqslant2\sqrt{\epsilon}\leqslant\frac{|\rho|}{4}$, we have $|\mu-\rho|\approx|\rho|$;
hence each of the two denominators is at least $(\mu-\rho)^{2}\gtrsim\rho^{2}$, so their product
is $\gtrsim\rho^{4}$, whereas the numerator is $\lesssim|\rho|\,\epsilon\,\rho^{2}$; the
difference written above is therefore bounded by $\frac{\epsilon}{|\rho|}$, which is integrable over
$\{8\sqrt{\epsilon}\leqslant|\rho|\lesssim\varepsilon_{5}^{10}\}$ with integral
$\lesssim\epsilon\log\frac{\varepsilon_{5}^{10}}{\sqrt{\epsilon}}\lesssim\epsilon\log\frac1\epsilon$
(the last step because $\log\varepsilon_{5}^{10}$ is a fixed constant while
$\frac12\log\frac1\epsilon\to\infty$). If instead $|\rho|\leqslant8\sqrt{\epsilon}$, then
$|A_{j}|\lesssim\epsilon$ and the numerator of this difference is
$\lesssim|\mu-\rho|\,\epsilon^{2}\lesssim\sqrt{\epsilon}\,\epsilon^{2}$, while each denominator
is at least $(\mu-\rho)^{2}\gtrsim\epsilon$, so that this difference is
$\lesssim\sqrt{\epsilon}$; multiplying by $|\omega_{m}(\rho)|\lesssim1$ and integrating over
$\{|\rho|\leqslant8\sqrt{\epsilon}\}$ gives
$\lesssim\sqrt{\epsilon}\cdot\sqrt{\epsilon}=\epsilon\lesssim\epsilon\log\frac1\epsilon$.
This proves \eqref{E3.pt}.
As a result, for any $\phi\in\mathcal{C}$ supported in $(-2\sqrt{\epsilon},2\sqrt{\epsilon})$, we have
\[
\Big|\int_{\mathbb{R}}E\cdot\phi\,d\alpha\Big|
\lesssim\int_{|\alpha|\leqslant2\sqrt{\epsilon}}\big[\epsilon\log(\frac{1}{\epsilon})+\epsilon+|\alpha|\big]\frac{\epsilon}{\epsilon^{2}+\alpha^{2}}d\alpha
\lesssim\epsilon\log(\frac{1}{\epsilon}),
\]
where the first step uses the pointwise bounds just proved together with
$|\phi(\alpha)|\leqslant\frac{\epsilon}{\epsilon^{2}+\alpha^{2}}$ and the support of $\phi$.

\emph{Application of Proposition \ref{prop:key}.} We put
\begin{equation}\label{FG.def}
F^{+}:=-(U_{2}\cdot e_{1}),\qquad F^{-}:=-(U_{1}\cdot e_{1}),\qquad
G:=(U_{1}\cdot e_{2})-(U_{2}\cdot e_{2}),
\end{equation}
and $E^{\pm}:=E_{1}^{\pm}+E_{2}^{\pm}+E_{3}^{\pm}$, $E:=E_{1}+E_{2}+E_{3}$. With this notation
the relation \eqref{Ue1.pm} reads
\[
F^{+}=T_{1,f}\omega^{+}-T_{3,(f,g)}\omega^{-}-\tfrac12\omega^{+}+E^{+},
\qquad
F^{-}=-T_{3,(g,f)}\omega^{+}+T_{1,g}\omega^{-}+\tfrac12\omega^{-}+E^{-},
\]
while \eqref{Ue2.disp} reads
$G=(T_{4,(g,f)}-T_{2,f})\omega^{+}+(T_{4,(f,g)}-T_{2,g})\omega^{-}+E$; these are exactly the three
structural identities required in Proposition \ref{prop:key}, valid on
$(-\sqrt{\epsilon},\sqrt{\epsilon})$. It remains to verify the hypotheses of that proposition.
\begin{itemize}
\item[(a)] \emph{The identities hold pointwise.} All the operators
\[
T_{1,f},\quad T_{2,f},\quad T_{3,(f,g)},\quad T_{3,(g,f)},\quad T_{4,(f,g)},\quad T_{4,(g,f)}
\]
are defined by integrals with smooth kernels: where the kernel is bounded the integral converges
absolutely, and where it is singular it is understood as an improper integral or as a Cauchy
principal value, as the case may be. Since the constant
$\epsilon$ is fixed and positive and the cutoffs localize the integration to a bounded set, each of
these integrals is well defined; hence
all three relations are pointwise identities on $(-\sqrt{\epsilon},\sqrt{\epsilon})$, in agreement
with the requirement of Proposition \ref{prop:key}.
\item[(b)] \emph{Regularity of $\omega^{\pm}$.} One has
$\|\omega^{\pm}\|_{C^{1}((-2\sqrt{\epsilon},2\sqrt{\epsilon}))}\lesssim1$ by \eqref{om.m}; note
also that $\omega^{\pm}$ are defined on all of $(-2\sqrt{\epsilon},2\sqrt{\epsilon})$, because
$2\sqrt{\epsilon}=\sqrt{2d}\lesssim\varepsilon_{5}^{50}$ by
$d\lesssim\varepsilon_{4}\leqslant\varepsilon_{5}^{100}$, so that this
interval is contained in the common domain $(-\varepsilon_{5}^{4},\varepsilon_{5}^{4})$ of the
maps $f_{j}$ and $\beta_{j}$ of \eqref{e30}.
\item[(c)] \emph{Size of the remainders.} The bound
$\|E^{+}\|_{Z}+\|E^{-}\|_{Z}\lesssim\epsilon\log\frac1\epsilon$ follows from \eqref{Z.Esum}
(recall that $E^{\pm}=E_{1}^{\pm}+E_{2}^{\pm}+E_{3}^{\pm}$, so that
$\|E^{\pm}\|_{Z}\leqslant\sum_{m=1}^{3}\|E_{m}^{\pm}\|_{Z}$), and the second bound required in
Proposition \ref{prop:key},
\[
\sup_{\phi\in\mathcal{C},\ \mathrm{supp}\,\phi\subseteq(-2\sqrt{\epsilon},2\sqrt{\epsilon})}
\Big|\int_{\mathbb{R}}E\phi\,d\alpha\Big|\lesssim\epsilon\log\frac1\epsilon,
\]
was proved immediately above. In addition, $E^{+},E^{-},E$ belong to $L^{2}((-\delta,\delta))$: extended
by zero outside $(-2\sqrt{\epsilon},2\sqrt{\epsilon})$, each
$E_{m}^{\pm}$ is bounded on the bounded interval $(-\delta,\delta)$ ($E_{1}^{\pm}$ by
\eqref{om.m}, $E_{2}^{\pm}$ by \eqref{S.C3}), and $E=\sum_{m}E_{m}$. For $E_{3}^{\pm}$ we argue
separately, since \eqref{I.C1} only covers $|\mu|\leqslant\frac{\sqrt{\epsilon}}{8}$. On the
support of the bracket $\Psi$ one has, as in Step (i) of the proof of \eqref{I.C1}, either
$|\rho|\lesssim\varepsilon_{5}^{10}$ or $|\mu-\rho|\leqslant\sqrt{\epsilon}$. In the first case,
if $|\mu|\geqslant\frac{\sqrt{\epsilon}}{8}$ then $|\mu-\rho|\gtrsim|\mu|$ and, by
$|f_{j}(\sigma)\mp\frac d2|\lesssim\sigma^{2}$,
$|a|\lesssim d+\mu^{2}+\rho^{2}$, so that
$|K_{n,m}|\leqslant|a|/|\mu-\rho|^{2}\lesssim(d+\mu^{2})/\mu^{2}\lesssim1$, while the length in
$\rho$ of that set is $\lesssim\varepsilon_{5}^{10}$; the first case contributes nothing when
$|\mu|<\frac{\sqrt{\epsilon}}{8}$, since $|\mu-\rho|\leqslant|\mu|+|\rho|\leqslant\frac{\sqrt{\epsilon}}{8}+\varepsilon_{5}^{10}$
would then violate $|\mu-\rho|\geqslant\frac{\sqrt{\epsilon}}{2}$. In the second case the two
indices $n,m$ have to be treated separately. Note first that here $|\rho|\leqslant\varepsilon_{5}^{9}$
by the convention on the integrals in $\rho$ stated after \eqref{omV.def}, so that
$|\mu|\leqslant|\mu-\rho|+|\rho|\leqslant\sqrt{\epsilon}+\varepsilon_{5}^{9}\leqslant2\sqrt{\epsilon}$:
both indices lie in $(-4\sqrt{\epsilon},4\sqrt{\epsilon})$, and \eqref{e:a.range}, \eqref{e:a.low}
and \eqref{om.m} apply without further qualification. If $n=m$, the amplitude
$a=f_{n}(\mu)-f_{m}(\rho)$ vanishes to first order on the diagonal, so
$a/((\mu-\rho)^{2}+a^{2})$ is a Hilbert kernel with bounded amplitude
$\bigl(|f_{n}'(\xi)|\leqslant C_{A}(|\mu|+|\rho|)\leqslant 3C_{A}\sqrt{\epsilon}$, with $\xi$
between $\mu$ and $\rho$, so that the amplitude of the kernel is bounded by $1/2\bigr)$ by
\eqref{e30}, and since $\omega_{m}\in C^{1}$ by
\eqref{om.m} the principal value is bounded by $\|\omega_{m}\|_{C^{1}}\lesssim1$. If $n\neq m$,
the amplitude does not vanish: $|a|\leqslant(2+A)\epsilon$ by \eqref{e:a.range}, while
$|a|\geqslant\frac{\epsilon}{2}$ whenever $|\mu-\rho|\leqslant\epsilon$, by \eqref{e:a.low}
applied to $a(\mu,\rho)$ or to $-a(\rho,\mu)$ according as $(n,m)=(2,1)$ or $(1,2)$. Thus on
$\{|\mu-\rho|\leqslant\epsilon\}$ the kernel is bounded by $1/|a|\leqslant2/\epsilon$, so that the
contribution of that set is at most $4$, and on $\{\epsilon\leqslant|\mu-\rho|\leqslant\sqrt{\epsilon}\}$
it satisfies $|K_{n,m}|\leqslant|a|/|\mu-\rho|^{2}\leqslant C_{A}\epsilon/|\mu-\rho|^{2}$, which
integrates to at most $C_{A}$; altogether the second case contributes $\lesssim_{A}1$ also for
$n\neq m$, the principal value being in fact an ordinary integral there. Hence
$|E_{3}^{\pm}(\mu)|\lesssim_{A}1$ for every
$|\mu|\leqslant\delta$, which is the asserted $L^{2}$-bound.
\item[(d)] \emph{$C^{1}$-bounds for $F^{\pm}$ and $G$.} By \eqref{V.decomp} and
\eqref{V.decomp.W} we have
\[
U_{n}=V_{n}-W_{n}-Q^{-1}(c(t_{2}))-Q^{-1}w(t_{2},z(t_{2},\alpha_{n}+\beta_{n}(\mu))),
\]
where
$V_{n}(\mu)=Q^{-1}\big(v_{1}(t_{2},\alpha_{n}+\beta_{n}(\mu))\big)$ and $W_{n}$ is the bulk part.
Since $Q$ is orthogonal it preserves Euclidean norms, $\|V_{n}\|_{L^{\infty}}\leqslant\|v_{1}\|_{L^{\infty}}\leqslant A$
by \eqref{e9} while $\|W_{n}\|_{L^{\infty}}\leqslant A$ by \eqref{e11.bis} and
$\|Q^{-1}w(t_{2},z(t_{2},\alpha_{n}+\beta_{n}(\mu)))\|_{L^{\infty}}\leqslant A$ by \eqref{e9.norm}, so that
$\|U_{n}\|_{L^{\infty}}\leqslant4A$; and since
$\partial_{\mu}\big[v_{1}(t_{2},\alpha_{n}+\beta_{n}(\mu))\big]
=\beta'_{n}(\mu)\,\partial_{\alpha}v_{1}(t_{2},\alpha_{n}+\beta_{n}(\mu))$, the a priori bound
$\|\partial_{\alpha}v_{1}\|_{L^{\infty}}\leqslant  A$ of \eqref{e9} together with
$\|\beta'_{n}\|_{L^{\infty}}\lesssim1$ from \eqref{e30} gives
$\|\partial_{\mu}V_{n}\|_{L^{\infty}}\lesssim_{A}1$, while the chain rule, the bound
$\|\nabla w\|_{L^{\infty}(\mathbb{R}^{2})}\leqslant A$ of \eqref{e9.norm} and
$\|\beta'_{n}\|_{L^{\infty}}\lesssim1$ from \eqref{e30} give
$\|\partial_{\mu}\big(w(t_{2},z(t_{2},\alpha_{n}+\beta_{n}(\mu)))\big)\|_{L^{\infty}}
\lesssim_{A}1$; by the Cauchy--Schwarz inequality
$|\partial_{\mu}(V_{n}\cdot e_{j})|\leqslant|\partial_{\mu}V_{n}|$, the same bound holds for each
component. On the other hand, by \eqref{V.decomp.W} we have
$W_{n}(\mu)=Q^{-1}\big(\Phi_{2}(t_{2},z(t_{2},\alpha_{n}+\beta_{n}(\mu)))\big)$ with
$\Phi_{2}$ as in \eqref{e11.bis}, so that
\[
\partial_{\mu}W_{n}(\mu)=\beta'_{n}(\mu)\,Q^{-1}\Big(\partial_{\alpha}\big[\Phi_{2}(t_{2},z(t_{2},\cdot))\big]_{\,\alpha=\alpha_{n}+\beta_{n}(\mu)}\Big),
\]
whence $\|\partial_{\mu}W_{n}\|_{L^{\infty}}\lesssim_{A}1$ by \eqref{e11.bis}. These bounds hold on
the whole common domain of the $\beta_{n}$, in particular on $(-2\sqrt{\epsilon},2\sqrt{\epsilon})$.

Combining these three bounds, $\|U_{n}\|_{C^{1}((-2\sqrt{\epsilon},2\sqrt{\epsilon}))}\lesssim_{A}1$, and therefore
\[
\|F^{+}\|_{C^{1}((-2\sqrt{\epsilon},2\sqrt{\epsilon}))}+\|F^{-}\|_{C^{1}((-2\sqrt{\epsilon},2\sqrt{\epsilon}))}+\|G\|_{C^{1}((-2\sqrt{\epsilon},2\sqrt{\epsilon}))}\lesssim_{A}1
\] 
as required.
\end{itemize}
Hence Proposition \ref{prop:key} applies and yields
$|G(0)|\lesssim\epsilon\log\frac1\epsilon$; since $\epsilon=\tilde d/2$, this is precisely
\eqref{U.step2.claim}.

\textit{Step 3: we estimate $|(W_{1}(0)-W_{2}(0))\cdot e_{2}|$.} 

We argue at the time $t=t_{2}$ and use the same
rigid motion $R$ and the same functions $f_{1},f_{2}$ as in Step 2, so that $d=\tilde{d}=2\epsilon$.

We have

\[
\begin{aligned}
&2\pi\,Q(W_{1}(0)-W_{2}(0)) \\
=&\int_{\Omega_{2}(t_{2})}(\frac{(z(t_{2},\alpha_{1})-y)^{\bot}}{|z(t_{2},\alpha_{1})-y|^{2}}-\frac{(z(t_{2},\alpha_{2})-y)^{\bot}}{|z(t_{2},\alpha_{2})-y|^{2}})\omega(t_{2},y)dy \\
=&\int_{\Omega_{2}(t_{2})}(\frac{(R(0,f_{1}(0))-y)^{\bot}}{|R(0,f_{1}(0))-y|^{2}}-\frac{(R(0,f_{2}(0))-y)^{\bot}}{|R(0,f_{2}(0))-y|^{2}})\omega(t_{2},y)dy \\
=&\int_{R^{-1}(\Omega_{2}(t_{2}))}(\frac{(R(0,f_{1}(0))-R(y))^{\bot}}{|R(0,f_{1}(0))-R(y)|^{2}}-\frac{(R(0,f_{2}(0))-R(y))^{\bot}}{|R(0,f_{2}(0))-R(y)|^{2}})\omega(t_{2},R(y))\,dy \\
=&\int_{R^{-1}(\Omega_{2}(t_{2}))}(\frac{(Q(0,f_{1}(0))-Q(y))^{\bot}}{|y_{1}|^{2}+|f_{1}(0)-y_{2}|^{2}}-\frac{(Q(0,f_{2}(0))-Q(y))^{\bot}}{|y_{1}|^{2}+|f_{2}(0)-y_{2}|^{2}})\omega(t_{2},R(y))\,dy.
\end{aligned}
\]
The factor $(2\pi)^{-1}$ carried by $W_{n}$ in \eqref{V.decomp.W} is kept on the left-hand
side throughout this step. Each of the two integrals in the first line is understood in the
principal-value sense fixed after \eqref{e9.bs}. Their difference is absolutely integrable, and this
is exactly what the estimates of this step prove, since they bound $\int_{U}|h_{d}|$ by
$d(1+|\log d|)$; consequently the symmetric truncations defining the two principal values may be
combined, and the change of variables below is justified by dominated convergence. The third
equality is the change of variables
$y\mapsto R(y)$, whose Jacobian determinant equals $\det Q=1$, and the fourth uses $R(0,f_{j}(0))-R(y)=Q\big((0,f_{j}(0))-y\big)$ together with
$|Qv|=|v|$. For a $2\times2$
orthogonal matrix $Q$ one has
\[
Q^{-1}[(Qy)^{\bot}]=\det(Q)\,y^{\bot}
\]
for all $y\in\mathbb{R}^{2}$; since $\det Q=1$ this is precisely \eqref{Q.perp}. Applying $Q^{-1}$
to the last relation we obtain
\[
\begin{aligned}
2\pi\big(W_{1}(0)-W_{2}(0)\big)=\int_{R^{-1}(\Omega_{2}(t_{2}))}(\frac{(y_{2}-f_{1}(0),-y_{1})}{|y_{1}|^{2}+|f_{1}(0)-y_{2}|^{2}}-\frac{(y_{2}-f_{2}(0),-y_{1})}{|y_{1}|^{2}+|f_{2}(0)-y_{2}|^{2}})& \\
\omega(t_{2},R(y))dy&.
\end{aligned}
\]
As a result,
\[
\begin{aligned}
&2\pi\,\big|(W_{1}(0)-W_{2}(0))\cdot e_{2}\big| \\
=&|\int_{R^{-1}(\Omega_{2}(t_{2}))}\frac{y_{1}((f_{1}(0)-y_{2})^{2}-(f_{2}(0)-y_{2})^{2})}{(y_{1}^{2}+(f_{1}(0)-y_{2})^{2})(y_{1}^{2}+(f_{2}(0)-y_{2})^{2})}\omega(t_{2},R(y))\,dy| \\
=&|\int_{R^{-1}(\Omega_{2}(t_{2}))}\frac{2dy_{1}y_{2}\omega(t_{2},R(y_{1},y_{2}))}{(y_{1}^{2}+(\frac{d}{2}+y_{2})^{2})(y_{1}^{2}+(\frac{d}{2}-y_{2})^{2})}dy|.
\end{aligned}
\]
Set $U:=R^{-1}(\Omega_{2}(t_{2}))$ and 
\[
h_{d}(z_{1},z_{2}):=\frac{2dz_{1}z_{2}\omega(t_{2},R(z_{1},z_{2}))}{(z_{1}^{2}+(\frac{d}{2}-z_{2})^{2})(z_{1}^{2}+(\frac{d}{2}+z_{2})^{2})}, \quad z=(z_{1},z_{2})\in\mathbb{R}^{2}.
\]
In the following, we break the integral into two parts and consider
\[
I_{1}:=\int_{U_{1}}h_{d}(z)dz, \quad I_{2}:=\int_{U_{2}}h_{d}(z)dz,
\]
where 
\[
\begin{aligned}
&U_{1}:=U\cap\{z:|z-(0,\tfrac{d}{2})|>\tfrac{d}{4},|z-(0,-\tfrac{d}{2})|>\tfrac{d}{4}\}, \\
&U_{2}:=U\cap(\{z:|z-(0,\tfrac{d}{2})|<\tfrac{d}{4}\}\cup\{z:|z-(0,-\tfrac{d}{2})|<\tfrac{d}{4}\}).
\end{aligned}
\]
We will use repeatedly that, by \eqref{e11}, $\|\omega\|_{L^{\infty}}\leqslant A$, and that
$\omega$ decays exponentially in the vertical direction, as in \eqref{e11.loc}.

\emph{Normalization of the representative.} The estimate of $I_{1}$ below involves the distance
from the origin of the region where $\omega(t_{2},R(\cdot))$ is concentrated, and hence the position of the midpoint
$R(0)=\frac12\big(z(t_{2},\alpha_{1})+z(t_{2},\alpha_{2})\big)$ of the chord. That position is not
controlled by the a priori bounds, because the pair $(\alpha_{1},\alpha_{2})$ may be shifted by any
common multiple of $2\pi$ without changing the geometric configuration; we therefore fix the
representatives once and for all. By the periodicity
requirement of Definition \ref{adm} we have $z(t,\alpha+2\pi k)=z(t,\alpha)+(2\pi k,0)$ for every
$k\in\mathbb{Z}$, and all the data of the problem are $2\pi$-periodic in $x_{1}$; replacing
$\alpha_{m}$ by $\alpha_{m}+2\pi k$ therefore translates the entire configuration by
$(2\pi k,0)$, and leaves unchanged both the set $U=R^{-1}(\Omega_{2}(t_{2}))$ and the quantity
$2\pi(W_{1}(0)-W_{2}(0))\cdot e_{2}$ under consideration, since $\omega$ and $\Omega_{2}(t_{2})$
are themselves invariant under that translation. We choose $k\in\mathbb{Z}$ such that
$\big|\frac12(\alpha_{1}+\alpha_{2})+2\pi k\big|\leqslant\pi$. Then, since
$|z(t_{2},\alpha)-(\alpha,0)|\leqslant A$ by \eqref{e8}, we get
\[
|R(0)|=\Big|\frac{z(t_{2},\alpha_{1})+z(t_{2},\alpha_{2})}{2}\Big|
\leqslant \pi+2A,
\]
and consequently
\begin{equation}\label{R.bound}
R_{\ast}:=|R(0)|+R_{0}\leqslant \pi+2A+R_{0}.
\end{equation}
Here $R$ denotes the rigid motion of \eqref{e28} and $R_{\ast}$ the scalar bound of
\eqref{R.bound}. Thus $R_{\ast}$ is bounded in terms of
$A$ and $R_{0}$ alone, and every constant below that is stated to depend only on $A$ and $R_{0}$
may depend on $R_{\ast}$. The periodicity is used here, and again in the far-field estimate in the
proof of \eqref{S.C3}, to make the constants independent of the position of the chord.

First we estimate $I_{1}$. Put $P_{+}:=(0,\frac d2)$, $P_{-}:=(0,-\frac d2)$ and
\[
r_{1}:=|z-P_{-}|,\qquad r_{2}:=|z-P_{+}|,\qquad\text{so that}\qquad r_{1}+r_{2}\geqslant |P_{+}-P_{-}|=d
\quad\text{for every }z.
\]
With $\widetilde{z}:=z-P_{+}$ (that is, $r_{2}=|\widetilde z|$ and $r_{1}=|\widetilde z+(0,d)|$), the
definition of $h_{d}$ gives
\[
|h_{d}(z)|\leqslant \frac{2d\,|\omega(t_{2},R(z))|\,|z_{1}z_{2}|}{r_{1}^{2}r_{2}^{2}}.
\]
On $U_{1}$ both radii exceed $\frac d4$; $\omega(t_{2},\cdot)$ is localized: by
\eqref{e11.loc} it decays exponentially,
$|\omega(t_{2},y)|\leqslant A e^{-|y_{2}|/R_{0}}$, so that
$|\omega(t_{2},R(z))|\leqslant A e^{-|(Qz+R(0))_{2}|/R_{0}}$. The strip
\begin{equation}\label{S.def}
S:=\{z:|(Qz+R(0))_{2}|\leqslant R_{0}\},
\end{equation}
which has width $2R_{0}$ and whose center line has distance at most $|R(0)|$ from the origin.
Thus its transverse offset is bounded by $R_{\ast}:=|R(0)|+R_{0}$, with $R_{\ast}$ bounded by
\eqref{R.bound}; it is the region where the weight is at least $e^{-1}$,
so that only $|\omega|\leqslant A$ is used there. Outside $S$ the weight decays, but only away
from the points $P_{+}$ and $P_{-}$; the two pieces of $S^{c}$ are estimated after
\eqref{club}. We bound the part of $I_{1}$ inside the strip and claim that
\begin{equation}\label{club}
\int_{U_{1}\cap S}\frac{|z_{1}z_{2}|}{r_{1}^{2}r_{2}^{2}}\,dz\lesssim1+\log\frac1d,
\end{equation}
with an implied constant depending only on $A$ and $R_{0}$; since $\|\omega\|_{L^{\infty}}\leqslant A$,
this gives the bound $d(1+\log\frac1d)$ for the part of $I_{1}$ over $U_{1}\cap S$. To prove the
claim, split $U_{1}$ into the three regions
\[
\mathcal{A}_{1}:=\{r_{2}<\tfrac d2\},\qquad \mathcal{A}_{2}:=\{r_{1}<\tfrac d2\},\qquad
\mathcal{A}_{3}:=\{r_{1}\geqslant \tfrac d2,\ r_{2}\geqslant \tfrac d2\}.
\]
On $\mathcal{A}_{1}$ we have $r_{1}\geqslant d-r_{2}>\frac d2$, $|z_{1}|\leqslant r_{2}$ and
$|z_{2}|\leqslant \frac d2+r_{2}\leqslant d$; hence
$\frac{|z_{1}z_{2}|}{r_{1}^{2}r_{2}^{2}}\leqslant \frac{4}{d\,r_{2}}$ and, using the polar coordinates
$z-P_{+}=r_{2}(\cos\theta,\sin\theta)$ (the constraint $r_{1}>\frac d4$ only shrinks the domain),
\[
\int_{\mathcal{A}_{1}}\frac{|z_{1}z_{2}|}{r_{1}^{2}r_{2}^{2}}\,dz
\leqslant \frac4d\int_{d/4<r_{2}<d/2}\int_{0}^{2\pi}\frac{r_{2}\,dr_{2}\,d\theta}{r_{2}}
\leqslant4\pi.
\]
The region $\mathcal{A}_{2}$ is handled by the same computation with the roles of $r_{1}$ and $r_{2}$
interchanged (equivalently, by the reflection $z_{2}\mapsto-z_{2}$, which exchanges $P_{+}$ and $P_{-}$ and
leaves $|z_{1}z_{2}|$ and $dz$ unchanged), and gives again $4\pi$; since only an upper bound for
the integral over $\mathcal{A}_{2}\cap U_{1}\subseteq \mathcal{A}_{2}$ is needed, the presence of $U_{1}$ is
irrelevant here. On $\mathcal{A}_{3}$
write $z=(\xi,\eta)$, $s:=\frac d2+\eta$, $t:=\frac d2-\eta$, so that $s+t=d$,
$r_{1}=\sqrt{\xi^{2}+s^{2}}$, $r_{2}=\sqrt{\xi^{2}+t^{2}}$, and split
$\mathcal{A}_{3}=\mathcal{A}_{3}^{\mathrm{in}}\cup\mathcal{A}_{3}^{\mathrm{out}}$ according to whether
$|\eta|\leqslant\frac d2$ or $|\eta|>\frac d2$. In both parts the estimates
$|z_{1}|=|\xi|\leqslant \min(r_{1},r_{2})=:r_{<}$ and
$|z_{2}|=|\eta|\leqslant \frac{r_{1}+r_{2}}{2}$, the latter because
$r_{1}+r_{2}\geqslant|\eta+\tfrac d2|+|\eta-\tfrac d2|\geqslant2|\eta|$, give
\begin{align}\label{dagger}
\frac{|z_{1}z_{2}|}{r_{1}^{2}r_{2}^{2}}
\leqslant \frac{r_{<}\cdot\frac{r_{1}+r_{2}}{2}}{r_{<}^{2}r_{>}^{2}}
\leqslant \frac{1}{r_{<}r_{>}},\qquad r_{>}:=\max(r_{1},r_{2}).
\end{align}
On $\mathcal{A}_{3}^{\mathrm{in}}$ one has $s,t\geqslant0$, and the identity
$\big(\xi^{2}+s^{2}\big)\big(\xi^{2}+t^{2}\big)=\big(\xi^{2}+st\big)^{2}+\xi^{2}(s-t)^{2}
\geqslant \big(\xi^{2}+st\big)^{2}$ gives $\frac1{r_{<}r_{>}}\leqslant \frac1{\xi^{2}+st}$, so that
\[
\int_{\mathcal{A}_{3}^{\mathrm{in}}\cap S}\frac{|z_{1}z_{2}|}{r_{1}^{2}r_{2}^{2}}\,dz
\leqslant \int_{-d/2}^{d/2}\int_{\mathbb{R}}\frac{d\xi\,d\eta}{\xi^{2}+s t}
=\pi\int_{-d/2}^{d/2}\frac{d\eta}{\sqrt{s t}}
=\pi^{2},
\]
since $s\,t=\frac{d^{2}}{4}-\eta^{2}$,
$\int_{-d/2}^{d/2}\frac{d\eta}{\sqrt{d^{2}/4-\eta^{2}}}=\pi$ and
$\int_{\mathbb{R}}\frac{d\xi}{\xi^{2}+st}=\frac{\pi}{\sqrt{st}}$ for $st>0$; here the middle
integral is over the whole slab $|\eta|\leqslant \frac d2$, $\xi\in\mathbb{R}$, which contains
$\mathcal{A}_{3}^{\mathrm{in}}\cap S$, so that dropping the constraint $z\in S$ is legitimate.
On $\mathcal{A}_{3}^{\mathrm{out}}$ the same bound \eqref{dagger} applies, and there the smaller of the two
radii is the vertical one: for $\eta>\frac d2$ put $u:=\eta-\frac d2>0$, so that
$0<u<u+d$, $r_{2}=\sqrt{\xi^{2}+u^{2}}\leqslant r_{1}$, $r_{<}=r_{2}$, and hence
$\frac{|z_{1}z_{2}|}{r_{1}^{2}r_{2}^{2}}\leqslant \frac1{r_{2}^{2}}=\frac1{\xi^{2}+u^{2}}$; the case
$\eta<-\frac d2$ is identical with $r_{1}$ in place of $r_{2}$. Since $r_{2}\geqslant\frac d2$
throughout $\mathcal{A}_{3}$, the region $\mathcal{A}_{3}^{\mathrm{out}}$ is disjoint from the disc of radius
$\frac d2$ about $P_{+}$. Write $z-P_{+}=\rho(\cos\theta,\sin\theta)$. The strip $S$ of \eqref{S.def} has
width $2R_{0}$, and its center line is at distance at most $|R(0)|+d/2$ from $P_{+}$. Since $d\leqslant1$,
this transverse offset, including the half-width $R_{0}$, is at most $R_{\ast}+1$; hence, for
$\rho\geqslant2(R_{\ast}+1)$, the circle of radius $\rho$ about $P_{+}$ meets $S$ in at most two arcs
whose total angular length is $\lesssim R_{0}/\rho$, while for $\rho\leqslant2(R_{\ast}+1)$ we simply
allow all angles. On $\mathcal{A}_{3}$ the two radii are comparable to $\rho:=|z-P_{+}|$, since both are
$\geqslant \frac d2$ and $|r_{1}-r_{2}|\leqslant |P_{+}-P_{-}|=d\leqslant 2\min(r_{1},r_{2})$; hence \eqref{dagger} gives
$\frac{|z_{1}z_{2}|}{r_{1}^{2}r_{2}^{2}}\leqslant \frac1{r_{<}r_{>}}\leqslant \frac{C}{\rho^{2}}$, and therefore
\[
\int_{\mathcal{A}_{3}^{\mathrm{out}}\cap S}\frac{|z_{1}z_{2}|}{r_{1}^{2}r_{2}^{2}}\,dz
\leqslant \int_{d/2}^{2(R_{\ast}+1)}\frac{2\pi\,d\rho}{\rho}
+C R_{0}\int_{2(R_{\ast}+1)}^{\infty}\frac{d\rho}{\rho^{2}}
\lesssim\log\frac1d+1,
\]
the implied constant depending only on $A$ and $R_{0}$, since $R_{\ast}\leqslant \pi+2A+R_{0}$ by \eqref{R.bound}
and $d\leqslant 1$. The logarithmic term comes only from the lower limit $d/2$ of the first integral; the
upper limit contributes $\log\frac{2(R_{\ast}+1)}{d/2}=\log\frac1d+O(1)$, the $O(1)$ depending on $A$
and $R_{0}$ through \eqref{R.bound} alone. The normalization of the representative enters here:
without it the constant in \eqref{club} would depend on the position of the chord.
Combining the two parts,
\[
\int_{\mathcal{A}_{3}\cap S}\frac{|z_{1}z_{2}|}{r_{1}^{2}r_{2}^{2}}\,dz\lesssim1+\log\frac1d,
\]
and, together with the bounds for $\mathcal{A}_{1}$ and $\mathcal{A}_{2}$, this proves \eqref{club}.

\emph{The part outside the strip.} Let $\rho:=|z-P_{+}|$ and put
\begin{equation}\label{rho0.def}
\rho_{0}:=4\big(R_{\ast}+R_{0}+1\big),
\end{equation}
which is bounded in terms of $A$ and $R_{0}$ alone by \eqref{R.bound}. On
$S^{c}\cap\{\rho\leqslant\rho_{0}\}$ we drop the weight and use only $|\omega|\leqslant A$ from
\eqref{e11}. The bounds for $\mathcal{A}_{1}$ and $\mathcal{A}_{2}$ above do not involve $S$ and give $4\pi$ each;
on $\mathcal{A}_{3}$ the estimate \eqref{dagger} and the comparability of $r_{1},r_{2}$ with $\rho$ give
$\frac{|z_{1}z_{2}|}{r_{1}^{2}r_{2}^{2}}\leqslant C\rho^{-2}$, hence, in polar coordinates
about $P_{+}$,
\begin{equation}\label{club.near}
\int_{U_{1}\cap S^{c}\cap\{\rho\leqslant\rho_{0}\}}\frac{|z_{1}z_{2}|}{r_{1}^{2}r_{2}^{2}}\,dz
\leqslant 8\pi+C\int_{d/2}^{\rho_{0}}\frac{d\rho}{\rho}
\lesssim1+\log\frac1d,
\end{equation}
where the lower limit is $\frac d2$ because $\mathcal{A}_{3}$ forces $r_{2}\geqslant\frac d2$, and the
last step uses only that $\rho_{0}$ is bounded in terms of $A$ and $R_{0}$, so that
$\log\frac{\rho_{0}}{d}=\log\rho_{0}+\log\frac1d\lesssim_{A,R_{0}}1+\log\frac1d$.

On the far field $\{\rho>\rho_{0}\}$ we use the weight. Since $\rho>\rho_{0}\geqslant2d$ for
$d\leqslant1$, we have $\rho-\frac d2\leqslant|z|\leqslant\rho+\frac d2\leqslant\frac54\rho$
and $r_{1},r_{2}\geqslant\rho-d\geqslant\frac{\rho}{2}$, hence $\rho\geqslant\frac{|z|}{2}$.
Since $\rho>\rho_{0}\geqslant4R_{\ast}$,  $|z|\geqslant\frac{\rho}{2}\geqslant2R_{\ast}$, and therefore
\[
|y|=|Qz+R(0)|\leqslant|z|+R_{\ast}\leqslant\tfrac32|z|,
\qquad
|y|\geqslant|z|-R_{\ast}\geqslant\tfrac{|z|}{2}.
\]
The upper bound is the one used in the kernel comparison, and it gives
$|z|\geqslant\frac23|y|$; the lower bound locates the domain of integration in $y$. Thus
\[
\frac{|z_{1}z_{2}|}{r_{1}^{2}r_{2}^{2}}
\leqslant\frac{|z|^{2}/2}{\rho^{4}/16}\leqslant\frac{128}{|z|^{2}}
\leqslant\frac{288}{|y|^{2}}\leqslant\frac{512}{|y|^{2}},
\]
and, since $dz=dy$ and $|y|\geqslant\frac{\rho}{4}>\frac{\rho_{0}}{4}$ on $\{\rho>\rho_{0}\}$,
\begin{align*}
\int_{U_{1}\cap\{\rho>\rho_{0}\}}e^{-|y_{2}|/R_{0}}
\frac{|z_{1}z_{2}|}{r_{1}^{2}r_{2}^{2}}\,dz
&\leqslant 512\int_{\{|y|>\rho_{0}/4\}}\frac{e^{-|y_{2}|/R_{0}}}{|y|^{2}}\,dy \\
&\leqslant 512\Big[\frac{16}{\rho_{0}}\int_{\mathbb{R}}e^{-|y_{2}|/R_{0}}\,dy_{2}
+4\,e^{-\rho_{0}/(8R_{0})}\Big]\leqslant C(A,R_{0}).
\end{align*}
The first summand corresponds to $|y_{1}|\geqslant\frac{\rho_{0}}{8}$, where
$\int_{|y_{1}|\geqslant\rho_{0}/8}\frac{dy_{1}}{y_{1}^{2}+y_{2}^{2}}\leqslant\frac{16}{\rho_{0}}$
and $\int_{\mathbb{R}}e^{-|y_{2}|/R_{0}}dy_{2}=2R_{0}$; the second to $|y_{1}|\leqslant\frac{\rho_{0}}{8}$,
where $|y|>\frac{\rho_{0}}{4}$ forces $|y_{2}|\geqslant\frac{\rho_{0}}{8}$ and hence
$e^{-|y_{2}|/R_{0}}\leqslant e^{-\rho_{0}/(8R_{0})}$, the remaining integral being at most $4$.

Combining the two estimates and recalling that
$|h_{d}(z)|\leqslant 2dA\,|z_{1}z_{2}|/(r_{1}^{2}r_{2}^{2})$ by \eqref{e11},
\[
\int_{U_{1}\cap S^{c}}|h_{d}(z)|\,dz\lesssim d\Big(1+\log\frac1d\Big),
\]
and together with \eqref{club} this gives $|I_{1}|\lesssim d(1+\log\frac1d)$ for $0<d\leqslant 1$.
The logarithm comes from the bound $C\rho^{-2}$. In the proof of Theorem \ref{thm} one has
$d=\tilde d\leqslant\varepsilon_{4}/2\leqslant e^{-1}$ by the choice of $\varepsilon_{4}$ made
above; then $1+\log\frac1d\leqslant2\log\frac1d$, and only
$|I_{1}|\lesssim d|\log d|$ is used below.

It remains to estimate $I_{2}$:
\begin{equation}
\begin{aligned}
|I_{2}|\leqslant &(\int_{R^{-1}(\Omega_{2}(t_{2}))\cap\{z:|z-(0,\frac{d}{2})|<\frac{d}{4}\}}|h_{d}|dz+\int_{R^{-1}(\Omega_{2}(t_{2}))\cap\{z:|z-(0,-\frac{d}{2})|<\frac{d}{4}\}}|h_{d}|dz) \\
=&L_{1}+L_{2}.
\end{aligned}
\end{equation}
For $L_{1}$, note that 
\[
\begin{aligned}
L_{1}\lesssim&d\int_{\{z:|z|<\frac{d}{4}\}}\frac{|z_{1}(z_{2}+\frac{d}{2})|}{|z|^{2}|z+(0,d)|^{2}}dz \\
=&d\int_{|z|<\frac{d}{4}}\frac{|z_{1}|(|z_{2}|+\frac{d}{2})}{|z|^{2}(z_{1}^{2}+(z_{2}+d)^{2})}dz \\
\leqslant&d\int_{|z|<\frac{d}{4}}\Big[\frac{1}{z_{1}^{2}+(z_{2}+d)^{2}}+\frac{d/2}{|z|\big(z_{1}^{2}+(z_{2}+d)^{2}\big)}\Big]dz \\
\lesssim&d\int_{|z-(0,d)|<\frac{d}{4}}\frac{1}{|z|^{2}}dz+d^{2}\int_{|z|<\frac{d}{4}}\frac{dz}{|z|(z_{1}^{2}+(z_{2}+d)^{2})}.
\end{aligned}
\]
Notice that 
\[
d\int_{|z-(0,d)|<\frac{d}{4}}\frac{1}{|z|^{2}}dz\lesssim d\int_{\frac{3d}{4}}^{\frac{5d}{4}}\int_{0}^{2\pi}r^{-1}\,d\theta\,dr\lesssim d
\]
and
\[
d^{2}\int_{|z|<\frac{d}{4}}\frac{dz}{|z|(z_{1}^{2}+(z_{2}+d)^{2})}\lesssim\int_{|z|<\frac{d}{4}}\frac{1}{|z|}dz\lesssim\int_{0}^{\frac{d}{4}}dr\lesssim d.
\]
The term $L_{2}$ is bounded by the same computation with $z_{2}$ replaced by $-z_{2}$. Hence 
\[
|I|:=\Big|\int_{U}h_{d}(z)\,dz\Big|\leqslant|I_{1}|+|I_{2}|\lesssim d\big(1+|\log d|\big)
\quad\text{for } \ 0<d\leqslant  1,
\]
and hence, for $0<d\leqslant e^{-1}$,
$|(W_{1}(0)-W_{2}(0))\cdot e_{2}|\lesssim d|\log d|$, which is the assertion of Step 3.

\textit{Step 4: the integral inequality.}
Combining Steps 2 and 3 we have proved that
\begin{equation}\label{D.gronwall2}
|\widetilde{D}'(t_{2})|\lesssim\widetilde{D}(t_{2})\big(\log\widetilde{D}(t_{2})+A\big);
\end{equation}
at the time $t_{2}$ the additive constant is dominated, because
$\widetilde{D}(t_{2})=\tilde d^{-2}$ with $\tilde d\lesssim\varepsilon_{4}$, so that
$\widetilde{D}(t_{2})\log\widetilde{D}(t_{2})\gtrsim\varepsilon_{4}^{-2}\log\frac1{\varepsilon_{4}}\geqslant A$
after shrinking $\varepsilon_{4}$; it is retained because \eqref{D.gronwall2} is the form that passes
to \eqref{D.gronwall.a.e}.
The estimate \eqref{D.gronwall2} is derived at the single time $t_{2}$. Since $D$ is defined by a
supremum, the passage to an integral inequality for $D$ is made by the envelope theorem. Recall from
\eqref{D.def} that $D$ is a supremum over the open
window $\varepsilon_{1}<|\alpha-\beta|<\varepsilon_{1}^{-1}$. Put
\[
m(t):=\min\Big\{|z(t,\alpha)-z(t,\beta)|:\alpha,\beta\in\mathbb{R},\
\varepsilon_{1}\leqslant|\alpha-\beta|\leqslant\tfrac1{\varepsilon_{1}}\Big\},\qquad t\in[0,T],
\]
a minimum over a compact set of pairs: indeed $\alpha\mapsto z(t,\alpha)-(\alpha,0)$ is
$2\pi$-periodic, so $\alpha$ may be restricted to one period, after which $\beta$ ranges over a
compact interval. Hence the minimum is attained. Since the pairs with
$|\alpha-\beta|\geqslant\varepsilon_{1}$ satisfy
$|z(t,\alpha)-z(t,\beta)|\geqslant\varepsilon_{1}|F(z)(t,\alpha,\beta)|$, the lower bound
\eqref{g.range} gives
$m(t)\geqslant\varepsilon_{1}\inf_{\alpha,\beta}F(z)(t,\alpha,\beta)\geqslant
\varepsilon_{1}\varepsilon_{4}/K=:m_{0}>0$ for all $t\in[t_{0},\bar t]$.

By \eqref{t0.value} we have $\inf_{\alpha,\beta}F(z)(t,\alpha,\beta)\leqslant\varepsilon_{4}/2
\leqslant\varepsilon_{2}$ for every $t\in[t_{0},\bar t]$, so Lemma \ref{chord}(2) applies at
every such $t$ and furnishes a pair $(\alpha_{1},\alpha_{2})$ with
\begin{equation}\label{m.minpair}
|\alpha_{1}-\alpha_{2}|\in[2\varepsilon_{1},\tfrac1{2\varepsilon_{1}}],\qquad
|z(t,\alpha_{1})-z(t,\alpha_{2})|=m(t),
\end{equation}
the second identity being the first line of \eqref{e22}, whose right-hand side is precisely the
minimum defining $m(t)$; moreover $(\alpha_{1},\alpha_{2})$ is a conjugate pair, that is, it
satisfies \eqref{e23} and \eqref{e24}. In particular the infimum of $|z(t,\alpha)-z(t,\beta)|$
over the open window $\varepsilon_{1}<|\alpha-\beta|<\varepsilon_{1}^{-1}$ is at most $m(t)$,
because the pair of \eqref{m.minpair} lies in that window, and it is at least $m(t)$, because
the open window is contained in the closed one. It therefore equals $m(t)$, and consequently
\begin{equation}\label{D.equals}
D(t)=m(t)^{-2}\qquad\text{for }t\in[t_{0},\bar t].
\end{equation}

\emph{$m$ is Lipschitz.} For every admissible pair the function
$t\mapsto|z(t,\alpha)-z(t,\beta)|$ is Lipschitz with constant $2\|\partial_{t}z\|_{L^{\infty}}
=2\|v_{2}\|_{L^{\infty}}\leqslant 2(A+\|u\|_{L^{\infty}})$, because $\partial_{t}z=v_{2}$ by
the no-slip condition and $v_{2}=v_{1}-u\tau$ by \eqref{curl.jump}, so that
$\|v_{2}\|_{L^{\infty}}\leqslant\|v_{1}\|_{L^{\infty}}+\|u\|_{L^{\infty}}\leqslant A+A C_{1}$
by \eqref{e9} and \eqref{e19}, the last step using $u=\omega_{\Gamma}/a$ and
$a\geqslant A^{-1}$. The bound on $\|\partial_{t}z\|_{L^{\infty}}$ needed here is
therefore not part of the a priori information of Definition \ref{adm}, but of the derived
bounds of the interface-vorticity section. A pointwise infimum of a family of functions sharing a Lipschitz
constant is Lipschitz with the same constant; hence $m$ is Lipschitz on $[0,T]$, therefore
absolutely continuous and, by Rademacher's theorem, differentiable at almost every point of
$[0,T]$.

\emph{The derivative at a point of differentiability.} Let $t\in[t_{0},\bar t]$ be a point at
which $m$ is differentiable, and let $(\alpha_{1},\alpha_{2})$ be the pair of
\eqref{m.minpair}: it minimizes the distance at the time $t$ and it is a conjugate pair.
We claim that
\[
m'(t)=\partial_{\tau}\Big[|z(\tau,\alpha_{1})-z(\tau,\alpha_{2})|\Big]_{\tau=t}
=\frac{z(t,\alpha_{1})-z(t,\alpha_{2})}{|z(t,\alpha_{1})-z(t,\alpha_{2})|}\cdot
\big(\partial_{t}z(t,\alpha_{1})-\partial_{t}z(t,\alpha_{2})\big).
\]

To prove this, set $f(\tau,\alpha,\beta):=|z(\tau,\alpha)-z(\tau,\beta)|$ and let $A(t')$
denote the set of pairs at which the infimum defining $m(t')$ is attained. Since
$f(\tau,\alpha,\beta)\geqslant |\alpha-\beta|-2A$, every minimizing pair satisfies
$|\alpha-\beta|\leqslant m(t')+2A$; and since the periodicity condition of Definition \ref{adm} gives
$z(\tau,\alpha+2\pi k)=z(\tau,\alpha)+(2\pi k,0)$ for every $k\in\mathbb{Z}$, both $f$ and
$\partial_{\tau}f$ are invariant under $(\alpha,\beta)\mapsto(\alpha+2\pi k,\beta+2\pi k)$,
so that we may normalize minimizing pairs by $0\leqslant \alpha\leqslant2\pi$, after which $A(t')$ is
contained in a compact box (and hence, being closed, is compact). Let $p$ be a minimizing pair at time
$t$ and let $p'$ be a minimizing pair at time $t'$. From $m(t')\leqslant f(t',p)$,
$m(t)=f(t,p)$, $m(t')=f(t',p')$ and $m(t)\leqslant f(t,p')$ we obtain
\[
\begin{split}
(t'-t)\,\partial_{\tau}f(t,p')+o(t'-t)
&=f(t',p')-f(t,p')\\
&\leqslant m(t')-m(t)\\
&\leqslant f(t',p)-f(t,p)
=(t'-t)\,\partial_{\tau}f(t,p)+o(t'-t),
\end{split}
\]
where the $o(t'-t)$ remainder is uniform for $p,p'$ in the compact range described above. We
let $t'\to t$ twice, once from above and once from below. Both times we may pass to a
subsequence for which $p'$ converges to some $\bar p$; since $m$ is continuous and
$m(t')=f(t',p')$, taking limits gives $m(t)=f(t,\bar p)$, i.e.,\ $\bar p\in A(t)$. 

\emph{Right limit $t'\downarrow t$.} Dividing the relation by $t'-t>0$ and letting
$t'\downarrow t$ we obtain
$\partial_{\tau}f(t,\bar p)\leqslant m'(t)\leqslant\partial_{\tau}f(t,p)$; since this holds for
{\em every} $p\in A(t)$ (the pair $p$ was arbitrary), we get
\begin{equation}\label{e.env.right}
m'(t)\leqslant\inf_{p\in A(t)}\partial_{\tau}f(t,p).
\end{equation}

\emph{Left limit $t'\uparrow t$.} Now $t'-t<0$, so dividing the relation by $t'-t$ reverses
both inequalities:
$\partial_{\tau}f(t,\bar p)\geqslant\frac{m(t')-m(t)}{t'-t}\geqslant\partial_{\tau}f(t,p)$.
Letting $t'\uparrow t$ and using again that $p\in A(t)$ was arbitrary gives
\begin{equation}\label{e.env.left}
m'(t)\geqslant\sup_{p\in A(t)}\partial_{\tau}f(t,p).
\end{equation}
Combining \eqref{e.env.right} and \eqref{e.env.left} we obtain
\[
\sup_{p\in A(t)}\partial_{\tau}f(t,p)\leqslant m'(t)\leqslant\inf_{p\in A(t)}\partial_{\tau}f(t,p),
\]
so that all the numbers $\partial_{\tau}f(t,p)$, $p\in A(t)$, are equal to one another and to
$m'(t)$ (this is the envelope theorem for a minimum over a compact set). Applying this to the pair of \eqref{m.minpair}
proves the claim. Since that
pair is a conjugate pair and since
$\tilde d:=|z(t,\alpha_{1})-z(t,\alpha_{2})|=m(t)\approx\inf_{\alpha,\beta}F(z)(t,\alpha,\beta)
\leqslant\varepsilon_{4}/2$ by \eqref{e22} and \eqref{t0.value}, Proposition
\ref{chord-arc prop} applies at the time $t$ with this pair and with $d=\tilde d$. Steps 1--3, run at the time $t$
instead of $t_{2}$, therefore give
\[
|m'(t)|=\big|(V_{1}(0)-V_{2}(0))\cdot e_{2}\big|\lesssim\tilde d\big(|\log\tilde d|+A\big),\qquad \tilde d=m(t),
\]
where the additional summand $A\tilde d$ is the contribution of the harmonic drift $w$ of
\eqref{e9.norm} (see Step 1); it is of smaller order than $\tilde d|\log\tilde d|$ as soon as
$\tilde d\leqslant e^{-A}$, and in general it is absorbed in the Gronwall argument of Step 5 once
$\log D(t)\geqslant\log(\varepsilon_{1}^{2}\varepsilon_{4}^{-2})\geqslant A$; all constants
occurring here depend only on $A$, on $T$ and on the a
priori bounds \eqref{e8}--\eqref{e11.bis}, and hence neither on $t$ nor on the pair.

Before turning this into a differential inequality we record that the logarithm occurring in it is
taken of a quantity which is at least $1$, so that $|\log m|=\frac12\log D$ below. By
\eqref{t0.value} we have $\inf_{\alpha,\beta}F(z)(t,\alpha,\beta)\leqslant\varepsilon_{4}/2$ for
every $t\in[t_{0},\bar t]$, so there is a pair $(\alpha,\beta)$ with
$F(z)(t,\alpha,\beta)\leqslant\varepsilon_{4}$. By \eqref{e20} such a pair cannot satisfy
$|\alpha-\beta|\leqslant\varepsilon_{1}$, where $F(z)\geqslant\varepsilon_{1}$; and the periodicity
estimate proved above, $F(z)(t,\alpha,\beta)\geqslant1-2A|\alpha-\beta|^{-1}$, forces
$|\alpha-\beta|<4A$, since otherwise $F(z)\geqslant\frac12>\varepsilon_{4}$. As
$4A\leqslant\varepsilon_{1}^{-1}$ by \eqref{eps1.list}, the pair lies in the window of
\eqref{D.def}, so that
\[
m(t)\leqslant|z(t,\alpha)-z(t,\beta)|=F(z)(t,\alpha,\beta)\,|\alpha-\beta|
\leqslant\varepsilon_{4}\cdot4A\leqslant\frac{\varepsilon_{4}}{\varepsilon_{1}}\leqslant1
\]
once $\varepsilon_{4}\leqslant\varepsilon_{1}$, an upper bound which we impose from now on. Hence
$m(t)\leqslant1$ and $D(t)=m(t)^{-2}\geqslant1$ for every $t\in[t_{0},\bar t]$. This lower bound
uses only \eqref{t0.value}, \eqref{e20} and the periodicity estimate, and no consequence of
\eqref{D.gronwall}; Step 5 below sharpens it to
$D(t)\geqslant\varepsilon_{1}^{2}\varepsilon_{4}^{-2}$.

Hence, for almost every $t\in[t_{0},\bar t]$, using $D=m^{-2}$ and $|\log m|=\frac12\log D$,
\begin{equation}\label{D.gronwall.a.e}
|D'(t)|=\frac{2|m'(t)|}{m(t)^{3}}\lesssim\frac{m(t)\big(|\log m(t)|+A\big)}{m(t)^{3}}
=\frac{|\log m(t)|+A}{m(t)^{2}}\lesssim D(t)\big(\log D(t)+A\big).
\end{equation}
Since $D=m^{-2}$ is the composition of the Lipschitz function $m$ with a smooth function on
$[m_{0},\infty)$, it is absolutely continuous on $[t_{0},\bar t]$, so \eqref{D.gronwall.a.e} may be
integrated: for all $t_{0}\leqslant r\leqslant s\leqslant\bar t$,
\[
D(s)\leqslant D(r)+C\int_{r}^{s}D(t')\big(\log D(t')+A\big)\,dt',
\]
which is \eqref{D.gronwall} with the additive $A$ shown. That summand is absorbed in
Step 5 below, where the lower bound $D(t)\geqslant\varepsilon_{1}^{2}\varepsilon_{4}^{-2}
\geqslant e^{A}$ is established; that bound is obtained from \eqref{t0.value}, \eqref{e20} and the
periodicity estimate alone, so the absorption does not circularize the argument. Only the
differential form \eqref{D.gronwall.a.e} is used in what follows.

\textit{Step 5: conclusion.}

By \eqref{t0.value}, $\inf_{\alpha,\beta}F(z)(t_{0},\alpha,\beta)=\varepsilon_{4}/2$. By
Lemma \ref{chord}(2), applied with $\vartheta=\varepsilon_{4}/2\leqslant \varepsilon_{2}$, the
shortest admissible chord at time $t_{0}$ has length
$d_{0}\approx\varepsilon_{4}/2$; consequently
\begin{equation}\label{D.t0}
D(t_{0})=\frac{1}{d_{0}^{2}}\approx4\,\varepsilon_{4}^{-2}\approx\varepsilon_{4}^{-2}.
\end{equation}
The differential form of \eqref{D.gronwall} is \eqref{D.gronwall.a.e}, valid for almost every
$t\in[t_{0},\bar{t}]$ with $D$ absolutely continuous. The argument given before
\eqref{D.gronwall.a.e} yields in fact
$D(t)=m(t)^{-2}\geqslant \varepsilon_{1}^{2}\varepsilon_{4}^{-2}\gg1$ after shrinking
$\varepsilon_{4}$ (which is allowed because every constraint imposed on $\varepsilon_{4}$ so far is
an upper bound, namely $\varepsilon_{4}\leqslant \varepsilon_{2}$,
$\varepsilon_{4}\leqslant \varepsilon_{5}^{100}$, $\varepsilon_{4}\leqslant 2/A$,
$\varepsilon_{4}<\varepsilon_{1}$, $\varepsilon_{4}\leqslant e^{-1}$ (used in Step 3) and the
smallness conditions of Appendix~\ref{sec:singular}, while no lower bound is needed anywhere); in particular,
shrinking $\varepsilon_{4}$ further, we may and will assume that
$\log D(t)\geqslant\log(\varepsilon_{1}^{2}\varepsilon_{4}^{-2})\geqslant A$ for every
$t\in[t_{0},\bar t]$, which absorbs the additive summand $A$ of \eqref{D.gronwall.a.e}, and hence
of the integral form \eqref{D.gronwall}, into $C\log D(t)$. Thus
$y(t):=\log D(t)$ is absolutely continuous and satisfies
$y'(t)=\frac{D'(t)}{D(t)}\leqslant C\log D(t)=C\,y(t)$ for almost every $t$, so Gronwall's
inequality gives $y(\bar{t})\leqslant y(t_{0})e^{C(\bar{t}-t_{0})}\leqslant y(t_{0})e^{CT}$, and therefore
\[
D(\bar{t})\leqslant \big(D(t_{0})\big)^{e^{C_{0}T}}\leqslant C_{3}\,\varepsilon_{4}^{-2e^{C_{0}T}},
\]
for constants $C_{0},C_{3}$ depending only on $A,T$ and $R_{0}$, where we used
\eqref{D.t0}.
It follows that
\[
\inf_{\alpha,\beta\in\mathbb{R}}F(z)(\bar{t},\alpha,\beta)\gtrsim\varepsilon_{1}\,D(\bar{t})^{-1/2}
\gtrsim C_{3}^{-1/2}\,\varepsilon_{1}\,\varepsilon_{4}^{\,e^{C_{0}T}}.
\]
For pairs in the window \eqref{D.def} we have $|z(\bar t,\alpha)-z(\bar t,\beta)|\geqslant D(\bar t)^{-1/2}$
and $|\alpha-\beta|\leqslant \varepsilon_{1}^{-1}$, hence $F\geqslant \varepsilon_{1}D(\bar t)^{-1/2}$; pairs with
$|\alpha-\beta|\leqslant \varepsilon_{1}$ satisfy $F\geqslant \varepsilon_{1}$ by \eqref{e20}, while pairs with
$|\alpha-\beta|\geqslant \varepsilon_{1}^{-1}$ satisfy $F\geqslant 1-2A\varepsilon_{1}\geqslant \frac12$ by the
periodicity estimate, and both bounds dominate $\varepsilon_{1}D(\bar t)^{-1/2}$ because
$D(\bar t)\geqslant 1$ and $\varepsilon_{1}\leqslant \frac{1}{2A^{2}}\leqslant \frac12$.
Choosing $K$ so large that $K>C_{3}^{1/2}\varepsilon_{1}^{-1}\varepsilon_{4}^{\,1-e^{C_{0}T}}$, a constant
depending only on $A$, $T$ and $R_{0}$, we obtain
$\inf_{\alpha,\beta}F(z)(\bar{t},\alpha,\beta)>\varepsilon_{4}/K$, which contradicts
\eqref{tbar.range} and hence the assumption \eqref{hyp}. Therefore \eqref{hyp} is false, that is,
\[
\inf_{(t,\alpha,\beta)\in[0,T]\times\mathbb{R}^{2}}F(z)(t,\alpha,\beta)\geqslant \frac{\varepsilon_{4}}{K}=:c>0,
\]
which is \eqref{e13}. The constant $c$ depends only on $T$, $A$ and $R_{0}$, with $A$ the
constant of Definition \ref{adm} after the enlargement of Remark \ref{rm:normalize}. With $K$
chosen minimally in the preceding relation, the value of $c$ is
\[
c\ \approx\ \varepsilon_{1}\,C_{3}^{-1/2}\,\varepsilon_{4}^{\,e^{C_{0}T}},
\]
Thus $c$ is the power $\varepsilon_{4}^{\,e^{C_{0}T}}$ of the threshold, with an exponent exponential
in $T$. The upper bounds on $\varepsilon_{4}$ involve $\varepsilon_{3}$,
$\varepsilon_{5}$ and the smallness conditions of Appendix~\ref{sec:singular}, all of which depend
on $T$; the threshold therefore decreases with $T$, and Theorem \ref{thm} asserts the positivity of
$c$ for the resulting threshold. The table below records the order in which the constants are fixed
and the constraints they satisfy.

\begin{center}
\begin{tabular}{lll>{\raggedright\arraybackslash}p{6.1cm}}
\hline
constant & depends on & fixed in & constraint \\
\hline
$\varepsilon_{1},\varepsilon_{2}$ & $A$ & Lemma \ref{chord} & --- \\
$\varepsilon_{3}$ & $A,T,\varepsilon_{1}$ & Lemma \ref{chord-arc} & --- \\
$\varepsilon_{5}$ & $A,T,\varepsilon_{1},\varepsilon_{3}$ & Proposition \ref{chord-arc prop} & \eqref{eps45.list} \\
$\varepsilon_{4}$ & $A,T$ & Proposition \ref{chord-arc prop} & upper bounds only \\
$C_{0},C_{3}$ & $A,T,R_{0}$ & Step 4, \eqref{D.gronwall.a.e} & --- \\
$K$ & $A,T,R_{0}$ & after \eqref{D.gronwall.a.e} & $K>C_{3}^{1/2}\varepsilon_{1}^{-1}\varepsilon_{4}^{\,1-e^{C_{0}T}}$ \\
$c$ & $A,T,R_{0}$ & \eqref{e13} & $c=\varepsilon_{4}/K$ \\
\hline
\end{tabular}
\end{center}

No constant depends on the particular solution beyond $A$, $T$ and $R_{0}$; only
$\varepsilon_{4}$ may still be decreased.
\end{proof}

\appendix
\section{Singular-integral estimates}\label{sec:singular}
In this section we collect the singular-integral lemmas used in Step 2 of the proof of
Theorem \ref{thm}; they are the analogs of the estimates of
\cite{CfAiVl} in the two-fluid Euler setting, adapted to the presence of the bulk vorticity
of the viscous MHD fluid as in \cite{AB}.

The objects of the section are two $C^{4}$ functions $f,g$ subject to the
hypotheses below, together with the operators $T_{1},T_{2},T_{3},T_{4},H,M$ introduced in the
course of it; no quantity belonging to the fluid problem occurs. Proposition \ref{prop:key} and
Lemma \ref{lem:elem} are used in the proof of Theorem \ref{thm}, Lemmas \ref{lem:key} and
\ref{lem:comm} only in the proof of Proposition \ref{prop:key}.

Let $f,g:(-\delta,\delta)\to\mathbb{R}$ be $C^{4}$ functions satisfying 
\[
\begin{aligned}
&\|f\|_{C^{3}}+\|g\|_{C^{3}}\leqslant  A, \qquad f(0)=f'(0)=g(0)=g'(0)=0, \\
&f(\alpha)+g(\alpha)\geqslant -\epsilon\quad\text{for }|\alpha|<4\sqrt{\epsilon},\qquad
\big|f(\alpha)+g(\beta)\big|\leqslant A\epsilon
\quad\text{if} \ \alpha,\beta\in(-4\sqrt{\epsilon},4\sqrt{\epsilon}).
\end{aligned}
\]
Here $\delta>0$, and throughout the section $\delta\geqslant4\sqrt{\epsilon}$. The implied constants below
depend only on $A$, and on $\delta$ where this is indicated explicitly; they are uniform in
$\epsilon\in(0,\varepsilon_{5})$. We write $C_{A}$ for a
constant depending only on $A$ whose value may differ at each occurrence, and $C_{\psi}$ for a
constant depending only on the cutoff $\psi$.

The second line is used below only through the following three
consequences. First, since $f(0)=f'(0)=g(0)=g'(0)=0$ and
$\|f''\|_{L^{\infty}}+\|g''\|_{L^{\infty}}\leqslant A$, Taylor's theorem gives
$|f(\alpha)|+|g(\alpha)|\leqslant A\alpha^{2}$; together with the diagonal hypothesis
$f+g\geqslant -\epsilon$ this yields, for $|\alpha|\leqslant\sqrt{\epsilon}$,
\begin{equation}\label{e:N.range.0}
\epsilon\leqslant N(\alpha):=2\epsilon+f(\alpha)+g(\alpha)\leqslant(2+A)\epsilon,
\end{equation}
and, on the whole window $|\alpha|\leqslant4\sqrt{\epsilon}$, where $A\alpha^{2}\leqslant16A\epsilon$,
\begin{equation}\label{e:N.range.1}
\epsilon\leqslant N(\alpha)\leqslant(2+16A)\epsilon.
\end{equation}
The sharper bound \eqref{e:N.range.0} is used whenever $|\alpha|\leqslant\sqrt{\epsilon}$, in
particular for $\alpha$ in the interval $I$ of Lemma \ref{lem:key}; \eqref{e:N.range.1} is used
only when the wider range $|\alpha|\leqslant2\sqrt{\epsilon}$ is needed. Second, for
all $\alpha,\beta\in(-4\sqrt{\epsilon},4\sqrt{\epsilon})$,
\begin{equation}\label{e:a.range}
\big|a(\alpha,\beta)\big|\leqslant(2+A)\epsilon,
\qquad a(\alpha,\beta):=2\epsilon+f(\alpha)+g(\beta).
\end{equation}
Third, if $|\alpha-\beta|\leqslant\epsilon$ then, by
$|f(\alpha)-f(\beta)|\leqslant A\max\{|\alpha|,|\beta|\}\,|\alpha-\beta|\leqslant
4A\epsilon^{3/2}$ and by the diagonal lower bound $N(\beta)\geqslant\epsilon$,
\begin{equation}\label{e:a.low}
a(\alpha,\beta)=N(\beta)+f(\alpha)-f(\beta)\geqslant\epsilon-4A\epsilon^{3/2}
\geqslant\tfrac{\epsilon}{2}
\qquad\text{whenever }|\alpha-\beta|\leqslant\epsilon,
\end{equation}
the last inequality using the smallness condition $\epsilon\leqslant(64A)^{-2}$, which
we impose once and for all; the other smallness conditions appearing below are written
out where they occur.
In particular the kernel $a/((\alpha-\beta)^{2}+a^{2})$ is comparable to the Poisson kernel of
width $\epsilon$ on the diagonal $|\alpha-\beta|\leqslant\epsilon$, while for
$|\alpha-\beta|\geqslant\epsilon$ it is dominated by $1/|\alpha-\beta|$; these are the only two
properties of it that occur in the proofs below.

Assume that $\psi:\mathbb{R}\to[0,1]$ is a smooth, even function supported in the interval $[-1,1]$ and equal to 1 in the interval $[-\frac{1}{2},\frac{1}{2}]$, and let $\psi_{r}(\xi):=\psi(\xi/r)$, consistently with \eqref{cutoff}. The operator formulas below are interpreted only on the local cutoff windows used in the lemmas and proposition. In all applications, $|\alpha|\leqslant2\sqrt{\epsilon}$; the factors $\psi_{\sqrt{\epsilon}}(\alpha-\beta)$ then imply $|\beta|\leqslant3\sqrt{\epsilon}<\delta$, while the kernels of $H$ and $M$ additionally have $|\alpha|\leqslant s$ and $|\alpha-\beta|\leqslant s$, where $s=\sqrt{\epsilon}/2^{6}$. Thus the profiles and input functions are sampled only where they are defined. Given $w\in L^{2}((-\delta,\delta))$, we define
\[
\begin{aligned}
T_{1,f}w(\alpha)&:=\frac{1}{2\pi}p.v.\int_{\mathbb{R}}\frac{f(\alpha)-f(\beta)}{(\alpha-\beta)^{2}+(f(\alpha)-f(\beta))^{2}}w(\beta)\psi_{\sqrt{\epsilon}}(\alpha-\beta)d\beta, \\
T_{2,f}w(\alpha)&=\frac{1}{2\pi}p.v.\int_{\mathbb{R}}\frac{\alpha-\beta}{(\alpha-\beta)^{2}+(f(\alpha)-f(\beta))^{2}}w(\beta)\psi_{\sqrt{\epsilon}}(\alpha-\beta)d\beta, \\
T_{3,(f,g)}w(\alpha)&=\frac{1}{2\pi}p.v.\int_{\mathbb{R}}\frac{2\epsilon+f(\alpha)+g(\beta)}{(\alpha-\beta)^{2}+(2\epsilon+f(\alpha)+g(\beta))^{2}}w(\beta)\psi_{\sqrt{\epsilon}}(\alpha-\beta)d\beta, \\
T_{4,(f,g)}w(\alpha)&=\frac{1}{2\pi}p.v.\int_{\mathbb{R}}\frac{\alpha-\beta}{(\alpha-\beta)^{2}+(2\epsilon+f(\alpha)+g(\beta))^{2}}w(\beta)\psi_{\sqrt{\epsilon}}(\alpha-\beta)d\beta, 
\end{aligned}
\]
and
\[
\begin{aligned}
Hw(\alpha)&:=\frac{1}{2\pi}p.v.\int_{\mathbb{R}}\frac{w(\beta)}{\alpha-\beta}\frac{(2\epsilon+f(\alpha)+g(\alpha))^{2}}{(\alpha-\beta)^{2}+(2\epsilon+f(\alpha)+g(\alpha))^{2}}\psi_{\frac{\sqrt{\epsilon}}{2^{6}}}(\alpha)\psi_{\frac{\sqrt{\epsilon}}{2^{6}}}(\alpha-\beta)d\beta, \\
Mw(\alpha)&:=\frac{1}{2\pi}\int_{\mathbb{R}}w(\beta)\frac{2\epsilon+f(\alpha)+g(\alpha)}{(\alpha-\beta)^{2}+(2\epsilon+f(\alpha)+g(\alpha))^{2}}\psi_{\frac{\sqrt{\epsilon}}{2^{6}}}(\alpha)\psi_{\frac{\sqrt{\epsilon}}{2^{6}}}(\alpha-\beta)d\beta.
\end{aligned}
\]
The operators $T_{3,(g,f)}$ and $T_{4,(g,f)}$ are defined by the very same formulas with the roles of
$f$ and $g$ interchanged, i.e.,\ with the quantity $2\epsilon+f(\alpha)+g(\beta)$ replaced by
$2\epsilon+g(\alpha)+f(\beta)$ in the numerator of $T_{3}$ and in the denominator of $T_{4}$; no
separate notation is needed for $T_{1}$ and $T_{2}$, which involve only one profile. In the
application in the proof of the main theorem the profile $f$ describes the upper interface and $g$ the
lower one, so that $2\epsilon+f(\alpha)+g(\beta)=f_{2}(\alpha)-f_{1}(\beta)$ and
$2\epsilon+g(\alpha)+f(\beta)=f_{2}(\beta)-f_{1}(\alpha)$; thus $T_{3,(f,g)}$ pairs the upper
profile at $\alpha$ with the lower one at $\beta$, while $T_{3,(g,f)}$ pairs the lower one at
$\alpha$ with the upper one at $\beta$. It is the latter that occurs in \eqref{Ue1.pm} and in
Proposition \ref{prop:key}, and the two operators are transposes of one another, the kernel of
one being obtained from that of the other by the substitution $(\alpha,\beta)\mapsto(\beta,\alpha)$.
We define the function space:
\begin{definition}Let
\[
\mathcal{C}:=\{\phi\in C_{0}^{1}((-\delta,\delta)):|\phi^{(k)}(\alpha)|\leqslant \frac{\epsilon}{\epsilon^{2+k}+|\alpha|^{2+k}} \text{for any}\ \alpha\in(-\delta,\delta)\ \text{and}\ k\in\{0,1\}\},
\]
and for any $h\in L^{2}((-\delta,\delta))$, let
\[
\|h\|_{Z}:=\sup_{\phi\in\mathcal{C}}|\int_{\mathbb{R}}h(\alpha)H^{*}\phi(\alpha)d\alpha|,
\]
where $H^{*}$ is the dual of $H$.
\end{definition}
\begin{lemma}\label{lem:elem}
$(1)$ For any $h\in C^{1}((-\frac{\sqrt{\epsilon}}{8},\frac{\sqrt{\epsilon}}{8}))$ we have
\[
\|h\|_{Z}\lesssim\epsilon\|h\|_{C^{1}((-\frac{\sqrt{\epsilon}}{8},\frac{\sqrt{\epsilon}}{8}))},
\]
while for any $h\in L^{2}((-\delta,\delta))$ we have
\[
\|h\|_{Z}\lesssim\int_{(-\frac{\sqrt{\epsilon}}{8},\frac{\sqrt{\epsilon}}{8})}|h(\beta)|\frac{\epsilon^{2}}{\epsilon^{3}+|\beta|^{3}}d\beta.
\]
$(2)$ There is a constant $C_{M}=C_{M}(A,\delta)$ such that 
\[
C_{M}^{-1}M^{*}\phi\in\mathcal{C} \quad \text{for any}\ \phi\in\mathcal{C}.
\]
Also, for any $h\in L^{\infty}((-\sqrt{\epsilon},\sqrt{\epsilon}))$,
\[
\|Mh\|_{L^{\infty}((-\delta,\delta))}\leqslant \|h\|_{L^{\infty}((-\sqrt{\epsilon},\sqrt{\epsilon}))}.
\]
\end{lemma}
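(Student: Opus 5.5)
The plan is to derive everything from explicit pointwise bounds on the kernels of $H$, $H^{*}$ and $M^{*}$. The only inputs are the range \eqref{e:N.range.0} for $N(\alpha)=2\epsilon+f(\alpha)+g(\alpha)$, namely $N\approx\epsilon$ on the support of $\psi_{s}(\alpha)$ with $s=\sqrt{\epsilon}/2^{6}$, and the derivative bound $|N'(\alpha)|\leqslant A|\alpha|\lesssim\sqrt{\epsilon}$ there. The cutoffs $\psi_{s}(\alpha)\psi_{s}(\alpha-\beta)$ force $|\alpha|\leqslant s$ and $|\beta|\leqslant 2s<\sqrt{\epsilon}/8$. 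This support property is what lets part (1) see $h$ only on $(-\frac{\sqrt\epsilon}{8},\frac{\sqrt\epsilon}{8})$.

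\emph{Part (1), first bound.} I write $\int hH^{*}\phi=\int (Hh)\phi$ and show that $|Hh(\alpha)|\lesssim\epsilon\|h\|_{C^{1}}$. For fixed $\alpha$ the kernel $\frac{1}{\alpha-\beta}\frac{N(\alpha)^{2}}{(\alpha-\beta)^{2}+N(\alpha)^{2}}\psi_{s}(\alpha-\beta)$ is odd in $\alpha-\beta$, because $\psi$ is even. Hence splitting $h(\beta)=h(\alpha)+(h(\beta)-h(\alpha))$, the constant part has vanishing principal value. The remainder is bounded by
\[
\|h'\|_{L^\infty}\int\frac{N^{2}}{x^{2}+N^{2}}\,dx=\pi N\|h'\|_{L^\infty}\lesssim\epsilon\|h'\|_{L^\infty}.
\]
Since $\int|\phi|\leqslant\int\frac{\epsilon}{\epsilon^{2}+\alpha^{2}}\,d\alpha\leqslant\pi$, the bound follows.

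\emph{Part (1), second bound.} This reduces to the pointwise estimate $|H^{*}\phi(\beta)|\lesssim\frac{\epsilon^{2}}{\epsilon^{3}+|\beta|^{3}}$ for $\phi\in\mathcal{C}$, supported in $|\beta|\leqslant 2s$. Here $H^{*}\phi(\beta)=\int K(\alpha,\beta)\phi(\alpha)\,d\alpha$, and the kernel satisfies $|K|\lesssim\min\{N^{-1},|\alpha-\beta|^{-1},N^{2}|\alpha-\beta|^{-3}\}$. For $|\beta|\lesssim\epsilon$ I use $|\phi|\leqslant\epsilon^{-1}$ and $\int|K|\,d\alpha\lesssim1$. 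For $|\beta|\gg\epsilon$ I split the $\alpha$-integral at $|\alpha-\beta|=|\beta|/2$.
\begin{itemize}
\item On the far part, $|K|\lesssim\epsilon^{2}/|\beta|^{3}$, and $\int|\phi|\lesssim1$ gives the bound.
\item On the near part, $K$ is odd in $\alpha-\beta$ up to the $\alpha$-dependence of $N(\alpha)^{2}\psi_{s}(\alpha)\phi(\alpha)$. So I subtract that factor's value at $\alpha=\beta$ and use its Lipschitz bound $\lesssim\epsilon/|\beta|^{3}$, which comes from $|\phi'|\leqslant\epsilon/|\alpha|^{3}$, from $|N'|/N\lesssim\sqrt\epsilon/\epsilon$ combined with $|\phi|$, and from $|\psi_{s}'|\lesssim s^{-1}$. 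The integrand is then controlled by $\frac{N^{2}}{x^{2}+N^{2}}$, whose integral is $\lesssim\epsilon$. This yields $\epsilon^{2}/|\beta|^{3}$.
\item The residual term, in which the cutoff $\psi_{s}(\alpha-\beta)$ is truncated asymmetrically, is harmless, since there $|\alpha-\beta|\approx s\gtrsim|\beta|$.
\end{itemize}

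\emph{Part (2).} The $L^{\infty}$ bound is immediate. $Mh(\alpha)$ integrates $h$ against $\frac{1}{2\pi}$ times a Poisson kernel of width $N(\alpha)$, multiplied by cutoffs in $[0,1]$. Its total mass is at most $\frac{1}{2\pi}\cdot\pi\leqslant1$, and the support lies in $|\beta|\leqslant2s<\sqrt\epsilon$.

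For $M^{*}\phi(\beta)=\frac{1}{2\pi}\int\phi(\alpha)\frac{N(\alpha)\psi_{s}(\alpha)\psi_{s}(\alpha-\beta)}{(\alpha-\beta)^{2}+N(\alpha)^{2}}\,d\alpha$, compact support and $C^{1}$ regularity are clear. The size bound $\lesssim\frac{\epsilon}{\epsilon^{2}+\beta^{2}}$ follows by convolving two profiles of width $\sim\epsilon$: I split at $|\alpha|\lessgtr|\beta|/2$, and in the first region use the Poisson decay $\epsilon/\beta^{2}$, in the second the bound $\phi\lesssim\epsilon/\beta^{2}$ together with mass $\leqslant\pi$.

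The main obstacle is the derivative bound $|\partial_{\beta}M^{*}\phi|\lesssim\frac{\epsilon}{\epsilon^{3}+|\beta|^{3}}$. Differentiating the kernel in $\beta$ directly gives only $\epsilon/(|\beta|^{2}\epsilon)$ near the diagonal, which is too weak. My plan is to write $\partial_{\beta}=-\partial_{\alpha}+(\partial_{\alpha}+\partial_{\beta})$ on the kernel and integrate by parts in $\alpha$. The derivative then lands on $\phi$, which is admissible since $|\phi'|\leqslant\epsilon/(\epsilon^{3}+|\alpha|^{3})$, and on $N(\alpha)$ and $\psi_{s}(\alpha)$, where the derivatives $N'$ and $\psi_{s}'$ are lower order. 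The resulting terms are estimated by the same near/far splitting as above, and each comes out $\lesssim\epsilon/(\epsilon^{3}+|\beta|^{3})$ with a constant depending on $A$ and $\delta$. That constant is $C_{M}$.
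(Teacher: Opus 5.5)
Your first bound in part (1), the $L^{\infty}$ bound for $M$, and the size bound for $M^{*}\phi$ are fine and agree with the paper. Your near/far treatment of $H^{*}\phi(\beta)$ for $|\beta|\gg\epsilon$ is essentially the paper's $J_{1},J_{2}$ argument. Two steps fail as written, however.

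\emph{The case $|\beta|\lesssim\epsilon$ in part (1).} Up to $\frac{1}{2\pi}$ and cutoffs, the kernel of $H^{*}$ is $\frac{1}{x}\frac{N^{2}}{x^{2}+N^{2}}=\frac1x-\frac{x}{x^{2}+N^{2}}$ with $x=\alpha-\beta$, i.e.\ the Hilbert kernel minus the conjugate Poisson kernel. It behaves like $1/x$ at $x=0$. So $|K|\lesssim N^{-1}$ is false, $\int|K|\,d\alpha$ diverges logarithmically, and your argument for $|\beta|\lesssim\epsilon$ does not go through. You need the odd-kernel cancellation there as well.
\begin{itemize}
\item On $|x|\le\epsilon$, subtract the value at $\alpha=\beta$ and use $|\phi'|\le\epsilon^{-2}$.
\item On $|x|\ge\epsilon$, use $|K|\lesssim\epsilon^{2}|x|^{-3}$ and $|\phi|\le\epsilon^{-1}$.
\end{itemize}
Both pieces give $\epsilon^{-1}\approx\epsilon^{2}/(\epsilon^{3}+|\beta|^{3})$. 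The paper does this uniformly in $\beta$ by cancelling on $|x|\le\kappa=\frac{|\beta|+\epsilon}{10}$.

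\emph{The derivative bound for $M^{*}\phi$.} This is the more serious problem: your global integration by parts does not close. It produces $\int\phi'(\alpha)k(\alpha,\beta)\,d\alpha$, with $k$ the kernel of $M^{*}$. For $10\epsilon\le|\beta|\le2s$, the region $|\alpha|<|\beta|/2$ can only be estimated by $\sup k\cdot\int|\phi'|\approx\frac{\epsilon}{\beta^{2}}\cdot\frac1\epsilon=\beta^{-2}$. This size is genuinely attained by $\int|\phi'|k$ for $\phi=(C\epsilon)^{-1}\psi_{\epsilon}\in\mathcal{C}$ (say for $10\epsilon\le|\beta|\le s/4$). It exceeds the target $\epsilon|\beta|^{-3}$ by the factor $|\beta|/\epsilon$, which is of order $\epsilon^{-1/2}$ when $|\beta|\approx s$. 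Moving the derivative onto $\phi$ thus loses near the origin exactly what differentiating the kernel loses near the diagonal. The signed integral has the right size there only because of the cancellation in $\int\phi'$, i.e.\ by undoing the integration by parts.

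The fix, which is the paper's Case 2, is to localize before differentiating, using the cutoff $\psi(10(\alpha-\beta)/\beta)$.
\begin{itemize}
\item On the far piece $|\alpha-\beta|\ge|\beta|/20$, differentiate the kernel directly. There $\frac{2|\alpha-\beta|N}{((\alpha-\beta)^{2}+N^{2})^{2}}\lesssim\epsilon|\beta|^{-3}$ and $\int|\phi|\le\pi$. Derivatives falling on $\psi_{s}$ cost only $s^{-1}\lesssim\epsilon^{-1/2}$, which is admissible because $|\beta|\le2s$.
\item On the near piece $|\alpha-\beta|\le|\beta|/10$, put the derivative on $\phi$ (equivalently, change variables $\alpha=\beta+\sigma$). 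Since $|\alpha|\ge\frac{9}{10}|\beta|$ there, $|\phi'(\alpha)|\lesssim\epsilon/(\epsilon^{3}+|\beta|^{3})$.
\item The derivative of the localizing cutoff is $O(|\beta|^{-1})$ on a set of length $O(|\beta|)$ where $|\phi|$ and $k$ are $O(\epsilon\beta^{-2})$. It contributes $\epsilon^{2}\beta^{-4}\le\epsilon|\beta|^{-3}$.
\end{itemize}
For $|\beta|\le10\epsilon$ either method works.
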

\begin{proof}
For the first inequality in $(1)$, it suffices to show 
\[
\Big|\int_{\mathbb{R}}Hh(\alpha)\cdot\phi(\alpha)d\alpha\Big|\lesssim\epsilon\|h\|_{C^{1}((-\frac{\sqrt{\epsilon}}{8},\frac{\sqrt{\epsilon}}{8}))},
\]
given $\phi\in\mathcal{C}$. By calculation
\[
\begin{aligned}
|Hh(\alpha)|&=\frac{1}{2\pi}|\int_{\mathbb{R}}\frac{h(\beta)-h(\alpha)}{\alpha-\beta}\frac{(2\epsilon+f(\alpha)+g(\alpha))^{2}}{(\alpha-\beta)^{2}+(2\epsilon+f(\alpha)+g(\alpha))^{2}}\psi_{\frac{\sqrt{\epsilon}}{2^{6}}}(\alpha)\psi_{\frac{\sqrt{\epsilon}}{2^{6}}}(\alpha-\beta)d\beta| \\
&\lesssim\|h\|_{C^{1}((-\frac{\sqrt{\epsilon}}{8},\frac{\sqrt{\epsilon}}{8}))}(2\epsilon+f(\alpha)+g(\alpha))\psi_{\frac{\sqrt{\epsilon}}{2^{6}}}(\alpha) \\
&\lesssim\epsilon\psi_{\frac{\sqrt{\epsilon}}{2^{6}}}(\alpha)\|h\|_{C^{1}((-\frac{\sqrt{\epsilon}}{8},\frac{\sqrt{\epsilon}}{8}))},
\end{aligned}
\]
then we get
\[
|\int_{\mathbb{R}}Hh(\alpha)\cdot\phi(\alpha)d\alpha|\lesssim\epsilon\|h\|_{C^{1}((-\frac{\sqrt{\epsilon}}{8},\frac{\sqrt{\epsilon}}{8}))}\int_{\mathbb{R}}\frac{\epsilon}{\epsilon^{2}+|\alpha|^{2}}d\alpha=\pi\epsilon\|h\|_{C^{1}((-\frac{\sqrt{\epsilon}}{8},\frac{\sqrt{\epsilon}}{8}))}.
\]
Similarly for the second one it suffices to prove
\[
|H^{*}\phi(\beta)|\lesssim\frac{\epsilon^{2}}{\epsilon^{3}+|\beta|^{3}}, \quad \beta\in(-\delta,\delta),
\]
for any $\phi\in\mathcal{C}$. In doing so we may assume that $|\beta|\lesssim\sqrt\epsilon$:
the integrand defining $H^{*}$ carries the two factors
$\psi_{\frac{\sqrt{\epsilon}}{2^{6}}}(\beta+\rho)$ and $\psi_{\frac{\sqrt{\epsilon}}{2^{6}}}(\rho)$, so
that $H^{*}\phi(\beta)=0$ unless
\begin{align}\label{heartsuit}
|\beta|\leqslant |\rho|+|\beta+\rho|\leqslant 2s=\frac{\sqrt\epsilon}{32}.
\end{align}
This is the point at which the restriction $|\beta|\lesssim\sqrt\epsilon$ enters the
argument below, where it is used to compare $\frac{\epsilon}{\epsilon+|\beta|}$ with
$\frac{\epsilon^{2}}{\epsilon^{3}+|\beta|^{3}}$. We write
\[
\begin{aligned}
H^{*}\phi(\beta)&=\frac{1}{2\pi}\int_{\mathbb{R}}\frac{\phi(\beta+\rho)}{\rho}\frac{(2\epsilon+f(\beta+\rho)+g(\beta+\rho))^{2}}{\rho^{2}+(2\epsilon+f(\beta+\rho)+g(\beta+\rho))^{2}}\\
&\qquad\times\psi_{\frac{\sqrt{\epsilon}}{2^{6}}}(\beta+\rho)\psi_{\frac{\sqrt{\epsilon}}{2^{6}}}(\rho)\,d\rho \\
&=\frac{1}{2\pi}\int_{\mathbb{R}}\frac{\phi(\beta+\rho)}{\rho}\frac{(2\epsilon+f(\beta+\rho)+g(\beta+\rho))^{2}}{\rho^{2}+(2\epsilon+f(\beta+\rho)+g(\beta+\rho))^{2}}\\
&\qquad\times\psi_{\frac{\sqrt{\epsilon}}{2^{6}}}(\beta+\rho)\psi_{\frac{\sqrt{\epsilon}}{2^{6}}}(\rho)\psi_{\kappa}(\rho)\,d\rho \\
&+\frac{1}{2\pi}\int_{\mathbb{R}}\frac{\phi(\beta+\rho)}{\rho}\frac{(2\epsilon+f(\beta+\rho)+g(\beta+\rho))^{2}}{\rho^{2}+(2\epsilon+f(\beta+\rho)+g(\beta+\rho))^{2}}\\
&\qquad\times\psi_{\frac{\sqrt{\epsilon}}{2^{6}}}(\beta+\rho)\psi_{\frac{\sqrt{\epsilon}}{2^{6}}}(\rho)(1-\psi_{\kappa}(\rho))\,d\rho \\
&=:H_{\leqslant \kappa}^{*}\phi(\beta)+H_{\geqslant \kappa}^{*}\phi(\beta).
\end{aligned}
\]
For the second part, we choose $\kappa=\frac{|\beta|+\epsilon}{10}$ and estimate as follows:
\[
\begin{aligned}
|H_{\geqslant \kappa}^{*}\phi(\beta)|&\lesssim\int_{\mathbb{R}}\frac{|\phi(\beta+\rho)|}{|\rho|}\frac{\epsilon^{2}}{\rho^{2}}\psi_{\frac{\sqrt{\epsilon}}{2^{6}}}(\rho)(1-\psi_{\kappa}(\rho))d\rho \\
&\lesssim\int_{\mathbb{R}}\frac{\epsilon}{\epsilon^{2}+(\beta+\rho)^{2}}\frac{\epsilon^{2}}{\kappa^{3}}d\rho \\
&\lesssim\frac{\epsilon^{2}}{\epsilon^{3}+|\beta|^{3}}.
\end{aligned}
\]
To estimate $|H_{\leqslant \kappa}^{*}\phi(\beta)|$, in the first summand below the
factor $\psi_{\frac{\sqrt{\epsilon}}{2^{6}}}(\rho)\psi_{\kappa}(\rho)$ is even and supported in
$[-\kappa,\kappa]$, so that the contribution of the constant $\phi(\beta)$, which is the only
singular part, cancels by symmetry and $\phi(\beta+\rho)$ may be replaced by
$\phi(\beta+\rho)-\phi(\beta)$:
\[
\begin{aligned}
|H_{\leqslant \kappa}^{*}\phi(\beta)|\lesssim&\int_{\mathbb{R}}\Big|\frac{\phi(\beta+\rho)-\phi(\beta)}{\rho}\Big|\frac{(2\epsilon+f(\beta)+g(\beta))^{2}}{\rho^{2}+(2\epsilon+f(\beta)+g(\beta))^{2}}\psi_{\frac{\sqrt{\epsilon}}{2^{6}}}(\beta)\psi_{\frac{\sqrt{\epsilon}}{2^{6}}}(\rho)\psi_{\kappa}(\rho)d\rho \\
&+\int_{\mathbb{R}}\frac{|\phi(\beta+\rho)|}{|\rho|}|\frac{(2\epsilon+f(\beta+\rho)+g(\beta+\rho))^{2}}{\rho^{2}+(2\epsilon+f(\beta+\rho)+g(\beta+\rho))^{2}}\psi_{\frac{\sqrt{\epsilon}}{2^{6}}}(\beta+\rho) \\
&-\frac{(2\epsilon+f(\beta)+g(\beta))^{2}}{\rho^{2}+(2\epsilon+f(\beta)+g(\beta))^{2}}\psi_{\frac{\sqrt{\epsilon}}{2^{6}}}(\beta)|d\rho \\
=:&J_{1}+J_{2}.
\end{aligned}
\]
For $J_{1}$ we use $\epsilon\leqslant X:=2\epsilon+f(\beta)+g(\beta)\leqslant C_{A}\epsilon$ (which gives
$\frac{X^{2}}{\rho^{2}+X^{2}}\leqslant C_{A}^{2}\frac{\epsilon^{2}}{\rho^{2}+\epsilon^{2}}$) together with
\begin{align}\label{spadesuit}
\frac{|\phi(\beta+\rho)-\phi(\beta)|}{|\rho|}
\leqslant \int_{0}^{1}|\phi'(\beta+t\rho)|\,dt
\leqslant \int_{0}^{1}\frac{\epsilon\,dt}{\epsilon^{3}+|\beta+t\rho|^{3}},
\end{align}
where the first inequality follows from
$\phi(\beta+\rho)-\phi(\beta)=\int_{0}^{\rho}\phi'(\beta+t)\,dt$. Since
$\psi_{\kappa}(\rho)$ vanishes for $|\rho|>\kappa=\frac{|\beta|+\epsilon}{10}$, all the numbers
$\beta+t\rho$ lie in $[|\beta|-\kappa,|\beta|+\kappa]$. If $|\beta|>2\kappa$ then
$|\beta+t\rho|\geqslant \frac{1}{2}|\beta|$ and the last integral in \eqref{spadesuit} is
$\leqslant 8\frac{\epsilon}{\epsilon^{3}+|\beta|^{3}}$; if instead $|\beta|\leqslant 2\kappa$ then
$|\beta|\leqslant \epsilon/4$ and $|\beta+t\rho|\leqslant 3\epsilon/8$, so that the integral is
$\leqslant \epsilon^{-2}\leqslant 64\frac{\epsilon}{\epsilon^{3}+|\beta|^{3}}$. Hence in all cases
\[
\frac{|\phi(\beta+\rho)-\phi(\beta)|}{|\rho|}
\lesssim\frac{\epsilon}{\epsilon^{3}+|\beta|^{3}},
\]
and therefore, using $\psi_{\kappa}(\rho)\leqslant 1$ and
$\int_{\mathbb{R}}\frac{\epsilon^{2}\,d\rho}{\rho^{2}+\epsilon^{2}}=\pi\epsilon$,
\[
J_{1}\lesssim\frac{\epsilon}{\epsilon^{3}+|\beta|^{3}}
\int_{\mathbb{R}}\frac{\epsilon^{2}}{\rho^{2}+\epsilon^{2}}\,d\rho
\lesssim\frac{\epsilon^{2}}{\epsilon^{3}+|\beta|^{3}}.
\]
For $J_{2}$ write $\Phi(x):=\frac{N(x)^{2}}{\rho^{2}+N(x)^{2}}\psi_{\frac{\sqrt{\epsilon}}{2^{6}}}(x)$,
so that the bracket in the integrand of $J_{2}$ is $\Phi(\beta+\rho)-\Phi(\beta)$, with $\rho$
held fixed. Since $|f(x)|+|g(x)|\leqslant Ax^{2}\leqslant A\epsilon$ and $f(x)+g(x)\geqslant -\epsilon$ for
$|x|\leqslant s$, we have $\epsilon\leqslant N(x)\leqslant (2+A)\epsilon$ there; hence, by the mean value theorem,
\[
|\Phi(\beta+\rho)-\Phi(\beta)|\leqslant |\rho|\sup_{x}\big|\partial_{x}\Phi(x)\big|
\lesssim|\rho|\Big[\frac{\epsilon(|\beta|+|\rho|+\epsilon)}{\rho^{2}+\epsilon^{2}}
+\frac{\epsilon^{2}}{s(\rho^{2}+\epsilon^{2})}\Big],
\qquad s:=\frac{\sqrt{\epsilon}}{2^{6}},
\]
the first summand coming from $\partial_{x}\big(N^{2}/(\rho^{2}+N^{2})\big)$ and the second from
$\psi'_{s}$. The second summand is in fact dominated by the first. It is nonzero only when
$\psi'_{s}(\beta+\rho)\ne0$, that is, when $\frac{s}{2}\leqslant |\beta+\rho|\leqslant s$; since
$|\rho|\leqslant \kappa=\frac{|\beta|+\epsilon}{10}$, this forces $|\beta|\geqslant \frac{s}{3}$: if instead
$|\beta|\leqslant 2\kappa$ then $|\beta|\leqslant \frac{\epsilon}{4}$ and
$|\beta+\rho|\leqslant \frac{3\epsilon}{8}<\frac{s}{2}$ for $\epsilon$ small, so that this case is void.
In the remaining case $\epsilon(\epsilon+|\beta|)\geqslant \frac{\epsilon s}{3}$, and since
$s^{2}=2^{-12}\epsilon$ we get
$\frac{\epsilon^{2}}{s}\leqslant 3\cdot2^{12}\,\epsilon(\epsilon+|\beta|)$. Hence
\[
|\Phi(\beta+\rho)-\Phi(\beta)|\lesssim|\rho|\,\frac{\epsilon(\epsilon+|\beta|)}{\rho^{2}+\epsilon^{2}},
\]
whence
\[
\begin{aligned}
J_{2}\lesssim&\int_{\mathbb{R}}\frac{|\phi(\beta+\rho)|}{|\rho|}\frac{\epsilon|\rho|(\epsilon+|\beta|)}{\rho^{2}+\epsilon^{2}}\psi_{\frac{\sqrt{\epsilon}}{2^{6}}}(\rho)\psi_{\kappa}(\rho)d\rho \\
\lesssim&\int_{\mathbb{R}}\frac{\epsilon}{\epsilon^{2}+\beta^{2}}\frac{\epsilon(\epsilon+|\beta|)}{\rho^{2}+\epsilon^{2}}\psi_{\kappa}(\rho)d\rho \\
\lesssim&\frac{\epsilon(\epsilon+|\beta|)}{\epsilon^{2}+\beta^{2}}
\leqslant \frac{2\epsilon}{\epsilon+|\beta|}
\lesssim\frac{\epsilon^{2}}{\epsilon^{3}+|\beta|^{3}}.
\end{aligned}
\]
Here the second line uses $|\phi(\beta+\rho)|\lesssim\frac{\epsilon}{\epsilon^{2}+\beta^{2}}$, which
follows from the definition of $\mathcal C$ together with the localization
$|\rho|\leqslant \kappa$: when $|\beta|>2\kappa$ one has $|\beta+\rho|\geqslant \frac12|\beta|$, while in
the complementary case $|\beta|\leqslant 2\kappa$ one has $|\beta|\leqslant \epsilon/4$ and
$|\phi|\leqslant \epsilon^{-1}\lesssim\frac{\epsilon}{\epsilon^{2}+\beta^{2}}$. The third line uses
$\int_{\mathbb{R}}\frac{d\rho}{\rho^{2}+\epsilon^{2}}=\frac{\pi}{\epsilon}$ and
$\epsilon^{2}+|\beta|^{2}\geqslant \frac{(\epsilon+|\beta|)^{2}}{2}$. Finally, the last comparison is a
consequence of \eqref{heartsuit}: with $b:=|\beta|\leqslant \frac{\sqrt\epsilon}{32}$ one has
$b^{2}\leqslant \frac{\epsilon}{2^{10}}\leqslant \epsilon$, hence $b^{3}\leqslant \epsilon b$, and together with
$\epsilon^{3}\leqslant \epsilon^{2}$ this gives $\epsilon^{3}+b^{3}\leqslant \epsilon(\epsilon+b)$; therefore
\[
\frac{\epsilon}{\epsilon+b}\leqslant \frac{\epsilon^{2}}{\epsilon^{3}+b^{3}}.
\]
Thus, we obtain the desired bound.

For $(2)$ set
\[
s:=\frac{\sqrt{\epsilon}}{2^{6}},\qquad N(\alpha):=2\epsilon+f(\alpha)+g(\alpha),
\]
so that the kernel of $M$ reads $\frac{N(\alpha)}{(\alpha-\beta)^{2}+N(\alpha)^{2}}\psi_{s}(\alpha)\psi_{s}(\alpha-\beta)$. Three remarks will be used repeatedly.\par\noindent
\textit{(i) Range of $N$.} Since $f(0)=f'(0)=g(0)=g'(0)=0$ and $\|f''\|_{L^{\infty}}+\|g''\|_{L^{\infty}}\leqslant A$, Taylor's theorem gives $|f(\alpha)|+|g(\alpha)|\leqslant A\alpha^{2}$; hence, for $|\alpha|\leqslant s$,
\begin{equation}\label{e:N.range}
\epsilon\leqslant N(\alpha)\leqslant C_{A}\epsilon,
\end{equation}
the lower bound by the hypothesis $f(\alpha)+g(\alpha)\geqslant-\epsilon$ (applicable since $s<4\sqrt{\epsilon}$) and the upper bound by $A\alpha^{2}\leqslant A\epsilon$.\par\noindent
\textit{(ii) Support.} The integrand defining $M^{*}\phi(\beta)$ carries the two factors $\psi_{s}(\alpha)$ and $\psi_{s}(\alpha-\beta)$, both supported in $[-s,s]$; hence $M^{*}\phi(\beta)\ne0$ forces $|\beta|\leqslant|\alpha|+|\alpha-\beta|\leqslant2s$, that is,
\begin{equation}\label{e:M.support}
M^{*}\phi(\beta)=0\qquad\text{whenever }|\beta|>2s.
\end{equation}
Hence $\operatorname{supp}M^{*}\phi\subseteq(-\delta,\delta)$ as soon as $2s<\delta$, and it suffices to bound $(M^{*}\phi)'$ for $|\beta|\leqslant2s$; throughout that range
\begin{equation}\label{e:M.target}
\frac{\epsilon}{\epsilon^{3}+|\beta|^{3}}\geqslant\frac{\epsilon}{\epsilon^{3}+8s^{3}}\geqslant\frac{1}{3}\,\epsilon^{-\frac12},
\end{equation}
because $8s^{3}=2^{-15}\epsilon^{\frac32}$ and $\epsilon^{3}\leqslant\epsilon^{\frac32}$ for $\epsilon\leqslant1$.\par\noindent
\textit{(iii) Differentiation under the integral sign.} Since $\phi\in C_{0}^{1}((-\delta,\delta))$ and, for fixed $\epsilon>0$, every kernel occurring below is smooth in $(\alpha,\beta)$ with $N(\alpha)\geqslant\epsilon$, each differentiated integrand is absolutely integrable in $\alpha$, locally uniformly in $\beta$. This legitimizes the computations below and shows $M^{*}\phi\in C^{1}$; with \eqref{e:M.support} it gives $M^{*}\phi\in C_{0}^{1}((-\delta,\delta))$.

The $L^{\infty}$-bound for $M$ is immediate: for $\alpha\in(-\delta,\delta)$, by \eqref{e:N.range} and $\int_{\mathbb{R}}\frac{\epsilon\,d\rho}{\rho^{2}+\epsilon^{2}}=\pi$,
\[
|Mh(\alpha)|\lesssim\int_{\mathbb{R}}|h(\alpha-\rho)|\frac{\epsilon}{\rho^{2}+\epsilon^{2}}\psi_{s}(\alpha)\psi_{s}(\rho)\,d\rho\lesssim\|h\|_{L^{\infty}((-\sqrt{\epsilon},\sqrt{\epsilon}))},
\]
because $\psi_{s}(\alpha)\ne0$ forces $|\alpha|\leqslant s$, and then $\psi_{s}(\rho)\ne0$ forces
$|\alpha-\rho|\leqslant s$, so that $|\rho|\leqslant2s=\sqrt{\epsilon}/32<\sqrt{\epsilon}$; for
$|\alpha|>s$ the left-hand side vanishes identically.

It remains to exhibit $C_{M}$ with $C_{M}^{-1}M^{*}\phi\in\mathcal{C}$ for every $\phi\in\mathcal{C}$. By (ii), (iii) and the definition of $\mathcal{C}$ this amounts to proving
\begin{equation}\label{e:M.goal}
|M^{*}\phi(\beta)|\lesssim\frac{\epsilon}{\epsilon^{2}+\beta^{2}},\qquad |(M^{*}\phi)'(\beta)|\lesssim\frac{\epsilon}{\epsilon^{3}+|\beta|^{3}},\qquad \beta\in(-\delta,\delta),
\end{equation}
with constants depending only on $A$ and $\psi$; $C_{M}$ is then the largest of them. Throughout we use, for $\phi\in\mathcal{C}$,
\begin{equation}\label{e:phi.ab}
|\phi(\alpha)|\leqslant\frac{1}{\epsilon}\quad\text{pointwise},\qquad \int_{\mathbb{R}}|\phi(\alpha)|\,d\alpha\leqslant\int_{\mathbb{R}}\frac{\epsilon\,d\alpha}{\epsilon^{2}+\alpha^{2}}=\pi.
\end{equation}

\emph{The first bound in \eqref{e:M.goal}.} By $|\phi(\alpha)|\leqslant\frac{\epsilon}{\epsilon^{2}+\alpha^{2}}$ and \eqref{e:N.range},
\[
|M^{*}\phi(\beta)|\lesssim\int_{\mathbb{R}}\frac{\epsilon}{\epsilon^{2}+\alpha^{2}}\frac{\epsilon}{(\alpha-\beta)^{2}+\epsilon^{2}}\psi_{s}(\alpha)\psi_{s}(\alpha-\beta)\,d\alpha.
\]
We detail this estimate, splitting the integral according to $|\alpha-\beta|\geqslant\frac{|\beta|}{10}$ or $|\alpha-\beta|\leqslant\frac{|\beta|}{10}$. In the first region $\frac{\epsilon}{(\alpha-\beta)^{2}+\epsilon^{2}}\lesssim\frac{\epsilon}{\beta^{2}+\epsilon^{2}}$ and the remaining integral satisfies $\int_{\mathbb{R}}\frac{\epsilon\,d\alpha}{\epsilon^{2}+\alpha^{2}}=\pi$. In the second region $|\alpha|\geqslant\frac{9}{10}|\beta|$; if $|\beta|\geqslant\epsilon$ then $\frac{\epsilon}{\epsilon^{2}+\alpha^{2}}\lesssim\frac{\epsilon}{\beta^{2}+\epsilon^{2}}$, while if $|\beta|\leqslant\epsilon$ then simply $\frac{\epsilon}{\epsilon^{2}+\alpha^{2}}\leqslant\frac{1}{\epsilon}\lesssim\frac{\epsilon}{\beta^{2}+\epsilon^{2}}$; in both cases the remaining factor integrates to $\int_{\mathbb{R}}\frac{\epsilon\,d\alpha}{(\alpha-\beta)^{2}+\epsilon^{2}}=\pi$. Hence $|M^{*}\phi(\beta)|\lesssim\frac{\epsilon}{\epsilon^{2}+\beta^{2}}$.

\emph{The second bound in \eqref{e:M.goal}.} By \eqref{e:M.support} we may assume $|\beta|\leqslant2s$. Differentiating in $\beta$ at fixed $\alpha$ and writing $\rho:=\alpha-\beta$ (so that $\partial_{\beta}\rho=-1$ and $\partial_{\beta}(\rho^{2}+N(\alpha)^{2})^{-1}=2\rho\,(\rho^{2}+N(\alpha)^{2})^{-2}$) gives
\begin{equation}\label{e:M.prime}
\begin{aligned}
(M^{*}\phi)'(\beta)=&\frac{1}{2\pi}\int_{\mathbb{R}}\phi(\alpha)\frac{2\rho\,N(\alpha)}{[\rho^{2}+N(\alpha)^{2}]^{2}}\psi_{s}(\alpha)\psi_{s}(\rho)\,d\alpha\\
&-\frac{1}{2\pi s}\int_{\mathbb{R}}\phi(\alpha)\frac{N(\alpha)}{\rho^{2}+N(\alpha)^{2}}\psi_{s}(\alpha)\psi'\Big(\frac{\rho}{s}\Big)\,d\alpha.
\end{aligned}
\end{equation}
\textit{Case 1: $|\beta|\leqslant10\epsilon$.} Here $|\beta|^{3}\leqslant10^{3}\epsilon^{3}$, so $\frac{\epsilon}{\epsilon^{3}+|\beta|^{3}}\geqslant\frac{1}{1001}\epsilon^{-2}$. For the first integral of \eqref{e:M.prime}, \eqref{e:N.range} together with $[\rho^{2}+N(\alpha)^{2}]^{2}\geqslant\rho^{4}$ gives
\[
\frac{2|\rho|N(\alpha)}{[\rho^{2}+N(\alpha)^{2}]^{2}}\leqslant\frac{2C_{A}\epsilon|\rho|}{(\rho^{2}+\epsilon^{2})^{2}}=:k(\rho),\qquad \int_{\mathbb{R}}k(\rho)\,d\rho=\frac{2C_{A}}{\epsilon},
\]
since $\int_{\mathbb{R}}\frac{|\rho|\,d\rho}{(\rho^{2}+\epsilon^{2})^{2}}=\frac{1}{\epsilon^{2}}$; hence by \eqref{e:phi.ab} this term is at most $\|\phi\|_{L^{\infty}}\|k\|_{L^{1}}\leqslant2C_{A}\epsilon^{-2}$. For the second integral of \eqref{e:M.prime}, $\frac{N(\alpha)}{\rho^{2}+N(\alpha)^{2}}\leqslant\frac{1}{N(\alpha)}\leqslant\frac{1}{\epsilon}$, so it is at most $\frac{C_{\psi}}{\epsilon s}\int_{\mathbb{R}}|\phi|\,d\alpha\lesssim\epsilon^{-\frac32}\leqslant\epsilon^{-2}$. Both terms are therefore $\lesssim_{A}\frac{\epsilon}{\epsilon^{3}+|\beta|^{3}}$ in this case.

\textit{Case 2: $10\epsilon\leqslant|\beta|\leqslant2s$.} Here $|\beta|^{3}\geqslant10^{3}\epsilon^{3}\geqslant\epsilon^{3}$, hence
\begin{equation}\label{e:M.case2}
\frac{\epsilon}{\epsilon^{3}+|\beta|^{3}}\geqslant\frac{1}{2}\,\frac{\epsilon}{|\beta|^{3}}.
\end{equation}
Write $1=\big[1-\psi\big(\frac{10\rho}{\beta}\big)\big]+\psi\big(\frac{10\rho}{\beta}\big)$ inside the integrand defining $M^{*}\phi$ and change variables $\alpha=\beta+\sigma$ in the second contribution: this yields $M^{*}\phi=M_{1}+M_{2}$ with
\[
\begin{aligned}
M_{1}(\beta)&:=\frac{1}{2\pi}\int_{\mathbb{R}}\phi(\alpha)\frac{N(\alpha)}{\rho^{2}+N(\alpha)^{2}}\psi_{s}(\alpha)\psi_{s}(\rho)\Big[1-\psi\Big(\frac{10\rho}{\beta}\Big)\Big]d\alpha,\qquad \rho=\alpha-\beta,\\
M_{2}(\beta)&:=\frac{1}{2\pi}\int_{\mathbb{R}}\phi(\beta+\sigma)\frac{N(\beta+\sigma)}{\sigma^{2}+N(\beta+\sigma)^{2}}\psi_{s}(\beta+\sigma)\psi_{s}(\sigma)\psi\Big(\frac{10\sigma}{\beta}\Big)d\sigma.
\end{aligned}
\]
Since $\psi\equiv1$ on $[-\frac12,\frac12]$ and $\operatorname{supp}\psi\subseteq[-1,1]$, the factor $1-\psi(\frac{10\rho}{\beta})$ vanishes for $|\rho|\leqslant\frac{|\beta|}{20}$, while $\psi(\frac{10\sigma}{\beta})$ vanishes for $|\sigma|\geqslant\frac{|\beta|}{10}$. Both factors are smooth in $\beta$ here, because $|\beta|\geqslant10\epsilon>0$.

\emph{Estimate of $M_{1}'$.} Differentiating at fixed $\alpha$, using $\partial_{\beta}\rho=-1$ and $\frac{\rho}{\beta}=\frac{\alpha}{\beta}-1$,
\[
\begin{aligned}
\partial_{\beta}\Big[\frac{N(\alpha)}{\rho^{2}+N(\alpha)^{2}}\Big]&=\frac{2\rho N(\alpha)}{[\rho^{2}+N(\alpha)^{2}]^{2}},\qquad
\partial_{\beta}\psi_{s}(\rho)=-\frac{1}{s}\psi'\Big(\frac{\rho}{s}\Big),\\
\partial_{\beta}\Big[1-\psi\Big(\frac{10\rho}{\beta}\Big)\Big]&=\frac{10\alpha}{\beta^{2}}\psi'\Big(\frac{10\rho}{\beta}\Big).
\end{aligned}
\]
The three resulting integrals are estimated in turn.
\begin{itemize}
\item[(a)] Its integrand is supported in $\{|\rho|\geqslant\frac{|\beta|}{20}\}$, where by \eqref{e:N.range} and $[\rho^{2}+N(\alpha)^{2}]^{2}\geqslant\rho^{4}$,
\[
\frac{2|\rho|N(\alpha)}{[\rho^{2}+N(\alpha)^{2}]^{2}}\leqslant\frac{2C_{A}\epsilon}{|\rho|^{3}}\leqslant\frac{2\cdot20^{3}C_{A}\epsilon}{|\beta|^{3}};
\]
by \eqref{e:phi.ab} this term is therefore $\lesssim_{A}\frac{\epsilon}{|\beta|^{3}}$, which is admissible by \eqref{e:M.case2}.
\item[(b)] Its integrand is supported in $\{\frac{s}{2}\leqslant|\rho|\leqslant s\}$, where $\rho^{2}+N(\alpha)^{2}\geqslant\frac{s^{2}}{4}$, whence $\frac{N(\alpha)}{\rho^{2}+N(\alpha)^{2}}\leqslant\frac{4C_{A}\epsilon}{s^{2}}=2^{14}C_{A}$ (recall $s^{2}=2^{-12}\epsilon$). By \eqref{e:phi.ab} this term is at most $\frac{2^{14}C_{A}C_{\psi}\pi}{s}\lesssim_{A}\epsilon^{-\frac12}$, admissible by \eqref{e:M.target}.
\item[(c)] Its integrand is supported in $\{\frac{|\beta|}{20}\leqslant|\rho|\leqslant\frac{|\beta|}{10}\}$, a set of measure at most $\frac{|\beta|}{5}$, on which $\frac{9}{10}|\beta|\leqslant|\alpha|=|\beta+\rho|\leqslant\frac{11}{10}|\beta|$; since $|\beta|\geqslant10\epsilon$, there holds
\[
|\phi(\alpha)|\leqslant\frac{\epsilon}{\epsilon^{2}+\alpha^{2}}\lesssim\frac{\epsilon}{\beta^{2}},\qquad
\frac{N(\alpha)}{\rho^{2}+N(\alpha)^{2}}\leqslant\frac{C_{A}\epsilon}{\rho^{2}}\lesssim_{A}\frac{\epsilon}{\beta^{2}},\qquad
\frac{10|\alpha|}{\beta^{2}}\Big|\psi'\Big(\frac{10\rho}{\beta}\Big)\Big|\lesssim\frac{C_{\psi}}{|\beta|},
\]
so that this term is $\lesssim_{A}|\beta|\cdot\frac{\epsilon}{\beta^{2}}\cdot\frac{\epsilon}{\beta^{2}}\cdot\frac{1}{|\beta|}=\frac{\epsilon^{2}}{|\beta|^{4}}\leqslant\frac{\epsilon}{|\beta|^{3}}$, admissible by \eqref{e:M.case2}.
\end{itemize}
Hence $|M_{1}'(\beta)|\lesssim_{A}\frac{\epsilon}{\epsilon^{3}+|\beta|^{3}}$.

\emph{Estimate of $M_{2}'$.} Put
\[
\mathcal{K}(\beta,\sigma):=\frac{N(\beta+\sigma)}{\sigma^{2}+N(\beta+\sigma)^{2}}\psi_{s}(\beta+\sigma)\psi_{s}(\sigma)\psi\Big(\frac{10\sigma}{\beta}\Big),
\]
so that $M_{2}'(\beta)=\frac{1}{2\pi}\int_{\mathbb{R}}\big[\phi'(\beta+\sigma)\mathcal{K}(\beta,\sigma)+\phi(\beta+\sigma)\partial_{\beta}\mathcal{K}(\beta,\sigma)\big]d\sigma$. Differentiating at fixed $\sigma$,
\[
\begin{aligned}
\partial_{\beta}\mathcal{K}(\beta,\sigma)=&N'(\beta+\sigma)\frac{\sigma^{2}-N(\beta+\sigma)^{2}}{[\sigma^{2}+N(\beta+\sigma)^{2}]^{2}}\psi_{s}(\beta+\sigma)\psi_{s}(\sigma)\psi\Big(\frac{10\sigma}{\beta}\Big)\\
&+\frac{N(\beta+\sigma)}{\sigma^{2}+N(\beta+\sigma)^{2}}\frac{1}{s}\psi'\Big(\frac{\beta+\sigma}{s}\Big)\psi_{s}(\sigma)\psi\Big(\frac{10\sigma}{\beta}\Big)\\
&-\frac{N(\beta+\sigma)}{\sigma^{2}+N(\beta+\sigma)^{2}}\psi_{s}(\beta+\sigma)\psi_{s}(\sigma)\psi'\Big(\frac{10\sigma}{\beta}\Big)\frac{10\sigma}{\beta^{2}}.
\end{aligned}
\]
Every piece is supported in $\{|\sigma|\leqslant\frac{|\beta|}{10}\}$, hence $|\beta+\sigma|\geqslant\frac{9}{10}|\beta|$; moreover $|N'(\beta+\sigma)|\leqslant|f'(\beta+\sigma)|+|g'(\beta+\sigma)|\leqslant2A|\beta+\sigma|\leqslant2As$ wherever $\psi_{s}(\beta+\sigma)\ne0$, and, by \eqref{e:N.range} applied on the range $|\beta+\sigma|\leqslant s$ to which each term below is confined by its factor $\psi_{s}(\beta+\sigma)$ (or $\psi'\big(\frac{\beta+\sigma}{s}\big)$, which confines even more),
\begin{equation}\label{e:M.poisson}
\int_{\mathbb{R}}\frac{N(\beta+\sigma)}{\sigma^{2}+N(\beta+\sigma)^{2}}\,d\sigma\leqslant C_{A}\epsilon\int_{\mathbb{R}}\frac{d\sigma}{\sigma^{2}+\epsilon^{2}}\leqslant C_{A}\pi.
\end{equation}
The four contributions are bounded as follows.
\begin{itemize}
\item[(d)] Since $|\beta+\sigma|\geqslant\frac{9}{10}|\beta|$ and $\phi\in\mathcal{C}$,
\[
|\phi'(\beta+\sigma)|\leqslant\frac{\epsilon}{\epsilon^{3}+|\beta+\sigma|^{3}}\leqslant\Big(\frac{10}{9}\Big)^{3}\frac{\epsilon}{\epsilon^{3}+|\beta|^{3}},
\]
so by \eqref{e:M.poisson} this term is $\lesssim_{A}\frac{\epsilon}{\epsilon^{3}+|\beta|^{3}}$.
\item[(e)] Using $\frac{|\sigma^{2}-N^{2}|}{[\sigma^{2}+N^{2}]^{2}}\leqslant\frac{1}{\sigma^{2}+N^{2}}\leqslant\frac{1}{\sigma^{2}+\epsilon^{2}}$ together with $|\phi(\beta+\sigma)|\leqslant\frac{\epsilon}{\epsilon^{2}+(\beta+\sigma)^{2}}\lesssim\frac{\epsilon}{\beta^{2}}$ and $|N'(\beta+\sigma)|\leqslant2As$, this term is at most
\[
2A\,C_{A}\,\frac{\epsilon}{\beta^{2}}\,s\int_{\mathbb{R}}\frac{d\sigma}{\sigma^{2}+\epsilon^{2}}\lesssim_{A}\frac{s}{\beta^{2}}.
\]
Since $|\beta|\leqslant2s$ gives $s|\beta|\leqslant2s^{2}\leqslant\epsilon$, we have $\frac{s}{\beta^{2}}\lesssim\frac{\epsilon}{|\beta|^{3}}$, so this term is admissible by \eqref{e:M.case2}.
\item[(f)] This contribution is supported in $\{\frac{s}{2}\leqslant|\beta+\sigma|\leqslant s\}$, where $|\phi(\beta+\sigma)|\leqslant\frac{\epsilon}{\epsilon^{2}+(\beta+\sigma)^{2}}\leqslant\frac{4\epsilon}{s^{2}}\lesssim1$; hence by \eqref{e:M.poisson} it is $\lesssim_{A}s^{-1}\lesssim\epsilon^{-\frac12}$, admissible by \eqref{e:M.target}.
\item[(g)] This contribution is supported in $\{\frac{|\beta|}{20}\leqslant|\sigma|\leqslant\frac{|\beta|}{10}\}$, where $\frac{10|\sigma|}{\beta^{2}}\lesssim\frac{1}{|\beta|}$ and, by \eqref{e:N.range}, $\frac{N(\beta+\sigma)}{\sigma^{2}+N(\beta+\sigma)^{2}}\leqslant\frac{C_{A}\epsilon}{\sigma^{2}}\lesssim_{A}\frac{\epsilon}{\beta^{2}}$, while $|\phi(\beta+\sigma)|\lesssim\frac{\epsilon}{\beta^{2}}$; since the support has measure $\lesssim|\beta|$, it is $\lesssim_{A}\frac{\epsilon^{2}}{|\beta|^{4}}\leqslant\frac{\epsilon}{|\beta|^{3}}$, admissible by \eqref{e:M.case2}.
\end{itemize}
Adding (a)--(g) settles Case 2, and together with Case 1 this proves the second bound in \eqref{e:M.goal}. Taking $C_{M}$ to be the largest of the constants produced above (they depend only on $A$ and on $\psi$, hence only on $A$ and $\delta$ in the notation of the statement), we obtain $C_{M}^{-1}M^{*}\phi\in\mathcal{C}$.
\end{proof}
\begin{lemma}\label{lem:key}
$(1)$ For any $h\in L^\infty((-\delta,\delta))$ we have
\[
\|T_{1,f}h\|_Z\lesssim \epsilon\log(1/\epsilon)\|h\|_{L^\infty((-\delta,\delta))},
\]
and
\[
\|(M-T_{3,(f,g)})h\|_Z+\|(M-T_{3,(g,f)})h\|_Z\lesssim \epsilon\log(1/\epsilon)\|h\|_{L^\infty(-2\sqrt\epsilon,2\sqrt{\epsilon})}.
\]
$(2)$ For any $h\in C^{1}((-2\sqrt{\epsilon},2\sqrt{\epsilon}))$ and any $\phi\in\mathcal{C}$ supported in $(-\sqrt{\epsilon},\sqrt{\epsilon})$,
\[
\begin{split}
&\Big|\int_{(-\sqrt{\epsilon},\sqrt{\epsilon})}(H+T_{4,(g,f)}-T_{2,f})h(\alpha)\phi(\alpha)\,d\alpha\Big|\\
&+\Big|\int_{(-\sqrt{\epsilon},\sqrt{\epsilon})}(H+T_{4,(f,g)}-T_{2,g})h(\alpha)\phi(\alpha)\,d\alpha\Big|\lesssim \epsilon\log(1/\epsilon)\|h\|_{C^{1}((-2\sqrt{\epsilon},2\sqrt{\epsilon}))}.
\end{split}
\]
\end{lemma}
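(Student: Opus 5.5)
The plan is to reduce every estimate to a bound on $\int(\cdot)H^{*}\phi$, or on $\int(\cdot)\phi$, using only pointwise kernel bounds together with the decay of the test functions. I use two facts from Lemma \ref{lem:elem}. First, its proof gives $|H^{*}\phi(\beta)|\lesssim\epsilon^{2}/(\epsilon^{3}+|\beta|^{3})$ for $\phi\in\mathcal C$, supported in $|\beta|\le 2s$. Second, $C_{M}^{-1}M^{*}\phi\in\mathcal C$. The model bound throughout is
\[
\int_{|\beta|\lesssim\sqrt\epsilon}\frac{\epsilon^{2}}{\epsilon^{3}+|\beta|^{3}}\,|K h(\beta)|\,d\beta\lesssim\epsilon\log\tfrac1\epsilon ,
\]
which holds as soon as $|Kh(\beta)|\lesssim(\epsilon+|\beta|)|\log|$ or $|Kh(\beta)|\lesssim\epsilon/(\epsilon+|\beta|)+\epsilon\log\frac1\epsilon$.

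\emph{Part (1), the operator $T_{1,f}$.} Write the kernel as $\frac{f(\alpha)-f(\beta)}{\alpha-\beta}\cdot\frac{1}{\alpha-\beta}$ times a bounded factor. Since $|f(\alpha)-f(\beta)|/|\alpha-\beta|\lesssim|\alpha|+|\beta|$, the kernel is bounded by $C(|\alpha|+|\beta|)/|\alpha-\beta|$ on the cutoff window $|\alpha-\beta|\le\sqrt\epsilon$. Integrating in $\beta$ only over $|\alpha-\beta|\le\sqrt\epsilon$ gives $|T_{1,f}h(\alpha)|\lesssim\|h\|_{\infty}(|\alpha|+\sqrt\epsilon)\cdot\log$, which is too crude. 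I would instead handle the principal-value part by splitting at $|\alpha-\beta|\sim|\alpha|+\epsilon$. On the near piece the kernel is bounded by $C(|\alpha|+|\beta|)/|\alpha-\beta|$ and it is integrated over a short interval. On the far piece the kernel is $\lesssim(|\alpha|+|\alpha-\beta|)/|\alpha-\beta|$; this yields a bounded contribution, and the pairing against $H^{*}\phi$ then costs a factor $\epsilon$. If this direct route loses the factor $\epsilon$ near $\alpha\sim\sqrt\epsilon$, I would instead use duality: compute $T_{1,f}^{*}H^{*}\phi$ and show it is $\lesssim\epsilon\log\frac1\epsilon\cdot\epsilon/(\epsilon^{2}+\beta^{2})$-integrable. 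For this step the decay $\epsilon^{2}/(\epsilon^{3}+|\beta|^{3})$ of $H^{*}\phi$ is the key input.

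\emph{Part (1), the operators $M-T_{3,(f,g)}$ and $M-T_{3,(g,f)}$.} The difference between the two Poisson-type kernels is controlled by $|a(\alpha,\beta)-N(\alpha)|=|g(\beta)-g(\alpha)|\lesssim(|\alpha|+|\beta|)|\alpha-\beta|$, together with the cutoff mismatch between $\psi_{s}$ and $\psi_{\sqrt\epsilon}$. By \eqref{e:N.range.1} and \eqref{e:a.low}, both denominators are $\gtrsim(\alpha-\beta)^{2}+\epsilon^{2}$ on the diagonal. Hence the kernel difference is $\lesssim(|\alpha|+|\beta|)|\alpha-\beta|/((\alpha-\beta)^{2}+\epsilon^{2})$ plus a term $\lesssim\epsilon/(\alpha-\beta)^{2}$ coming from the region where the cutoffs differ. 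Integrating in $\beta$, the result is $\lesssim\|h\|_{\infty}\bigl((|\alpha|+\sqrt\epsilon)\log\frac1\epsilon\cdot\sqrt\epsilon/\ldots\bigr)$. Pairing against $H^{*}\phi$ and using the model bound then gives $\epsilon\log\frac1\epsilon$, provided $|\alpha|\lesssim\sqrt\epsilon$ is exploited through $\sqrt\epsilon\cdot\epsilon^{2}/(\epsilon^{3}+|\alpha|^{3})$. Checking that no extra $\epsilon^{-1/2}$ survives in this pairing is where the computation must be carried out carefully.

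\emph{Part (2).} This is the main obstacle, because there the logarithmic singularities must cancel rather than merely be bounded. My plan is to rewrite $T_{2,f}$ as $\frac{1}{2\pi}\mathrm{p.v.}\int\frac{h(\beta)}{\alpha-\beta}\psi_{\sqrt\epsilon}$ plus a commutator that is $O(\epsilon\log)$, using $(f(\alpha)-f(\beta))^{2}\lesssim(|\alpha|+|\beta|)^{2}(\alpha-\beta)^{2}$. Next, I would write $T_{4,(g,f)}$ as the truncated Hilbert kernel multiplied by $\frac{(\alpha-\beta)^{2}}{(\alpha-\beta)^{2}+N(\alpha)^{2}}$, with an error controlled by $|a-N|$ as in Part (1). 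With these two rewritings, $T_{4}-T_{2}$ equals $-H$ up to remainders; this identity is exactly the cancellation that part (2) expresses. I would then estimate the remainders pointwise by $(\epsilon+|\alpha|)\log\frac1\epsilon\cdot\|h\|_{C^{1}}$, using the $C^{1}$ norm of $h$ to handle principal values through $h(\beta)-h(\alpha)$. Finally I would integrate these remainders against $|\phi|\le\epsilon/(\epsilon^{2}+\alpha^{2})$ over $|\alpha|<\sqrt\epsilon$, which gives $\epsilon\log\frac1\epsilon$. The cutoff mismatch between $\psi_{s}(\alpha)$ and the region $|\alpha|<\sqrt\epsilon$ on which $\phi$ lives produces an additional remainder on $s<|\alpha|<\sqrt\epsilon$, where $|\phi|\lesssim\epsilon/\alpha^{2}$ and the operators are $O(\log)$. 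This gives a contribution $\lesssim\epsilon\cdot\epsilon^{-1/2}\log$, which is too large. I expect this cutoff-mismatch region to be the main difficulty, and I would first try to show that on it $T_{4}-T_{2}=O(\epsilon/|\alpha|)$, since there $|a|\lesssim\epsilon\ll|\alpha-\beta|$ outside a set of length $O(\epsilon)$.
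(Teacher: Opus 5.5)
Your scheme relies only on the absolute pairing $\int|b|\,|H^{*}\phi|$ with $|H^{*}\phi(\beta)|\lesssim\epsilon^{2}/(\epsilon^{3}+|\beta|^{3})$, and that cannot give the lemma. The weight has integral of order one. So a merely bounded contribution costs $O(\|b\|_{L^{\infty}})$, not the factor $\epsilon$ you claim for the far piece of $T_{1,f}$. Likewise, a contribution of pointwise size $\sqrt\epsilon$ near $\alpha=0$ costs $\sqrt\epsilon$.

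Such contributions are genuinely present in $(M-T_{3,(f,g)})h$. The cutoff mismatch $\int h\,\frac{a}{\rho^{2}+a^{2}}(\psi_{s}-\psi_{\sqrt\epsilon})(\rho)\,d\beta$ and the part of $g(\beta)-g(\alpha)$ quadratic in $\rho$ are of size up to $\epsilon/s\sim\sqrt\epsilon$ at the origin, already for $h\equiv1$. This is the $\epsilon^{-1/2}$ you flag and leave open, and no sharpening of pointwise bounds closes it.

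The missing idea is the cancellation $H(\mathrm{const})=0$, that is, the \emph{first} estimate of Lemma~\ref{lem:elem}, $\|w\|_{Z}\lesssim\epsilon\|w\|_{C^{1}}$. The paper splits $(M-T_{3,(f,g)})h=G_{1,1}+G_{1,2}+G_{2}+G_{3}$. It uses absolute pairing only for the pieces that vanish at the origin: $|G_{1,1}|\lesssim|\alpha|\log\frac1\epsilon$, and $G_{3}$ is supported in $|\alpha|\geqslant s/2$. For the $\sqrt\epsilon$-size pieces, $G_{1,2}$ (the $\rho^{2}$-part of $\delta$) and $G_{2}$ (the cutoff mismatch), it shows the $C^{1}$ norm is $\lesssim\log\frac1\epsilon$ by keeping the $\alpha$-derivative on the kernel.

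For $T_{1,f}$ the gap is more basic. Near the diagonal the kernel is $\frac{f'(\beta)}{(\alpha-\beta)(1+f'(\beta)^{2})}$ up to a smooth kernel. Your bound $C(|\alpha|+|\beta|)/|\alpha-\beta|$ is therefore not integrable on any interval around $\beta=\alpha\neq0$, so the near piece cannot be estimated in absolute value. For $h$ merely bounded, no pointwise bound on $T_{1,f}h$ exists at all, since a truncated Hilbert transform of an $L^{\infty}$ function is only in $\mathrm{BMO}$.

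The paper subtracts this Hilbert part; the smooth remainder gives a $C^{1}$ function, hence $O(\epsilon)$ in $Z$. It then decomposes the Hilbert part dyadically in the output scale $2^{k_{2}}$ and the input scale $2^{k_{1}}$, with $k_{i}\in[j_{0},\frac{j_{0}}{2}+4]$.
\begin{itemize}
\item For $k_{2}\geqslant k_{1}-10$ it uses the $\epsilon$-uniform $L^{2}$ bound of the truncated Hilbert transform (Plancherel), $|f'|\lesssim 2^{k_{1}}$ on the input shell, Cauchy--Schwarz and the weight $\epsilon^{2}2^{-3k_{2}}$. This gives $\epsilon^{2}2^{-k_{2}}$, which sums to $O(\epsilon)$.
\item For $k_{2}\leqslant k_{1}-11$ it telescopes the sum over $k_{2}$ into a function with bounded $C^{1}$ norm and applies the first estimate of Lemma~\ref{lem:elem}. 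The factor $\log\frac1\epsilon$ counts the values of $k_{1}$.
\end{itemize}
Your duality fallback would need this same $L^{2}$ input, or regularity of $H^{*}\phi$ beyond the pointwise decay you quote.

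In part (2) your route is essentially the paper's $J^{1},J^{2},J^{3}$ decomposition: compare $T_{4}$ and $T_{2}$ with a common Hilbert kernel and recognize $-H$. Two corrections are needed.
\begin{itemize}
\item Remainders of size $(\epsilon+|\alpha|)\log\frac1\epsilon$ integrated against $\epsilon/(\epsilon^{2}+\alpha^{2})$ over $|\alpha|<\sqrt\epsilon$ give $\epsilon\log^{2}\frac1\epsilon$, not $\epsilon\log\frac1\epsilon$. You need the sharper bound $|\alpha|+\epsilon\log\frac1\epsilon$. The paper obtains it by splitting at $|\rho|=\epsilon$ and using $|f(\beta)-f(\alpha)|\leqslant A(|\alpha|+|\rho|)|\rho|$.
\item The cutoff-mismatch region you call the main difficulty is harmless. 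Both $Hh$ and the $H$-type term with cutoff $\psi_{\sqrt\epsilon}(\rho)$ that your rewriting produces have kernels odd in $\rho$, so you may subtract $h(\alpha)$. Each is then $O(\epsilon\|h\|_{C^{1}})$ pointwise, because $\int_{\mathbb{R}}\frac{N^{2}}{\rho^{2}+N^{2}}\,d\rho=\pi N\lesssim\epsilon$. Since $\int|\phi|\leqslant\pi$, their difference contributes $O(\epsilon)$ whatever the cutoffs.
\end{itemize}
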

\begin{proof}
We write
\[
\frac{f(\alpha)-f(\beta)}{(\alpha-\beta)^{2}+(f(\alpha)-f(\beta))^{2}}=\frac{f'(\beta)}{(\alpha-\beta)(1+f'(\beta)^{2})}+K(\alpha,\beta),
\] 
where
\[
\begin{aligned}
K(\alpha,\beta):=&\frac{f(\alpha)-f(\beta)}{(\alpha-\beta)^{2}+(f(\alpha)-f(\beta))^{2}}-\frac{f'(\beta)}{(\alpha-\beta)(1+f'(\beta)^{2})} \\
=&K_{1}(\alpha,\beta)+K_{2}(\alpha,\beta), \\
K_{1}(\alpha,\beta):=&\frac{f(\alpha)-f(\beta)-(\alpha-\beta)f'(\beta)}{(\alpha-\beta)^{2}(1+\widetilde{f}(\alpha,\beta)^{2})}, \\
K_2(\alpha,\beta):=&\frac{f'(\beta)[f'(\beta)+\widetilde{f}(\alpha,\beta)][f'(\beta)-\widetilde{f}(\alpha,\beta)]}{(\alpha-\beta)(1+\widetilde{f}(\alpha,\beta)^2)(1+f'(\beta)^2)},\\
\widetilde{f}(\alpha,\beta):=&\frac{f(\alpha)-f(\beta)}{\alpha-\beta}.
\end{aligned}
\]
By the fundamental theorem of calculus and Taylor's formula, we get 
\begin{equation}\label{K.bound}
\sup_{\alpha,\beta\in (-2\sqrt{\epsilon},2\sqrt{\epsilon})}\big[|K(\alpha,\beta)|+|\partial_\alpha K(\alpha,\beta)|+|\partial_\beta K(\alpha,\beta)|\big]\lesssim 1.
\end{equation}
Here no singularity is hidden in the diagonal $\alpha=\beta$: setting
$\tau:=\alpha-\beta$ and $\widehat\Lambda:=\Lambda/\tau^{2}$, where
$\Lambda(\alpha,\beta):=f(\alpha)-f(\beta)-(\alpha-\beta)f'(\beta)$, Taylor's formula gives
$\widehat\Lambda=\frac12f''(\beta)+O(\tau)$ and
$\widetilde f=f'(\beta)+O(\tau)$, both smooth, and accordingly
$K_{1}=\widehat\Lambda/(1+\widetilde f^{2})$ and
$K_{2}=f'(\beta)(f'(\beta)+\widetilde f)(f'(\beta)-\widetilde f)\big/\big(\tau(1+\widetilde f^{2})(1+f'(\beta)^{2})\big)$
extend smoothly to $\tau=0$, because $f'(\beta)-\widetilde f=O(\tau)$; hence $K$, its
$\alpha$- and $\beta$-derivatives are bounded on the compact range by a constant depending only
on $\|f\|_{C^{3}}\leqslant A$.
Thus, setting
\[
T_{1,f}^{1}h(\alpha):=\frac{1}{2\pi}\int_{\mathbb{R}}K(\alpha,\beta)h(\beta)\psi_{\sqrt{\epsilon}}(\alpha-\beta)\,d\beta,
\]
we obtain from Lemma \ref{lem:elem} that
\[
\|T_{1,f}^1h\|_Z\lesssim \epsilon\|T_{1,f}^1h\|_{C^1((-\sqrt{\epsilon},\sqrt{\epsilon}))}\lesssim\epsilon.
\]

It remains to check that $\|T_{1,f}^{1}h\|_{C^{1}((-\sqrt{\epsilon},\sqrt{\epsilon}))}\lesssim1$
under the normalization $\|h\|_{L^{\infty}}\leqslant1$. The bound \eqref{K.bound} says that $K$
and $\partial_{\alpha}K$ are bounded on the relevant range by a constant depending only on $A$,
while $\psi_{\sqrt{\epsilon}}$ is supported in an interval of length $2\sqrt{\epsilon}$ and
satisfies $|\partial_{\alpha}\psi_{\sqrt{\epsilon}}|\lesssim\epsilon^{-1/2}$. Since the integrand
is smooth and compactly supported in $\beta$ for fixed $\alpha$, differentiation under the
integral sign is legitimate, and therefore
\[
|T_{1,f}^{1}h(\alpha)|\lesssim\int_{\{|\alpha-\beta|\leqslant\sqrt{\epsilon}\}}d\beta
\lesssim\sqrt{\epsilon},
\]
\[
|\partial_{\alpha}T_{1,f}^{1}h(\alpha)|
\lesssim\int_{\{|\alpha-\beta|\leqslant\sqrt{\epsilon}\}}\Big(1+\epsilon^{-\frac12}\Big)d\beta
\lesssim\sqrt{\epsilon}+1\lesssim1.
\]
To continue, we introduce a dyadic partition of the cutoff: let $j_0$ denote the largest integer with the property that $2^{j_0}\leqslant \epsilon$ (we will always assume $\epsilon$ small enough that $\frac{j_0}{2}+4\leqslant0$, which holds as soon as $\epsilon<2^{-7}$) and let
\[
\eta_j(\alpha):=
\begin{cases}
\psi_{2^{j_0}}(\alpha)\qquad&\text{ if }j=j_0;\\
\psi_{2^{j}}(\alpha)-\psi_{2^{j-1}}(\alpha)\qquad&\text{ if }j\in[j_0+1,0]\cap\mathbb{Z}.
\end{cases}
\]
The functions satisfy
\[
\sum_{j=j_0}^0\eta_j=\psi.
\]
For $k_1,k_2\in[j_0,j_0/2+4]\cap\mathbb{Z}$ we define
\[
\begin{aligned}
h_{k_1}(\beta)&:=h(\beta)\eta_{k_1}(\beta), \\ H_{k_1,k_2}(\alpha)&:=\eta_{k_2}(\alpha)\frac{1}{2\pi}\int_{\mathbb{R}}\frac{f'(\beta)}{(\alpha-\beta)(1+f'(\beta)^2)}h_{k_1}(\beta)\psi_{\sqrt{\epsilon}}(\alpha-\beta)\,d\beta.
\end{aligned}
\]
Without loss of generality we assume $\|h\|_{L^{\infty}(-2\sqrt{\epsilon},2\sqrt{\epsilon})}\leqslant 1$; the general case follows by linearity, since all the operators above are linear in $h$. 

\textit{Step 1 (reduction to the indices $k_{1},k_{2}\in[j_{0},\frac{j_{0}}{2}+4]$).} Let
\[
\mathcal{T}h(\alpha):=\frac{1}{2\pi}\mathrm{p.v.}\int_{\mathbb{R}}\frac{f'(\beta)}{(\alpha-\beta)(1+f'(\beta)^{2})}h(\beta)\psi_{\sqrt{\epsilon}}(\alpha-\beta)\,d\beta
\]
denote the principal part of $T_{1,f}$, so that $T_{1,f}h=T_{1,f}^{1}h+\mathcal{T}h$ and
\[
H_{k_{1},k_{2}}(\alpha)=\eta_{k_{2}}(\alpha)\mathcal{T}(h\eta_{k_{1}})(\alpha).
\]
We check that the $Z$-norm only probes the region $|\alpha|\leqslant\frac{\sqrt{\epsilon}}{32}$ and, through the factor $\psi_{\sqrt{\epsilon}}(\alpha-\beta)$, only the values $|\beta|\leqslant\frac{33}{32}\sqrt{\epsilon}$. By the definition of $H^{*}$, the integrand of $H^{*}\phi(\beta)$ contains the two factors $\psi_{\frac{\sqrt{\epsilon}}{2^{6}}}(\beta+\rho)$ and $\psi_{\frac{\sqrt{\epsilon}}{2^{6}}}(\rho)$, whence $H^{*}\phi(\beta)\ne0$ forces $|\beta|\leqslant|\beta+\rho|+|\rho|\leqslant \frac{\sqrt{\epsilon}}{2^{5}}$; and if $|\alpha|\leqslant\frac{\sqrt{\epsilon}}{32}$ then $\psi_{\sqrt{\epsilon}}(\alpha-\beta)\ne0$ forces $|\beta|\leqslant|\alpha|+\sqrt{\epsilon}\leqslant\frac{33}{32}\sqrt{\epsilon}$.\par\noindent
Since $2^{j_{0}}$ is the largest power of two with $2^{j_{0}}\leqslant\epsilon$, we have $2^{j_{0}}>\frac{\epsilon}{2}$, hence
\[
2^{\frac{j_{0}}{2}+4}>\frac{16}{\sqrt{2}}\sqrt{\epsilon}>11\sqrt{\epsilon}\geqslant\frac{33}{32}\sqrt{\epsilon}.
\]
On the other hand $\eta_{k}=\psi_{2^{k}}-\psi_{2^{k-1}}$ is supported in $\{2^{k-2}\leqslant|\alpha|\leqslant2^{k}\}$ for $k>j_{0}$, so for every $k\geqslant\frac{j_{0}}{2}+5$ the function $\eta_{k}$ vanishes on $\{|\alpha|\leqslant\frac{33}{32}\sqrt{\epsilon}\}$, because there $|\alpha|\leqslant\frac{33}{32}\sqrt{\epsilon}<2^{\frac{j_{0}}{2}+3}\leqslant2^{k-2}$. Since $\sum_{j=j_{0}}^{0}\eta_{j}=\psi$ and $\psi\equiv1$ on $[-1,1]\supseteq[-\frac{33}{32}\sqrt{\epsilon},\frac{33}{32}\sqrt{\epsilon}]$, we conclude that
\[
\sum_{k=j_{0}}^{\frac{j_{0}}{2}+4}\eta_{k}(\alpha)=1\qquad\text{for all }|\alpha|\leqslant\frac{33}{32}\sqrt{\epsilon}.
\]
Therefore, for every $\phi\in\mathcal{C}$ and every $|\alpha|\leqslant\frac{\sqrt{\epsilon}}{32}$,
\[
\sum_{k_{1},k_{2}\in[j_{0},\frac{j_{0}}{2}+4]\cap\mathbb{Z}}H_{k_{1},k_{2}}(\alpha)=\Big(\sum_{k_{2}}\eta_{k_{2}}(\alpha)\Big)\mathcal{T}\Big(h\sum_{k_{1}}\eta_{k_{1}}\Big)(\alpha)=\mathcal{T}h(\alpha),
\]
and consequently
\[
\|\mathcal{T}h\|_{Z}=\Big\|\sum_{k_1,k_2\in[j_0,j_0/2+4]\cap\mathbb{Z}}H_{k_1,k_2}\Big\|_Z.
\]
Thus it suffices to show that
\[
\Big\|\sum_{k_1,k_2\in[j_0,j_0/2+4]\cap\mathbb{Z}}H_{k_1,k_2}\Big\|_Z\lesssim \epsilon\log(\frac{1}{\epsilon}).
\]

\textit{Step 2 (the $Z$-weight on $\operatorname{supp}\eta_{k_{2}}$).} Fix $k_{1},k_{2}\in[j_{0},\frac{j_{0}}{2}+4]\cap\mathbb{Z}$. Since $\operatorname{supp}H_{k_{1},k_{2}}\subseteq\operatorname{supp}\eta_{k_{2}}\subseteq[-2^{k_{2}},2^{k_{2}}]$ and $2^{k_{2}}\geqslant2^{j_{0}}>\frac{\epsilon}{2}$, the weight occurring in the second estimate of Lemma \ref{lem:elem} satisfies
\begin{equation}\label{e:Zweight}
\frac{\epsilon^{2}}{\epsilon^{3}+|\alpha|^{3}}\lesssim\epsilon^{2}2^{-3k_{2}}\qquad\text{uniformly for }\alpha\in\operatorname{supp}\eta_{k_{2}},
\end{equation}
and hence
\[
\|H_{k_{1},k_{2}}\|_{Z}\lesssim\int_{\mathbb{R}}|H_{k_{1},k_{2}}(\alpha)|\frac{\epsilon^{2}}{\epsilon^{3}+|\alpha|^{3}}\,d\alpha\lesssim\epsilon^{2}2^{-3k_{2}}\|H_{k_{1},k_{2}}\|_{L^{1}}.
\]
If $k_{2}=j_{0}$ then $\epsilon^{3}+|\alpha|^{3}\geqslant\epsilon^{3}$, so the weight is at most $\epsilon^{-1}$, while $2^{-3j_{0}}\geqslant\epsilon^{-3}$ gives $\epsilon^{2}2^{-3j_{0}}\geqslant\epsilon^{-1}$; and if $k_{2}>j_{0}$ then $\eta_{k_{2}}=\psi_{2^{k_{2}}}-\psi_{2^{k_{2}-1}}$ is supported in $\{2^{k_{2}-2}\leqslant|\alpha|\leqslant2^{k_{2}}\}$ (the smaller cutoff $\psi_{2^{k_{2}-1}}$ equals $1$ on $[-\frac{1}{2}2^{k_{2}-1},\frac{1}{2}2^{k_{2}-1}]$), so that $\epsilon^{3}+|\alpha|^{3}\geqslant|\alpha|^{3}\geqslant2^{3k_{2}-6}$ and the weight is at most $\epsilon^{2}2^{-3k_{2}+6}$. 

\textit{Step 3 (Cauchy--Schwarz).} Since $|\operatorname{supp}\eta_{k_{2}}|\leqslant2^{k_{2}+1}$,
\[
\|H_{k_{1},k_{2}}\|_{L^{1}}\leqslant\|\mathbf{1}\|_{L^{2}(\operatorname{supp}\eta_{k_{2}})}\|H_{k_{1},k_{2}}\|_{L^{2}}\leqslant2^{\frac{k_{2}+1}{2}}\|H_{k_{1},k_{2}}\|_{L^{2}}.
\]

\textit{Step 4 (boundedness of the truncated Hilbert transform on $L^{2}$, uniform in $\epsilon$).} Put
$G(\beta):=\frac{f'(\beta)}{1+f'(\beta)^{2}}h_{k_{1}}(\beta)\in L^{2}(\mathbb{R})$ and
\[
\mathcal{H}_{\sqrt{\epsilon}}G(\alpha):=\frac{1}{2\pi}\mathrm{p.v.}\int_{\mathbb{R}}\frac{G(\beta)}{\alpha-\beta}\,\psi_{\sqrt{\epsilon}}(\alpha-\beta)\,d\beta,
\]
so that $H_{k_{1},k_{2}}(\alpha)=\eta_{k_{2}}(\alpha)\,\mathcal{H}_{\sqrt{\epsilon}}G(\alpha)$. We claim that
\[
\|\mathcal{H}_{\sqrt{\epsilon}}G\|_{L^{2}(\mathbb{R})}\leqslant C_{\psi}\|G\|_{L^{2}(\mathbb{R})},\qquad C_{\psi}:=\tfrac{1}{2}\|\widehat{\psi}\|_{L^{1}(\mathbb{R})},
\]
with $C_{\psi}$ independent of $\epsilon$. The kernel $K(\rho):=\frac{1}{2\pi}\frac{\psi_{\sqrt{\epsilon}}(\rho)}{\rho}$ is the compactly supported distribution $\psi_{\sqrt{\epsilon}}\cdot\mathrm{p.v.}\frac{1}{2\pi\rho}$, i.e., the product of the even cutoff $\psi_{\sqrt{\epsilon}}$ with the odd distribution $\mathrm{p.v.}\frac{1}{2\pi\rho}$. Passing to the Fourier side (with the convention $\widehat{u}(\xi)=\int_{\mathbb{R}}u(\rho)e^{-i\rho\xi}d\rho$), a product becomes, up to the factor $(2\pi)^{-1}$, the convolution of the Fourier transforms, and
$\big|\mathcal{F}\big(\mathrm{p.v.}\frac{1}{\pi\rho}\big)(\xi)\big|=|\!-\!i\,\mathrm{sgn}\,\xi|\leqslant1$. Since $\psi$ is smooth and compactly supported, $\widehat{\psi}$ decays rapidly, so $\|\widehat{\psi}\|_{L^{1}}<\infty$, and since
$\widehat{\psi_{\sqrt{\epsilon}}}(\xi)=\sqrt{\epsilon}\,\widehat{\psi}(\sqrt{\epsilon}\xi)$ we have $\|\widehat{\psi_{\sqrt{\epsilon}}}\|_{L^{1}}=\|\widehat{\psi}\|_{L^{1}}$, independently of $\epsilon$. Young's inequality for convolutions then yields $\|\widehat{K}\|_{L^{\infty}}\leqslant C_{\psi}$, and the claim follows from Plancherel's theorem. 

\textit{Step 5 (the region $k_{2}\geqslant k_{1}-10$).} By Steps 3 and 4, by $|\eta_{k_{2}}|\leqslant1$, and by the mean value theorem together with $f'(0)=0$, $\|f''\|_{L^{\infty}}\leqslant A$, which give $\|f'\|_{L^{\infty}(\operatorname{supp}\eta_{k_{1}})}\leqslant A\,2^{k_{1}}$ (because $\operatorname{supp}\eta_{k_{1}}\subseteq[-2^{k_{1}},2^{k_{1}}$]), we get
\[
\|H_{k_{1},k_{2}}\|_{L^{2}}\leqslant\|\mathcal{H}_{\sqrt{\epsilon}}G\|_{L^{2}}\leqslant C_{\psi}\|G\|_{L^{2}}\leqslant C_{\psi}A\,2^{k_{1}}\|h_{k_{1}}\|_{L^{2}}.
\]
Also, $\|h_{k_{1}}\|_{L^{2}}\leqslant\|h\|_{L^{\infty}}\|\eta_{k_{1}}\|_{L^{2}}\leqslant2^{\frac{k_{1}+1}{2}}$. Combining this with Steps 2 and 3,
\[
\begin{aligned}
\|H_{k_{1},k_{2}}\|_{Z}&\lesssim\epsilon^{2}2^{-3k_{2}}\|H_{k_{1},k_{2}}\|_{L^{1}}
\lesssim\epsilon^{2}2^{-\frac{5k_{2}}{2}}\|H_{k_{1},k_{2}}\|_{L^{2}} \\
&\lesssim_{A}\epsilon^{2}2^{-\frac{5k_{2}}{2}}2^{k_{1}}\|h_{k_{1}}\|_{L^{2}}
\lesssim\epsilon^{2}2^{-\frac{5k_{2}}{2}}2^{\frac{3k_{1}}{2}}.
\end{aligned}
\]
In the region $k_{2}\geqslant k_{1}-10$, i.e., $k_{1}\leqslant k_{2}+10$, we may bound $2^{\frac{3k_{1}}{2}}\leqslant2^{\frac{3k_{2}}{2}+15}$, hence
\[
\|H_{k_{1},k_{2}}\|_{Z}\lesssim_{A}\epsilon^{2}2^{-k_{2}}\qquad\text{whenever }k_{2}\geqslant k_{1}-10.
\]
Summing over this region, and writing $k_{2}=j_{0}+m$ (for a given $k_{2}$ the number of admissible indices $k_{1}\in[j_{0},\min(k_{2}+10,\frac{j_{0}}{2}+4)]$ is at most $m+11$),
\[
\sum_{\substack{k_{1},k_{2}\in[j_{0},\frac{j_{0}}{2}+4]\cap\mathbb{Z}\\k_{2}\geqslant k_{1}-10}}\|H_{k_{1},k_{2}}\|_{Z}
\lesssim\epsilon^{2}\sum_{m\geqslant0}(m+11)2^{-j_{0}-m}
=\epsilon^{2}2^{-j_{0}}\sum_{m\geqslant0}(m+11)2^{-m}\lesssim\epsilon,
\]
because $\sum_{m\geqslant0}(m+11)2^{-m}=\sum_{m\geqslant0}m2^{-m}+11\sum_{m\geqslant0}2^{-m}=2+22=24$ and $2^{-j_{0}}<2\epsilon^{-1}$.

\textit{Step 6 (the separated region $k_{2}\leqslant k_{1}-11$).} Here the two scales are well separated. First, $k_{2}\leqslant k_{1}-11$ together with $k_{2}\geqslant j_{0}$ forces $k_{1}\geqslant j_{0}+11>j_{0}$, so that $\eta_{k_{1}}$ is supported in the annulus $\{2^{k_{1}-2}\leqslant|\beta|\leqslant2^{k_{1}}\}$, whereas $\eta_{k_{2}}$ is supported in $\{|\alpha|\leqslant2^{k_{2}}\}\subseteq\{|\alpha|\leqslant2^{k_{1}-11}\}$; moreover
$2^{k_{1}}\leqslant2^{\frac{j_{0}}{2}+4}\leqslant16\sqrt{\epsilon}$, because $2^{j_{0}}\leqslant\epsilon$. Hence for $\alpha\in\operatorname{supp}\eta_{k_{2}}$ and $\beta\in\operatorname{supp}\eta_{k_{1}}$,
\[
|\alpha-\beta|\geqslant|\beta|-|\alpha|\geqslant2^{k_{1}-2}-2^{k_{1}-11}\geqslant2^{k_{1}-3},\qquad |f'(\beta)|\leqslant A|\beta|\leqslant A\,2^{k_{1}},
\]
where we again used $f'(0)=0$ and $\|f''\|_{L^{\infty}}\leqslant A$.

The functions $H_{k_{1},k_{2}}$ are summed over $k_{2}$ before being estimated: each of them carries the cutoff $\eta_{k_{2}}$, which oscillates on the scale $2^{k_{2}}\ll2^{k_{1}}$, so its $C^{1}$-norm is not bounded independently of $\epsilon$. Since $\eta_{j_{0}}=\psi_{2^{j_{0}}}$ and $\eta_{k}=\psi_{2^{k}}-\psi_{2^{k-1}}$ for $k>j_{0}$, the sum telescopes:
\[
\sum_{k_{2}=j_{0}}^{K}\eta_{k_{2}}=\psi_{2^{K}}\qquad\text{for every integer }K\geqslant j_{0}.
\]
Therefore, for each fixed $k_{1}\geqslant j_{0}+11$,
\[
\sum_{k_{2}=j_{0}}^{k_{1}-11}H_{k_{1},k_{2}}(\alpha)
=\psi_{2^{k_{1}-11}}(\alpha)\,\mathcal{T}(h\eta_{k_{1}})(\alpha)=:\mathcal{S}_{k_{1}}(\alpha).
\]
We claim that
\begin{equation}\label{e:Sk1}
\|\mathcal{S}_{k_{1}}\|_{Z}\lesssim_{A}\epsilon\qquad\text{for }j_{0}+11\leqslant k_{1}\leqslant\tfrac{j_{0}}{2}+4.
\end{equation}
Write
\[
\begin{aligned}
\mathcal{T}(h\eta_{k_{1}})(\alpha)&=\frac{1}{2\pi}\int_{\mathbb{R}}G(\beta)\,P(\alpha-\beta)\,d\beta,\\
G(\beta)&:=\frac{f'(\beta)}{1+f'(\beta)^{2}}\,h(\beta)\eta_{k_{1}}(\beta),\qquad P(\rho):=\frac{\psi_{\sqrt{\epsilon}}(\rho)}{\rho}.
\end{aligned}
\]
By $\|h\|_{L^{\infty}}\leqslant1$, by $|f'(\beta)|\leqslant A\,2^{k_{1}}$ on $\operatorname{supp}\eta_{k_{1}}$ and by
$|\operatorname{supp}\eta_{k_{1}}|\leqslant2^{k_{1}+1}$,
\[
\|G\|_{L^{1}}\leqslant A\,2^{2k_{1}+1}.
\]
On the set where $\psi_{2^{k_{1}-11}}(\alpha)\ne0$ we have $|\alpha|\leqslant2^{k_{1}-11}$ while
$|\beta|\geqslant2^{k_{1}-2}$ on $\operatorname{supp}G$, so $|\alpha-\beta|\geqslant2^{k_{1}-3}$; there the
kernel $P$ never meets its singularity at the origin and satisfies
\[
|P(\alpha-\beta)|\leqslant2^{-k_{1}+3},\qquad
|\partial_{\alpha}P(\alpha-\beta)|\leqslant2^{-2k_{1}+6}+\|\psi'\|_{L^{\infty}}\,\epsilon^{-\frac12}\,2^{-k_{1}+3}.
\]
Differentiation under the integral sign is therefore legitimate, $\mathcal{T}(h\eta_{k_{1}})$ is smooth there, and
\[
|\mathcal{T}(h\eta_{k_{1}})(\alpha)|\lesssim_{A}2^{2k_{1}}\cdot2^{-k_{1}}\lesssim_{A}2^{k_{1}},\qquad
|\partial_{\alpha}\mathcal{T}(h\eta_{k_{1}})(\alpha)|
\lesssim_{A}2^{2k_{1}}\Big(2^{-2k_{1}}+\epsilon^{-\frac12}2^{-k_{1}}\Big)\lesssim_{A}1,
\]
the last step by $2^{k_{1}}\leqslant16\sqrt{\epsilon}$. Multiplying by $\psi_{2^{k_{1}-11}}$, whose
gradient is bounded by $\|\psi'\|_{L^{\infty}}2^{-k_{1}+11}$, we get
\[
|\mathcal{S}_{k_{1}}(\alpha)|+|\partial_{\alpha}\mathcal{S}_{k_{1}}|
\lesssim_{A}2^{k_{1}}+2^{k_{1}}2^{-k_{1}+11}+1\lesssim_{A}1,
\]
the last step because $2^{k_{1}}\leqslant2^{\frac{j_{0}}{2}+4}\leqslant16\sqrt{\epsilon}\lesssim1$,
and \eqref{e:Sk1} follows from the \emph{first} estimate of Lemma \ref{lem:elem} applied on
$I_{\epsilon}:=(-\tfrac{\sqrt{\epsilon}}{8},\tfrac{\sqrt{\epsilon}}{8})$. Finally, the
number of admissible indices $k_{1}$ is at most $\frac{|j_{0}|}{2}+5\lesssim\log\frac1\epsilon$
(because $2^{j_{0}}>\frac{\epsilon}{2}$ gives $|j_{0}|\leqslant\log_{2}\frac{2}{\epsilon}$), whence
\[
\Big\|\sum_{k_{1}}\ \sum_{k_{2}=j_{0}}^{k_{1}-11}H_{k_{1},k_{2}}\Big\|_{Z}
\leqslant\sum_{k_{1}}\|\mathcal{S}_{k_{1}}\|_{Z}\lesssim_{A}\epsilon\log(\tfrac1\epsilon).
\]

\textit{Step 7 (conclusion).} Every pair $(k_{1},k_{2})$ satisfies exactly one of
$k_{2}\geqslant k_{1}-10$ and $k_{2}\leqslant k_{1}-11$, so Steps 5 and 6 (together with the
contribution of $T_{1,f}^{1}$, for which $\|T_{1,f}^{1}h\|_{Z}\lesssim\epsilon$ was established
above) give
\[
\|T_{1,f}h\|_{Z}\leqslant\|T_{1,f}^{1}h\|_{Z}+\Big\|\sum_{k_{1},k_{2}\in[j_{0},\frac{j_{0}}{2}+4]\cap\mathbb{Z}}H_{k_{1},k_{2}}\Big\|_{Z}
\lesssim_{A}\epsilon+\epsilon\log(\tfrac1\epsilon)\lesssim\epsilon\log(\tfrac1\epsilon).
\]
This proves the first bound.

For the second inequality in $(1)$, by linearity we may again assume
$\|h\|_{L^{\infty}(-2\sqrt{\epsilon},2\sqrt{\epsilon})}\leqslant1$; and since $M$ is symmetric in
$(f,g)$ (it only involves the sum $f(\alpha)+g(\alpha)$), the very same argument with $(f,g)$
replaced by $(g,f)$ yields the bound for $\|(M-T_{3,(g,f)})h\|_{Z}$. So it suffices to prove
\[
\|(M-T_{3,(f,g)})h\|_{Z}\lesssim_{A}\epsilon\log(\tfrac{1}{\epsilon}).
\]
We will use the two estimates of Lemma \ref{lem:elem} alternately: for every $w$,
\begin{equation}\label{e:Ztwo}
\|w\|_{Z}\lesssim\epsilon\|w\|_{C^{1}(I_{\epsilon})}
\qquad\text{or}\qquad
\|w\|_{Z}\lesssim\int_{I}|w(\beta)|\,\frac{\epsilon^{2}}{\epsilon^{3}+|\beta|^{3}}\,d\beta,
\qquad I:=(-\tfrac{\sqrt{\epsilon}}{8},\tfrac{\sqrt{\epsilon}}{8}).
\end{equation}
Throughout this proof we abbreviate
\[
\begin{aligned}
&s:=\frac{\sqrt{\epsilon}}{2^{6}},\qquad
\widetilde{a}(\alpha):=2\epsilon+f(\alpha)+g(\alpha),\\
&a(\alpha,\beta):=2\epsilon+f(\alpha)+g(\beta),\qquad
\delta(\alpha,\beta):=g(\beta)-g(\alpha),
\end{aligned}
\]
so that $a=\widetilde{a}+\delta$, and we write $\rho:=\alpha-\beta$. 

\textit{Step 1 (the parameters remain in the admissible range).} Fix $\alpha\in I$ and let $\beta$
be such that the kernels below do not vanish. The operator $T_{3,(f,g)}$ carries the factor
$\psi_{\sqrt{\epsilon}}(\rho)$ and $M$ carries $\psi_{s}(\rho)$, so $|\rho|\leqslant\sqrt{\epsilon}$ and hence
\[
|\beta|\leqslant|\alpha|+\sqrt{\epsilon}\leqslant\tfrac{9}{8}\sqrt{\epsilon}<2\sqrt{\epsilon};
\]
only the values of $h$ on $(-2\sqrt{\epsilon},2\sqrt{\epsilon})$ are therefore used, and
$|\alpha|,|\beta|\leqslant4\sqrt{\epsilon}$; also $\alpha\in I\subseteq(-\sqrt{\epsilon},\sqrt{\epsilon})$.
Hence \eqref{e:a.range}, \eqref{e:a.low} and \eqref{e:N.range.0} apply and give
\begin{equation}\label{e:arange}
\big|a(\alpha,\beta)\big|\leqslant(2+A)\epsilon,\quad
a(\alpha,\beta)\geqslant\frac{\epsilon}{2}\ \text{ if }|\rho|\leqslant\epsilon,\quad
\epsilon\leqslant\widetilde{a}(\alpha)\leqslant(2+A)\epsilon.
\end{equation}
When $|\rho|\geqslant\epsilon$ the kernel is controlled instead by
$|a|/(\rho^{2}+a^{2})\leqslant1/(2|\rho|)$, as recorded in
\eqref{e:a.low}. By the mean value theorem and
$g'(0)=0$, $\|g''\|_{L^{\infty}}\leqslant A$,
\begin{equation}\label{e:deltabd}
|g'(\alpha)|\leqslant A|\alpha|,\qquad |g'(\alpha)-g'(\beta)|\leqslant A|\rho|,\qquad
|\delta(\alpha,\beta)|\leqslant A\big(|\alpha|+|\rho|\big)|\rho|.
\end{equation}

\textit{Step 2 (exact splitting of the difference of the two kernels).} Put
\[
K_{M}(\alpha,\beta):=\frac{\widetilde{a}(\alpha)}{\rho^{2}+\widetilde{a}(\alpha)^{2}}\,\psi_{s}(\alpha)\psi_{s}(\rho),
\qquad
K_{T}(\alpha,\beta):=\frac{a(\alpha,\beta)}{\rho^{2}+a(\alpha,\beta)^{2}}\,\psi_{\sqrt{\epsilon}}(\rho),
\]
so that $(M-T_{3,(f,g)})h(\alpha)=\frac{1}{2\pi}\int_{\mathbb{R}}h(\beta)\,[K_{M}-K_{T}](\alpha,\beta)\,d\beta$.
A direct computation gives
\[
\frac{a}{\rho^{2}+a^{2}}-\frac{\widetilde{a}}{\rho^{2}+\widetilde{a}^{2}}
=\frac{a\big(\rho^{2}+\widetilde{a}^{2}\big)-\widetilde{a}\big(\rho^{2}+a^{2}\big)}
{(\rho^{2}+a^{2})(\rho^{2}+\widetilde{a}^{2})}
=\frac{(a-\widetilde{a})\big(\rho^{2}-a\widetilde{a}\big)}
{(\rho^{2}+a^{2})(\rho^{2}+\widetilde{a}^{2})}
=\delta\,Q,
\]
where
\[
Q=Q(\alpha,\beta):=\frac{\rho^{2}-a\widetilde{a}}{(\rho^{2}+a^{2})(\rho^{2}+\widetilde{a}^{2})}.
\]
Adding and subtracting $\frac{a}{\rho^{2}+a^{2}}\psi_{s}(\rho)$ we thus obtain the identity
\begin{equation}\label{e:split}
K_{M}-K_{T}=-\delta\,Q\,\psi_{s}(\rho)+\frac{a}{\rho^{2}+a^{2}}\,\Theta(\rho)
+\frac{\widetilde{a}}{\rho^{2}+\widetilde{a}^{2}}\,\psi_{s}(\rho)\,\big[\psi_{s}(\alpha)-1\big],
\qquad \Theta:=\psi_{s}-\psi_{\sqrt{\epsilon}},
\end{equation}
and accordingly we decompose $(M-T_{3,(f,g)})h=G_{1}+G_{2}+G_{3}$ with
\[
\begin{aligned}
G_{1}(\alpha)&:=-\frac{1}{2\pi}\int_{\mathbb{R}}h(\beta)\,\delta(\alpha,\beta)\,Q(\alpha,\beta)\,\psi_{s}(\alpha-\beta)\,d\beta,\\
G_{2}(\alpha)&:=\frac{1}{2\pi}\int_{\mathbb{R}}h(\beta)\,\frac{a(\alpha,\beta)}{\rho^{2}+a(\alpha,\beta)^{2}}\,\Theta(\alpha-\beta)\,d\beta,\\
G_{3}(\alpha)&:=\big[\psi_{s}(\alpha)-1\big]\,\frac{1}{2\pi}\int_{\mathbb{R}}h(\beta)\,
\frac{\widetilde{a}(\alpha)}{\rho^{2}+\widetilde{a}(\alpha)^{2}}\,\psi_{s}(\alpha-\beta)\,d\beta.
\end{aligned}
\]
Since both cutoffs equal $1$ there, $\Theta\equiv0$ on $\{|\rho|\leqslant\frac{s}{2}\}$; also $\|\Theta\|_{L^{\infty}}\leqslant1$ and $\|\Theta'\|_{L^{\infty}}\leqslant\|\psi'\|_{L^{\infty}}(\frac1s+\frac{1}{\sqrt{\epsilon}})\lesssim\frac1s$, and, by \eqref{e:arange},
\begin{equation}\label{e:Qbd}
|Q|\leqslant\frac{\rho^{2}+(2+A)^{2}\epsilon^{2}}{(\rho^{2}+\epsilon^{2})^{2}}\leqslant\frac{C_{A}}{\rho^{2}+\epsilon^{2}}.
\end{equation}
For the lower bound of the denominator we distinguish two cases: if
$|\rho|\leqslant\epsilon$, then $a\geqslant\frac{\epsilon}{2}$ and $\widetilde{a}\geqslant\epsilon$ by
\eqref{e:arange}, so that
$(\rho^{2}+a^{2})(\rho^{2}+\widetilde{a}^{2})\geqslant\frac14(\rho^{2}+\epsilon^{2})^{2}$;
if $|\rho|\geqslant\epsilon$, then
$\rho^{2}+a^{2}\geqslant\rho^{2}\geqslant\frac12(\rho^{2}+\epsilon^{2})$ and likewise for
$\widetilde{a}$, so that the product is at least $\frac14(\rho^{2}+\epsilon^{2})^{2}$ in this case
as well; in both cases $|Q|\lesssim_{A}(\rho^{2}+\epsilon^{2})^{-1}$ follows from
$|a|,|\widetilde{a}|\leqslant(2+A)\epsilon$ and
$|\rho^{2}-a\widetilde{a}|\leqslant\rho^{2}+(2+A)^{2}\epsilon^{2}$.

\textit{Step 3 (the term $G_{3}$: it vanishes near the origin).} Since
$\int_{\mathbb{R}}\frac{\widetilde{a}}{\rho^{2}+\widetilde{a}^{2}}d\rho=\pi$, we have
\[
0\leqslant\frac{1}{2\pi}\int_{\mathbb{R}}\frac{\widetilde{a}(\alpha)}{\rho^{2}+\widetilde{a}(\alpha)^{2}}\psi_{s}(\rho)\,d\rho\leqslant\frac12,
\]
so $|G_{3}(\alpha)|\leqslant\frac12$, while $\psi_{s}(\alpha)-1=0$ for $|\alpha|\leqslant\frac s2$. Hence,
by the second estimate in \eqref{e:Ztwo}, and using $\frac{\epsilon^{2}}{\epsilon^{3}+|\alpha|^{3}}\leqslant\frac{\epsilon^{2}}{|\alpha|^{3}}$
for $|\alpha|\geqslant\frac s2$ (recall $\frac s2=\frac{\sqrt{\epsilon}}{128}\geqslant\epsilon$ for $\epsilon$ small),
\[
\|G_{3}\|_{Z}\lesssim\int_{\{|\alpha|\geqslant\frac s2\}}\frac{\epsilon^{2}}{|\alpha|^{3}}\,d\alpha
=\frac{4\epsilon^{2}}{s^{2}}=2^{14}\,\epsilon.
\]

\textit{Step 4 (the term $G_{2}$: the cutoff mismatch is slowly varying).} We claim that
$\|G_{2}\|_{C^{1}(I_{\epsilon})}\lesssim_{A}\log\frac1\epsilon$; the first estimate in
\eqref{e:Ztwo} then gives $\|G_{2}\|_{Z}\lesssim_{A}\epsilon\log\frac1\epsilon$. Write $F(\alpha,\beta):=\frac{a\,\Theta(\rho)}{\rho^{2}+a^{2}}$ with
$\rho=\alpha-\beta$, so that $G_{2}(\alpha)=\frac{1}{2\pi}\int_{\mathbb{R}}h(\beta)F(\alpha,\beta)d\beta$. For fixed
$\alpha\in I$ the function $\beta\mapsto F(\alpha,\beta)$ is smooth and compactly supported (it vanishes unless
$\frac s2\leqslant|\rho|\leqslant\sqrt{\epsilon}$), uniformly in $\alpha$; since $h$ is merely bounded, we keep the
derivative on the kernel, i.e., we differentiate under the integral sign. Holding $\beta$ fixed we have
$\partial_{\alpha}a=f'(\alpha)$ and $\partial_{\alpha}\rho=1$, whence
\[
\partial_{\alpha}G_{2}(\alpha)=\frac{1}{2\pi}\int_{\mathbb{R}}h(\beta)\Big[F_{a}\,f'(\alpha)+F_{\rho}\Big]d\beta,
\]
with
\[
F_{a}=\Theta(\rho)\frac{\rho^{2}-a^{2}}{(\rho^{2}+a^{2})^{2}},\qquad
F_{\rho}=a\Big[\frac{\Theta'(\rho)}{\rho^{2}+a^{2}}-\frac{2\rho\,\Theta(\rho)}{(\rho^{2}+a^{2})^{2}}\Big].
\]
Using $|f'(\alpha)|\leqslant A|\alpha|\leqslant A\sqrt{\epsilon}$, the elementary bound
$\frac{|\rho^{2}-a^{2}|}{(\rho^{2}+a^{2})^{2}}\leqslant\frac{1}{\rho^{2}+a^{2}}$, the inclusion
$\operatorname{supp}\Theta\subseteq\{|\rho|\geqslant\frac s2\}$, the bound $\|\Theta'\|_{L^{\infty}}\lesssim\frac1s$,
the estimate $|a|\leqslant(2+A)\epsilon$ from \eqref{e:arange}, and
\[
\int_{\{|\rho|\geqslant\frac s2\}}\frac{d\rho}{\rho^{2}}=\frac{4}{s},\qquad
\int_{\{|\rho|\geqslant\frac s2\}}\frac{2|\rho|}{(\rho^{2}+a^{2})^{2}}\,d\rho\leqslant\int_{\{|\rho|\geqslant\frac s2\}}\frac{2\,d\rho}{|\rho|^{3}}=\frac{8}{s^{2}},
\]
we obtain
\[
\begin{aligned}
\int_{\mathbb{R}}|F|\,d\beta&\leqslant\int_{\{|\rho|\geqslant\frac s2\}}\frac{|a|}{\rho^{2}+a^{2}}\,d\rho
\leqslant\int_{\{|\rho|\geqslant\frac s2\}}\frac{d\rho}{2|\rho|}
\leqslant\frac12\log\frac{2\sqrt{\epsilon}}{s}\lesssim\log\frac1\epsilon,\\
\int_{\mathbb{R}}|F_{a}\,f'(\alpha)|\,d\beta&\leqslant A\sqrt{\epsilon}\int_{\{|\rho|\geqslant\frac s2\}}\frac{d\rho}{\rho^{2}}\lesssim_{A}1,\\
\int_{\mathbb{R}}|F_{\rho}|\,d\beta&\leqslant\frac{4|a|\,\|\Theta'\|_{L^{\infty}}}{s}+\frac{8|a|}{s^{2}}
\lesssim_{A}\frac{\epsilon}{s^{2}}\lesssim1.
\end{aligned}
\]
(The first line uses $|a|/(\rho^{2}+a^{2})\leqslant1/(2|\rho|)$, which holds for every real $a$ and
every $\rho\ne0$; no sign condition on $a$ is needed. The last line uses
$s^{2}=2^{-12}\epsilon$, so that $\frac{\epsilon}{s^{2}}=2^{12}$.) Therefore
$\|G_{2}\|_{C^{1}(I_{\epsilon})}\lesssim_{A}\log\frac1\epsilon$, and the first estimate in
\eqref{e:Ztwo} gives $\|G_{2}\|_{Z}\lesssim_{A}\epsilon\log\frac1\epsilon$. 

\textit{Step 5 (the term $G_{1}$).} We split $\delta=\delta_{1}+\delta_{2}$ with
\[
\delta_{1}(\alpha,\beta):=-g'(\alpha)\rho,\qquad
\delta_{2}(\alpha,\beta):=g(\beta)-g(\alpha)+g'(\alpha)\rho=g(\alpha-\rho)-g(\alpha)+g'(\alpha)\rho,
\]
so that indeed $\delta_{1}+\delta_{2}=g(\beta)-g(\alpha)=\delta$, and we write $G_{1}=G_{1,1}+G_{1,2}$
accordingly. By Taylor's theorem and \eqref{e:deltabd},
\begin{equation}\label{e:delta12}
|\delta_{1}|\leqslant A|\alpha|\,|\rho|,\qquad
|\delta_{2}|\leqslant\tfrac{A}{2}\rho^{2},\qquad
|\partial_{\alpha}\delta_{2}|\leqslant2A|\rho|,\qquad
|\partial_{\rho}\delta_{2}|\leqslant A|\rho|;
\end{equation}
indeed $\partial_{\alpha}\delta_{2}=g'(\alpha-\rho)-g'(\alpha)+g''(\alpha)\rho$ has modulus at most
$A|\rho|+A|\rho|$ and $\partial_{\rho}\delta_{2}=g'(\alpha)-g'(\alpha-\rho)$ has modulus at most $A|\rho|$,
both by \eqref{e:deltabd} and $\|g''\|_{L^{\infty}}\leqslant A$. 

\textit{Step 5a ($G_{1,1}$: a pointwise bound suffices).} By \eqref{e:Qbd} and \eqref{e:delta12},
\[
|G_{1,1}(\alpha)|\lesssim_{A}|\alpha|\int_{\{|\rho|\leqslant s\}}\frac{|\rho|}{\rho^{2}+\epsilon^{2}}\,d\rho
=|\alpha|\log\Big(\frac{s^{2}+\epsilon^{2}}{\epsilon^{2}}\Big)\lesssim|\alpha|\log(\tfrac1\epsilon),
\]
since $\frac{s^{2}+\epsilon^{2}}{\epsilon^{2}}=\frac{1}{2^{12}\epsilon}+1\leqslant\frac{1}{\epsilon}$ for $\epsilon$ small.
Hence, by the second estimate in \eqref{e:Ztwo} together with
\[
\int_{\mathbb{R}}|\alpha|\,\frac{\epsilon^{2}}{\epsilon^{3}+|\alpha|^{3}}\,d\alpha
=2\epsilon\int_{0}^{\infty}\frac{t}{1+t^{3}}\,dt\lesssim\epsilon
\]
(substituting $|\alpha|=\epsilon t$), which yields $\|G_{1,1}\|_{Z}\lesssim\epsilon\log(\frac1\epsilon)$. 

\textit{Step 5b ($G_{1,2}$: bounded in $C^{1}$ up to a logarithm).} We claim
$\|G_{1,2}\|_{C^{1}(I_{\epsilon})}\lesssim_{A}\log(\frac1\epsilon)$, which by the first estimate in \eqref{e:Ztwo} gives
$\|G_{1,2}\|_{Z}\lesssim_{A}\epsilon\log(\frac1\epsilon)$. First, by \eqref{e:Qbd} and $|\delta_{2}|\leqslant\frac{A}{2}\rho^{2}$,
\[
|G_{1,2}(\alpha)|\lesssim_{A}\int_{\{|\rho|\leqslant s\}}\frac{\rho^{2}}{\rho^{2}+\epsilon^{2}}\,d\rho\leqslant2s\lesssim\sqrt{\epsilon}.
\]
For the derivative we again keep it on the kernel; since $\rho=\alpha-\beta$, differentiating with respect to
$\alpha$ at fixed $\beta$ amounts to applying $(\partial_{\alpha}+\partial_{\rho})$ to the integrand written in the
variables $(\alpha,\rho)$. Thus, with $W:=\delta_{2}\,Q\,\psi_{s}(\rho)$,
\begin{equation}\label{e:dW}
\int_{\mathbb{R}}|\partial_{\alpha}W|\,d\beta\leqslant\int_{\mathbb{R}}
\Big[\,\big|(\partial_{\alpha}+\partial_{\rho})\delta_{2}\big|\,|Q|
+|\delta_{2}|\big(|\partial_{\alpha}Q|+|\partial_{\rho}Q|\big)
+|\delta_{2}|\,|Q|\,|\psi_{s}'(\rho)|\,\Big]d\rho.
\end{equation}
To bound $\partial_{\tau}Q$ for $\tau\in\{\alpha,\rho\}$ we differentiate $Q=N/D$ with
$N=\rho^{2}-a\widetilde{a}$ and $D=(\rho^{2}+a^{2})(\rho^{2}+\widetilde{a}^{2})$. By \eqref{e:arange},
by $|\alpha|\leqslant\sqrt{\epsilon}$ and by the two-case argument used for \eqref{e:Qbd} we have
$D\geqslant\frac14(\rho^{2}+\epsilon^{2})^{2}$, $|N|\leqslant C_{A}(\rho^{2}+\epsilon^{2})$,
$|\partial_{\tau}a|\leqslant A(\sqrt{\epsilon}+|\rho|)$, $|\partial_{\alpha}\widetilde{a}|\leqslant2A\sqrt{\epsilon}$,
$\partial_{\rho}\widetilde{a}=0$, and therefore
\[
|\partial_{\tau}N|\leqslant2|\rho|+C_{A}\epsilon\big(\sqrt{\epsilon}+|\rho|\big),\qquad
|\partial_{\tau}D|\leqslant C_{A}\big(|\rho|+\epsilon(\sqrt{\epsilon}+|\rho|)\big)(\rho^{2}+\epsilon^{2}),
\]
whence, using $\epsilon|\rho|\leqslant|\rho|$,
\begin{equation}\label{e:dQ}
|\partial_{\tau}Q|\leqslant\frac{|\partial_{\tau}N|}{D}+\frac{|N|\,|\partial_{\tau}D|}{D^{2}}
\leqslant C_{A}\frac{|\rho|}{(\rho^{2}+\epsilon^{2})^{2}}+C_{A}\frac{\epsilon\sqrt{\epsilon}}{(\rho^{2}+\epsilon^{2})^{2}}.
\end{equation}
Substituting \eqref{e:Qbd}, \eqref{e:delta12} and \eqref{e:dQ} into \eqref{e:dW}, and using
\[
\begin{aligned}
&\int_{\{|\rho|\leqslant s\}}\frac{|\rho|}{\rho^{2}+\epsilon^{2}}\,d\rho\lesssim\log(\tfrac1\epsilon)
\quad(\text{as in Step 5a}),\qquad
\int_{\{|\rho|\leqslant\epsilon\}}\frac{|\rho|^{3}}{(\rho^{2}+\epsilon^{2})^{2}}\,d\rho\lesssim1,\\
&\int_{\epsilon}^{s}\frac{d\rho}{\rho}\leqslant\log(\tfrac1\epsilon),\qquad
\epsilon\sqrt{\epsilon}\int_{\mathbb{R}}\frac{\rho^{2}\,d\rho}{(\rho^{2}+\epsilon^{2})^{2}}
=\epsilon\sqrt{\epsilon}\cdot\frac{\pi}{2\epsilon}\lesssim\sqrt{\epsilon},
\end{aligned}
\]
together with
\[
\int_{\mathbb{R}}|\delta_{2}|\,|Q|\,|\psi_{s}'(\rho)|\,d\rho
\lesssim_{A}\frac{1}{s}\Big|\big\{\tfrac s2\leqslant|\rho|\leqslant s\big\}\Big|\lesssim_{A}1
\]
(for $|\delta_{2}||Q|\leqslant\frac{A}{2}\rho^{2}\cdot\frac{C_{A}}{\rho^{2}+\epsilon^{2}}\leqslant\frac{AC_{A}}{2}$ and
$|\psi_{s}'|\lesssim\frac1s$), we arrive at $\|\partial_{\alpha}G_{1,2}\|_{L^{\infty}(I_{\epsilon})}\lesssim_{A}\log(\frac1\epsilon)$,
which proves the claim. 

\textit{Step 6 (conclusion).} Adding the four bounds and using $\epsilon\lesssim\epsilon\log\frac1\epsilon$,
\[
\|(M-T_{3,(f,g)})h\|_{Z}\leqslant\|G_{1,1}\|_{Z}+\|G_{1,2}\|_{Z}+\|G_{2}\|_{Z}+\|G_{3}\|_{Z}
\lesssim_{A}\epsilon\log(\tfrac1\epsilon),
\]
under the normalization $\|h\|_{L^{\infty}(-2\sqrt{\epsilon},2\sqrt{\epsilon})}\leqslant1$; the general case follows
by linearity. By the symmetry $(f,g)\leftrightarrow(g,f)$ the same bound holds for
$\|(M-T_{3,(g,f)})h\|_{Z}$, and the second inequality in $(1)$ is proved.

Finally we prove $(2)$. By linearity we may assume $\|h\|_{C^{1}((-2\sqrt{\epsilon},2\sqrt{\epsilon}))}\leqslant1$;
by symmetry it suffices to prove that
\[
\Big|\int_{(-\sqrt{\epsilon},\sqrt{\epsilon})}(H+T_{4,(g,f)}-T_{2,f})h(\alpha)\phi(\alpha)\,d\alpha\Big|\lesssim \epsilon\log(\frac{1}{\epsilon}),
\]
we write 
\[
\begin{split}
&[-T_{4,(g,f)}+T_{2,f}]h(\alpha)\\
=&\frac{1}{2\pi}\int_{\mathbb{R}}\frac{(\alpha-\beta)\,h(\beta)\psi_{\sqrt{\epsilon}}(\alpha-\beta)}
{[(\alpha-\beta)^2+(f(\alpha)-f(\beta))^2]\cdot[(\alpha-\beta)^2+(2\epsilon+f(\beta)+g(\alpha))^2]}\\
&\qquad\times\big[(2\epsilon+f(\alpha)+g(\alpha))^2+2(2\epsilon+f(\alpha)+g(\alpha))(f(\beta)-f(\alpha))\big]\,d\beta\\
=&J^1(\alpha)+J^2(\alpha),
\end{split}
\]
where
\[
\begin{split}
J^1(\alpha):=&\frac{1}{2\pi}\int_{\mathbb{R}}\frac{(\alpha-\beta)\big[2(2\epsilon+f(\alpha)+g(\alpha))(f(\beta)-f(\alpha))\big]}{[(\alpha-\beta)^2+(f(\alpha)-f(\beta))^2]\cdot[(\alpha-\beta)^2+(2\epsilon+f(\beta)+g(\alpha))^2]} \\
&h(\beta)\psi_{\sqrt{\epsilon}}(\alpha-\beta)\,d\beta,\\
J^2(\alpha):=&\frac{1}{2\pi}\int_{\mathbb{R}}\frac{(\alpha-\beta)(2\epsilon+f(\alpha)+g(\alpha))^2}{[(\alpha-\beta)^2+(f(\alpha)-f(\beta))^2]\cdot[(\alpha-\beta)^2+(2\epsilon+f(\beta)+g(\alpha))^2]} \\
&h(\beta)\psi_{\sqrt{\epsilon}}(\alpha-\beta)\,d\beta.
\end{split}
\]
For $J^{1}(\alpha)$ we put $\rho:=\alpha-\beta$, $\Delta:=f(\beta)-f(\alpha)$ and
$N:=2\epsilon+f(\alpha)+g(\alpha)$, so that $N=\big(2\epsilon+f(\beta)+g(\alpha)\big)-\Delta$; since
$|\alpha|,|\beta|\leqslant2\sqrt{\epsilon}$ here, \eqref{e:N.range.1} gives
$\epsilon\leqslant N\lesssim_{A}\epsilon$. As
$\psi_{\sqrt{\epsilon}}(\rho)=0$ for $|\rho|\geqslant\sqrt{\epsilon}$,
\[
|J^{1}(\alpha)|\lesssim\int_{\{|\rho|\leqslant\sqrt{\epsilon}\}}
\frac{|\rho|\,\epsilon\,|\Delta|}{(\rho^{2}+\Delta^{2})\big(\rho^{2}+(N+\Delta)^{2}\big)}\,d\rho.
\]
We use the difference quotient, valid because $f'(0)=0$ and
$\|f''\|_{L^{\infty}}\leqslant A$,
\[ |\Delta|\leqslant A\big(|\alpha|+|\rho|\big)|\rho| . \]
The weaker bound $|\Delta|\lesssim\sqrt{\epsilon}|\rho|$, which follows from
$|f'(\alpha)|\leqslant A|\alpha|$ alone, would give only $|J^{1}|\lesssim|\alpha|+\sqrt{\epsilon}$.
Split the integral into
$\mathcal{R}_{\leqslant}:=\{|\rho|\leqslant\epsilon\}$ and
$\mathcal{R}_{\geqslant}:=\{\epsilon\leqslant|\rho|\leqslant\sqrt{\epsilon}\}$.
On $\mathcal{R}_{\leqslant}$ we have $|\Delta|\leqslant A(|\alpha|+\epsilon)\epsilon
\leqslant3A\epsilon^{3/2}\leqslant\frac{N}{2}$ for $\epsilon$ small, so that
$\rho^{2}+(N+\Delta)^{2}\gtrsim\rho^{2}+\epsilon^{2}$ and $\rho^{2}+\Delta^{2}\geqslant\rho^{2}$; hence
\[
\int_{\mathcal{R}_{\leqslant}}\lesssim_{A}\epsilon\int_{\{|\rho|\leqslant\epsilon\}}
\frac{|\alpha|+|\rho|}{\rho^{2}+\epsilon^{2}}\,d\rho\lesssim|\alpha|+\epsilon.
\]
On $\mathcal{R}_{\geqslant}$ we keep only $\rho^{2}+\Delta^{2}\geqslant\rho^{2}$ and
$\rho^{2}+(N+\Delta)^{2}\geqslant\rho^{2}$, which gives
\[
\int_{\mathcal{R}_{\geqslant}}\lesssim_{A}\epsilon\int_{\{\epsilon\leqslant|\rho|\leqslant\sqrt{\epsilon}\}}
\frac{|\alpha|+|\rho|}{\rho^{2}}\,d\rho\lesssim|\alpha|+\epsilon\log(\tfrac1\epsilon).
\]
Therefore $|J^{1}(\alpha)|\lesssim_{A}|\alpha|+\epsilon\log(\frac1\epsilon)$.
Then we set
\[
J^3(\alpha):=\frac{1}{2\pi}\int_{\mathbb{R}}\frac{(2\epsilon+f(\alpha)+g(\alpha))^2}{[1+f'(\alpha)^2]\cdot[(\alpha-\beta)^2+(f(\alpha)+g(\alpha)+2\epsilon)^2]}\frac{h(\beta)}{\alpha-\beta}\psi_{\sqrt{\epsilon}}(\alpha-\beta)\,d\beta.
\]

At this point we compare $J^{2}$ with the Hilbert-type kernel $J^{3}$. Write
\[
\rho:=\alpha-\beta,\qquad N:=2\epsilon+f(\alpha)+g(\alpha),\qquad \Delta:=f(\beta)-f(\alpha),
\]
so that $2\epsilon+f(\beta)+g(\alpha)=N+\Delta$ and $(f(\alpha)-f(\beta))^{2}=\Delta^{2}=\widetilde{f}(\alpha,\beta)^{2}\rho^{2}$, whence
\[
J^{2}(\alpha)=\frac{1}{2\pi}\int_{\mathbb{R}}\frac{N^{2}}{\rho}\,\frac{h(\beta)\psi_{\sqrt{\epsilon}}(\rho)}{D_{2}}\,d\beta,
\qquad D_{2}:=\big[1+\widetilde{f}(\alpha,\beta)^{2}\big]\big[\rho^{2}+(N+\Delta)^{2}\big],
\]
while, by its definition,
\[
J^{3}(\alpha)=\frac{1}{2\pi}\int_{\mathbb{R}}\frac{N^{2}}{\rho}\,\frac{h(\beta)\psi_{\sqrt{\epsilon}}(\rho)}{D_{3}}\,d\beta,
\qquad D_{3}:=\big[1+f'(\alpha)^{2}\big]\big[\rho^{2}+N^{2}\big].
\]
For any $\alpha,\beta\in(-2\sqrt{\epsilon},2\sqrt{\epsilon})$ we claim that
\[
\begin{aligned}
|D_{2}-D_{3}|&=\Big|\big(\widetilde{f}(\alpha,\beta)^{2}-f'(\alpha)^{2}\big)\big[\rho^{2}+(N+\Delta)^{2}\big]
+\big(1+f'(\alpha)^{2}\big)\big[\Delta^{2}+2N\Delta\big]\Big|\\
&\lesssim_{A}\epsilon|\rho|\,(|\alpha|+|\beta|).
\end{aligned}
\]
The identity itself follows from $D_{2}-D_{3}=(1+\widetilde{f}^{2})[\rho^{2}+(N+\Delta)^{2}]-(1+f'^{2})[\rho^{2}+N^{2}]$ by adding and subtracting $(1+\widetilde f^{2})[\rho^{2}+N^{2}]$. For the estimate, note first that
\[
\big|\widetilde{f}(\alpha,\beta)-f'(\alpha)\big|=\Big|\frac{1}{\rho}\int_{\beta}^{\alpha}\big(f'(s)-f'(\alpha)\big)ds\Big|\leqslant\tfrac{A}{2}|\rho|,
\qquad
\big|\widetilde{f}(\alpha,\beta)\big|+\big|f'(\alpha)\big|\lesssim_{A}|\alpha|+|\beta|,
\]
using $f'(0)=0$ and $\|f''\|_{L^{\infty}}\leqslant A$; hence $|\widetilde{f}^{2}-f'^{2}|\lesssim_{A}|\rho|(|\alpha|+|\beta|)$. Since $|\rho|\leqslant\sqrt{\epsilon}$ and since \eqref{e:N.range.0} and \eqref{e:a.range}, applied to the two mixed sums $2\epsilon+f(\alpha)+g(\alpha)$ and $2\epsilon+g(\alpha)+f(\beta)$ whose arguments all lie in $(-4\sqrt{\epsilon},4\sqrt{\epsilon})$, give $\epsilon\leqslant N\lesssim_{A}\epsilon$ and $|N+\Delta|\lesssim_{A}\epsilon$, we have $\rho^{2}+(N+\Delta)^{2}\lesssim\epsilon$ and $1+f'(\alpha)^{2}\lesssim1$. Also, $|\Delta|\leqslant\sup|f'|\,|\rho|\lesssim_{A}(|\alpha|+|\rho|)|\rho|$ and $|\rho|\leqslant|\alpha|+|\beta|$, so
\[
|2N\Delta|\lesssim_{A}\epsilon\,(|\alpha|+|\rho|)|\rho|\lesssim\epsilon|\rho|(|\alpha|+|\beta|),
\qquad
|\Delta|^{2}\lesssim_{A}\epsilon\,\rho^{2}\lesssim\epsilon|\rho|(|\alpha|+|\beta|),
\]
which proves the claim. Next, each of $D_{2}$ and $D_{3}$ satisfies $\gtrsim\rho^{2}+\epsilon^{2}$: the first factors are $\geqslant1$, while $|N|\geqslant\epsilon$ by the diagonal hypothesis of Appendix~\ref{sec:singular}, which holds for every $|\alpha|\leqslant2\sqrt{\epsilon}$ and is noted in \eqref{e:N.range.1}, and as for the second factor we argue by cases: if $|\rho|\leqslant\epsilon$, then $N+\Delta=a(\alpha,\beta)\geqslant\frac{\epsilon}{2}$ by \eqref{e:arange}, so that $\rho^{2}+(N+\Delta)^{2}\gtrsim\epsilon^{2}\gtrsim\rho^{2}+\epsilon^{2}$; if instead $|\rho|\geqslant\epsilon$, then $\rho^{2}+(N+\Delta)^{2}\geqslant\rho^{2}\geqslant\frac12(\rho^{2}+\epsilon^{2})$, no matter what the sign of $N+\Delta$ is. Combining this lower bound with $|N^{2}|\lesssim_{A}\epsilon^{2}$, with the estimate for $|D_{2}-D_{3}|$, and with $\|h\|_{L^{\infty}}\leqslant1$, we obtain
\[
\begin{aligned}
|J^{2}(\alpha)-J^{3}(\alpha)|
&\lesssim\int_{\mathbb{R}}\frac{\epsilon^{2}\cdot\epsilon|\rho|\,(|\alpha|+|\beta|)}{(\rho^{2}+\epsilon^{2})^{2}}\frac{|h(\beta)|}{|\rho|}\psi_{\sqrt{\epsilon}}(\rho)\,d\beta\\
&\lesssim_{A}\epsilon^{3}\int_{\{|\rho|\leqslant\sqrt{\epsilon}\}}\frac{|\alpha|+|\rho|}{(\rho^{2}+\epsilon^{2})^{2}}\,d\rho
\lesssim|\alpha|+\epsilon,
\end{aligned}
\]
where we used $|\beta|=|\alpha-\rho|\leqslant|\alpha|+|\rho|$ together with the elementary bounds
\[
\int_{\{|\rho|\leqslant\sqrt{\epsilon}\}}\frac{d\rho}{(\rho^{2}+\epsilon^{2})^{2}}\lesssim\epsilon^{-3},
\qquad
\int_{\{|\rho|\leqslant\sqrt{\epsilon}\}}\frac{|\rho|\,d\rho}{(\rho^{2}+\epsilon^{2})^{2}}\leqslant\int_{\mathbb{R}}\frac{|\rho|\,d\rho}{(\rho^{2}+\epsilon^{2})^{2}}=\frac{1}{\epsilon^{2}}.
\]
(The first of these follows by splitting at $|\rho|=\epsilon$: the inner part contributes at most $\epsilon^{-4}\cdot\epsilon=\epsilon^{-3}$ and the outer part at most $\int_{\epsilon}^{\sqrt{\epsilon}}\rho^{-4}d\rho\leqslant\frac13\epsilon^{-3}$.)
Thus, for any $\phi\in\mathcal{C}$,
\[
\begin{aligned}
&\int_{(-\sqrt{\epsilon},\sqrt{\epsilon})}\Big(|J^{1}(\alpha)|+|J^{2}(\alpha)-J^{3}(\alpha)|\Big)\,|\phi(\alpha)|\,d\alpha \\
\lesssim&\int_{(-\sqrt{\epsilon},\sqrt{\epsilon})}\Big(|\alpha|+\epsilon\log(\tfrac1\epsilon)\Big)\frac{\epsilon}{\epsilon^{2}+\alpha^{2}}\,d\alpha \\
&\lesssim\epsilon\log(\tfrac1\epsilon)+\epsilon\log(\tfrac1\epsilon) \\
&\lesssim\epsilon\log(\frac{1}{\epsilon}).
\end{aligned}
\]
Thus, it suffices to prove that, for any $\phi\in\mathcal{C}$ supported in $(-\sqrt{\epsilon},\sqrt{\epsilon})$, 
\[
\Big|\int_{\mathbb{R}}[Hh(\alpha)-J^3(\alpha)]\phi(\alpha)\,d\alpha\Big|\lesssim \epsilon\log(\frac{1}{\epsilon}).
\]
Write $\widetilde a(\alpha):=2\epsilon+f(\alpha)+g(\alpha)$ and put $\rho:=\alpha-\beta$ (so that
$\beta=\alpha-\rho$), and recall $s=\sqrt{\epsilon}/2^{6}$. Both kernels
\[
\frac{1}{\rho}\frac{\widetilde a(\alpha)^{2}}{\rho^{2}+\widetilde a(\alpha)^{2}}\psi_{s}(\alpha)\psi_{s}(\rho),
\qquad
\frac{1}{\rho}\frac{\widetilde a(\alpha)^{2}}{(1+f'(\alpha)^{2})\big(\rho^{2}+\widetilde a(\alpha)^{2}\big)}\psi_{\sqrt\epsilon}(\rho)
\]
occurring in $Hh$ and in $J^{3}$ are \emph{odd} functions of $\rho$: each is $1/\rho$ times a
factor which is even in $\rho$. The principal value therefore annihilates the contributions
containing $h(\alpha)$, because the truncation is symmetric about $\rho=0$ and the integral of an
odd function over a symmetric interval vanishes; hence
\[
\begin{aligned}
\int_{\mathbb{R}}[Hh-J^{3}](\alpha)\phi(\alpha)\,d\alpha
&=\frac{1}{2\pi}\iint_{\mathbb{R}^{2}}
\frac{h(\alpha-\rho)-h(\alpha)}{\rho}\,
\frac{\widetilde a(\alpha)^{2}}{\rho^{2}+\widetilde a(\alpha)^{2}}\\
&\qquad\times\Big[\psi_{s}(\alpha)\psi_{s}(\rho)-\frac{\psi_{\sqrt\epsilon}(\rho)}{1+f'(\alpha)^{2}}\Big]
\phi(\alpha)\,d\rho\,d\alpha.
\end{aligned}
\]
Here
$\big|\frac{h(\alpha-\rho)-h(\alpha)}{\rho}\big|\leqslant\|h'\|_{L^{\infty}}$ and
$\big|\psi_{s}(\alpha)\psi_{s}(\rho)-\frac{\psi_{\sqrt\epsilon}(\rho)}{1+f'(\alpha)^{2}}\big|\leqslant2$; moreover
$\operatorname{supp}\psi_{\sqrt\epsilon}\subseteq[-2\sqrt{\epsilon},2\sqrt{\epsilon}]$ while, by \eqref{e:arange},
\[
\int_{\mathbb{R}}\frac{\widetilde a(\alpha)^{2}}{\rho^{2}+\widetilde a(\alpha)^{2}}\,d\rho=\pi\widetilde a(\alpha)\leqslant\pi(2+A)\epsilon,
\]
and $\int_{\mathbb{R}}|\phi(\alpha)|\,d\alpha\leqslant\int_{\mathbb{R}}\frac{\epsilon}{\epsilon^{2}+\alpha^{2}}d\alpha=\pi$. Hence
\[
\Big|\int_{\mathbb{R}}[Hh-J^{3}](\alpha)\phi(\alpha)\,d\alpha\Big|
\lesssim \|h\|_{C^{1}}\,\epsilon,
\]
which is even stronger than the required bound.
\end{proof}
Using Lemma \ref{lem:elem} and Lemma \ref{lem:key}, we now prove the key proposition of this
section.
\begin{proposition}\label{prop:key}
Assume that $F^{+}$, $F^{-}$, $G$, $\omega^{+}$, $\omega^{-}$ are of class $C^{1}$ on $(-2\sqrt{\epsilon},2\sqrt{\epsilon})$, that $E^{+},E^{-},E\in L^{2}((-\delta,\delta))$, and that they satisfy the following estimates
\[
\begin{split}
&\|F^+\|_{C^1((-2\sqrt{\epsilon},2\sqrt{\epsilon}))}+\|F^-\|_{C^1((-2\sqrt{\epsilon},2\sqrt{\epsilon}))}+\|G\|_{C^1((-2\sqrt{\epsilon},2\sqrt{\epsilon}))} \\
&+\|\omega^+\|_{C^1((-2\sqrt{\epsilon},2\sqrt{\epsilon}))}+\|\omega^-\|_{C^1((-2\sqrt{\epsilon},2\sqrt{\epsilon}))}\lesssim 1,\\
&\|E^+\|_{Z}+\|E^-\|_Z+\sup_{\phi\in\mathcal{C},\,\mathrm{supp}\,\phi\subseteq (-2\sqrt{\epsilon},2\sqrt{\epsilon})}\Big|\int_{\mathbb{R}}E\cdot\phi\,d\alpha\Big|\lesssim\epsilon\log(\frac{1}{\epsilon}).
\end{split}
\]
Also, assume that the following identities hold in $(-\sqrt{\epsilon},\sqrt{\epsilon})$:
\[
\begin{split}
F^+&=T_{1,f}\omega^+-T_{3,(f,g)}\omega^--(1/2)\omega^++E^+,\\
F^-&=-T_{3,(g,f)}\omega^++T_{1,g}\omega^-+(1/2)\omega^-+E^-,\\
G&=(T_{4,(g,f)}-T_{2,f})\omega^++(T_{4,(f,g)}-T_{2,g})\omega^-+E.
\end{split}
\]
Then
\[
|G(0)|\lesssim\epsilon\log(\frac{1}{\epsilon}).
\]
\end{proposition}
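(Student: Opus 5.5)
The plan is to follow the Fefferman--Ionescu--Lie mechanism: use the first two identities to show that $\omega^{\pm}$ are small in a weak sense, and then insert this information into the third identity, evaluated against a Poisson-type test function concentrated at the origin.

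\emph{Weak smallness of $\omega^{+}+\omega^{-}$.} Add the first two identities. Lemma \ref{lem:key}(1) allows $T_{1,f}\omega^{+}$ and $T_{1,g}\omega^{-}$ to be discarded in $Z$-norm at cost $\epsilon\log(1/\epsilon)$, since $\|\omega^{\pm}\|_{L^{\infty}}\lesssim1$. It also allows $T_{3,(f,g)}$ and $T_{3,(g,f)}$ to be replaced by the single operator $M$ at the same cost. This yields
\[
\tfrac12(\omega^{-}-\omega^{+})-M(\omega^{+}+\omega^{-})=F^{+}+F^{-}+\mathcal{E}_{1},
\qquad \|\mathcal{E}_{1}\|_{Z}\lesssim\epsilon\log(1/\epsilon).
\]
Subtracting the two identities gives
\[
-\tfrac12(\omega^{+}+\omega^{-})+M(\omega^{+}-\omega^{-})=F^{+}-F^{-}+\mathcal{E}_{2}.
\]
By Lemma \ref{lem:elem}(1), $\|F^{\pm}\|_{Z}\lesssim\epsilon$ because $F^{\pm}\in C^{1}$. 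Hence $\|(\tfrac12\pm M)(\omega^{+}\pm\omega^{-})\|_{Z}\lesssim\epsilon\log(1/\epsilon)$ for the appropriate signs.

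\emph{Inverting $\tfrac12\pm M$.} The operator $M$ is convolution against a Poisson kernel of width $\widetilde a(\alpha)\approx\epsilon$ (localized by $\psi_{s}(\alpha)$), and it has mass at most $\tfrac12$. I would try to invert $\tfrac12+M$ weakly through duality. By Lemma \ref{lem:elem}(2), $M^{*}$ maps $\mathcal C$ into $C_{M}\mathcal C$, so a Neumann-type or fixed-point argument on the dual side would show that $\|\omega^{+}+\omega^{-}\|_{Z}\lesssim\epsilon\log(1/\epsilon)$. The operator $\tfrac12-M$ is degenerate on near-constants, and this is the point where the $C^{1}$ control of $\omega^{\pm}$ must be used instead.

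\emph{Reading off $G(0)$.} For the third identity, Lemma \ref{lem:key}(2) replaces $(T_{4,(g,f)}-T_{2,f})$ and $(T_{4,(f,g)}-T_{2,g})$ by $-H$ when tested against $\phi\in\mathcal C$ supported in $(-\sqrt\epsilon,\sqrt\epsilon)$. This gives
\[
\int G\phi=-\int H(\omega^{+}+\omega^{-})\phi+O(\epsilon\log(1/\epsilon))
=-\int(\omega^{+}+\omega^{-})H^{*}\phi+O(\epsilon\log(1/\epsilon)).
\]
By the definition of $\|\cdot\|_{Z}$ and the previous step, the right-hand side is $O(\epsilon\log(1/\epsilon))$. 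Now choose $\phi(\alpha)=c\,\epsilon/(\epsilon^{2}+\alpha^{2})$, cut off smoothly at scale $\sqrt\epsilon$ and normalized so that it lies in $\mathcal C$; its integral is $\approx1$. Since $G\in C^{1}$, $\int G\phi=G(0)\int\phi+O(\int|\alpha||\phi|)=cG(0)+O(\epsilon\log(1/\epsilon))$, which gives $|G(0)|\lesssim\epsilon\log(1/\epsilon)$.

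\emph{Main obstacle.} The hardest step is the weak inversion that turns control of $(\tfrac12+M)(\omega^{+}+\omega^{-})$ in $Z$ into control of $\omega^{+}+\omega^{-}$ in $Z$. It needs a dual function $\chi$ with $(\tfrac12+M^{*})\chi=H^{*}\phi$ and $\chi$ in a bounded multiple of $\mathcal C$. The difficulty is that, near the origin, $H^{*}\phi$ has the size of an element of $\mathcal C$ rather than the size of $H^{*}$ of one. I would first attempt to write $H^{*}\phi=(\tfrac12+M^{*})\chi$ explicitly through the Fourier symbol of the Poisson kernel, frozen at $\alpha=0$, and treat the $\alpha$-dependence of $\widetilde a$ as a perturbation controlled by Lemma \ref{lem:elem}(2).
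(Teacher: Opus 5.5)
As written, your argument does not close, for two reasons. First, the claimed decoupling is wrong. After the replacements $T_{1}\to0$, $T_{3}\to M$, $F^{\pm}\to0$, each legitimate in $Z$ at cost $\epsilon\log\frac1\epsilon$, the two identities say $\omega^{+}+2M\omega^{-}\approx0$ and $\omega^{-}-2M\omega^{+}\approx0$ in $Z$. Put $u:=\omega^{+}+\omega^{-}$ and $v:=\omega^{+}-\omega^{-}$. Your two displays then read $-\frac12v-Mu\approx0$ and $-\frac12u+Mv\approx0$, so the sum and the difference stay coupled and the ``hence'' does not follow. What actually follows is $\|(I+4M^{2})\omega^{\pm}\|_{Z}\lesssim\epsilon\log\frac1\epsilon$. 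This uses, as in the paper, that $M$ is bounded on $Z$ modulo $\epsilon\log\frac1\epsilon$, by Lemmas \ref{lem:elem}(2) and \ref{lem:comm}. The degenerate operator $\frac12-M$ never occurs.

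Second, the weak inversion you single out as the main obstacle is left undone, and it is not routine: $2M$ has mass close to $1$, so no contracting Neumann series is available. The paper avoids inverting anything. It tests $G$ against $[I+4(M^{*})^{2}]\phi_{0}$ instead of $\phi_{0}$ and commutes $I+4M^{2}$ past $H$ with Lemma \ref{lem:comm}. It uses Lemma \ref{lem:elem}(2) to keep $(M^{*})^{2}\phi_{0}$ in a fixed multiple of $\mathcal{C}$ supported in $(-\frac{\sqrt\epsilon}{8},\frac{\sqrt\epsilon}{8})$. Finally, the nonnegativity of the kernel of $M^{*}$ gives $\int[I+4(M^{*})^{2}]\phi_{0}\geqslant\int\phi_{0}\gtrsim1$.

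Under the hypotheses as stated, however, your first two steps are unnecessary, and your third step becomes a complete proof after one observation. You invoke Lemma \ref{lem:elem}(1) for $F^{\pm}$ because they are $C^{1}$. The proposition also assumes $\|\omega^{\pm}\|_{C^{1}((-2\sqrt\epsilon,2\sqrt\epsilon))}\lesssim1$, so the same lemma gives $\|\omega^{+}+\omega^{-}\|_{Z}\lesssim\epsilon$ at once. Indeed, the kernel of $H$ is $(\alpha-\beta)^{-1}$ times an even weight of mass $\lesssim N(\alpha)\lesssim\epsilon$, whence $|H\omega^{\pm}|\lesssim\epsilon\|\omega^{\pm}\|_{C^{1}}$ pointwise.

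Lemma \ref{lem:key}(2), which the paper's proof also relies on, consumes exactly this $C^{1}$ bound, and the $E$-term is controlled by hypothesis. Hence $|\int G\phi|\lesssim\epsilon\log\frac1\epsilon$ for every $\phi\in\mathcal{C}$ supported in $(-\sqrt\epsilon,\sqrt\epsilon)$. Your truncated Poisson test function, or the paper's $\phi_{0}=(C\epsilon)^{-1}\psi_{\epsilon}$, then gives $|G(0)|\lesssim\epsilon\log\frac1\epsilon$ without using the $F^{\pm}$ identities at all.

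The Fefferman--Ionescu--Lie mechanism that the paper reproduces is built for the situation where $\omega^{\pm}$ is controlled only in a weak norm, so that the $F^{\pm}$ relations are the only source of smallness. With the $C^{1}$ control furnished here by Lemma \ref{om.C1.est}, the short route suffices.
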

\begin{proof}
Throughout the proof we regard $\omega^{\pm}$ as elements of $L^{2}((-\delta,\delta))$, extending
them $C^{1}$ and boundedly from $(-2\sqrt{\epsilon},2\sqrt{\epsilon})$ (recall
$2\sqrt{\epsilon}<\delta$ for $\epsilon$ small). Every kernel below carries
the factor $\psi_{\sqrt{\epsilon}}(\alpha-\beta)$ or $\psi_{\sqrt{\epsilon}/2^{6}}(\alpha-\beta)$,
so for $|\alpha|\leqslant\sqrt{\epsilon}$ the values of $T_{j}w(\alpha)$, $Hw(\alpha)$ and
$Mw(\alpha)$ depend only on the restriction of $w$ to $(-2\sqrt{\epsilon},2\sqrt{\epsilon})$,
and the extension can be chosen without increasing $\|w\|_{L^{\infty}}$.
Notice 
\[
\begin{aligned}
&\omega^{+}+2M\omega^{-} \\
=&2(M\omega^{-}-T_{3,(f,g)}\omega^{-})+\omega^{+}+2T_{3,(f,g)}\omega^{-} \\
=&2(M\omega^{-}-T_{3,(f,g)}\omega^{-})+2T_{1,f}\omega^{+}+2E^{+}-2F^{+},
\end{aligned}
\]
where the second line uses the identity for $F^{+}$ in the statement of Proposition
\ref{prop:key} in the form
$\omega^{+}+2T_{3,(f,g)}\omega^{-}=2T_{1,f}\omega^{+}+2E^{+}-2F^{+}$. Taking the $Z$-norm, we get
\[
\begin{aligned}
&\|\omega^{+}+2M\omega^{-}\|_{Z} \\
\lesssim&\|(M-T_{3,(f,g)})\omega^{-}\|_{Z}+\|T_{1,f}\omega^{+}\|_{Z}+\|E^{+}\|_{Z}+\epsilon\|F^{+}\|_{C^{1}} \\
\lesssim&\epsilon\log(\frac{1}{\epsilon}).
\end{aligned}
\]
Interchanging the roles of $\omega^{+}$ and $\omega^{-}$, we get
\[
\|\omega^{-}-2M\omega^{+}\|_{Z}\lesssim\epsilon\log(\frac{1}{\epsilon}).
\]
We will also use that $M$ is bounded on $Z$ up to an $\epsilon\log\frac1\epsilon$ error:
for every $g$ with $\|g\|_{L^{\infty}}\lesssim1$,
\begin{equation}\label{e:Mbdd}
\begin{aligned}
\|Mg\|_{Z}&=\sup_{\phi\in\mathcal{C}}\Big|\int_{\mathbb{R}}g\,M^{*}H^{*}\phi\,d\alpha\Big|\\
&\leqslant\sup_{\phi\in\mathcal{C}}\Big|\int_{\mathbb{R}}g\,H^{*}M^{*}\phi\,d\alpha\Big|
+\|g\|_{L^{\infty}}\sup_{\phi\in\mathcal{C}}\|(M^{*}H^{*}-H^{*}M^{*})\phi\|_{L^{1}}\\
&\lesssim\|g\|_{Z}+\epsilon\log(\tfrac1\epsilon),
\end{aligned}
\end{equation}
where we used $C_{M}^{-1}M^{*}\phi\in\mathcal{C}$ (Lemma \ref{lem:elem}) to bound the first
term by $C_{M}\|g\|_{Z}$, and the commutator lemma proved below to bound the second.
Then
\[
\begin{aligned}
&\|(I+4M^{2})\omega^{+}\|_{Z} \\
\leqslant&\|\omega^{+}+2M\omega^{-}\|_{Z}+\|-2M(-2M\omega^{+}+\omega^{-})\|_{Z} \\
\lesssim&\epsilon\log(\frac{1}{\epsilon}),
\end{aligned}
\]
because $(I+4M^{2})\omega^{+}=(\omega^{+}+2M\omega^{-})-2M(\omega^{-}-2M\omega^{+})$ and
$\|\omega^{-}-2M\omega^{+}\|_{Z}\lesssim\epsilon\log\frac1\epsilon$ by the previous relation.
The same argument with $\omega^{+}$ and $\omega^{-}$ interchanged gives
\[
\|(I+4M^{2})\omega^{-}\|_{Z}\lesssim\epsilon\log(\frac{1}{\epsilon}).
\]
As a result, for any $\phi\in\mathcal{C}$ we have 
\[
\Big|\int_{\mathbb{R}}(H\omega^++4HM^2\omega^+)\cdot\phi\,d\alpha\Big|+\Big|\int_{\mathbb{R}}(H\omega^-+4HM^2\omega^-)\cdot\phi\,d\alpha\Big|\lesssim\epsilon\log(\frac{1}{\epsilon}).
\]
Also,
\[
\begin{split}
&\Big|\int_{\mathbb{R}}(HM^2\omega^+-M^2H\omega^+)\cdot\phi\,d\alpha\Big| \\
\lesssim&\Big|\int_{\mathbb{R}}(HM-MH)M\omega^+\cdot\phi\,d\alpha\Big|+\Big|\int_{\mathbb{R}}M(HM-MH)\omega^+\cdot\phi\,d\alpha\Big|\\
\lesssim&\Big|\int_{\mathbb{R}}M\omega^+\cdot(M^\ast H^\ast-H^\ast M^\ast)\phi\,d\alpha\Big|+\Big|\int_{\mathbb{R}}\omega^+\cdot(M^\ast H^\ast-H^\ast M^\ast)M^\ast\phi\,d\alpha\Big|\\
\lesssim&\epsilon\log(\frac{1}{\epsilon}),
\end{split}
\]
where the last inequality follows from the commutator Lemma \ref{lem:comm} proved below. \\
The same argument with $\omega^{+}$ and $\omega^{-}$ interchanged gives
\[
\Big|\int_{\mathbb{R}}(HM^2\omega^--M^2H\omega^-)\cdot\phi\,d\alpha\Big|\lesssim\epsilon\log(\frac{1}{\epsilon}).
\]
Combining the last two relations, we obtain
\[
\begin{aligned}
&\Big|\int_{\mathbb{R}}H\omega^{+}[I+4(M^{*})^{2}]\phi\,d\alpha\Big|+\Big|\int_{\mathbb{R}}H\omega^{-}[I+4(M^{*})^{2}]\phi\,d\alpha\Big| \\
&\quad\leqslant\Big|\int_{\mathbb{R}}(H\omega^{+}+4HM^{2}\omega^{+})\phi+4(M^{2}H-HM^{2})\omega^{+}\,\phi\,d\alpha\Big| \\
&\qquad+\Big|\int_{\mathbb{R}}(H\omega^{-}+4HM^{2}\omega^{-})\phi+4(M^{2}H-HM^{2})\omega^{-}\,\phi\,d\alpha\Big| \\
&\quad\lesssim\epsilon\log(\frac{1}{\epsilon}),
\end{aligned}
\]
the equality in the first step being the identity
$H\omega^{\pm}[I+4(M^{\ast})^{2}]\phi=(H\omega^{\pm}+4HM^{2}\omega^{\pm})\phi
+4(M^{2}H-HM^{2})\omega^{\pm}\phi$, which is obtained by moving the adjoint
$(M^{\ast})^{2}$ onto $\omega^{\pm}$, and the last step combining the two
preceding bounds.
As a result, for any $\phi\in\mathcal{C}$ supported in $(-\frac{\sqrt{\epsilon}}{8},\frac{\sqrt{\epsilon}}{8})$,
\[
\begin{aligned}
&|\int_{\mathbb{R}}G[I+4(M^{*})^{2}]\phi| \\
\lesssim&\Big|\int_{\mathbb{R}}[(T_{4,(g,f)}-T_{2,f})\omega^++(T_{4,(f,g)}-T_{2,g})\omega^-]\cdot[I+4(M^\ast)^2]\phi\,d\alpha\Big| \\
&+\Big|\int_{\mathbb{R}}E\cdot[I+4(M^\ast)^2]\phi\,d\alpha\Big| \\
\lesssim&\epsilon\log(\frac{1}{\epsilon}).
\end{aligned}
\]
In the last step we applied Lemma \ref{lem:key}(2) with test function
$[I+4(M^\ast)^2]\phi$ and the assumption on $E$ with test function
$C^{-1}[I+4(M^\ast)^2]\phi$, where the factor $C$ depends only on $A$. This is
legitimate: by Lemma \ref{lem:elem}(2) applied twice, $C_{M}^{-1}M^{\ast}\phi\in\mathcal{C}$
and $C_{M}^{-2}(M^{\ast})^{2}\phi\in\mathcal{C}$, so $[I+4(M^\ast)^2]\phi$ belongs to
$\mathcal{C}$ up to multiplication by a constant depending only on $A$ (both estimates of
Lemma \ref{lem:key}(2), and the supremum in the assumption on $E$, are linear in the test
function), and its support is contained in
$(-\frac{\sqrt{\epsilon}}{8},\frac{\sqrt{\epsilon}}{8})\cup(-\frac{\sqrt{\epsilon}}{16},\frac{\sqrt{\epsilon}}{16})
=(-\frac{\sqrt{\epsilon}}{8},\frac{\sqrt{\epsilon}}{8})\subset(-\sqrt{\epsilon},\sqrt{\epsilon})$
by \eqref{e:M.support}. The $H$-part of Lemma \ref{lem:key}(2) is then absorbed by the
preceding relation: by the decomposition given above,
$\int_{\mathbb{R}}H\omega^{\pm}[I+4(M^\ast)^2]\phi\,d\alpha$ differs from
$\int_{\mathbb{R}}(H\omega^{\pm}+4HM^{2}\omega^{\pm})\phi\,d\alpha$ by
$4\int_{\mathbb{R}}(M^{2}H-HM^{2})\omega^{\pm}\phi\,d\alpha$, and this commutator term is
bounded by $\epsilon\log\frac1\epsilon$.
Since $G$ is $C^{1}$, we have $|G(\alpha)-G(0)|\lesssim|\alpha|$. For any $\phi\in\mathcal{C}$,
\[
\begin{aligned}
&|G(0)|\,\Big|\int_{\mathbb{R}}[1+4(M^{*})^{2}]\phi\,d\alpha\Big| \\
\lesssim&\Big|\int_{\mathbb{R}}G[I+4(M^{*})^{2}]\phi\,d\alpha\Big|+\int_{\mathbb{R}}|\alpha|\,\big|[I+4(M^{*})^{2}]\phi\big|\,d\alpha \\
&\lesssim\epsilon\log(\frac{1}{\epsilon})+\int_{-\frac{\sqrt{\epsilon}}{8}}^{\frac{\sqrt{\epsilon}}{8}}\frac{|\alpha|\,\epsilon}{\epsilon^{2}+\alpha^{2}}\,d\alpha \\
&\lesssim\epsilon\log(\frac{1}{\epsilon}).
\end{aligned}
\]
Now choose
\[
\phi_{0}:=(C\epsilon)^{-1}\psi_{\epsilon},\qquad
C:=\max\Big\{\sup_{\xi\in\mathbb{R}}|\psi(\xi)|(1+\xi^{2}),\,
\sup_{\xi\in\mathbb{R}}|\psi'(\xi)|(1+|\xi|^{3}),\,1\Big\},
\]
a finite quantity because $\psi$ is compactly supported. Then $\phi_{0}\in C_{0}^{1}((-\delta,\delta))$ (its support is $[-\epsilon,\epsilon]\subseteq(-\delta,\delta)$ for
$\epsilon$ small), and for every $\alpha$, writing $\xi:=\alpha/\epsilon$,
\[
\begin{aligned}
|\phi_{0}(\alpha)|&=\frac{|\psi(\xi)|}{C\epsilon}
\leqslant \frac{1}{\epsilon(1+\xi^{2})}=\frac{\epsilon}{\epsilon^{2}+\alpha^{2}},\\
|\phi_{0}'(\alpha)|&=\frac{|\psi'(\xi)|}{C\epsilon^{2}}
\leqslant \frac{1}{\epsilon^{2}(1+|\xi|^{3})}=\frac{\epsilon}{\epsilon^{3}+|\alpha|^{3}},
\end{aligned}
\]
so $\phi_{0}\in\mathcal{C}$. The kernel of $M^{\ast}$ is nonnegative (it is
$\frac{1}{2\pi}\frac{N(\alpha)}{(\alpha-\beta)^{2}+N(\alpha)^{2}}\psi_{s}(\alpha)\psi_{s}(\alpha-\beta)$ with
$N>0$), hence $(M^{\ast})^{2}\phi_{0}\geqslant0$ pointwise and
\[
\Big|\int_{\mathbb{R}}[I+4(M^\ast)^2]\phi_0\,d\alpha\Big|\geqslant\int_{\mathbb{R}}\phi_{0}\,d\alpha=\frac{1}{C}\int_{\mathbb{R}}\psi(\xi)\,d\xi\gtrsim 1.
\]
Dividing the previous relation by this quantity gives $|G(0)|\lesssim\epsilon\log\frac1\epsilon$, as claimed.
\end{proof}
Finally, we prove the commutator lemma used above.
\begin{lemma}\label{lem:comm}
If $\phi\in \mathcal{C}$ then
\[
\|(H^\ast M^\ast-M^\ast H^\ast)\phi\|_{L^1}\lesssim \epsilon\log(\frac{1}{\epsilon}).
\]
\end{lemma}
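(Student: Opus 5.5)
We compute the commutator kernel explicitly and show it is small enough, after integration against the weight of $\mathcal{C}$, to give $L^{1}$ norm $\epsilon\log\frac1\epsilon$. Write $s=\sqrt{\epsilon}/2^{6}$ and $N(\alpha)=2\epsilon+f(\alpha)+g(\alpha)$. The kernels are
\[
k_{H}(\alpha,\beta)=\frac{1}{2\pi}\frac{N(\alpha)^{2}}{(\alpha-\beta)\big((\alpha-\beta)^{2}+N(\alpha)^{2}\big)}\psi_{s}(\alpha)\psi_{s}(\alpha-\beta),\qquad
k_{M}(\alpha,\beta)=\frac{1}{2\pi}\frac{N(\alpha)}{(\alpha-\beta)^{2}+N(\alpha)^{2}}\psi_{s}(\alpha)\psi_{s}(\alpha-\beta).
\]
Freeze $N$ at a constant $\nu$ and drop the cutoffs. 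The two kernels are then convolution kernels: the Fourier multiplier of $k_{M}$ is $\tfrac12 e^{-\nu|\xi|}$, and that of $k_{H}$ is of the form $c\,\mathrm{sgn}(\xi)(1-e^{-\nu|\xi|})$. These multipliers commute exactly, so the commutator $H^{*}M^{*}-M^{*}H^{*}$ comes entirely from three sources. First, the variation of $N(\alpha)$, which carries $|N'(\alpha)|\le 2A|\alpha|\le 2As$. Second, the cutoff $\psi_{s}(\alpha)$. Third, the cutoff $\psi_{s}(\alpha-\beta)$.

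Accordingly I would decompose $H^{*}=H_{0}^{*}+R_{H}$ and $M^{*}=M_{0}^{*}+R_{M}$. Here $H_{0}^{*}$ and $M_{0}^{*}$ are the adjoints with $N(\alpha)$ replaced by the frozen value $N(\beta)$ at the output variable (equivalently, after the transposition, at the point where $\phi$ is integrated), keeping the cutoff $\psi_{s}(\rho)$. The commutator then splits as
\[
[H_{0}^{*},M_{0}^{*}]\phi+(\text{terms containing at least one } R).
\]

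\emph{Step 1: the frozen commutator.} The frozen part is not an exact convolution, because the frozen value depends on the point. Writing $\nu(\beta)$, the commutator of two operators $P_{\nu(\beta)}$ and $Q_{\nu(\beta)}$ equals the commutator of the exact convolutions plus terms with difference quotients $\nu(\beta)-\nu(\beta')$. These are bounded by $A(|\beta|+|\beta'|)|\beta-\beta'|$. Combined with $|\partial_{\nu}k_{M}|\lesssim(\rho^{2}+\epsilon^{2})^{-1}$ and the analogous bound $|\partial_{\nu}k_{H}|\lesssim\epsilon|\rho|^{-1}(\rho^{2}+\epsilon^{2})^{-1}$, they give a pointwise bound of size $\sqrt{\epsilon}\,\cdot$(Poisson-type kernel)$\,\cdot\log\frac1\epsilon$ acting on $|\phi|\le\epsilon/(\epsilon^{2}+\alpha^{2})$. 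The factor $|\beta|$ is the gain, and integrating $|\beta|\epsilon/(\epsilon^{2}+\beta^{2})$ over $|\beta|\le 2s$ produces the $\epsilon\log\frac1\epsilon$.

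The cutoff $\psi_{s}(\rho)$ destroys exact commutation only at $|\rho|\sim s$. There $|k_{M}|\lesssim\epsilon/s^{2}\lesssim1$ and $|k_{H}|\lesssim\epsilon^{2}/s^{3}$, so these contributions are $O(\epsilon)$ in $L^{1}$ after using $\|\phi\|_{L^{1}}\le\pi$. I would estimate them directly rather than through Fourier analysis.

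\emph{Step 2: the $\psi_{s}(\alpha)$ factors.} In the adjoint these become multiplication by $\psi_{s}$ at the inner variable. Its commutator with either operator produces kernels with $\psi_{s}(\alpha)-\psi_{s}(\alpha')$, which is bounded by $\min(1,|\alpha-\alpha'|/s)$. Against $k_{M}$ this gives $\int|\rho|/s\cdot\epsilon/(\rho^{2}+\epsilon^{2})\,d\rho\lesssim(\epsilon/s)\log\frac1\epsilon$. Against $k_{H}$ the factor $|\rho|/s$ cancels the singularity, leaving $\epsilon^{2}/(s(\rho^{2}+\epsilon^{2}))$, which integrates to $\epsilon/s$. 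Both contributions are supported where $\psi_{s}'\neq0$, that is $|\alpha|\sim s$, and there $|\phi|\lesssim\epsilon/s^{2}$. So the $L^{1}$ contribution is at most $s\cdot\epsilon s^{-2}\cdot(\epsilon/s)\log\frac1\epsilon\lesssim\epsilon\log\frac1\epsilon$.

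\emph{Main obstacle.} The hard step is the residual singular term in Step 1 coming from $H$. Its kernel has the singularity $1/\rho$ only on $|\rho|\lesssim\epsilon$, and the difference of frozen values $N(\alpha)-N(\beta)$ must be used to gain a full factor $|\rho|(|\alpha|+|\rho|)$ so that the p.v. is not needed. I would handle this with the cancellation trick of Lemma \ref{lem:elem}: subtract $\phi(\beta)$ and use $|\phi'|\le\epsilon/(\epsilon^{3}+|\beta|^{3})$ on $|\rho|\le\kappa=(|\beta|+\epsilon)/10$, and use a crude dyadic bound for $|\rho|>\kappa$. This matches the split $H^{*}_{\le\kappa}+H^{*}_{\ge\kappa}$ used there. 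Collecting the estimates gives $\|(H^{*}M^{*}-M^{*}H^{*})\phi\|_{L^{1}}\lesssim_{A}\epsilon\log\frac1\epsilon$, uniformly in $\phi\in\mathcal{C}$.
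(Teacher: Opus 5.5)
Your frozen-coefficient heuristic is right in principle: with $N$ constant, both adjoints are convolutions and commute exactly, and the truncation $\psi_s(\alpha-\beta)$ is no source of error, since it is part of a convolution kernel. The problem is that the estimates you attach to the error terms do not reach $\epsilon\log\frac1\epsilon$, and the cancellation that is actually needed is missing.

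Write $\alpha$ for the output, $\mu$ for the intermediate and $\beta$ for the input variable. In $M_0^*H_0^*$, replacing $N(\mu)$ by $N(\alpha)$ in the $H$-factor costs $|N(\mu)-N(\alpha)|\lesssim(|\alpha|+|\mu|)|\mu-\alpha|$. That is a length of the $M$-leg, and it multiplies $\partial_\nu k_H(u)=2\nu u/(u^2+\nu^2)^2$ with $u=\beta-\mu$, so nothing compensates the $H$-leg scale. With your bound $|\partial_\nu k_H|\lesssim\epsilon|u|^{-1}(u^2+\epsilon^2)^{-1}$ the $u$-integral even diverges at $u=0$. With the correct bound, absolute values still give a kernel of order $1$ for $|\beta|\lesssim\epsilon\ll|\alpha|\leqslant s$: the region $|\mu|\lesssim\epsilon$ alone contributes $\frac{\epsilon}{\alpha^2}\cdot\alpha^2\cdot\frac1\epsilon$, where the $\alpha^2$ is the quadratic part of $N(0)-N(\alpha)=-(f+g)(\alpha)$. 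This yields only an $L^1$ bound of order $s\approx\sqrt\epsilon$. Your own summary ``$\sqrt\epsilon\cdot$(Poisson)$\cdot\log\frac1\epsilon$'' likewise gives only $\sqrt\epsilon\log\frac1\epsilon$.

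The true size is $\epsilon/|\alpha|$. It comes from the oddness of $\partial_\nu k_H$ in $u$ combined with the smoothness in $u$ of the remaining $M$-factor. Subtracting the central value of $\phi$, as in Lemma \ref{lem:elem}, uses the oddness once, inside the $\beta$-integral. You then integrate $\partial_\nu H\phi$ against $|k_M|\,|N(\mu)-N(\alpha)|$ in absolute value, which discards its zero mean, and you land at $\sqrt\epsilon$ again. The ``terms containing at least one $R$'' are never estimated and have the same problem. In general $\|R_M\phi\|_{L^1}$ is of order $s$, again because of $(f+g)''$, while $H_0^*$ is not bounded on $L^1$.

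Step 2 is also incorrect as stated. The kernel $(\psi_s(\mu)-\psi_s(\beta))k_M$ does not vanish when $\beta$, where $\phi$ is evaluated, is near $0$ and $s/2<|\mu|<s$. There $k_M\approx\epsilon/s^2\approx1$, so this commutator applied to $\phi$ has $L^1$ norm of order $s$. Your bound $|\phi|\lesssim\epsilon/s^2$ is applied at the wrong variable, and the smallness appears only after composing with the other factor.

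The missing idea is the paper's organisation of the commutator kernel. Write it with the common factor $\rho^{-1}\psi_s(\rho)\psi_s(b-\rho)$, where $\rho$ is the length of the $H$-leg and $b=\beta-\alpha$, and compare the two products at fixed $\rho$.
\begin{itemize}
\item Moving $N$ by the distance $\rho$, either in the $H$-factor ($\widetilde K_1$: $\alpha+\rho\to\alpha$) or in the $M$-factor ($\widetilde K_3$: $\beta\to\beta-\rho$), produces a factor $|\rho|$ that cancels $\rho^{-1}$.
\item The unavoidable remainder $F(\alpha,\rho)-F(\beta,\rho)$ ($\widetilde K_2$) carries $|b|$ instead. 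It is handled by splitting at $\kappa=(|b|+\epsilon)/10$ and subtracting the kernel with the $M$-factor frozen at $\rho=0$. That kernel is even in $\rho$, hence annihilated by $\rho^{-1}\psi_\kappa(\rho)$.
\item Cutoff derivatives are absorbed because $\psi_s'(x)\neq0$ forces $|x|\geqslant s/2$, so $\epsilon^2/s\lesssim\epsilon(|\alpha|+|\beta|+|\rho|)$.
\end{itemize}
This gives the pointwise bound $|K(\alpha,\beta)|\lesssim\epsilon(|\alpha|+|\beta|+\epsilon)/(b^2+\epsilon^2)$. After that only $|\phi|\leqslant\epsilon/(\epsilon^2+\beta^2)$ is needed, never $\phi'$.
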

\begin{proof}
By definition, we can rewrite,
\[
(H^\ast M^\ast-M^\ast H^\ast)\phi(\alpha)=\frac{1}{4\pi^2}\int_{\mathbb{R}}K(\alpha,\beta)\phi(\beta)\,d\beta,
\]
where, writing $N(\theta):=2\epsilon+f(\theta)+g(\theta)$,
\[
\begin{split}
K(\alpha,\beta):&=\int_{\mathbb{R}}\frac{\psi_{\sqrt\epsilon/2^6}(\mu-\alpha)}{\mu-\alpha}\frac{N(\mu)^2\psi_{\sqrt\epsilon/2^6}(\mu)}{(\mu-\alpha)^2+N(\mu)^2}
\cdot\frac{N(\beta)\psi_{\sqrt\epsilon/2^6}(\beta)}{(\beta-\mu)^2+N(\beta)^2}\psi_{\sqrt\epsilon/2^6}(\beta-\mu)\,d\mu\\
&\quad-\int_{\mathbb{R}}\frac{N(\mu)\psi_{\sqrt\epsilon/2^6}(\mu)}{(\mu-\alpha)^2+N(\mu)^2}\psi_{\sqrt\epsilon/2^6}(\mu-\alpha)\cdot\frac{\psi_{\sqrt\epsilon/2^6}(\beta-\mu)}{\beta-\mu}\frac{N(\beta)^2\psi_{\sqrt\epsilon/2^6}(\beta)}{(\beta-\mu)^2+N(\beta)^2}\,d\mu.
\end{split}
\]
The first integral is the kernel of $H^\ast M^\ast$. Indeed the kernel of $H$ is
\[
\frac{1}{2\pi}\frac{1}{\alpha-\beta}\frac{N(\alpha)^2}{(\alpha-\beta)^2+N(\alpha)^2}\psi_s(\alpha)\psi_s(\alpha-\beta),
\]
that of $M$ is
\[
\frac{1}{2\pi}\frac{N(\alpha)}{(\alpha-\beta)^2+N(\alpha)^2}\psi_s(\alpha)\psi_s(\alpha-\beta),
\]
the kernel of $H^\ast$ is its transpose, and composing the two gives
\[
\begin{aligned}
&\frac{1}{4\pi^2}\int\frac{1}{\mu-\alpha}\frac{N(\mu)^2}{(\mu-\alpha)^2+N(\mu)^2}\psi_s(\mu)\psi_s(\mu-\alpha)\\
&\qquad\times\frac{N(\beta)}{(\beta-\mu)^2+N(\beta)^2}\psi_s(\beta)\psi_s(\beta-\mu)\,d\mu;
\end{aligned}
\]
the second integral is obtained in the same way from $M^\ast H^\ast$.

In the first integral we change variables $\mu=\alpha+\rho$, and in the second one
$\mu=\beta-\rho$; both produce the common factor
$\frac{\psi_{\sqrt\epsilon/2^6}(\rho)}{\rho}\psi_{\sqrt\epsilon/2^6}(\beta-\alpha-\rho)$, so that
\[
K(\alpha,\beta)=\int_{\mathbb{R}}\frac{\psi_{\sqrt\epsilon/2^6}(\rho)}{\rho}\psi_{\sqrt\epsilon/2^6}(\beta-\alpha-\rho)\widetilde{K}(\alpha,\beta;\rho)\,d\rho,
\]
where
\[
\begin{split}
\widetilde{K}(\alpha,\beta;\rho):&=\frac{(2\epsilon+f(\alpha+\rho)+g(\alpha+\rho))^2\psi_{\sqrt\epsilon/2^6}(\alpha+\rho)}{\rho^2+(2\epsilon+f(\alpha+\rho)+g(\alpha+\rho))^2}\\
&\quad\times\frac{(2\epsilon+f(\beta)+g(\beta))\psi_{\sqrt\epsilon/2^6}(\beta)}{(\beta-\alpha-\rho)^2+(2\epsilon+f(\beta)+g(\beta))^2}\\
&\quad-\frac{(2\epsilon+f(\beta)+g(\beta))^2\psi_{\sqrt\epsilon/2^6}(\beta)}{\rho^2+(2\epsilon+f(\beta)+g(\beta))^2}\\
&\qquad\times\frac{(2\epsilon+f(\beta-\rho)+g(\beta-\rho))\psi_{\sqrt\epsilon/2^6}(\beta-\rho)}{(\beta-\alpha-\rho)^2+(2\epsilon+f(\beta-\rho)+g(\beta-\rho))^2}.
\end{split}
\]
We claim that for any $\alpha,\beta\in\mathbb{R}$,
\[
|K(\alpha,\beta)|\lesssim \psi_{\sqrt\epsilon}(\beta)\psi_{\sqrt\epsilon}(\alpha)\frac{\epsilon(|\alpha|+|\beta|+\epsilon)}{(\beta-\alpha)^2+\epsilon^2}.
\]
Then for any $\phi\in\mathcal{C}$ we estimate
\[
\Big\|\int_{\mathbb{R}}\phi(\beta)K(\alpha,\beta)\,d\beta\Big\|_{L^1_\alpha}\lesssim \int_{\mathbb{R}}\int_{\mathbb{R}}\psi_{\sqrt\epsilon}(\beta)\psi_{\sqrt\epsilon}(\alpha)\frac{\epsilon(|\alpha|+|\beta|+\epsilon)}{(\beta-\alpha)^2+\epsilon^2}\cdot\frac{\epsilon}{\epsilon^2+\beta^2}\,d\beta d\alpha\lesssim \epsilon\log(\frac{1}{\epsilon}),
\]
as desired.\par\noindent
It remains to prove the claim. We write 
\[
\begin{split}
\widetilde{K}(\alpha,\beta;\rho)&=\widetilde{K}_1(\alpha,\beta;\rho)+\widetilde{K}_2(\alpha,\beta;\rho)+\widetilde{K}_3(\alpha,\beta;\rho),\\
\widetilde{K}_1(\alpha,\beta;\rho):&=\frac{(2\epsilon+f(\beta)+g(\beta))\psi_{\sqrt\epsilon/2^6}(\beta)}{(\beta-\alpha-\rho)^2+(2\epsilon+f(\beta)+g(\beta))^2} \\
&\Big[\frac{(2\epsilon+f(\alpha+\rho)+g(\alpha+\rho))^2\psi_{\sqrt\epsilon/2^6}(\alpha+\rho)}{\rho^2+(2\epsilon+f(\alpha+\rho)+g(\alpha+\rho))^2}-\frac{(2\epsilon+f(\alpha)+g(\alpha))^2\psi_{\sqrt\epsilon/2^6}(\alpha)}{\rho^2+(2\epsilon+f(\alpha)+g(\alpha))^2}\Big],\\
\widetilde{K}_2(\alpha,\beta;\rho):&=\frac{(2\epsilon+f(\beta)+g(\beta))\psi_{\sqrt\epsilon/2^6}(\beta)}{(\beta-\alpha-\rho)^2+(2\epsilon+f(\beta)+g(\beta))^2} \\
&\Big[\frac{(2\epsilon+f(\alpha)+g(\alpha))^2\psi_{\sqrt\epsilon/2^6}(\alpha)}{\rho^2+(2\epsilon+f(\alpha)+g(\alpha))^2}-\frac{(2\epsilon+f(\beta)+g(\beta))^2\psi_{\sqrt\epsilon/2^6}(\beta)}{\rho^2+(2\epsilon+f(\beta)+g(\beta))^2}\Big],\\
\widetilde{K}_3(\alpha,\beta;\rho):&=\frac{(2\epsilon+f(\beta)+g(\beta))^2\psi_{\sqrt\epsilon/2^6}(\beta)}{\rho^2+(2\epsilon+f(\beta)+g(\beta))^2} \\
&\quad\times\Big[\frac{(2\epsilon+f(\beta)+g(\beta))\psi_{\sqrt\epsilon/2^6}(\beta)}{(\beta-\alpha-\rho)^2+(2\epsilon+f(\beta)+g(\beta))^2}\\
&\qquad-\frac{(2\epsilon+f(\beta-\rho)+g(\beta-\rho))\psi_{\sqrt\epsilon/2^6}(\beta-\rho)}{(\beta-\alpha-\rho)^2+(2\epsilon+f(\beta-\rho)+g(\beta-\rho))^2}\Big].
\end{split}
\]
For $\widetilde{K}_1$ and $\widetilde{K}_3$, we have the following estimates, valid for $\rho,\alpha-\beta-\rho\in(-\sqrt{\epsilon},\sqrt{\epsilon})$:
\[
\begin{split}
&|\widetilde{K}_1(\alpha,\beta;\rho)|\lesssim \frac{\epsilon\psi_{\sqrt\epsilon/2^6}(\beta)}{(\beta-\alpha-\rho)^2+\epsilon^2}\cdot \frac{|\rho|\epsilon(|\alpha|+|\rho|+\epsilon)}{\rho^2+\epsilon^2},\\
&|\widetilde{K}_3(\alpha,\beta;\rho)|\lesssim \frac{\epsilon^2\psi_{\sqrt\epsilon/2^6}(\beta)}{\rho^2+\epsilon^2}\cdot \frac{|\rho|(|\beta|+|\rho|+\epsilon)}{(\beta-\alpha-\rho)^2+\epsilon^2}.
\end{split}
\]
Write $s:=\sqrt\epsilon/2^{6}$, $F(x,\rho):=\frac{N(x)^{2}\psi_{s}(x)}{\rho^{2}+N(x)^{2}}$ and
$O_{x}:=\frac{N(x)\psi_{s}(x)}{(\beta-\alpha-\rho)^{2}+N(x)^{2}}$, so that
$\widetilde{K}_{1}=O_{\beta}\,[F(\alpha+\rho,\rho)-F(\alpha,\rho)]$ with
$|O_{\beta}|\lesssim\frac{\epsilon\,\psi_{s}(\beta)}{(\beta-\alpha-\rho)^{2}+\epsilon^{2}}$ by \eqref{e:N.range}.
Since $F(\alpha+\rho,\rho)-F(\alpha,\rho)=\int_{0}^{\rho}\partial_{t}F(\alpha+t,\rho)\,dt$, it
suffices to bound $\partial_{x}F$; differentiating and using $|N|\leqslant C_{A}\epsilon$,
$|N'|\leqslant A|x|$ (valid on all of $(-\delta,\delta)$ since $f'(0)=g'(0)=0$) and
$|\psi'_{s}|\leqslant C_{\psi}/s$ gives
\[
|\partial_{x}F(x,\rho)|\lesssim_{A}\frac{\epsilon(|\alpha|+|\rho|)}{\rho^{2}+\epsilon^{2}}
+\frac{\epsilon^{2}}{s(\rho^{2}+\epsilon^{2})}
+\frac{\epsilon^{3}(|\alpha|+|\rho|)}{(\rho^{2}+\epsilon^{2})^{2}}
\lesssim\frac{\epsilon(|\alpha|+|\rho|+\epsilon)}{\rho^{2}+\epsilon^{2}}.
\]
Here the middle term is handled by noting that $\psi'_{s}(x)\ne0$ forces $|x|\geqslant\frac{s}{2}$,
whence $|\alpha|+|\rho|\gtrsim s$ and therefore
$\frac{\epsilon^{2}}{s}\lesssim\epsilon(|\alpha|+|\rho|+\epsilon)$ (a pure constant, since
$s^{2}=\frac{\epsilon}{2^{12}}$), while the last term is bounded by
$\frac{\epsilon(|\alpha|+|\rho|)}{\rho^{2}+\epsilon^{2}}\cdot\frac{\epsilon^{2}}{\rho^{2}+\epsilon^{2}}
\leqslant\frac{\epsilon(|\alpha|+|\rho|+\epsilon)}{\rho^{2}+\epsilon^{2}}$.
This proves the first estimate. The second one is identical:
$\widetilde{K}_{3}=\frac{N(\beta)^{2}\psi_{s}(\beta)}{\rho^{2}+N(\beta)^{2}}\,[O_{\beta}-O_{\beta-\rho}]$
with $\frac{N(\beta)^{2}\psi_{s}(\beta)}{\rho^{2}+N(\beta)^{2}}
\lesssim\frac{\epsilon^{2}\psi_{s}(\beta)}{\rho^{2}+\epsilon^{2}}$ and, by the very same
differentiation in $x$ with the denominator $(\beta-\alpha-\rho)^{2}+N(x)^{2}$ in place of
$\rho^{2}+N(x)^{2}$,
\[
|O_{\beta}-O_{\beta-\rho}|\leqslant|\beta-(\beta-\rho)|\cdot\sup_{x}|\partial_{x}O_{x}|
\lesssim\frac{|\rho|\,\epsilon(|\beta|+|\rho|+\epsilon)}{(\beta-\alpha-\rho)^{2}+\epsilon^{2}}.
\]
Therefore, letting
\[
K_j(\alpha,\beta):=\int_{\mathbb{R}}\frac{\psi_{\sqrt\epsilon/2^6}(\rho)}{\rho}\psi_{\sqrt\epsilon/2^6}(\beta-\alpha-\rho)\widetilde{K}_j(\alpha,\beta;\rho)\,d\rho,\qquad j\in\{1,2,3\},
\]
we have
\[
\begin{split}
|K_1(\alpha,\beta)|+|K_3(\alpha,\beta)|&\lesssim \psi_{\sqrt\epsilon}(\beta)\psi_{\sqrt\epsilon}(\alpha)\int_{\mathbb{R}}\frac{\epsilon\psi_{\sqrt\epsilon/2^6}(\rho)}{\rho^2+\epsilon^2}\frac{\epsilon\psi_{\sqrt\epsilon/2^6}(\beta-\alpha-\rho)}{(\beta-\alpha-\rho)^2+\epsilon^2}\\
&\qquad\qquad\cdot(|\alpha|+|\beta|+|\rho|+\epsilon)\,d\rho\\
&\lesssim \psi_{\sqrt\epsilon}(\beta)\psi_{\sqrt\epsilon}(\alpha)\frac{\epsilon(|\alpha|+|\beta|+\epsilon)}{(\beta-\alpha)^2+\epsilon^2}.
\end{split}
\]
The cutoffs $\psi_s(\rho)$ and $\psi_s(b-\rho)$ confine the integration to the overlap
$[-s,s]\cap[b-s,b+s]$, where $s:=\sqrt\epsilon/64$ and $b:=\beta-\alpha$, so that $|\rho|\leqslant s$ and
$|b-\rho|\leqslant s$. In addition, $\widetilde{K}_1$ vanishes unless $|\alpha|\leqslant2s$, because it contains one
of the factors $\psi_s(\alpha+\rho)$ or $\psi_s(\alpha)$ and $|\rho|\leqslant s$; likewise
$\widetilde{K}_3$ vanishes unless $|\beta|\leqslant2s$. As $2s=\sqrt\epsilon/32\leqslant \sqrt\epsilon/2$ and
$\psi\equiv1$ on $[-\frac12,\frac12]$, the function $\psi_{\sqrt\epsilon}$ equals $1$ on $[-2s,2s]$;
inserting the factors $\psi_{\sqrt\epsilon}(\alpha)$ and $\psi_{\sqrt\epsilon}(\beta)$ into the
bounds above therefore changes nothing on the support of the integrand.

For the second line write $\Lambda:=|\alpha|+|\beta|+\epsilon$, $v:=|b|/\epsilon$ and
$U:=s/\epsilon=\frac1{64}\epsilon^{-1/2}$ (so that $|v|\leqslant4U$, since $|\alpha|,|\beta|\leqslant2s$ and
$|b|\leqslant4s$). Since $0\leqslant \psi_s\leqslant1$, the change
of variables $u:=\rho/\epsilon$, which maps $[-s,s]\cap[b-s,b+s]$ onto
$\{|u|\leqslant U,\ |v-u|\leqslant U\}$, bounds the integral above by $\frac{\Lambda}{\epsilon}\,B_{0}(v)+B_{1}(v)$,
where
\[
\begin{gathered}
B_{0}(v):=\int_{|u|\leqslant U,\ |v-u|\leqslant U}\frac{du}{(1+u^{2})(1+(v-u)^{2})},\\[1mm]
B_{1}(v):=\int_{|u|\leqslant U,\ |v-u|\leqslant U}\frac{|u|\,du}{(1+u^{2})(1+(v-u)^{2})}.
\end{gathered}
\]
Write $D(u,v):=(1+u^{2})(1+(v-u)^{2})$. For all $u,v\in\mathbb{R}$,
\[
\frac{1}{D(u,v)}\leqslant \frac{4}{1+v^{2}}\Big(\frac{1}{1+u^{2}}+\frac{1}{1+(v-u)^{2}}\Big),
\tag{$\diamondsuit$}
\]
because $v^{2}=(u+(v-u))^{2}\leqslant2u^{2}+2(v-u)^{2}$ and hence
$1+v^{2}\leqslant4(1+u^{2})+4(1+(v-u)^{2})$. Integrating $(\diamondsuit)$ over the region
$\{|u|\leqslant U,\ |v-u|\leqslant U\}$ and using $\int_{\mathbb{R}}\frac{dt}{1+t^{2}}=\pi$, we obtain
\[
B_{0}(v)\leqslant \frac{8\pi}{1+v^{2}}.
\]
For $B_{1}$ we first drop the cutoffs (this only enlarges the domain of integration, so
that the resulting integral dominates the one above) and then use the substitution
$u\mapsto v-u$, which shows that $B_{1}$ is even in $v$ and rewrites it as
$\frac12\int_{\mathbb{R}}\frac{|u|+|v-u|}{D(u,v)}\,du=\int_{\mathbb{R}}\frac{|u|}{D(u,v)}\,du$,
so as to reduce to the case $v\geqslant0$. Splitting
$\mathbb{R}=(-\infty,0)\cup(0,v)\cup(v,\infty)$ and using $(v+t)^{2}\geqslant v^{2}$ on $[0,v]$ together
with $(v+t)^{2}\geqslant t^{2}$ on $[v,\infty)$, we get
\[
\begin{aligned}
\int_{-\infty}^{0}\frac{|u|\,du}{D(u,v)}
&\leqslant \frac{\log(1+v^{2})+1}{2(1+v^{2})},\\
\int_{0}^{v}\frac{|u|\,du}{D(u,v)}&\leqslant \frac{2\pi v}{4+v^{2}},\\
\int_{v}^{\infty}\frac{|u|\,du}{D(u,v)}
&\leqslant \frac{\pi v}{2(1+v^{2})}+\frac{\log(1+v^{2})+1}{2(1+v^{2})}.
\end{aligned}
\]
After the substitution $t=-u$ the first integral becomes
$\int_{0}^{\infty}\frac{t\,dt}{(1+t^{2})(1+(v+t)^{2})}$, which is at most
$\int_{0}^{v}\frac{t\,dt}{(1+t^{2})(1+v^{2})}+\int_{v}^{\infty}\frac{t\,dt}{(1+t^{2})^{2}}$.
The second integral equals $\frac v2\int_{0}^{v}\frac{du}{D(u,v)}$ (average the two forms obtained
from each other by $u\mapsto v-u$), and the latter is at most $\frac{4\pi}{4+v^{2}}$: write it as
twice its restriction to $[0,v/2]$, where $1+(v-u)^{2}\geqslant1+v^{2}/4$, and use
$\int_{0}^{v/2}\frac{du}{1+u^{2}}\leqslant \frac{\pi}{2}$. The third integral follows by
putting $t=u-v$, upon which the $t$-part is bounded exactly as the first integral, while the
$v$-part is bounded by $\frac{v}{1+v^{2}}\int_{0}^{\infty}\frac{dt}{1+t^{2}}$.
Each of the three quantities above is bounded by a constant if $0\leqslant v\leqslant1$, and by
$\frac{C}{v}\leqslant \frac{C(1+|v|)}{1+v^{2}}$ if $v\geqslant1$; consequently
\[
B_{1}(v)\lesssim\frac{1+|v|}{1+v^{2}}
=\frac{\epsilon(1+|v|)}{\epsilon(1+v^{2})}
\leqslant \frac{\Lambda}{\epsilon(1+v^{2})},
\]
where the last inequality uses $\epsilon(1+|v|)=\epsilon+|b|\leqslant|\alpha|+|\beta|+\epsilon=\Lambda$.
Combining the two estimates,
\[
\frac{\Lambda}{\epsilon}\,B_{0}(v)+B_{1}(v)
\lesssim\frac{\Lambda}{\epsilon(1+v^{2})}
=\frac{\epsilon\,\Lambda}{b^{2}+\epsilon^{2}},
\]
which proves the second line of the claimed estimate.
Thus, it remains to prove
\[
|K_2(\alpha,\beta)|\lesssim \psi_{\sqrt\epsilon}(\beta)\psi_{\sqrt\epsilon}(\alpha)\frac{\epsilon(|\alpha|+|\beta|+\epsilon)}{(\beta-\alpha)^2+\epsilon^2}.
\]
For this we decompose 
\[
K_2(\alpha,\beta)=K_2^1(\alpha,\beta)+K_2^2(\alpha,\beta),
\]
where
\[
\begin{split}
&K_2^1(\alpha,\beta):=\int_{\mathbb{R}}\frac{\psi_{\kappa}(\rho)}{\rho}\psi_{\sqrt\epsilon/2^6}(\beta-\alpha-\rho)\widetilde{K}_2(\alpha,\beta;\rho)\,d\rho,\\
&K_2^2(\alpha,\beta):=\int_{\mathbb{R}}\frac{\psi_{\sqrt\epsilon/2^6}(\rho)-\psi_{\kappa}(\rho)}{\rho}\psi_{\sqrt\epsilon/2^6}(\beta-\alpha-\rho)\widetilde{K}_2(\alpha,\beta;\rho)\,d\rho,
\end{split}
\]
and carefully select $\kappa:=\frac{|\alpha-\beta|+\epsilon}{10}$. 

For $\widetilde{K}_{2}(\alpha,\beta;\rho)$ we can estimate, for $\rho,\alpha-\beta-\rho\in(-\sqrt{\epsilon},\sqrt{\epsilon})$:
\[
|\widetilde{K}_2(\alpha,\beta;\rho)|\lesssim \frac{\epsilon\psi_{\sqrt\epsilon/2^6}(\beta)}{(\beta-\alpha-\rho)^2+\epsilon^2}\cdot \frac{|\alpha-\beta|\epsilon(|\alpha|+|\beta|+\epsilon)}{\rho^2+\epsilon^2}
\]
Writing $\widetilde{K}_{2}=O_{\beta}\,[F(\alpha,\rho)-F(\beta,\rho)]$ with $F,O_{\beta}$ as above,
The denominator $\rho^{2}+N(x)^{2}$ of the bracket depends on the variable
$x$, since $N(x)=2\epsilon+f(x)+g(x)$, so comparing the two numerators alone does not suffice.
Differentiating,
\[
\partial_{x}F(x,\rho)=\frac{2N(x)N'(x)\rho^{2}}{(\rho^{2}+N(x)^{2})^{2}}\psi_{s}(x)
+\frac{N(x)^{2}}{\rho^{2}+N(x)^{2}}\psi'_{s}(x),
\]
and the mean value theorem in the first variable gives
$F(\alpha,\rho)-F(\beta,\rho)=\int_{\beta}^{\alpha}\partial_{x}F(x,\rho)\,dx$. For $x$ between
$\alpha$ and $\beta$ we have $|x|\leqslant|\alpha|+|\beta|$, so that
$\epsilon\leqslant N\leqslant C_{A}\epsilon$ by \eqref{e:N.range},
$|N'|\leqslant A|x|\leqslant A(|\alpha|+|\beta|)$ and
$|\psi'_{s}|\leqslant C_{\psi}/s$ yield
\[
|\partial_{x}F(x,\rho)|\lesssim_{A}\frac{\epsilon(|\alpha|+|\beta|)}{\rho^{2}+\epsilon^{2}}
+\frac{\epsilon^{2}}{s(\rho^{2}+\epsilon^{2})}
\lesssim\frac{\epsilon(|\alpha|+|\beta|+\epsilon)}{\rho^{2}+\epsilon^{2}},
\]
the middle term being handled as before: $\psi'_{s}(x)\ne0$ forces $|x|\geqslant\frac{s}{2}$,
hence $|\alpha|+|\beta|\geqslant\frac{s}{2}$, and therefore
$\frac{\epsilon^{2}}{s}\lesssim\epsilon(|\alpha|+|\beta|)$ because $s^{2}=2^{-12}\epsilon$.
Hence
\[
|F(\alpha,\rho)-F(\beta,\rho)|\leqslant|\alpha-\beta|\sup_{x}|\partial_{x}F(x,\rho)|
\lesssim\frac{|\alpha-\beta|\,\epsilon(|\alpha|+|\beta|+\epsilon)}{\rho^{2}+\epsilon^{2}}.
\]
No factor $|\rho|$ appears here, in contrast with the corresponding step for $\widetilde{K}_{1}$,
because the mean value is taken over the interval between $\alpha$ and $\beta$ rather than over
$[\alpha,\alpha+\rho]$. Combined with the bound on $O_{\beta}$ this gives the announced estimate.

Hence,
\[
\begin{split}
|K_2^2(\alpha,\beta)|\lesssim& \int_{\mathbb{R}}\frac{|\psi_{\sqrt\epsilon/2^6}(\rho)-\psi_{\kappa}(\rho)|}{|\rho|}\psi_{\sqrt\epsilon/2^6}(\beta-\alpha-\rho)\frac{\epsilon^2\psi_{\sqrt\epsilon/2^6}(\beta)}{(\beta-\alpha-\rho)^2+\epsilon^2} \\
&\cdot \frac{|\alpha-\beta|(|\alpha|+|\beta|+\epsilon)}{\rho^2+\epsilon^2}\,d\rho\\
\lesssim&\epsilon\psi_{\sqrt\epsilon}(\beta)\psi_{\sqrt\epsilon}(\alpha)(|\alpha|+|\beta|+\epsilon) \\
&\int_{\mathbb{R}}\frac{|\psi_{\sqrt\epsilon/2^6}(\rho)-\psi_{\kappa}(\rho)|}{|\rho|}\frac{\epsilon\psi_{\sqrt\epsilon/2^6}(\beta-\alpha-\rho)}{(\beta-\alpha-\rho)^2+\epsilon^2}\cdot \frac{|\alpha-\beta|}{\rho^2+\epsilon^2}\,d\rho\\
\lesssim&\frac{\epsilon\psi_{\sqrt\epsilon}(\beta)\psi_{\sqrt\epsilon}(\alpha)(|\alpha|+|\beta|+\epsilon)}{(\beta-\alpha)^2+\epsilon^2}.
\end{split}
\]
In the last step we used the following. Both cutoffs equal $1$ on
$[-\frac12\min\{s,\kappa\},\frac12\min\{s,\kappa\}]$, so the difference
$\psi_{s}(\rho)-\psi_{\kappa}(\rho)$ vanishes there, and since $0\leqslant\psi\leqslant1$ its
absolute value is at most $1$ everywhere; on the support of the difference, therefore,
$|\rho|>m:=\frac12\min\{s,\kappa\}$, and $\psi_{s}(\beta-\alpha-\rho)$ forces
$|\beta-\alpha-\rho|\leqslant s$. Writing $b=\beta-\alpha$, we bound
$\frac1{|\rho|}\leqslant \frac1m$, $\frac{|b|}{\rho^{2}+\epsilon^{2}}\leqslant \frac{|b|}{m^{2}}$ and
$\int_{\mathbb{R}}\frac{\epsilon\,d\rho}{(b-\rho)^{2}+\epsilon^{2}}=\pi$, so that the integral is at
most $\frac{\pi|b|}{m^{3}}\lesssim\frac{|b|}{\min\{s,\kappa\}^{3}}$, and this is in turn
$\lesssim\frac{1}{b^{2}+\epsilon^{2}}$: indeed, if $\kappa\leqslant s$ then
$\frac{|b|}{\min^{3}}=\frac{1000|b|}{(|b|+\epsilon)^{3}}$ and
$\frac{t}{(t+1)^{3}}\lesssim\frac{1}{1+t^{2}}$ for $t\geqslant0$, while if $\kappa>s$ the support
condition also gives $|\rho|\leqslant\kappa$, hence
$|b|\leqslant s+\kappa=\frac{|b|+\epsilon}{10}+s$, which forces $|b|\lesssim s$ and the same bound
follows.
Finally we bound $|K_{2}^{1}|$; first we write
\[
K_{2}^{1}(\alpha,\beta)=\int_{\mathbb{R}}\frac{\psi_{\kappa}(\rho)}{\rho}\big[\psi_{\sqrt\epsilon/2^6}(\beta-\alpha-\rho)\widetilde{K}_2(\alpha,\beta;\rho)-\psi_{\sqrt\epsilon/2^6}(\beta-\alpha)\widetilde{K}'_2(\alpha,\beta;\rho)\big]\,d\rho,
\]
where 
\[
\begin{aligned}
\widetilde{K}'_2(\alpha,\beta;\rho):=&\frac{(2\epsilon+f(\beta)+g(\beta))\psi_{\sqrt\epsilon/2^6}(\beta)}{(\beta-\alpha)^2+(2\epsilon+f(\beta)+g(\beta))^2} \\
&\Big[\frac{(2\epsilon+f(\alpha)+g(\alpha))^2\psi_{\sqrt\epsilon/2^6}(\alpha)}{\rho^2+(2\epsilon+f(\alpha)+g(\alpha))^2}-\frac{(2\epsilon+f(\beta)+g(\beta))^2\psi_{\sqrt\epsilon/2^6}(\beta)}{\rho^2+(2\epsilon+f(\beta)+g(\beta))^2}\Big].
\end{aligned}
\]
Then 
\[
\begin{aligned}
&|\psi_{\frac{\sqrt{\epsilon}}{2^{6}}}(\beta-\alpha-\rho)\widetilde{K}_{2}(\alpha,\beta;\rho)-\psi_{\frac{\sqrt{\epsilon}}{2^{6}}}(\beta-\alpha)\widetilde{K'}_{2}(\alpha,\beta;\rho)| \\
\lesssim&|\psi_{\frac{\sqrt{\epsilon}}{2^{6}}}(\beta-\alpha-\rho)\frac{(2\epsilon+f(\beta)+g(\beta))\psi_{\frac{\sqrt{\epsilon}}{2^{6}}}(\beta)}{(\beta-\alpha-\rho)^{2}+(2\epsilon+f(\beta)+g(\beta))^{2}}- \\
&\psi_{\frac{\sqrt{\epsilon}}{2^{6}}}(\beta-\alpha)\frac{(2\epsilon+f(\beta)+g(\beta))\psi_{\frac{\sqrt{\epsilon}}{2^{6}}}(\beta)}{(\beta-\alpha)^{2}+(2\epsilon+f(\beta)+g(\beta))^{2}}| \\
&\times|\frac{(2\epsilon+f(\alpha)+g(\alpha))^{2}\psi_{\frac{\sqrt{\epsilon}}{2^{6}}}(\alpha)}{(\rho)^{2}+(2\epsilon+f(\alpha)+g(\alpha))^{2}}-\frac{(2\epsilon+f(\beta)+g(\beta))^{2}\psi_{\frac{\sqrt{\epsilon}}{2^{6}}}(\beta)}{(\rho)^{2}+(2\epsilon+f(\beta)+g(\beta))^{2}}| \\
\lesssim&\frac{\epsilon|\rho|(|\beta-\alpha|+\epsilon)}{[(\beta-\alpha)^2+\epsilon^2]^2}\cdot \frac{|\alpha-\beta|\epsilon(|\alpha|+|\beta|+\epsilon)}{\rho^2+\epsilon^2}.
\end{aligned}
\]
Here the first factor bounds the difference of the two
$\psi_s N/((\cdot)^{2}+N^{2})$ factors: separating the increment of $\psi_s$ from that of
$((\cdot)^{2}+N^{2})^{-1}$, and using $|b-\rho|\geqslant \frac45|b|$ (which follows from
$|\rho|\leqslant \kappa=\frac1{10}(|b|+\epsilon)$) when $|b|\geqslant \epsilon$, gives the stated bound;
when $|b|\leqslant \epsilon$ one has $|b|,|b-\rho|\leqslant|b|+\kappa\leqslant\frac65\epsilon\leqslant\frac{s}{2}$
for $\epsilon$ small, so that $\psi_{s}$ equals $1$ at both points $b$ and $b-\rho$ and the
increment of $\psi_{s}$ vanishes; every other denominator is then comparable to $\epsilon^{2}$
and the same bound follows in the same way.
Combining this with the definition of $K_2^1$ we get
\[
|K_2^1(\alpha,\beta)|\lesssim \frac{\epsilon^{2}\,|b|\,\Lambda\,(|b|+\epsilon)}{[b^{2}+\epsilon^{2}]^{2}}\int_{\mathbb{R}}\frac{\psi_{\kappa}(\rho)}{\rho^{2}+\epsilon^{2}}\,d\rho
\leqslant \frac{\epsilon^{2}\,|b|\,\Lambda\,(|b|+\epsilon)}{[b^{2}+\epsilon^{2}]^{2}}\cdot\frac{2}{\epsilon}\arctan\frac{\kappa}{\epsilon}
\lesssim \frac{\epsilon\Lambda}{b^{2}+\epsilon^{2}},
\]
where the middle inequality uses $0\leqslant \psi_\kappa\leqslant1$ together with
$\operatorname{supp}\psi_\kappa\subseteq[-\kappa,\kappa]$, and the last one the identity
$\kappa=\frac1{10}(|b|+\epsilon)$ and the uniform estimate
\[
\frac{2\,|b|\,(|b|+\epsilon)}{b^{2}+\epsilon^{2}}\arctan\frac{\kappa}{\epsilon}
=\frac{2t(t+1)}{1+t^{2}}\arctan\frac{t+1}{10}
\leqslant3\arctan\frac{t+1}{10}\leqslant \frac{3\pi}{2},
\qquad t:=\frac{|b|}{\epsilon}\geqslant0,
\]
which uses $\frac{2t(t+1)}{1+t^{2}}\leqslant3$ for $t\geqslant0$.
\end{proof}

\smallskip
\noindent\textit{Acknowledgment.}
C. Hao was partially supported by the National Natural Science Foundation of China under Grant No. 12171460, the CAS Project for Young Scientists in Basic Research under Grant No. YSBR-031 and the National Key R\&D Program of China under Grant No. 2021YFA1000800.

\end{document}